\DeclareSymbolFont{AMSb}{U}{msb}{m}{n}
\documentclass[11pt, noamsfonts, reqno]{amsart}

% ----- Operators -----
 %im

\DeclareMathOperator{\Div}{div}
\DeclareMathOperator{\ord}{ord}

\DeclareMathOperator{\tor}{tor}
\DeclareMathOperator{\val}{val}
\DeclareMathOperator{\num}{num}
\DeclareMathOperator{\order}{order}
\DeclareMathOperator{\GCD}{GCD}

% ----- Useful stuff ------

\newcommand{\clc}{\cdot \ldots \cdot}

\newcommand{\tc}[1]{\textcolor{#1}}
\newcommand{\bs}{\boldsymbol}

% ----- Macros -----
 
\renewcommand{\phi}{\varphi}

\newcommand{\C}{\mathbb{C}}

\newcommand{\N}{\mathbb{N}}
\newcommand{\Q}{\mathbb{Q}}

\newcommand{\Z}{\mathbb{Z}}

% ----- Environments -----

% ----- Packages -----
\usepackage[utf8]{inputenc}
\usepackage[T1]{fontenc}
\usepackage{amsmath}
\usepackage{amsthm}
\usepackage{mathtools}  
\usepackage[dvipsnames]{xcolor}     
\usepackage{graphicx}               
\usepackage{verbatim}               
\usepackage{enumerate,enumitem}     
\usepackage[charter]{mathdesign}
 % here - PS
\usepackage{geometry}   
    \newlength{\alphabet}
    \settowidth{\alphabet}{\normalfont abcdefghijklmnopqrstuvwxyz}

\geometry{left=2cm, right=2cm}

\usepackage[]{todonotes}          
             
\usepackage{longtable}             
\usepackage{lscape}              
\usepackage{makecell}

\usepackage[colorlinks=true, urlcolor=RoyalBlue, citecolor=Green, linkcolor=purple]{hyperref}  
\usepackage{trimclip}
\usepackage{relsize}

% ----- TikZ --------

% \usepackage{physics}
\usepackage{amsmath}
\usepackage{tikz}
\usepackage{mathdots}
\usepackage{cancel}
\usepackage{color}
\usepackage{array}
\usepackage{multirow}
\usepackage{amssymb}
\usepackage{tabularx}
\usepackage{extarrows}
\usepackage{booktabs}
\usepackage{breqn}   
\usetikzlibrary{fadings}
\usetikzlibrary{patterns}
\usetikzlibrary{shadows.blur}
\usetikzlibrary{shapes}

% ----- Settings -----
\graphicspath{{img/}{../img/}}                
\DeclareGraphicsExtensions{.pdf, .png, .JPG}    

\setcounter{tocdepth}{1}        
\linespread{1.1}                
\setlength \columnsep{1cm}      

\clubpenalties 	3 10000 150 0 	
\widowpenalties 3 10000 150 0 	

\theoremstyle{definition}
\newtheorem{definition}{Definition}[section]
\newtheorem{notation}[definition]{Notation}
\newtheorem{example}[definition]{Example}
\newtheorem{remark}[definition]{Remark}

\theoremstyle{plain}

\newtheorem*{conjecture*}{Conjecture}
\newtheorem{conjx}{Conjecture}
 
\newtheorem*{theorem*}{Theorem}
\newtheorem{thmx}{Theorem}
 
\newtheorem*{corollary*}{Corollary}

\newtheorem{theorem}[definition]{Theorem}
\newtheorem{proposition}[definition]{Proposition}
\newtheorem*{proposition*}{Proposition}
\newtheorem{claim}[definition]{Claim}
\newtheorem*{claim*}{Claim}
\newtheorem{lemma}[definition]{Lemma}
\newtheorem*{note*}{\tc{teal}{Note}}
\newtheorem{corollary}[definition]{Corollary}

\usepackage[mathcal]{euscript}
\usepackage{enumitem}
\usepackage[dvipsnames]{xcolor}
\usepackage{tikz-cd}
\usepackage{geometry}
\geometry{bottom=4.5cm}

\title{Rational torsion of generalised modular Jacobians of odd level}

\author{Mar Curc{\'o}-Iranzo}
\keywords{generalised Jacobians, modular curves, rational points, eta-quotients, cuspidal divisor class group.}
\subjclass{11G18, (14H40, 11G45, 11G16, 14G05) }
\thanks{The author would like to thank Valentijn Karemaker for introducing her to the topic of the paper and for the many insightful discussions. Thanks also to Gunther Cornelissen for his valuable comments on the introduction. Thanks to David Hokken and Berend Ringeling for their feedback and help with the code. Thanks to Jakub Byszewski for his help with the proof of Lemma~\ref{lem:inject}. Finally, thanks to Takao Yamazaki, Yifan Yang, Fu-Tsun Wei, Hwajong Yoo and the referees for their feedback and corrections on earlier versions of the paper.}
\email{m.curcoiranzo@uu.nl}
\address{Mathematical Institute, Utrecht University, PO Box 80010, 3508 TA, Utrecht, the Netherlands}
%\date{\today}

\begin{document}

\begin{abstract}
We consider the generalised Jacobian $J_{0}(N)_{\mathbf{m}}$ of the modular curve $X_{0}(N)$ of level $N$, with respect to the modulus~$\mathbf{m}$ consisting of all cusps on the modular curve. When $N$ is odd, we determine the group structure of the rational torsion $J_{0}(N)_{\mathbf{m}}(\Q)_{\tor}$ up to $2$-primary and $l$-primary parts for any prime $l$ dividing $N$. Our results extend those of Wei--Yamazaki for squarefree levels and Yamazaki--Yang for prime-power levels.
\end{abstract}

\maketitle

\section{Introduction}

Given a positive integer $N$, let $X_{0}(N)$ denote the modular curve of level $N$ for the congruence subgroup $\Gamma_{0}(N)=\{\begin{psmallmatrix} a & b \\ c & c \end{psmallmatrix}\in \textup{SL}_{2}(\Z) : c \equiv 0 \pmod{N}\}$. This is a projective nonsingular curve defined over $\Q$. Modular curves are geometric and algebraic objects relating the theory of elliptic curves to that of modular forms, and they have been widely studied over the last century; see \cite{katz2016arithmetic} for an overview.
The points of $X_{0}(N)$ parametrise isomorphism classes of elliptic curves over $\Q$ together with a rational cyclic subgroup of order $N$. After a base change to $\C$, the curve $X_{0}(N)/\C$ is isomorphic to the compact Riemann surface $\mathcal{H}/\Gamma_{0}(N) \cup \mathbb{P}^{1}(\Q)/\Gamma_{0}(N)$, where $\mathcal{H}$ denotes the complex upper-half plane. The open subset $\mathcal{H}/\Gamma_{0}(N)$ parametrises isomorphism classes of complex elliptic curves together with a cyclic subgroup of order $N$ and the closed points $\textup{Cusps}(X_{0}(N)) \coloneqq \mathbb{P}^{1}(\Q)/\Gamma_{0}(N)$ form the set of cusps of $X_{0}(N)$. All points $P \in \textup{Cusps}(X_{0}(N))$ are algebraic points defined over the cyclotomic extension $\Q(\zeta_{N})$, cf.\ \cite[Theorem~2.17]{yoo2019rationalcusp}.

Let $J_0(N)$ denote the Jacobian of the modular curve $X_0(N)$. 
The interest in the study of the rational torsion of $J_0(N)$ goes back to Levi's Torsion Conjecture, which established a classification of the possible torsion subgroups of the group of rational points of an elliptic curve over $\Q$, cf.~\cite{schappacher:hal-00129719}. Almost seventy years later, Ogg \cite[Conjecture~2]{ogg1975diophantine} reformulated and generalised Levi's Torsion Conjecture into what is now known as Ogg's Conjecture. Writing $C_{N}$ for the image of the degree-zero divisors with support at the cusps of $X_{0}(N)$ in $J_{0}(N)$ and defining $C_{N}(\Q) \coloneqq C_{N} \cap J_{0}(N)(\Q)$, the statement made by Ogg and proved later in a renowned paper by Mazur \cite[Theorem~I]{mazur1977modular} was the following: let $N \geq 5$ be a prime number and $n= \num\left(\frac{N-1}{12}\right)$, i.e. the numerator of $\frac{N-1}{12}$ when written in lowest terms.
The cyclic  group $C_{N}(\Q)$ of  order  $n$ is  the full torsion  subgroup $J_{0}(N)(\Q)_{\tor}$ of $J_0(N)(\Q)$. 

A natural generalisation of this conjecture states that the finite torsion group $J_{0}(N)(\Q)_{\tor}$is equal to $C_{N}(\Q)$ for all positive integers $N$; this is called the generalised Ogg's Conjecture -- see \cite{guo2021rational}, \cite{ling1997}, \cite{lorenzini1995torsion}, \cite{ohta2013eisenstein}, \cite{ren2018rational}, \cite{ribet2022rational}, \cite{yoo2017ker} and \cite{yoo2021rational} for progress on this conjecture. Moreover, if $C(N)$ is the subgroup formed by the equivalence classes of $\Q$-rational divisors in $C_{N}(\Q)$, i.e., the divisor classes fixed under the action of the absolute Galois group of $\Q$, a further conjecture states that $C_{N}(\Q)$ is equal to $C(N)$ -- see \cite{ren2019maximal}, \cite{takagi1995cuspidal} and \cite{wang2020modular} for progress on the proof of this conjecture. Recently, analogous work has been done for the case of Drinfeld modular curves -- see \cite{papikian2017rational}, \cite{papikian2019cuspidal} -- and for the modular curve $X_{1}(N)$ -- see \cite[p.~153]{langkub1981}, \cite[Table~1]{conrad2003j1}.

On a further note, understanding the torsion on the Jacobian $J_{0}(N)$ allows us to study unramified abelian covers of the modular curve $X_{0}(N)$, as every unramified abelian cover $X \rightarrow X_{0}(N)$ is the pullback of a unique isogeny $J \rightarrow  J_{0}(N)$, see \cite[Chapter~I,~Theorem~2]{serre1959groupes}. Hence, we have a bijection
\begin{equation} \label{bij1} \{\textup{finite  unramified abelian covers } X \rightarrow X_{0}(N)\} \leftrightarrow \{\textup{isogenies } J \rightarrow J_{0}(N) \textup{ with } \iota^{-1}(J) \simeq X\}, \end{equation}
where $\iota$ is the Abel-Jacobi map $X_{0}(N) \rightarrow J_{0}(N)$, see \cite[Chapter~I,~Corollary]{serre1959groupes}.
Furthermore, the rational covers in this bijection correspond to rational isogenies, i.e., geometrically connected covers $X$ over $\Q$ correspond to isogenies whose kernel is a constant finite group scheme.
By duality, the latter are in bijection with rational isogenies from $J_{0}(N)$ whose kernel is given by the Cartier dual of these  constant finite group schemes. Cartier duals of constant finite group schemes are said to be of $\mu$-type, cf. \cite{mazur1977modular}. In \cite{vatsal_2005}, computes the odd $\mu$-type subgroups of $J_0(N)$ for squarefree $N$. 

Let $\mathbf{m} = \sum_{P \in \textup{Cusps}(X_{0}(N))} P$ be the reduced rational divisor of $X_{0}(N)$ with $\textup{Cusps}(X_{0}(N))$ as support. We can view the rational effective divisor $\mathbf{m}$ as a modulus, i.e., as the formal product of the places of the global function field $K=\Q(X_{0}(N))$ corresponding to the points in $\textup{Cusps}(X_{0}(N))$, and consider the generalised Jacobian $J_{0}(N)_{\mathbf{m}}$ in the Rosenlicht-Serre sense, see \cite{rosenlicht1954generalised}, \cite[Chapters~I,~V]{serre1959groupes}.  By studying generalised Jacobians, Rosenlicht and Lang extended the geometric interpretation of class field theory for global function fields of curves over finite fields to curves over perfect fields. 
Just like the usual Jacobian $J_{0}(N)$ allows us to study unramified abelian covers of the modular curve $X_{0}(N)$, the generalised Jacobian $J_{0}(N)_{\mathbf{m}}$ allows us to study finite abelian covers of $X_{0}(N)$ ramified at the support of $\mathbf{m}$.
Hence, again via pullback, we now have the bijection
\begin{equation} \label{bij2} \{\textup{finite abelian covers } X \rightarrow X_{0}(N) \textup{ ramified at } \mathbf{m}\} \leftrightarrow \{\textup{ isogenies } J \rightarrow J_{0}(N)_{\mathbf{m}}  \textup{ with } \iota_{\mathbf{m}}^{-1}(J) \simeq X\}, \end{equation}
 where $\iota_{\mathbf{m}}$ is a (canonical) rational map $X_{0}(N) \rightarrow J_{0}(N)_{\mathbf{m}}$, see \cite[Chapter~I,~Corollary]{serre1959groupes}. In \eqref{bij2}, rational covers again correspond to rational isogenies. To classify the latter, we would like to use duality as we did before. However, since $J_{0}(N)_{\mathbf{m}}$ is not an abelian variety, the appropriate way to make sense of its dual is by considering the dual 1-motive of $J_{0}(N)_{\mathbf{m}}$, in the Deligne sense.
The rational isogenies in \eqref{bij2} are now classified by $\mu$-type subgroups of this dual \cite[Section~2]{yamazakimaximal}. In \cite{yamazakimaximal} the authors compute the latter $\mu$-type subgroups for prime levels $N$. 
Another approach is considered in \cite{lecouturier2021mixed} where mixed modular symbols are used to construct 1-motives related to $J_{0}(N)_{\mathbf{m}}$. 
Bridging the gaps, studying the rational torsion of $J_{0}(N)_{\mathbf{m}}$ is a first step towards understanding ramified abelian covers of $X_{0}(N)$.

In a different direction, recent papers of Gross \cite{gross2012classes} and Bruinier-Li \cite{Heegner} relate traces of singular moduli on modular curves to Heegner divisors in generalised Jacobians with cuspidal modulus. In \cite[Theorem~1.1]{Heegner}, the authors extend the Gross-Johnen-Zagier formula for classes of Heegner divisors of $J_{0}(N)$ by showing that the generating series of these classes is a weakly holomorphic modular form of weight $3/2$ with values in $J_{0}(N)_{\mathbf{m}}$.

Driven by the strong relationship between the generalised modular Jacobians and the space of holomorphic modular forms of weight 2 on a congruence subgroup of $\text{SL}_{2}(\Z)$, in \cite{jordan2022generalized} the authors study $J_{0}(N)_{\mathbf{m}}$ from a geometric point of view and describe the N{\'e}ron model of $J_{0}(N)_{\mathbf{m}}$. For $N$ prime, they compute the character and component groups of the special fiber of the generalised Jacobian; the component group is then also computed for $N=p^{2}$. Their results arise from their study of the action of the Hecke operators on the generalised Jacobian.

Motivated by its inherent interest and applications, in this paper we study the structure of the group $J_{0}(N)_{\mathbf{m}}(\Q)_{\tor}$. The group $J_{0}(N)_{\mathbf{m}}(\Q)$ is not finitely generated, but its torsion subgroup is finite, cf.~\cite{serre1959groupes}, \cite{yamazaki2016rational}. As an analogue of the generalised Ogg's Conjecture, Yamazaki--Yang \cite[Theorem~1.1.3]{yamazaki2016rational} and Wei--Yamazaki \cite[Theorem~1.2.3]{wei2019rational} describe $J_{0}(N)_{\mathbf{m}}(\Q)_{\tor}$ when $N$ is a prime-power and when $N$ is squarefree, respectively. We build upon their work and use results in \cite[Theorem~1.6,~1.7]{yoo2019rationalcusp} on the description of $C(N)$ and in \cite[Theorem~1.4]{yoo2021rational} on the generalised Ogg's Conjecture to prove the following result.

\begin{thmx}[Theorem \ref{thm:mainthm}]\label{thm:mainthmintro} Let $N$ be an odd number. Consider the set $D_{N}^{F}$ of divisors $d$ of $N$ divisible by at least two distinct primes. Then for any odd prime~$l$ with $l^{2}$ not dividing $3N$ and for any $d\in D^{F}_{N}$ there exists a degree-$0$ cuspidal divisor $E(d)$ such that 
\begin{equation*}
    J_{0}(N)_{\mathbf{m}}(\Q)_{\tor}[l^{\infty}] = \bigoplus_{d \in D_{N}^{F}} \langle [E(d)] \rangle[l^{\infty}]\simeq \bigoplus_{d \in D_{N}^{F}}  \Z / l^{\val_{l}(\bs \varepsilon(N, d))} \Z,
\end{equation*}
where $\bs \varepsilon(N, d)= \order(E(d))$ equals the numerator of the quantity $\frac{\mathcal{G}_{\mathcal{M}}(N, d)}{24}$ when written in lowest terms.
The integers $\mathcal{M}$ and $\mathcal{G}_{\mathcal{M}}(N, d)$, both uniquely determined by $N$ and $d$, are given in Definition~\ref{def:ordering}.\textup{(c)}. 
\end{thmx}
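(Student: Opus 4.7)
The plan is to reduce the torsion computation to an explicit calculation in a cuspidal divisor class group. Let $\mathcal{C}_{\mathbf{m}}(N) \subset J_{0}(N)_{\mathbf{m}}$ denote the subgroup of classes of degree-$0$ divisors supported on the cusps. From the Rosenlicht--Serre structure theorem we have a short exact sequence $0 \to T_{\mathbf{m}} \to J_{0}(N)_{\mathbf{m}} \to J_{0}(N) \to 0$, where $T_{\mathbf{m}}$ is a torus indexed by the cusps modulo a diagonal $\mathbb{G}_{m}$. Passing to $\Q$-points and the $l$-primary torsion of the associated long exact sequence, one obtains a surjection from the $l$-primary torsion of $J_{0}(N)_{\mathbf{m}}(\Q)$ to $J_{0}(N)(\Q)_{\tor}[l^{\infty}]$, with the kernel captured by the torsion of $T_{\mathbf{m}}(\Q)$ and the Galois cohomology of $T_{\mathbf{m}}$; this is controlled cuspidally because the modulus is precisely the sum of all cusps. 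The hypothesis $l^{2}\nmid 3N$ is exactly what is needed to invoke the generalised Ogg conjecture of \cite[Theorem~1.4]{yoo2021rational}, which identifies $J_{0}(N)(\Q)_{\tor}[l^{\infty}]$ with $C(N)[l^{\infty}]$. Combined with the compatibility of the cuspidal subgroups under $J_{0}(N)_{\mathbf{m}} \to J_{0}(N)$, this reduces the theorem to computing $\mathcal{C}_{\mathbf{m}}(N)(\Q)[l^{\infty}]$, i.e.\ classes of degree-$0$ cuspidal divisors modulo divisors of rational functions $f$ with $f\equiv 1\pmod{\mathbf{m}}$.

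For the explicit description, I invoke the classical theorem of Ligozat that every cuspidal principal divisor on $X_{0}(N)$ is the divisor of an eta-quotient $\prod_{d\mid N}\eta(d\tau)^{r_{d}}$, together with the explicit formulas for $\ord_{P_{d'}}\eta(d\tau)$ at each cusp $P_{d'}$. This converts the computation of $\mathcal{C}_{\mathbf{m}}(N)$ into the cokernel of an explicit integer matrix indexed by the divisors of $N$. For each $d\in D_{N}^{F}$ I construct $E(d)$ as a specific $\Z$-linear combination of cusps $P_{d'}$, designed so that $\bs\varepsilon(N,d)\cdot E(d)$ is the divisor of an eta-quotient normalised so that its leading Fourier coefficient at every cusp is $1$ (and hence trivial in $\mathcal{C}_{\mathbf{m}}(N)$). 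The integer $\bs\varepsilon(N,d)=\mathrm{num}(\mathcal{G}_{\mathcal{M}}(N,d)/24)$ appears naturally: the factor $24$ comes from $\eta(\tau)^{24}=\Delta(\tau)$ and from the requirement that the exponents $r_{d}$ be integral, while $\mathcal{G}_{\mathcal{M}}(N,d)$ encodes the combinatorics of the ordering from Definition~\ref{def:ordering}.(c). This argument gives $\bs\varepsilon(N,d)$ as an upper bound for the order of $[E(d)]$.

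The main obstacle is to match this upper bound by a lower bound and to establish the direct sum decomposition. For the lower bound, one must show that no eta-quotient whose divisor is a smaller multiple of $E(d)$ can be normalised to have leading constant $1$ at every cusp; I expect to carry this out by a $l$-adic valuation argument on the exponents $r_{d}$, playing the denominator $24$ off against the structure of $\mathcal{G}_{\mathcal{M}}(N,d)$. For the direct-sum decomposition, I would construct an injective homomorphism $\bigoplus_{d\in D_{N}^{F}}\Z/\bs\varepsilon(N,d)\Z \hookrightarrow \mathcal{C}_{\mathbf{m}}(N)(\Q)[l^{\infty}]$ and prove injectivity by establishing the linear independence of the divisors $\{E(d)\}_{d\in D_{N}^{F}}$ modulo the image of $C(N)$; this is the role of Lemma~\ref{lem:inject}. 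Surjectivity then follows by comparing the rank of the constructed subgroup with the reduction in the first paragraph: the divisors $d$ with $\omega(d)\le 1$ are already fully accounted for by $C(N) \subset J_{0}(N)$ via \cite{yoo2019rationalcusp}, so the residual $l$-primary torsion of $J_{0}(N)_{\mathbf{m}}$ over that of $J_{0}(N)$ is exactly indexed by $D_{N}^{F}$.
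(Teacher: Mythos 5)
Your reduction in the first paragraph contains the central error. The Rosenlicht--Serre sequence does give a surjection $J_0(N)_{\mathbf{m}}(\Q)\twoheadrightarrow J_0(N)(\Q)$ on $\Q$-points (after Hilbert 90), but passing to torsion subgroups is only left exact: applying $\mathrm{Hom}(\Q/\Z,-)$ produces a connecting homomorphism $\bs\delta\colon J_0(N)(\Q)_{\tor}\to\bigoplus_{d'\mid N,\,d'\neq N}\Q(P_{d'})^{\times}\otimes\Q/\Z$, and the image of $J_0(N)_{\mathbf{m}}(\Q)_{\tor}$ in $J_0(N)(\Q)_{\tor}$ is $\ker(\bs\delta)$, not everything. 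Your claimed surjection on $l$-primary torsion is already false for $N=p$ prime, where $J_0(p)_{\mathbf{m}}(\Q)_{\tor}$ is trivial away from $2$ and $p$ while $J_0(p)(\Q)_{\tor}$ is cyclic of order $\num((p-1)/12)$. Consequently your closing counting argument runs in the wrong direction: $J_0(N)_{\mathbf{m}}(\Q)_{\tor}[l^{\infty}]$ is a \emph{subgroup} of $J_0(N)(\Q)_{\tor}[l^{\infty}]\cong C(N)[l^{\infty}]$ (which is indexed by all of $D_N$), not an overgroup with ``residual'' pieces indexed by $D_N^{F}$; the content of the theorem is that restricting to $\ker(\bs\delta)$ kills exactly the contributions of the prime-power divisors and cuts the surviving cyclic factors down to order $\num(\mathcal{G}_{\mathcal{M}}(N,d)/24)$.

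Your second paragraph is in the right spirit --- membership in $\ker(\bs\delta)$ is indeed detected by the leading Fourier coefficients of the Ligozat eta-quotient realising $n\cdot D$ as a principal divisor, and this is what the paper does --- but the proposal leaves unaddressed the steps that carry all the difficulty: (i) the explicit computation of those leading coefficients at every cusp $Q_{d'}$ for the generators $Z(d)$ of $C(N)$, which requires first showing (this is the actual content of Lemma~\ref{lem:inject}, which you have misattributed to a linear-independence role) that the roots of unity arising from the eta transformation formula may be discarded, i.e.\ that the subgroup generated by the relevant $p$'s and $\sqrt{p^{\ast}}$'s injects into $\bigoplus_{d'}\Q(Q_{d'})^{\times}\otimes\Q/\Z$; (ii) the verification that the proposed elements of $\ker(\bs\delta|_{C(N)})$ actually exhaust the kernel, which in the paper requires the row-reduction argument of Lemma~\ref{lem:matrix} and Claims~\ref{claim2N}--\ref{claim4N}; and (iii) the decomposition into a \emph{direct} sum of cyclic groups, which does not follow from exhibiting divisors of the right orders but needs the $l$-adic independence criteria applied to the carefully chosen combinations $E(\bs{f})$ of Section~\ref{sec:ldecompker}. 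As written, the proposal asserts the conclusions of these steps without arguments that would survive the correction of the first paragraph.
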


Let $N=\prod_{i=1}^{s} p_{i}^{r_i}$ be odd. Our results coincide with those of Wei-Yamazaki in \cite[Theorem~1.2.3]{wei2019rational} and with those of Yamazaki-Yang in \cite[Proposition~1.3.2]{yamazaki2016rational} when $N$ is squarefree, i.e. when all the $r_i$ are equal to $1$. Namely, both in their and our work we get $$\val_{l}(J_{0}(N)_{\mathbf{m}}(\Q)_{\tor})=\val_{l}\left( \prod_{i=1}^{s}(p_{i}-1)^{2^{s-1}-1}\prod_{i=1}^{s}(p_{i}+1)^{2^{s-1}-s-2}\right),$$ see Section~\ref{sec:generalised jacobians}.    

In \cite[Theorem~1.2.3]{wei2019rational}, Wei and Yamazaki show that in the case of squarefree level $N$ the size of the rational torsion of $J_{0}(N)_{\mathbf{m}}$ -- up to 2-torsion and 3-torsion -- increases exponentially with the number of primes $s$ and linearly with each prime $p_{i}$, just like the rational torsion of $J_{0}(N)$ for these same levels. Here we have a similar situation, as we see that $J_{0}(N)_{\mathbf{m}}(\Q)_{\tor}$ is given by the product of $(2^{s-1}-1)\prod_{i=1}^{s}r_{i}$ cyclic groups. This contrasts with the case where $N=p^{r}$ discussed by Yamazaki and Yang in \cite[Theorem~1.1.3]{yamazaki2016rational}; for prime-power, levels the rational torsion of $J_{0}(N)_{\mathbf{m}}$ is trivial up to 2-torsion and $p$-torsion, no matter how big $p$ and~$r$ are, which differs from what happens to the usual modular Jacobian for these levels. 
Contrary to the case in \cite{yamazaki2016rational} we observe here that the size of $J_{0}(N)_{\mathbf{m}}(\Q)_{\tor}$ increases linearly with each $r_{i}$ when $N$ is divisible by at least two distinct primes.  On the other hand, it follows from Theorem~\ref{thm:mainthmintro} that the size of $J_{0}(N)_{\mathbf{m}}(\Q)_{\tor}$ increases linearly with each $p_{i}$, following the trend observed in \cite{yamazaki2016rational}.

Finally, notice that under the assumption that the generalised Ogg's Conjecture and the conjecture $C_{N}(\Q)=C(N)$ hold, we could also use the results in Theorem~\ref{thm:mainthmintro} for any $l=p_{i}$ to obtain a complete explicit description of the rational torsion of the generalised Jacobian for $N=\prod_{i=1}^{s}p_{i}^{r_{i}}$ odd. This would provide an analogue of the generalised Ogg's conjecture for generalised Jacobians with cuspidal modulus and odd level $N$.

The structure of the article is as follows. In Section~\ref{sec:notations} we fix some notation and definitions. 
In Section~\ref{sec:modular jacobian} we review known results on $J_{0}(N)(\Q)_{\tor}$ and $C_{N}(\Q)$. For this, we revisit classical results on the cusps of $X_{0}(N)$ and eta quotients -- see Sections~\ref{ssec:cuspsubgroup} and \ref{ssec:theodiv} --, and we explain the construction of certain divisors $Z(d) \in \textup{Div}_{0}(X_{0}(N))$ given in \cite[Theorem~1.6]{yoo2019rationalcusp} -- see Section~\ref{ssec:Z(d)}. These divisors generate $C(N)$ and play a key role in the proof of Theorem~\ref{thm:mainthmintro}. Finally, in Proposition~\ref{prop:etaz-divN} we compute certain eta quotients related to the divisors $Z(d)$. 
Experts may skip this section up to Proposition~\ref{prop:etaz-divN}.
In Section~\ref{sec:generalised jacobians} we recall some results on the generalised Jacobian $J_{0}(N)_{\mathbf{m}}$. We introduce a group homomorphism denoted by $\bs \delta$ and reduce the proof of Theorem~\ref{thm:mainthmintro} to a computation of the kernel of $\bs \delta$ on $C(N)$ -- see Theorem~\ref{thm:mainthm}. 
Finally, in the remaining sections we spell out the results for the computation of $\ker(\bs \delta)$. More precisely, in Section~\ref{sec:imdelt} we use the eta quotients from Proposition~\ref{prop:etaz-divN} to describe the image under $\bs \delta$ of the classes $[Z(d)]$ of the aforementioned divisors $Z(d)$; and in Section~\ref{sec:kerdelt} we use these computations to give the full description of $\ker(\bs \delta|_{C(N)})$.

\section{Notation} \label{sec:notations}

In this section we introduce some notation and definitions that will be used throughout the paper. 
\begin{notation} Let $a$ and $b$ be integers and $p$ a prime number. We denote Euler's totient function of $a$ by $\varphi(a)$, the $p$-adic valuation of $a$ by $\ord_{p}(a)$, and the greatest common divisor of $a$ and $b$ by $(a, b)$ or $\gcd(a, b)$. Lastly, we denote by $\num\left(\frac{a}{b}\right)$ the numerator of $\frac{a}{b}$ when written in lowest terms.
\end{notation}

\begin{definition} \label{def:B} Given an odd prime number $p$, we define $B(p) \coloneqq \num\left(\frac{p-1}{24}\right) \frac{24}{p-1}$ and $B_{3}(p) \coloneqq \num\left(\frac{p-1}{3}\right) \frac{3}{p-1}.$
We also define the positive integer
\begin{center}$A(p) = \begin{cases}
     1 &\text{if } p \neq 3, \\
     3 &\text{if } p=3. \end{cases}$ \end{center}
\end{definition}

\begin{definition} \label{def:radN} Let $p$ denote a prime number. Given a positive integer $N$, we define the radical of $N$ by \begin{equation*}
    \text{rad}(N)\coloneqq \prod_{p|N} p, 
\end{equation*} 
where the product runs over all the different prime factors of $N$. 
We also define the integer $k(N)$ by
\begin{equation*}k(N) \coloneqq N \prod_{p|N} (p-p^{-1}) = \frac{N}{\text{rad}(N)} \prod_{p|N} (p^{2}-1). \end{equation*}
\end{definition}

\begin{definition} Let $N$ be a positive integer. We give the following definitions related to the set of divisors of $N$:
\begin{itemize}
  \item $D_{N}$ is the set of all positive divisors of $N$, except for 1. 
    \item $\sigma_{0}(N) \coloneqq \# D_{N} +1$ is the number of divisors of $N$, including 1.
    \item $D_{N}^{\textup{nsf}} $ is the set of non-squarefree divisors of $N$. 
    \item $D_{N}^{\textup{sf}} = D_{N} \setminus D_{N}^{\textup{nsf}}$ is the set of all squarefree divisors of $N$, except for 1. 
\end{itemize}
\end{definition}

\begin{definition} Given a prime number $p$, a positive integer $r$, and  an integer $f$ such that $0 \leq f \leq r$, we define
\begin{equation*}\mathcal{G}_{p}(r, f) \coloneqq \begin{cases} 
    p^{r-1}(p^{2}-1) & \text{if } f=0, \\
    1 & \text{if } f=1, \\
    p^{r-\kappa-1}(p^{2}-1) & \text{if } 2 \leq f \leq r, 
    \end{cases} 
    \end{equation*}
    where $\kappa \coloneqq r-1$ if $f=2$, and $\kappa \coloneqq \left[ \frac{r+1-f}{2} \right]$ if $3 \leq f \leq r$. 

\end{definition}

\begin{definition} \label{def:ordering} Let $N$ be an odd positive integer. 

\begin{enumerate}
\item[(a)] Given a prime number $l$, we fix an ordering $p \prec q$ on the set $\{ p|N : p \text{ prime}\}$ of primes dividing $N$, and we label the elements in this set accordingly: we let $N = \prod_{i=1}^{n} p_{i}^{r_{i}} $ with $i<j$ for $p_{i} \prec p_{j}$. We use the ordering defined in \cite[Assumption~1.14]{yoo2019rationalcusp}; that is we assume 
\begin{equation*}
    \ord_{l}(\gamma_{i}) \geq \ord_{l}(\gamma_{j}) \quad \textrm{for} \quad i \leq j, 
\end{equation*}
and 
\begin{equation*}
    \ord_{l}(p_{i}-1) \leq \ord_{l}(p_{j}-1) \quad \textrm{for} \quad i \leq j, 
\end{equation*}
where $\gamma_{i} \coloneqq p_{i}^{r_{i}-1}(p_{i}^{2}-1)$ for all $i$. Then any $d|N$ is of the form $ d = \prod_{i=1}^{s} p_{i}^{f_{i}}$ for some $0 \leq f_{i} \leq r_{i}$; we will also make use of the notation $\bs{f}(d) = (f_{1}, \ldots, f_{s})$ to denote these divisors.

\item[(b)] We let 
\begin{equation*}
    F \coloneqq \left\{ \bs{f}=(f_{1}, \ldots, f_{s}) \in \prod_{i=1}^{s}\{0, \ldots, r_{i}\}: \textrm{there exist distinct } i, j \text{ with } f_{i},f_{j}\geq 1\right\}.
\end{equation*}
In addition, for each $1 \leq \iota \leq s$ and $2\leq b \leq r_{\iota}$, we define
\begin{equation*}
    F_{\iota}^{b} \coloneqq \{ \bs{f}=(f_{1}, \ldots, f_{s}) \in F : f_{\iota} = b \text{ and } f_{j} = 0 \text{ or } 1 \text{ for all } j \neq \iota\} \subseteq F,
\end{equation*}
and further set
\begin{equation*}
    F_{\mathrm{sf}} \coloneqq \{ \bs{f}=(f_{1}, \ldots, f_{s}) \in \{0, 1\}^{s} :\text{there exist distinct } i,j  \text{ with } \ f_{i}=f_{j}=1\} \subseteq F.
\end{equation*}
Note that $F_{\iota}^b$ and $F_{\mathrm{sf}}$ have empty intersection. For each $\bs{f} \in \left(\bigcup_{\iota, b} F_{\iota}^{b}\right) \cup F_{\mathrm{sf}}$ we define $j_{+}$ and $j_{-}$ by
\begin{equation*}
j_{\pm} = \begin{cases}
j \pm 1 & \text{ if either } \bs{f} \in F_{\text{sf}} \text{ or } \bs{f} \in F_{\iota}^{b} \text{ and }j\pm 1 \neq \iota, \\
j \pm 2 & \text{ else,}
\end{cases}
\end{equation*}
where the $\pm$-signs are all chosen in the same way.
For an index $j \in \{1, \ldots, s\}$ implicitely related to such an $\bs{f}$ (see the definitions below), we define $j_{+}$ and $j_{-}$ accordingly. 
For such $\bs{f}$ we also define 
\begin{equation*}
    m(\bs{f}) \coloneqq \min_{i} \{i : f_{i} =1 \},
\end{equation*}
\begin{equation*}
    n(\bs{f}) \coloneqq \min_{i} \{i \geq m(\bs{f}) : f_{i} =1, \ f_{i_{+}} = 0 \},
\end{equation*}
\begin{equation*}
     n'(\bs{f}) \coloneqq \max_{i} \{i : f_{i} =1 \text{ and } f_{j} \neq 1 \text{ for all } j>i\},
\end{equation*}
\begin{equation*}
    m'(\bs{f}) \coloneqq \min_{i} \{i : f_{i} =1, \text{ and } f_{j} = 1 \text{ for all } i<j<n'(\bs{f})\}
\end{equation*}
and 
\begin{equation*}
    n''(\bs{f}) \coloneqq \min_{i} \{i > n(\bs{f}) : f_{i} =1, \ f_{i_{+}} = 0 \}.
\end{equation*}

When it is clear from the context we will drop $\bs{f}$ from the notation and only write $m$, $m'$, $n$, $n'$ and $n''$. 
Finally, for each $\bs{f} = (f_{1}, \ldots, f_{s}) \in F_{\text{sf}}$ we define $w\left(\bs{f}\right) \coloneqq \# \{ f_{i}=1\}$.

\item[(c)] Let $d = \prod_{i=1}^{n} p_{i}^{f_{i}}$ be a divisor of $N$ with the $p_i$'s ordered as described in part (a). We define
    \begin{equation*}
        \mathcal{G}(N, d) \coloneqq \begin{cases}
        \prod_{i=1}^{n} \mathcal{G}_{p_{i}}(r_{i}, f_{i}) & \text{ if } f_{i} \geq 2 \text{ for some } i, \\
         (p_{m}-1)\prod_{i=1, i \neq m}^{n} \mathcal{G}_{p_{i}}(r_{i}, f_{i})  & \text{ if } f_i \in \{0, 1\} \text{ for all } i,
        \end{cases} \end{equation*}
        and
        \begin{equation*}
            n(N, d) \coloneqq \num \left( \frac{\mathcal{G}(N, d)}{24} \right).
        \end{equation*}
    Finally we define 
    $$\mathcal{G}_{\mathcal{M}}(N, d) \coloneqq \begin{cases}
    \mathcal{G}(N, d) & \textup{if } f_{i}, f_{j} \geq 2, \\
    \mathcal{M}(\bs{f}) \cdot \mathcal{G}(N, d) & \textup{if } f_{\iota} \geq 2 \text{ and } f_{i} \in \{0, 1\} \textup{ for all } i \neq \iota, \\
     \mathcal{M}(\bs{f}) \cdot  \prod_{i=1}^{s}\mathcal{G}_{p_{i}}(r_{i}, f_{i})  & \textup{if } f_{i} \in \{0, 1\} \textup{ for all } i,
\end{cases}$$ where
        \begin{equation*}
    \mathcal{M}(\bs{f}) \coloneqq \begin{cases}
    (p_{m}-1) & \textup{if } f=(0, \ldots, 0, 1_{m}, \ldots, 1_{s-1}, 0), \\%n' = s-1 \textup{ and } n'=n, \\
    (p_{m}-1)(p_{m+1}-1)  & \textup{if } f=(0, \ldots, 0, 1_{m}, \ldots, 1_{s}), \\ %n' = s \textup{ and } n'=n, \\
    (p_{m'}-1) & \textup{if } f=(0, \ldots, 0, 1_{m}, 0, \ldots, 0, 1, \ldots, 1), \\%n' = s \text{ and } n' \neq n, m=n, n''=n', \\
    (p_{m}-1) & \text{if } f=(0, \ldots, 0, 1_{m}, 1, \ldots, 1, 0, 1, \ldots, 1), \\%n' = s \textup{ and } n' \neq n, m\neq n, n''=n, m'=n+2, \\
     1 & \textup{else.} \\
    \end{cases}
\end{equation*}

\item[(d)] We give the following definitions related to the set of divisors of $N$:
\begin{itemize}
  \item $D_{N}^{F}$ is the set of all positive divisors $d = \prod_{i=1}^{n} p_{i}^{f_{i}}$ of $N$ such that  $\bs{f}(d) \in F$. 
    \item $D_{N}^{\textup{nsf}, F} $ is the intersection $D_{N}^{\textup{nsf}} \cap D_{N}^{F}$. 
    \item $D_{N}^{\textup{sf}, F}$ is the the intersection $D_{N}^{\textup{sf}} \cap D_{N}^{F}$. 
\end{itemize}
    \end{enumerate}
\end{definition}

\begin{remark} Notice that each $\bs{f} = (f_{1}, \ldots, f_{s}) \in F$ corresponds uniquely to a divisor $d$ of $N$. That is, $\bs{f}$ satisfies $\bs{f}=\bs{f}(d)$ for some $d \in D_{N}$. Hence, throughout the text we will use the notations $\bs{f}$ and $\bs{f}(d)$ without distinction to denote the divisor $d = \prod_{i=1}^{s} p_{i}^{f_{i}}$. 
\end{remark}

\begin{notation} We introduce the following notation, which is used extensively in Sections~\ref{sec:kerdelt} and \ref{sec:ldecompker}. Given $N~=~\prod_{j=1}^{s}p_{j}^{r_{j}}$ and $\{i_{1}, \ldots, i_{k}\} \subset \{1, \ldots, s\}$ a finite subset of indices, we write $\prod_{j\neq i_{1}, \ldots, i_{k}}$ to denote $\prod_{j=1, \\ j\neq i_{1}, \ldots, i_{k}}^{s}$. Moreover, given $\bs{f} \in \left(\bigcup_{\iota, b} F_{\iota}^{b}\right) \cup F_{\mathrm{sf}}$, we write 
$\prod_{j \mid \bs{f}}$ to denote $\prod_{j =1, f_{j} \neq 0}^{s}$. Likewise, we write $\prod_{j \nmid \bs{f}}$ for $\prod_{j =1, f_{j} = 0}^{s}$. The same notation is used for sums and tensor products.

\end{notation}

\begin{notation}
Given a field $k$, we denote the group $(k^{\times})_{\tor}$ of the torsion units in $k$ by $\mu(k)$. Further, given a positive integer $m$, the group of $m$-th roots of unity in $\C$ is denoted by $\mu_{m}$. 
\end{notation}

\begin{notation} We use the following standard notation for the groups of divisors of a smooth projective curve $C$ over a field $k$:
\begin{itemize}
    \item $\text{Div}(C)$ denotes the group of divisors of the curve $C$;
    \item $\text{Div}^{0}(C)$ denotes the group of degree-zero divisors of the curve $C$;
    \item $k(C)$ denotes the function field of the curve $C$;
    \item We say that $D \in \text{Div}(C)$ is a principal divisor if $D = \text{div}(f)$ for some $f \in k(C)$;
    \item When working with $\text{Div}(C)$, the symbol $\sim$ denotes the linear equivalence relation on $\text{Div}(C)$. That is, we write $D_{1} \sim D_{2}$ if and only if $D_{1} - D_{2}$ is a principal divisor. 
    \end{itemize}
\end{notation}

\section{\texorpdfstring{$J_0(N)(\Q)_{\tor}$}{JONQtor} and the Cuspidal Subgroup} \label{sec:modular jacobian}

This section is divided into five parts. In the first part, we will present the subgroup $J_{0}(N)(\Q)_{\tor}$ and introduce the classical statement of Ogg's Conjecture. In the second part, we will explore the rational divisor class group $C(N)$ and in the third part we will explain how this is related to the rational torsion subgroup of $J_{0}(N)$ through reviewing main conjectures and results over these groups. In the fourth part, we discuss eta quotients and we explain the link of these complex functions with the divisors in $C(N)$, and finally, in the fifth and last part, we describe a set of divisors on $X_{0}(N)$ that generate $C(N)$ ``optimally''. 
The main goal of this section is to explain the relevant theoretical background that leads to the main results of this section Theorem~\ref{thm:yooz} and Proposition~\ref{prop:etaz-divN}. In Theorem~\ref{thm:yooz} we explain how to compute the group structure of the rational torsion of the modular Jacobian $J_{0}(N)$ through divisors; more specifically, we find divisors whose linear equivalence classes generate $J_{0}(N)(\Q)_{\tor}$ in an optimal way and we compute their order. As usual, by the order of a divisor $D$ on $X_{0}(N)$ we mean the order of the linear equivalence class $[D]$ of $D$ in $J_{0}(N)$; i.e., the smallest $n \in \Z$ such that there exists a modular function $f$ in $k(X_{0}(N))$ with 
\begin{equation} 
\text{div}(f) = n \cdot D;
\end{equation}
we denote the order of a divisor $D$ by $\order(D)$. 
Furthermore, in Proposition~\ref{prop:etaz-divN}  we find a function $f \in k(X_{0}(N))$ for each of the divisors $D$ constructed in Theorem \ref{thm:yooz}, such that $\text{div}(f) = n \cdot D$, where $n$ is the order of $D$.

 \subsection{The group $J_{0}(N)(\Q)_{\tor}$}

Let $N$ be a positive integer and consider the congruence subgroup $\Gamma_{0}(N)~\subseteq ~\text{SL}_{2}(\Z)$, which consists of the matrices in $\text{SL}_{2}(\Z)$ whose lower-left entry is congruent to zero modulo $N$. This group defines an action on the complex upper-half plane $\mathcal{H}$ given by 
\begin{equation} \label{eq:defact} 
    \gamma z = \frac{az+b}{cz+d}, \quad \text{ for all } z \in \mathcal{H}  \text{ and } \gamma = \begin{pmatrix} a & b \\ c & d \end{pmatrix} \in \Gamma_{0}(N).\end{equation}
    
    The modular curve $X_{0}(N)\backslash\C$ is the result of compactifying the quotient $\text{SL}_{2}(\Z) / \mathcal{H}$ by adding a finite number of points called \textbf{cusps}. These cusps are defined by the quotient 
\begin{equation*} 
    \text{Cusps}(X_{0}(N)) \coloneqq \text{SL}_{2}(\Z) \backslash \mathbb{P}^{1}(\Q), 
\end{equation*} 
where $\mathbb{P}^{1}(\Q)$ is the rational projective line and the action of $\text{SL}_{2}(\Z)$ is given as in \eqref{eq:defact}, extending the action on~$\mathcal{H}$. Hence, we denote a representative of a cusp of $X_{0}(N)$ by $[\frac{a}{b}]$, with $(a, b)=1$. Furthermore, we denote the cusp $\left[\frac{0}{1}\right]$ by $0$ and the cusp $\left[\frac{1}{N}\right]$ by $\infty$.

Rational torsion points on $X_{0}(N)$ are of big interest, in particular because they parametrise elliptic curves with rational torsion. However, $X_{0}(N)(\Q)$ is hard to compute straight from the modular curve. To remedy this, we can embed  $X_{0}(N)$ into its Jacobian variety $J_{0}(N)$ through the Torelli map. The latter is an abelian variety, so its group of rational points is a bit easier to work with as it satisfies Mordell's theorem.
Hence,  we are interested in understanding the torsion subgroup $J_{0}(N)(\Q)_{\tor}$ for any given $N$. The following result gives an explicit description of this group for the case of $N$ prime.

\begin{theorem} \label{thm:mazur} Let $N \geq 5$ be a prime number. Let $n= \num((N-1)/12)$ and let $\{(0), (\infty)\}$ be the two cusps of $X_{0}(N)$. The cyclic  group  of  order  $n$  generated  by  the  class  of the divisor $(0)-(\infty)$  is  the full  torsion  subgroup of $J_0(N)(\Q)$.
\end{theorem}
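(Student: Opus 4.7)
The proof naturally splits into two inclusions: showing that $\langle [(0)-(\infty)] \rangle \subseteq J_0(N)(\Q)_{\tor}$ is a cyclic subgroup of order exactly $n$, and showing that every $\Q$-rational torsion element of $J_0(N)$ already lies in it. The first part is classical and accessible via explicit modular units; the second constitutes the deep core of Mazur's theorem.

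For the lower bound, since $N$ is prime, $X_0(N)$ has exactly the two cusps $(0)$ and $(\infty)$, both defined over $\Q$, so $[(0)-(\infty)]$ is indeed a $\Q$-rational class in $J_0(N)$. To pin down its order, I would use the classical modular unit $f(z) = \Delta(z)/\Delta(Nz)$: this is a weight-zero $\Gamma_0(N)$-invariant function whose divisor is supported on the two cusps, and a standard $q$-expansion computation gives $\text{div}(f) = (N-1)((0)-(\infty))$, so the order divides $N-1$. To sharpen this to $\num\left((N-1)/12\right)$, I would exploit the identity $\Delta = (2\pi)^{12}\eta^{24}$ to construct eta quotients $\eta(z)^a \eta(Nz)^b$ with $a+b=0$ that are genuine modular units on $X_0(N)$; subject to the integrality constraints at the two cusps, the smallest admissible integer multiple of $(0)-(\infty)$ realized as the divisor of such a unit is precisely $n$, giving the lower bound on the order and showing it is achieved.

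For the upper bound the plan is to control $J_0(N)(\Q)_{\tor}[\ell^\infty]$ one prime $\ell$ at a time. For any auxiliary prime $p \nmid N\ell$, Néron model theory gives an injection $J_0(N)(\Q)_{\tor}[\ell^\infty] \hookrightarrow J_0(N)(\F_p)$, and Eichler-Shimura controls $\# J_0(N)(\F_p)$ via Frobenius eigenvalues on $H^1(X_0(N), \Z_\ell)$. However, raw point counts alone are not sharp enough; the real engine is the \emph{Eisenstein ideal} $I = (T_q - q - 1 : q \text{ prime}, q \neq N) + (w_N + 1) \subset \mathbb{T}$ of the Hecke algebra $\mathbb{T}$ acting on $J_0(N)$. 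The plan is then to establish three facts: (a) $\mathbb{T}/I \cong \Z/n\Z$, reflecting congruences mod $n$ between the weight-$2$ Eisenstein series on $\Gamma_0(N)$ and the cuspidal newforms; (b) the scheme-theoretic $I$-torsion $J_0(N)[I]$ equals the cuspidal subgroup $C_N$; and (c) every $\Q$-rational torsion element of $J_0(N)$ is annihilated by $I$. Combining (a)--(c) gives $J_0(N)(\Q)_{\tor} \subseteq J_0(N)[I](\Q) = C_N(\Q) = \langle [(0)-(\infty)]\rangle$, matching the lower bound.

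The hardest step is clearly (c), and within it the $\ell = N$-primary part, where the reduction-mod-$p$ argument breaks down. To handle this I would study the Néron model of $J_0(N)$ over $\Z_N$ and appeal to Raynaud's classification of finite flat group schemes of order $N$ over $\Z_N$: any $\mathrm{Gal}(\overline{\Q}/\Q)$-stable $\F_N$-subspace of $J_0(N)[N]$ supported on the generic fibre must extend to a finite flat subgroup of multiplicative ($\mu_N$) or étale ($\Z/N\Z$) type, and the toric-étale decomposition of $J_0(N)$ at its supersingular reduction together with the action of $w_N$ pins the Galois-fixed pieces to the cuspidal subgroup. Assembling these local analyses over all primes $\ell \mid n$, together with the auxiliary reduction arguments at primes $\ell \nmid 2N$, gives the desired containment and hence the equality claimed in the theorem.
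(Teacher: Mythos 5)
The paper does not prove this statement: it is quoted verbatim as Mazur's theorem (the original Ogg conjecture for prime level), with a citation to \cite{mazur1977modular}, so there is no in-paper argument to compare yours against. What you have written is a recognisable outline of Mazur's actual strategy, and the first half of it is genuinely complete in spirit: for $N$ prime the two cusps are rational, $\Div(\Delta(z)/\Delta(Nz)) = (N-1)\bigl((0)-(\infty)\bigr)$, and optimising over eta quotients $\eta(z)^{a}\eta(Nz)^{-a}$ subject to Ligozat's integrality and square conditions (Proposition~\ref{prop:modulareta} of this paper) does yield order exactly $\num\left(\frac{N-1}{12}\right)$; this is Ogg's computation and is the only part of the theorem one could reasonably reconstruct from scratch.

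The upper bound, however, is asserted rather than proved, and two of its ingredients are stated inaccurately. First, your step (b) is false as written: $J_0(N)[I]$ is not the cuspidal subgroup $C_N$ but (away from small primes) the direct sum of $C_N$ and the Shimura subgroup $\Sigma_N$, which is of $\mu$-type; the correct statement you need is $J_0(N)[I](\Q) = C_N(\Q)$, and establishing even that requires Mazur's multiplicity-one results for the Eisenstein ideal. Second, the identification of the ``hardest step'' is misplaced: since $n \mid N-1$, the prime $N$ itself never divides $n$, and while one does need the N\'eron model at $N$ and Raynaud's classification to exclude $N$-torsion, the genuinely delicate primes in Mazur's argument are $\ell = 2$ and $\ell = 3$ and the completions of $\mathbb{T}$ at the Eisenstein primes dividing $n$ — none of which your sketch addresses (the reduction-mod-$p$ injection you invoke fails precisely for $\ell = 2$, and the eta-quotient square condition is what forces the factor $12$ rather than $24$). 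Each of (a), (b), (c) is a substantial theorem occupying a large portion of Mazur's paper, so the proposal should be read as a correct roadmap to the literature rather than a proof.
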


This theorem was first conjectured by Ogg in 1995, \cite[Conjecture~2]{ogg1975diophantine}, and proved by Mazur two years later, \cite[Theorem~1]{mazur1977modular}. For an arbitrary $N$, it is still not known how to compute the full group $J_{0}(N)(\Q)_{\tor}$, yet a number of recent articles have worked on generalising Ogg's Conjecture. In order to write down the generalised statement as well as some advances on the topic, it is convenient to first introduce some definitions.

\subsection{Cuspidal Subgroups} \label{ssec:cuspsubgroup}

If $\varphi :J_{0}(N) \xrightarrow{\sim} \text{Pic}^{0}(X_{0}(N))$ is any isomorphism, we let $C_{N}$ be the \textbf{cuspidal subgroup of $J_{0}(N)$}, that is, the subgroup of $J_{0}(N)(\overline{\Q})$ generated by the cusps of $X_{0}(N)$. In terms of divisors, by a slight abuse of notation, this can be written as 
\begin{equation*}
    C_{N} \coloneqq \langle \{\varphi^{-1}([D]): \ [D]\in \text{Pic}^{0}(X_{0}(N))(\overline{\Q}),\  D \ \text{a cuspidal divisor}\} \rangle,
\end{equation*}
where \textbf{cuspidal divisors} are those divisors of $X_{0}(N)$ that are only supported at the cusps of $X_{0}(N)$. Let $C_{N}(\Q)$ denotes the subgroup of rational points of $J_{0}(N)$ in  $C_{N}$, i.e.,
 \begin{equation*}
     C_{N}(\Q) = \{ P \in C_{N}: \ P \text{ is } \Q\text{-rational}\}= \{  \varphi^{-1}([D]) \in C_{N}: \ \sigma(D)\sim D \text{ for all } \sigma \in \text{Gal}(\overline{\Q}/ \Q)\}.
 \end{equation*}
The subgroup $C_{N}(\Q)$ is called the \textbf{rational cuspidal subgroup of  $J_{0}(N)$}. By theorems from Manin \cite[Corollary~3.6]{manin1972parabolic} and Drinfeld \cite{drinfeld1973two} we have that  $C_{N}(\Q) \subseteq J_{0}(N)(\Q)_{\tor}$. Moreover, if $A_{N} \coloneqq \textup{Div}^{0}_{\textup{cusp}}(X_{0}(N)) = \{ D \in \text{Div}^{0}(X_{0}(N)): D \ \text{cuspidal} \}$ and $U_{N} \coloneqq \{ f \in \mathbb{C}(X_{0}(N)): \ \text{div}(f) \in A_{N}, f \neq 0\}/\mathbb{C}^{\times}$, we have the short exact sequence
\begin{equation*}
    0 \rightarrow U_{N} \rightarrow A_{N} \xrightarrow{F} C_{N} \rightarrow 0,
\end{equation*}
where $F: D  \mapsto [D]$. The group $U_N$ is called the group of \textbf{modular units}. Taking the group $A_{N}(\Q) = \{ D \in A_{N}: \ \sigma(D)=D \ \forall \sigma \in \text{Gal}(\overline{\Q}/ \Q)\}$ of \textbf{rational cuspidal divisors}, and $F|_{A_{N}(\Q)}: A_{N}(\Q) \rightarrow C_{N}(\Q)$ the restricted map from the exact sequence above, we write
\begin{equation} \label{eq:ratdivclgr}
    C(N) \coloneqq \{F(D): \ D \in A_{N}(\Q)\}
\end{equation}
for the \textbf{rational divisor class group} of $X_0(N)$. In \cite{yoo2019rationalcusp}, Yoo completely determines the structure of the group $C(N)$ for any $N$. In particular, he shows the following.

\begin{theorem}[{\cite[Theorem~1.5]{yoo2019rationalcusp}}] \label{thm:yooz}
Let $N$ be a positive integer. For each $d \in D_{N}$, there exists a divisor $Z(d) \in A_{N}(\Q)$ such that
   \begin{equation} \label{eq:yoodiv}
        C(N) \simeq \langle [Z(d)] : d \in D^{\textup{sf}}_{N} \rangle \oplus \bigoplus_{d \in D_N^{\textup{nsf}}} \left\langle [Z(d)] \right\rangle \simeq \langle [Z(d)] : d \in D^{\textup{sf}}_{N} \rangle \oplus \bigoplus_{d \in D_N^{\textup{nsf}}} \Z/n(N, d)\Z,   
    \end{equation}
    where $n(N, d)$ is the order of $Z(d)$. 
\end{theorem}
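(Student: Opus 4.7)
The plan is to identify a natural generating set for $A_N(\Q)$ coming from the Galois orbits of cusps, construct for each $d \in D_N$ an explicit rational divisor $Z(d)$ supported on these orbits, and compute the order of $[Z(d)]$ in $C_N$ using eta-quotient modular units. The non-squarefree orders will come out to $n(N,d) = \num(\mathcal{G}(N,d)/24)$, and the squarefree part will be bundled into a single complementary summand.

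First, I would invoke the standard parametrisation of $\textup{Cusps}(X_0(N))$ by divisors $d \mid N$: the cusps break into a disjoint union indexed by divisors of $N$, with $\textup{Gal}(\overline{\Q}/\Q)$ acting on each stratum through a cyclotomic character with a single transitive orbit (cf.\ \cite{yoo2019rationalcusp}). The $\Q$-rational cuspidal divisors then form a free abelian group on the orbit sums $P_d$, and $A_N(\Q)$ is its degree-zero subgroup. For each $d \in D_N$, I would define $Z(d) \in A_N(\Q)$ as a specific integer combination of the $P_\delta$ supported on the divisors of $d$, isolating the ``level-$d$'' behaviour; the combinatorial bookkeeping in Definition~\ref{def:ordering}\textup{(c)} is arranged exactly so that this combination has degree zero and admits a clean eta-quotient presentation.

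Second, to compute orders I would turn to eta quotients. By Ligozat's theorem, $\prod_{\delta \mid N} \eta(\delta \tau)^{a_\delta}$ is a modular function on $X_0(N)$ precisely when the $a_\delta$ satisfy explicit integrality and congruence conditions, in which case its order of vanishing at each cusp is an explicit linear functional of the $a_\delta$. I would construct, for each $d \in D_N$, an eta quotient $\eta_d$ with $\Div(\eta_d) = n(N,d) \cdot Z(d)$; this simultaneously shows $Z(d) \in A_N(\Q)$ and gives the upper bound $\order([Z(d)]) \mid n(N,d)$. The factor $1/24$ entering $n(N,d)$ reflects the weight-$1/2$ of $\eta$. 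The matching lower bound, that no smaller multiple of $Z(d)$ is principal, follows from the Manin--Drinfeld-type statement that the group of modular units on $X_0(N)$ is, up to a controlled factor, generated by eta quotients, so any candidate principal relation reduces to an eta-quotient computation of the type just performed.

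Third, to upgrade generation into the direct-sum decomposition I would produce, for each $d \in D_N^{\textup{nsf}}$, a homomorphism $C(N) \twoheadrightarrow \Z/n(N,d)\Z$ that sends $[Z(d)]$ to a generator and annihilates the other $[Z(d')]$; such maps arise from cusp-valuations attached to the new $p$-power stratum that distinguishes $d$ under the ordering fixed in Definition~\ref{def:ordering}\textup{(a)}. These maps split off the non-squarefree cyclic summands, while every $[Z(d)]$ with $d \in D_N^{\textup{sf}}$ lies in their common kernel and generates the remaining summand. A direct change-of-basis argument, re-expressing differences of the orbit sums $P_d$ in terms of the $Z(d)$, then shows the classes $\{[Z(d)]\}_{d \in D_N}$ span all of $C(N)$. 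The main obstacle is Step~2: exhibiting, for every non-squarefree $d$, an eta quotient whose exponents satisfy the Ligozat integrality and mod-$24$ conditions \emph{simultaneously} and whose divisor equals exactly $n(N,d) \cdot Z(d)$. This forces a careful induction on the prime factorisation of $d$, normalised by the constants $B(p)$ and $A(p)$ of Definition~\ref{def:B}, and it is precisely the obstruction to replicating the argument in the squarefree case that forces the asymmetric form of the decomposition.
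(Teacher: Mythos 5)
This theorem is quoted from Yoo's paper and is not proved in the present article, but your outline reproduces the strategy of the cited proof and of the machinery the paper recapitulates in Sections~\ref{ssec:theodiv}--\ref{ssec:Z(d)}: explicit divisors $Z(d)$ built from tensor products of the vectors $\mathbf{A}_{p}(r,f)$ and $\mathbf{B}_{p}(r,f)$, orders computed through Ligozat's criteria together with the invertibility of $\Lambda(N)$ (which also supplies your lower bound, with the index of the eta quotients inside the modular units accounted for by the factor $\mathbf{h}(D)\in\{1,2\}$), and the direct-sum decomposition from a linear-independence criterion. The one imprecision worth flagging is in your third step: the functionals that split off the cyclic summands are coordinate projections in the eta-exponent coordinates $\mathbb{V}(D)=\GCD(D)^{-1}\cdot\frac{k(N)}{24}\Lambda(N)^{-1}\Phi_{N}(D)$, made triangular with respect to a suitable ordering of the divisors of $N$ (Yoo's Theorem~1.9, used later in Lemmas~\ref{lem:E(f)cicFsf} and \ref{lem:E(f)cicFib}), not ``cusp-valuations'' of the divisors themselves --- the raw coefficients of the $P_{d'}$ do not descend to well-defined homomorphisms on $C(N)$, since principal cuspidal divisors have nonzero coefficients there.
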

    
In his paper, Yoo defines explicit divisors $Z(d)$ for each $d \in D_{N}$ such that Theorem~\ref{thm:yooz} holds, and computes the orders $n(d, N)$. The description of $C(N)$ in Equation~\eqref{eq:yoodiv} becomes fully explicit when we look at $C(N)[l^{\infty}]$ for a given prime number $l$. 

\begin{theorem}[{\cite[Theorem~1.6]{yoo2019rationalcusp}}] \label{thm:yoozl} Let $N$ be a positive integer. For each $d \in D_{N}$, let $Z(d) \in A_{N}(\Q)$ be as in Theorem~\ref{thm:yooz}. Given any prime number $l$, there exists a divisor $Y(d)$ for each $d \in D_{N}^{\textup{sf}}$ such that locally
\begin{equation*}
    \langle [Z(d)] : d \in D^{\textup{sf}}_{N} \rangle[l^{\infty}] \simeq \bigoplus_{d \in D_{N}^{\textup{sf}}} \langle [Y_{l}(d)] \rangle. 
\end{equation*}
Hence, 
    \begin{equation} \label{eq:yoodivl}
        C(N)[l^{\infty}] \simeq \bigoplus_{d \in D_{N}^{\textup{sf}}} \langle [Y_{l}(d)] \rangle \oplus \bigoplus_{d \in D_N^{\textup{nsf}}} \left\langle [Z_{l}(d)] \right\rangle \simeq \bigoplus_{d \in D_{N}} \Z/n_{l}(N, d)\Z, 
    \end{equation}
    where $n_{l}(N, d)$ is the $l$-primary part of $n(N, d)$ for $d \in D_{N}^{\textup{nsf}}$, and the $l$-part of the order of $Y_{l}(d)$ if $d \in D_{N}^{\textup{sf}}$, and where $Z_{l}(d) = (n(N, d)/n_{l}(N, d)) \cdot Z(d)$ and $Y_{l}(d) = (\order(Y(d))/n_{l}(N, d)) \cdot Y(d)$.
 
\end{theorem}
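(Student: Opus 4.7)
My plan is to derive the $l$-primary refinement by applying the functor $G \mapsto G[l^{\infty}]$ to the decomposition of Theorem~\ref{thm:yooz} and then diagonalising the squarefree summand via the structure theorem for finite abelian $l$-groups.

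Since $G \mapsto G[l^{\infty}]$ commutes with finite direct sums on finitely generated abelian groups, Theorem~\ref{thm:yooz} immediately yields
\begin{equation*}
C(N)[l^{\infty}] \simeq \langle [Z(d)] : d \in D_{N}^{\textup{sf}}\rangle[l^{\infty}] \oplus \bigoplus_{d \in D_{N}^{\textup{nsf}}} \langle [Z(d)]\rangle[l^{\infty}].
\end{equation*}
For each $d \in D_{N}^{\textup{nsf}}$ the cyclic group $\langle [Z(d)]\rangle \simeq \Z/n(N,d)\Z$ has $l$-primary part cyclic of order $n_{l}(N,d) = l^{\ord_{l}(n(N,d))}$, generated by $(n(N,d)/n_{l}(N,d))\cdot[Z(d)] = [Z_{l}(d)]$, which handles the non-squarefree contribution on the nose.

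For the squarefree summand $H \coloneqq \langle [Z(d)] : d \in D_{N}^{\textup{sf}}\rangle$, observe that by Manin--Drinfeld $H \subseteq J_{0}(N)(\Q)_{\tor}$ is finite, so $H[l^{\infty}]$ is a finite abelian $l$-group spanned by the $l$-primary components of the $[Z(d)]$ with $d \in D_{N}^{\textup{sf}}$. Applying Smith normal form to a relation matrix for these generators over $\Z_{(l)}$, one obtains $|D_{N}^{\textup{sf}}|$ new generators $[Y(d)]$, each expressible as an explicit $\Z$-linear combination of the $[Z(d')]$, such that the rescaled classes $[Y_{l}(d)] = (\order(Y(d))/n_{l}(N,d))\cdot[Y(d)]$ satisfy
\begin{equation*}
H[l^{\infty}] \simeq \bigoplus_{d \in D_{N}^{\textup{sf}}} \langle [Y_{l}(d)]\rangle.
\end{equation*}
Some of these cyclic summands may be trivial (whenever $n_{l}(N,d) = 1$), so the indexing by $D_{N}^{\textup{sf}}$ only provides an upper bound on the number of nontrivial invariant factors. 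Combining this decomposition with the non-squarefree contribution gives the stated isomorphism.

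The main obstacle is making the divisors $Y(d)$ truly explicit: the structure theorem guarantees their existence, but carrying out the applications of this result — notably the computation of the $n_{l}(N,d)$ that feeds into Theorem~\ref{thm:mainthmintro} — requires a concrete description of the integer relations among the $[Z(d)]$ for squarefree $d$. These relations are detected via modular units on $X_{0}(N)$, and Yoo's construction extracts them from principal cuspidal divisors afforded by eta quotients (the computational tool reviewed in Section~\ref{ssec:theodiv}); the explicit $Y(d)$ then emerge from the $l$-adic Smith form of the matrix of eta-quotient orders at the cusps.
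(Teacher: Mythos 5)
Your argument is sound as a proof of the statement as literally quoted, but note that the paper itself does not prove this theorem: it is imported from Yoo \cite{yoo2019rationalcusp}, Theorem~1.6, where the proof is constructive, so the real comparison is between your soft argument and Yoo's explicit one. Your route -- apply $(-)[l^{\infty}]$ to the decomposition of Theorem~\ref{thm:yooz}, handle each non-squarefree cyclic factor by rescaling $Z(d)$ to $Z_{l}(d)$, and invoke the structure theorem (Smith normal form over $\Z_{(l)}$, padding with trivial summands so that the factors are indexed by $D_{N}^{\textup{sf}}$) -- does establish the existence claim, since every class of $l$-power order in $\langle [Z(d)] : d \in D_{N}^{\textup{sf}} \rangle$ is represented by an integral combination of the $Z(d)$, and the number of cyclic factors of the $l$-primary part is bounded by the number of generators. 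Two small points to tidy: a Smith form over $\Z_{(l)}$ produces generators with coefficients in $\Z_{(l)}$, so you should clear the prime-to-$l$ denominators (this changes the generators only by automorphisms of the $l$-group); and the $Y(d)$ you produce depend on $l$, which the statement permits since the ordering in Definition~\ref{def:ordering} is itself fixed relative to $l$. What your argument does not -- and cannot -- recover is the substantive content of the cited result: in Yoo's theorem the $Y(d)$ are explicitly constructed from the $Z(d)$ and their orders $n_{l}(N,d)$ are computed via the eta-quotient and $\Lambda(N)$ machinery reviewed in Section~\ref{ssec:theodiv} (Theorem~\ref{thm:order}), and that explicitness is precisely what makes the decomposition usable, e.g.\ for the computable $l$-adic valuations feeding into Theorem~\ref{thm:mainthmintro}. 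You flag this limitation correctly yourself, so your write-up should be read as an honest existence proof together with a pointer to where the computational substance lives, rather than a reconstruction of the proof behind the citation.
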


The subgroup $C(N)$ will play a fundamental role in our results. However, while we can completely decompose the $l$-primary subgroup of $C(N)$ into cyclic subgroups for all primes $l$, it remains an open problem to find a global decomposition of $\langle [Z(d)] : d \in D^{\text{sf}}_{N} \rangle$, and hence of $C(N)$. 
Nonetheless, the decomposition in given Theorem~\ref{thm:yooz} will be good enough for our purposes. 

Next, we further develop the notion of cusps of $X_{0}(N)$ and establish some notation to describe $C(N)$ according to the decomposition given in Theorem~\ref{thm:yooz}.

By \cite[Theorem~2.2]{yoo2019rationalcusp}, we can identify the set of cusps of $X_{0}(N)$ with 
\begin{equation} \label{eq:cusps} \left\{\left[\frac{c}{d}\right]: d \in \N, d|N, (c, d)=1 \text{ and } c \in \Z/(d, N/d)\Z\right\},\end{equation} 
cf.~\cite[Corollary~2.3]{yoo2019rationalcusp}.
Hence, for each cusp $\omega \in \textup{cusps}(X_{0}(N))$ we can find a representative $\left[\frac{c}{d}\right]$ such that $d$ is a divisor of $N$. More precisely, given $\omega=[\frac{a}{b}]$ we can set $d=(b, N) > 0$ and find $c \in \Z$ such that $(c, d)=1$ and  $[\frac{a}{b}]=[\frac{c}{d}]$ . We define $d$ as the \textbf{level of the cusp} $\omega$. 
It follows from Equation~\eqref{eq:cusps} that we have $\phi((d, N/d))$ cusps of level $d$. 

The modular curve $X_{0}(N)/\C$ has a canonical nonsingular projective model defined over $\Q$. In this model, a cusp $\omega$ of level $d$ is defined over $\Q(\zeta_{M_d})$, where $M_{d}=(d, N/d)$, $\zeta_{M_{d}}$ is a primitive root of unity of order $M_{d}$, and the action of $\text{Gal}(\Q(\zeta_{M_d})/\Q)$ on the set of cusps of level $d$ is transitive, cf.~\cite[Theorem~2.17]{yoo2019rationalcusp}. Hence, if $P(N)_{d}$ is the cuspidal divisor given by the sum of all cusps of level $d$ in $X_{0}(N)$, we have that $P(N)_{d}$ is a rational divisor. 
Recall from Equation~\eqref{eq:ratdivclgr} that the group $C(N)$ is the image of the group $A_{N}(\Q)$ in the Jacobian $J_{0}(N)$. By \cite[Lemma~2.19]{yoo2019rationalcusp} , the group $A_{N}(\Q)$ is generated by the classes of the divisors 
 \begin{equation*}
     C(N)_{d} \coloneqq \varphi(M_{d}) \cdot P(N)_{1} - P(N)_{d}.
 \end{equation*}
We will drop the level $N$ from the notation of these divisors and just write $C_{d}$ and $P_{d}$ unless more clarity is needed. Let $S(N)_{\Q}$ be the $\Q$-vector space of dimension $\sigma_{0}(N)$, whose vectors have coordinates indexed by the divisors of $N$ in a fixed ordering, e.g. the lexicographic order. We define the standard basis elements $e(N)_{d} \coloneqq (0, \ldots, 1_{\tc{Orchid}{d}}, \ldots, 0)$, for $d$ a divisor of $N$, with $1$ at the $d$-th position. 

Consider the $\Z$-lattice
\begin{equation*}
    S(N) \coloneqq \left\{ \sum_{d\mid N} a_{d} e(N)_{d} : \ a_{d} \in \Z\right\}
\end{equation*}
 inside $S(N)_{\Q}$ and its sublattice 
\begin{equation*}
    S(N)^{0} \coloneqq \left\{ \sum_{d\mid N} a_{d} e(N)_{d} :\  a_{d} \in \Z, \sum_{d\mid N} a_{d} \phi(M_{d}) = 0 \right\}. 
\end{equation*}
If we consider the map that sends the divisor $P_{d}$ to the vector $e(N)_{d}$, we get an isomorphism 
\begin{equation} \label{eq:phi}
    \Phi_{N}: \text{Div}_{\text{cusp}}(X_{0}(N))(\Q) \xrightarrow{\simeq} S(N). 
\end{equation}
Notice also that, with this description, $A_{N}(\Q)\simeq S(N)^{0}$. Further, for $S(N)_{\Q} \coloneqq S(N) \otimes_{\Z} \Q$ we obtain
\begin{equation*}
    \text{Div}_{\text{cusp}}(X_{0}(N))(\Q) \otimes \Q \simeq S(N)_{\Q} =  \left\{ \sum_{d\mid N} a_{d} e(N)_{d} : \ a_{d} \in \Q \right\}. 
\end{equation*}

\begin{remark} \label{rem:tensor} Let $N= \prod_{i=1}^{s}p_{i}^{r_{i}}$ be the prime decomposition of the level $N$. Then we can identify $S(N)_{\Q}$ with $\bigotimes_{i=1}^{s} S(p_{i}^{r_{i}})_{\Q}$. If $d|N$ has prime decomposition $d=\prod_{i}p_{i}^{f_{i}}$, the divisor $P(N)_{d}$ is identified with $ \otimes_{i}^{s} P(p_{i}^{r_{i}})_{p_{i}^{f_{i}}}$ by writing $e(N)_{d} = \otimes_{i=1}^{s} e(p_{i}^{r_{i}})_{r_{i}^{f_{i}}}$,  cf.~\cite[Lemma~2.6,~Remark~2.20]{yoo2019rationalcusp}. 
\end{remark}

The spaces $S(N)$ will prove to be very useful for encoding the description of cuspidal divisors on $X_{0}(N)$ and doing further computations -- see Section \ref{ssec:theodiv}.

\subsection{Conjectures}

From the definitions in Subsection~\ref{ssec:cuspsubgroup}, we have the three inclusion of groups
\begin{equation} \label{eq:incl}
    C(N) \subseteq C_{N}(\Q) \subseteq J_0(N)(\Q)_{\tor}. 
\end{equation}

Notice that if $N$ is a prime number then $X_{0}(N)$ only has two cusps -- $0$ and $\infty$ --, which are rational points in $X_{0}(N)$. Hence, $C_{N}(\Q) = \langle [0-\infty]\rangle$ and Theorem ~\ref{thm:mazur} states $C_{N}(\Q) = J_0(N)(\Q)_{\tor}$. The natural generalisation of Theorem~\ref{thm:mazur} is as follows.

\begin{conjx}[Generalised Ogg's conjecture\footnote{This is also referred to as Stein's Conjecture, cf.~\cite{steintable}}] \label{conj:A} Let $N$ be a positive integer. Then
\begin{equation*}
    C_{N}(\Q) = J_0(N)(\Q)_{\tor}.
\end{equation*}
\end{conjx}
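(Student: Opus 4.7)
The Generalised Ogg's Conjecture remains open in full generality, so what follows is a strategic plan for the chain of inclusions in \eqref{eq:incl} rather than a complete proof; I will indicate at each stage what has been established and where the genuine obstruction lies. My plan is to reduce to a local-at-$l$ analysis: it suffices to prove, for every prime $l$, that $J_{0}(N)(\Q)_{\tor}[l^\infty]=C_{N}(\Q)[l^\infty]$. Since the containment $C_{N}(\Q)\subseteq J_{0}(N)(\Q)_{\tor}$ is given by Manin--Drinfeld, the task is the reverse containment for each $l$-primary part.

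First, for primes $l\nmid 6N$ I would use the Eisenstein-ideal approach pioneered by Mazur. The Hecke algebra $\mathbb{T}$ acts on $J_{0}(N)$ and the cuspidal subgroup is annihilated by the Eisenstein ideal $I=\langle T_p-(p+1):p\nmid N\rangle$. Using Eichler--Shimura, Frobenius at a good prime $p$ acts on $J_{0}(N)(\overline{\mathbb{F}}_p)[l]$ via $T_p$, while acting trivially on $\Q$-rational torsion after reduction (which is injective away from $N$ and $l$). Comparing these two actions forces $J_{0}(N)(\Q)_{\tor}[l^\infty]$ to be annihilated by a suitable Eisenstein ideal, and one then matches this against the structure of $C(N)[l^\infty]$ via Theorem~\ref{thm:yoozl}, possibly after exploiting $C_{N}(\Q)=C(N)$ results coming from \cite{ren2019maximal,wang2020modular}.

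Second, for primes $l\mid N$ one loses good reduction at $l$, so the clean Eichler--Shimura argument fails. Here I would pass to the Néron model $\mathcal{J}/\Z_l$ and study the specialisation map into the component group $\Phi_l$ of the special fibre, whose order, structure, and Galois action are computed in \cite{jordan2022generalized} (and in earlier work for prime power level). The torsion rational points inject into $\mathcal{J}(\F_l)$ up to the prime-to-$l$ part, and one then needs to control both the connected part (essentially an extension of an abelian variety with known character group, dictated by the dual graph of the special fibre) and the rational part of $\Phi_l$. This is carried out by matching generators cusp-by-cusp with the divisors $Z(d)$ constructed in Theorem~\ref{thm:yooz}.

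The main obstacle, and the reason the conjecture is still open, lies in the interaction between the $2$-primary part and primes $l\mid N$. At $l=2$, the sign issues from the Atkin--Lehner involutions $w_d$ and the failure of the Eisenstein ideal to be locally principal obstruct a clean annihilation argument; at $l\mid N$, the precise structure of $\Phi_l$ for arbitrary composite $N$ is delicate and no uniform description is available beyond the squarefree and prime-power cases. Consequently, a realistic contribution would be conditional: assuming the described conjectures $C_{N}(\Q)=C(N)$ and Generalised Ogg, one combines the explicit decomposition of $C(N)[l^\infty]$ from Theorem~\ref{thm:yoozl} with Theorem~\ref{thm:mainthmintro} to obtain a complete description of $J_{0}(N)_{\mathbf{m}}(\Q)_{\tor}$ for odd $N$, as the paper itself indicates. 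Unconditionally proving the conjecture would require new input beyond the techniques currently available.
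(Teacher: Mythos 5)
This statement is Conjecture~\ref{conj:A}, the Generalised Ogg's Conjecture, and the paper contains no proof of it: it is stated as an open problem, with only the partial results listed in Section~3.3 (Ling, Lorenzini, Ohta, Ren, Yoo) and the case actually used in the paper, namely Theorem~\ref{thm:yoogg} for odd primes $l$ with $l^{2}\nmid 3N$. You correctly recognise this and do not claim a proof, so there is nothing to fault on that score; your $l$-primary reduction, the Eisenstein-ideal/Eichler--Shimura strategy away from the level, and the N\'eron-model analysis at primes dividing $N$ are an accurate description of the routes taken in the cited literature, and your identification of the obstructions at $l=2$ and at $l\mid N$ matches the paper's own framing. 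One minor imprecision: the unconditional range achieved by Ren's argument is $l\nmid 6N\prod_{p\mid N}(p^{2}-1)$ rather than merely $l\nmid 6N$, and it is Yoo's Theorem~\ref{thm:yoogg} (building on Ohta) that extends this to all odd $l$ with $l^{2}\nmid 3N$; your sketch should be read as a survey of those results rather than as a new argument.
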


While the generalised Ogg's conjecture is about the second inclusion in Equation~\eqref{eq:incl}, it is also an open and interesting question whether the first inclusion is also an equality.   Intuitively, it would not necessarily be the case, as a divisor~$D$ might not be $\Q$-rational itself but satisfy $\sigma(D) \sim D$ for all $\sigma \in \text{Gal}(\overline{\Q}/\Q)$, and hence represent a rational point. However, in all the cases computed so far, it is the case that both groups are the same, which points to the following conjecture. 

\begin{conjx}[Rational cuspidal groups conjecture] \label{conj:B} Let $N$ be a positive integer. Then
\begin{equation*} 
    C_{N}(\Q) = C(N).
\end{equation*}
\end{conjx}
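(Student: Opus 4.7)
The plan is to attack Conjecture~\ref{conj:B} through Galois cohomology, since the reverse inclusion $C(N) \subseteq C_{N}(\Q)$ is immediate from the definitions. The key observation is that the short exact sequence $0 \to U_{N} \to A_{N} \xrightarrow{F} C_{N} \to 0$ is an exact sequence of $G_{\Q}$-modules, where $G_{\Q} = \textup{Gal}(\overline{\Q}/\Q)$. Taking $G_\Q$-invariants yields the long exact sequence
$$A_{N}(\Q) \xrightarrow{F|_{A_{N}(\Q)}} C_{N}(\Q) \xrightarrow{\delta} H^{1}(G_{\Q}, U_{N}),$$
and since by definition $C(N) = F(A_{N}(\Q))$, the conjecture reduces to showing $\delta \equiv 0$. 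Concretely, for every degree-zero cuspidal divisor $D$ with $\sigma D \sim D$ for all $\sigma \in G_{\Q}$, one must exhibit a modular unit $g$ such that $D - \textup{div}(g)$ is $G_{\Q}$-fixed as a divisor, not merely as a class.

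Next I would try to control $H^{1}(G_{\Q}, U_{N})$ using the classical description of modular units. Modulo constants, $U_{N}$ is a finitely generated free abelian group spanned by eta quotients (Section~\ref{ssec:theodiv}), and the $G_{\Q}$-action factors through $\textup{Gal}(\Q(\zeta_{N})/\Q) \simeq (\Z/N\Z)^{\times}$, permuting cusps of a fixed level $d$ transitively with stabilizer corresponding to $\textup{Gal}(\Q(\zeta_{N})/\Q(\zeta_{M_{d}}))$. Thus $U_{N}/\C^{\times}$ should split as a sum of permutation modules induced from these stabilizers, and Shapiro's lemma would reduce computing $H^{1}$ to cohomology of the smaller groups $\textup{Gal}(\Q(\zeta_{M_{d}})/\Q)$ acting on ``local'' divisors supported at a single cusp; the constant contribution $\C^{\times}$ is killed by Hilbert~90.

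The concrete strategy would then be $l$-primary: for each prime $l$ and each cyclic generator of $C_{N}(\Q)[l^{\infty}]$ (analogous in spirit to the $Y_{l}(d)$ and $Z_{l}(d)$ of Theorem~\ref{thm:yoozl}, but now generating the possibly larger group $C_N(\Q)$), one writes down the cocycle $\sigma \mapsto u_{\sigma}$ explicitly via eta-quotient transformation formulas and then aims to exhibit a single unit $v$ with $u_{\sigma} = \sigma(v)/v$, reducing the problem to linear algebra over $\Z/l^{k}\Z$ on the lattice of divisor exponents. This mirrors the philosophy by which Yoo proves $C(N) \simeq \bigoplus \Z/n(N,d)\Z$.

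The hard part is that, while the proposed cohomological reduction is straightforward, constructing the coboundary $v$ is subtle: the Galois action on eta quotients couples different cusps via their transcendental $q$-expansions, and there is no known uniform recipe for descending a $G_\Q$-fixed class to a rational representative across all $d \mid N$. This is exactly why the conjecture remains open and is only known in partial cases, cf.~\cite{ren2019maximal, takagi1995cuspidal, wang2020modular}. A realistic intermediate goal, matching the scope of Theorem~\ref{thm:mainthmintro}, would be to prove vanishing of $\delta$ on the $l$-primary part for odd $N$ and primes $l$ with $l^{2} \nmid 3N$, where the orders $n_l(N,d)$ are controlled and the Galois-module structure of $U_N \otimes \Z_l$ simplifies enough to make the coboundary construction explicit.
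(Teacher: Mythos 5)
What you are addressing is Conjecture~\ref{conj:B}, and the paper contains no proof of it: it is stated as an open conjecture, with only partial cases known (squarefree $N$ by Ohta \cite{ohta2013eisenstein}, $N=n^2M$ with $n\mid 24$, $M$ squarefree by Wang--Yang \cite{wang2020modular}, and the results of \cite{takagi1995cuspidal}, \cite{ren2019maximal}). Your proposal is likewise not a proof. The inclusion $C(N)\subseteq C_N(\Q)$ is immediate from the definitions, and your cohomological reformulation — that the cokernel of $A_N(\Q)\to C_N(\Q)$ injects (essentially) into $H^1(G_\Q, U_N)$, so the conjecture is equivalent to the vanishing of the connecting map $\delta$ on $C_N(\Q)$ — is standard and sound in outline. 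But the entire mathematical content of the conjecture is precisely the construction of the coboundary, i.e.\ producing, for each Galois-invariant cuspidal divisor class, an actual $G_\Q$-fixed cuspidal divisor in that class; your final paragraph concedes this step is not carried out and has no known uniform mechanism. So the proposal establishes nothing beyond the trivial inclusion and a restatement of the problem.

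Two further technical points would need repair even as a programme. First, $U_N$ modulo constants is \emph{not} a sum of permutation modules induced from the cusp stabilizers: by Proposition~\ref{prop:modulareta} it is the finite-index sublattice of exponent vectors cut out by Ligozat's congruence and square conditions, so Shapiro's lemma does not apply directly, and it is exactly the cohomology of this proper sublattice (nontrivial in general) that measures the possible failure of $C_N(\Q)=C(N)$; asserting a permutation-module splitting begs the question. Second, dividing out the constants introduces a potential $H^2(G_\Q,\overline{\Q}^\times)$ obstruction when you try to lift cocycles from $U_N$ back to genuine functions, which Hilbert~90 alone does not dispose of. Since the paper itself only ever uses $C(N)$ (via Theorems~\ref{thm:yooz}--\ref{thm:yoozl} and Theorem~\ref{thm:yoogg}) and never claims Conjecture~\ref{conj:B}, there is no argument in the paper to compare yours against; as it stands, your text should be presented as a possible strategy, not as a proof.
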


If both Conjecture~\ref{conj:A} and Conjecture~\ref{conj:B} would hold, we could use Theorems~\ref{thm:yooz} and \ref{thm:yoozl} to obtain a full description of the rational torsion of the Jacobian $J_{0}(N)$. 
The following list outlines some of the progress made towards proving Conjectures~\ref{conj:A} and \ref{conj:B}.
\begin{enumerate}
    \item[(a)] When $N=p^{r}$, the generalised Ogg's Conjecture (Conjecture~\ref{conj:A}) is true up to 6$p$-torsion. This is due to results of Ling \cite{ling1997} and Lorenzini \cite{lorenzini1995torsion}.
    
    \item[(b)]  Wang-Yang \cite{wang2020modular} prove the equality of the groups  $C(N)$ and $C_{N}(\Q)$ (Conjecture~\ref{conj:B}) for all levels $N=n^2M$, where $M$ is squarefree and $n$ is an integer dividing 24.
    
    \item[(c)] When $N$ is squarefree, then $C_N(\Q)=C(N)$ (Conjecture~\ref{conj:B}) and the generalised Ogg's Conjecture (Conjecture~\ref{conj:A}) is true up to 6-torsion. Furthermore, if $3 \nmid N$ the conjectures hold up to $2$-torsion. These results were proved by Ohta \cite{ohta2013eisenstein}.

\end{enumerate}

As portrayed in Theorem~\ref{thm:yoozl}, we can also break the study of the rational torsion of the Jacobian into its $l$-primary parts, where $l$ is a prime number. In this direction, the equality 
  \begin{equation}  \label{eq:l-part}
     C(N)[l^{\infty}] = J_{0}(N)(\Q)_{\tor}[l^{\infty}]
 \end{equation} is proved in the following cases:
 
 \begin{enumerate}
     \item[(d)] If $N= 3p$ for a prime $p$ such that either $p\not\equiv 1 \pmod{9} $ or $3^{\frac{p-1}{3}} \not\equiv 1 \pmod{p}$, and $l= 3$; due to Yoo \cite{yoo2016eisenstein}.
     
     \item[(e)] If $N$ is any positive integer and $l \nmid 6N\prod_{p\mid N}(p^{2}-1)$; due to Ren \cite{ren2018rational}.
     
    \item[(f)]  Recent major advances in the proof of Equation~\eqref{eq:l-part} are given in the next result; due to Yoo \cite{yoo2021rational} and based on previous work by Otha \cite{masami2014eisenstein}.
\end{enumerate}

\begin{theorem}[{\cite[Theorem~1.4]{yoo2021rational}}] \label{thm:yoogg}
For any positive integer $N$, we have 
\begin{equation*}
    C(N)[l^{\infty}] = J_{0}(N)(\Q)_{\tor}[l^{\infty}]
\end{equation*}
for any odd prime $l$ such that $l^{2}$ does not divide $3N$.
\end{theorem}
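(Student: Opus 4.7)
The plan is to establish the reverse inclusion $J_0(N)(\Q)_{\tor}[l^\infty] \subseteq C(N)[l^\infty]$, since the forward inclusion follows from the Manin--Drinfeld theorem (already noted in the discussion around $C_N(\Q)$). The strategy has two conceptual stages: first reduce the problem to studying the \emph{Eisenstein} part of the torsion, then identify that Eisenstein torsion with rational cuspidal divisors.

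For the Eisenstein reduction, I would invoke the Eichler--Shimura congruence. For each prime $q \nmid lN$, the Frobenius $\mathrm{Frob}_q$ acts on $J_0(N)[l^\infty]$ and satisfies the relation $\mathrm{Frob}_q^2 - T_q \mathrm{Frob}_q + q = 0$. On a $\Q$-rational torsion point $\mathrm{Frob}_q$ acts as the identity, so $T_q$ acts as multiplication by $q+1$. Hence every point in $J_0(N)(\Q)_{\tor}[l^\infty]$ is annihilated by the Eisenstein ideal $I \subseteq \mathbb{T}$ generated by $\{T_q - (q+1) : q \nmid N\}$, augmented with the appropriate operators $U_q - \varepsilon_q$ at primes dividing $N$. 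Thus one is reduced to controlling $J_0(N)[\mathfrak{m}](\Q)$ for maximal Eisenstein ideals $\mathfrak{m}$ of residue characteristic $l$.

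The core step is then to show that for every such maximal $\mathfrak{m}$, the $\mathfrak{m}$-torsion $J_0(N)[\mathfrak{m}]$ is generated by cuspidal divisors. Following Ohta's approach for squarefree level, this comes from a detailed analysis of the Eisenstein quotient of the Hecke algebra acting on the cuspidal group, together with a dimension count using the Eisenstein component of $H^1(X_0(N), \Z_l)$ and the $q$-expansions of the corresponding Eisenstein series. Extending this to arbitrary $N$, as in \cite{yoo2021rational}, requires a careful stratification of the Eisenstein maximal ideals by a character datum and matching each to an explicit rational cuspidal divisor $Z(d)$ from Theorem~\ref{thm:yooz}. The hypothesis $l^2 \nmid 3N$ enters precisely here: it rules out exceptional congruences that would cause the local Eisenstein component of $\mathbb{T}$ to be non-Gorenstein or to force the cuspidal map to fail to be surjective onto the $\mathfrak{m}$-torsion.

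Finally, once $J_0(N)(\Q)_{\tor}[l^\infty]$ is shown to be cuspidal, one descends to $\Q$-rationality. Any $l$-primary rational torsion class is represented by a divisor $D$ with $\sigma(D) \sim D$ for all $\sigma \in \mathrm{Gal}(\overline{\Q}/\Q)$; averaging over Galois (which is invertible on $l$-primary parts for $l$ odd with $l \nmid M_d$) produces an honestly $\Q$-rational divisor in the same class, placing it in $C(N)[l^\infty]$ via Theorem~\ref{thm:yoozl}. The main obstacle throughout is the second step: controlling the Eisenstein part of the Hecke module for non-squarefree $N$, where the interaction between old and new forms complicates the structure of $\mathbb{T}/I$ and forces the technical assumption $l^2 \nmid 3N$.
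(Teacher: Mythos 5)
This statement is not proved in the paper at all: it is quoted verbatim as Theorem~1.4 of \cite{yoo2021rational} and used as a black box (it is one of the two external inputs, together with the construction of the divisors $Z(d)$, on which the whole article rests). So there is no internal proof to compare your argument against; what can be assessed is whether your sketch would stand on its own or faithfully reflects the cited proof.

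Your architecture is the standard one (and is indeed the architecture of Ohta's and Yoo's work): Eichler--Shimura shows that $T_q$ acts as $q+1$ on rational torsion, so $J_0(N)(\Q)_{\tor}$ is annihilated by the Eisenstein ideal, and one is reduced to showing that the kernel of each Eisenstein maximal ideal of residue characteristic $l$ is accounted for by cuspidal divisors. But two points prevent this from being a proof. First, the reduction to Eisenstein kernels is the easy half; the step you describe as ``a detailed analysis of the Eisenstein quotient \ldots together with a dimension count'' \emph{is} the theorem for non-squarefree $N$, and nothing in your sketch indicates how the old/new interaction at powers $p^{r}\|N$ is controlled or where exactly $l^{2}\nmid 3N$ intervenes; deferring this to the citation makes the argument circular as a proof of the cited result. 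Second, and more concretely, your final descent step fails: you average a Galois-orbit of a cuspidal divisor to produce a $\Q$-rational representative, which requires inverting $|{\rm Gal}(\Q(\zeta_{M_d})/\Q)| = \varphi(M_d)$ on the $l$-primary part. You add the hypothesis ``$l\nmid M_d$'', but that is not what is needed (one needs $l\nmid\varphi(M_d)$) and neither condition is available: the interesting torsion lives precisely at primes $l$ dividing $p_i-1$ for $p_i\mid N$, i.e.\ dividing $\varphi(M_d)$. Passing from ``the class is cuspidal and Galois-fixed'' (membership in $C_N(\Q)$) to ``the class is represented by a Galois-invariant divisor'' (membership in $C(N)$) is exactly the content of Conjecture~B of the paper, open in general, and Yoo's proof must land directly in $C(N)$ through the Hecke-module analysis rather than through an averaging trick.
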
 

For our purposes, the most relevant results are the construction of divisors $Z(d)$ as generators of $C(N)$ in  \cite{yoo2019rationalcusp} and the partial proof of Conjectures \ref{conj:A} and \ref{conj:B} in \cite{yoo2021rational}.
We will see in Section~\ref{sec:generalised jacobians} that to compute the full rational torsion of the generalised modular Jacobian $J_{0}(N)_{\mathbf{m}}$ it is important to understand the rational torsion on $J_{0}(N)$, since, up to 2-torsion, $J_{0}(N)_{\mathbf{m}}(\Q)_{\tor}$ is given by the kernel $\ker(\bs \delta|_{J_{0}(N)(\Q)_{\tor}})$ of a certain homomorphism $\bs \delta$. Since we would like to use Equation~\eqref{eq:l-part} whenever it is known to hold, in the main result we will work with $\ker(\bs \delta|_{J_{0}(N)(\Q)_{\tor}[l^{\infty}]})$, whenever $l \not\in \{p|N, 2\}$ is a prime number. 
 In the following subsections we will follow the constructions in \cite{yoo2019rationalcusp} to find the generating divisors $Z(d)$ and understand $C(N)$. These generators are given in Definition \ref{def:z-divisors}.

 \subsection{Cuspidal divisors and eta quotients} \label{ssec:theodiv}
 
In this subsection we will see how to construct the divisors $Z(d)$ appearing in Theorem~\ref{thm:yooz} and how to compute the order of a given divisor $[D]\in C(N)$. Hence, this subsection aims to explain how to approach the problem of describing the group $C(N)$. 
 
Recall that by the order of a divisor $D$ on $X_{0}(N)$ we mean the smallest $n \in \Z_{>0}$ such that there exists a modular function $f$ in $\C(X_{0}(N))$ with \begin{equation} \label{eq:f} \text{div}(f) = n \cdot D.\end{equation}
Notice that being able to describe the functions $f$ is central to our topic not only for computing the order of a given divisor $D$ but also for finding relations between generators: a set of divisors is linearly dependent in $J_{0}(N)$ if we can find a linear combination of them such that the resulting divisor is the divisor of a modular function $f\in k(X_{0}(N))$. 

If $D$ is a degree-zero cuspidal divisor, then the function $f$ in Equation~\eqref{eq:f} does not have any zeroes or poles on $\mathcal{H}$ and has the same order of vanishing at all cusps of the same level. 
We can construct this kind of functions using Dedekind's eta function. 

\begin{definition} Let $q=e^{2 \pi i z}$ with $z$ a variable on the complex upper-half plane $\mathcal{H}$. Dedekind's eta function $\eta: \mathcal{H} \rightarrow \C$ is defined by
\begin{equation*}
    \eta(z)=q^{\frac{1}{24}}\prod_{n=1}^{\infty} (1-q^{n}). 
\end{equation*}
\end{definition}

It is well known that $\Delta(z) \coloneqq \eta(z)^{24}$ is a modular form of weight 12 for $\text{SL}_{2}(\Z)$. Furthermore, $\eta(z)$ is a holomorphic function on $\mathcal{H}\cup \mathbb{P}^{1}(\Q)$ and $\eta(z) \neq 0$ for all $z \in \mathcal{H}$. Hence, Dedekind's eta function can only have potential zeroes at the cusps of $X_{0}(N)$. For a divisor $d'$ of $N$ we can define the functions $\eta_{d'}(z) \coloneqq \eta(d'z)$ and $\Delta_{d'}(z) \coloneqq \Delta(d'z)$, and the following result holds. 

\begin{lemma}[{\cite[Lemma~3.2]{yoo2019rationalcusp}}] \label{lem:ordereta} Let $N$ be a positive integer and $d'$ a divisor of $N$. The function $\Delta_{d'}(z)$ is a modular form of weight $12$ for $\Gamma_{0}(N)$. Moreover, if $\omega \in \text{cusps}(X_{0}(N))$ is a cusp of level $d$, the order of vanishing of $\Delta_{d'}(z)$ at $\omega$ is 
\begin{equation} \label{eq:aN}
    a_{N}(d, d') \coloneqq \frac{N}{(d, N/d)} \times \frac{(d, d')^{2}}{dd'}.
\end{equation}
\end{lemma}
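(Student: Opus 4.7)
The plan is to treat the two assertions of the lemma separately: modularity of $\Delta_{d'}$ under $\Gamma_0(N)$, and the computation of the order of vanishing at an arbitrary cusp.

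\emph{Modularity.} I would start from the fact that $\Delta(z)=\eta(z)^{24}$ is a weight-$12$ modular form for $\textup{SL}_2(\Z)$, with transformation law $\Delta(\gamma z)=(\gamma_3 z+\gamma_4)^{12}\Delta(z)$ for any $\gamma=\begin{pmatrix}\gamma_1 & \gamma_2\\ \gamma_3 & \gamma_4\end{pmatrix}\in\textup{SL}_2(\Z)$. For $\gamma\in\Gamma_0(N)$ we have $d'\mid N\mid \gamma_3$, so the conjugate $\tilde\gamma:=\begin{pmatrix}\gamma_1 & \gamma_2 d'\\ \gamma_3/d' & \gamma_4\end{pmatrix}$ lies in $\textup{SL}_2(\Z)$ and satisfies $\tilde\gamma(d'z)=d'\gamma(z)$. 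Applying the $\textup{SL}_2(\Z)$-transformation law of $\Delta$ to $\tilde\gamma$ at the point $d'z$ then gives $\Delta_{d'}(\gamma z)=(\gamma_3 z+\gamma_4)^{12}\Delta_{d'}(z)$, which is weight-$12$ modularity for $\Gamma_0(N)$. Holomorphy on $\mathcal H$ and at the cusps is inherited from holomorphy of $\eta$ on $\mathcal H\cup\mathbb{P}^1(\Q)$.

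\emph{Order at a cusp of level $d$.} Writing $\omega=[c_0/d]$ with $(c_0,d)=1$, I would choose $\sigma_\omega=\begin{pmatrix}c_0 & -b_0\\ d & a_0\end{pmatrix}\in\textup{SL}_2(\Z)$ (with $a_0 c_0+b_0 d=1$) sending $i\infty$ to $\omega$. The width of $\omega$ in $\Gamma_0(N)$ is $h_\omega=N/(d\,(d,N/d))$, and by definition $\textup{ord}_\omega(\Delta_{d'})$ is the $q_{h_\omega}$-order of $(\Delta_{d'}\vert_{12}\sigma_\omega)(z)$ at $i\infty$, where $q_{h_\omega}=e^{2\pi iz/h_\omega}$. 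The key step is the Hermite-style factorisation
\begin{equation*}
\begin{pmatrix} d' & 0\\ 0 & 1\end{pmatrix}\sigma_\omega \;=\; \tau\cdot T,\qquad \tau\in\textup{SL}_2(\Z),\qquad T=\begin{pmatrix}(d,d') & *\\ 0 & d'/(d,d')\end{pmatrix},
\end{equation*}
which exists because $(c_0 d'/(d,d'),\,d/(d,d'))=1$ by B\'ezout. Under the linear fractional action $T$ sends $z\mapsto \mu z+\nu$ with $\mu=(d,d')^2/d'$. Hence $\Delta(d'\sigma_\omega(z))=(\tau_3 T(z)+\tau_4)^{12}\Delta(T(z))$ by the $\textup{SL}_2(\Z)$-invariance applied to $\tau$, and combining this with $(\Delta_{d'}\vert_{12}\sigma_\omega)(z)=(dz+a_0)^{-12}\Delta(d'\sigma_\omega(z))$ shows that the two twelfth-power factors contribute a nonzero limit as $z\to i\infty$. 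The remaining $\Delta(T(z))=\Delta(\mu z+\nu)$ has leading term $e^{2\pi i\nu}q_\mu$, so the $q$-order of $(\Delta_{d'}\vert_{12}\sigma_\omega)$ at $i\infty$ is $\mu$. Converting to the local parameter $q_{h_\omega}$ multiplies the order by $h_\omega$, giving
\begin{equation*}
\textup{ord}_\omega(\Delta_{d'})=\mu\cdot h_\omega=\frac{(d,d')^2}{d'}\cdot\frac{N}{d(d,N/d)}=\frac{N}{(d,N/d)}\cdot\frac{(d,d')^2}{dd'}=a_N(d,d').
\end{equation*}

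\emph{Main obstacle.} The delicate point is the explicit Hermite decomposition of the determinant-$d'$ matrix $\begin{pmatrix}d' & 0\\0 & 1\end{pmatrix}\sigma_\omega$: one must identify the upper-left entry of the upper-triangular factor as $(d,d')$ rather than some other divisor of $d'$, since this is the source of the square $(d,d')^2$ in the numerator of $a_N(d,d')$. Combining this correctly with the width $h_\omega=N/(d(d,N/d))$ is where every arithmetic factor of the formula arises. This is a special case of Ligozat's order formula for eta-quotients, which one could cite directly in place of the above bookkeeping.
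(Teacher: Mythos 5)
The paper does not actually prove this lemma -- it is quoted from Yoo (Lemma~3.2), which in turn rests on Ligozat's order formula for eta quotients -- and your computation is exactly the standard argument behind that citation: the conjugation trick for modularity, the width $h_\omega=N/(d\,(d,N/d))$, the Hermite factorisation with upper-left entry $\gcd(d'c_0,d)=(d,d')$ producing the factor $(d,d')^2/d'$, and the rescaling to the local parameter $q_{h_\omega}$ are all correct. So the proposal is sound and essentially coincides with the (omitted, cited) proof; citing Ligozat directly, as you note, is what the paper in fact does.
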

Hence, we can use the functions $\eta_{d'}(z)$ and $\Delta_{d'}(z)$ to construct modular functions on $X_{0}(N)$ with prescribed poles and zeroes with specific orders at cusps. This construction uses the so-called eta quotients:  

\begin{definition} \label{def:etaquotients} Let $N$ be a positive integer. For each divisor $d'$ of $N$ we choose an integer $r_{d'}\in \Z$. 
A function $g_{\mathbf{r}}: \mathcal{H} \rightarrow \C$ of the form
\begin{equation*}
    g_{\mathbf{r}} = \prod_{d'\mid N} \eta_{d'}(z)^{r_{d'}}
\end{equation*}
is called an \textbf{eta quotient of level $\bs{N}$}. If we allow the $r_{d'}$ to be an element in $\Q$, then the function resulting from this construction is called a \textbf{generalised eta quotient of level $N$}. These generalised eta quotients are considered as power series with rational coefficients. For a fixed $N$, we denote the set of generalised eta quotients of level $N$ by $\mathcal{E}(N)$.
\end{definition}

Similarly to the space $S(N)$, which we use to encode cuspidal divisors, we will define spaces to encode information about eta quotients. For this, let $S'(N)$ a copy of the $\Z$-lattice $S(N)$ and $S'(N)_{\Q}$ be a copy of the $\Q$-vector space $S(N)_{\Q}$. We use the element $\mathbf{r} = \sum_{d'\mid N} r_{d'} \cdot e(N)_{d'} \in S'(N)_{\Q}$ to denote the eta quotient $g_{\mathbf{r}} = \prod_{d'\mid N} \eta_{d'}(z)^{r_{d'}}$.

With this notation and using Definition \ref{def:etaquotients} and Lemma \ref{lem:ordereta}, we can determine when a generalised eta quotient is a modular function on $X_{0}(N)$ and control the divisor of this function. This is made explicit through the following two results. 

\begin{proposition}[{\cite[Section~3.2]{ligozat1975courbes}}] \label{prop:modulareta} Let $\mathbf{r} = \sum_{d'\mid N} \left(r_{d'}  \cdot e(N)_{d'}\right)$ be an element of $S'(N)_{\Q}$. Consider the generalised eta quotient $g_{\mathbf{r}} = \prod_{d'\mid N} \eta_{d'}(z)^{\mathbf{r}_{d'}}$. Then $g_{\mathbf{r}}$ is a modular function on $X_{0}(N)$ if and only if all the following conditions hold:
\begin{enumerate}
    \item for all $d'$ we have  $r_{d'} \in \Z$, i.e., $\mathbf{r} \in S'(N)$; 
    \item we have $\sum_{d' \mid N} r_{d'}\cdot d' \equiv 0 \pmod{24}$; 
    \item we have $\sum_{d' \mid N} r_{d'}\cdot N/d' \equiv 0 \pmod{24}$;
    \item the sum of the coefficients satisfies $\sum_{d' \mid N} r_{d'} = 0$;
    \item we have $\prod_{d' \mid N} (d')^{r_{d'}} \in \Q^{2}$.
\end{enumerate}
\end{proposition}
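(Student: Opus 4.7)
The plan is to reduce the claim to Dedekind's classical transformation law for $\eta$ under $\text{SL}_{2}(\Z)$, and then extract the five conditions from the requirement that the resulting multiplier system for $g_{\mathbf{r}}$, restricted to $\Gamma_{0}(N)$, be trivial. Recall that for every $\gamma=\begin{psmallmatrix}a & b \\ c & d\end{psmallmatrix}\in \text{SL}_{2}(\Z)$ with $c>0$ one has $\eta(\gamma z)=\varepsilon(\gamma)\,(cz+d)^{1/2}\,\eta(z)$, where $\varepsilon(\gamma)$ is an explicit $24$-th root of unity whose dependence on $(a,b,c,d)$ involves a Dedekind sum and a Jacobi symbol. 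The key preliminary step is to translate this into a transformation law for each $\eta_{d'}(z)=\eta(d'z)$ under $\gamma\in\Gamma_{0}(N)$, by conjugating $\gamma$ by $\begin{psmallmatrix} d' & 0 \\ 0 & 1 \end{psmallmatrix}$; the result is again an element of $\text{SL}_{2}(\Z)$ precisely because $d'\mid N$ ensures that $N/d'$ divides the bottom-left entry of $\gamma$.

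Taking the product over $d'\mid N$, the function $g_{\mathbf{r}}$ transforms under $\gamma\in\Gamma_{0}(N)$ as a factor $(cz+d)^{w/2}$ of weight $w=\tfrac{1}{2}\sum_{d'} r_{d'}$ times a multiplier. Imposing $w=0$ yields condition~(4). Triviality of the multiplier then splits into three pieces. First, the $q^{1/24}$-prefactor in $\eta$ produces, upon evaluating the multiplier at the translation $\begin{psmallmatrix}1 & 1 \\ 0 & 1\end{psmallmatrix}$ (which fixes $\infty$), the congruence $\sum r_{d'} d' \equiv 0 \pmod{24}$ of~(2); applying the analogous translation conjugated through the Fricke involution, which fixes the cusp $0$, gives the dual congruence $\sum r_{d'} N/d' \equiv 0 \pmod{24}$ of~(3). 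Second, the Jacobi-symbol part of $\varepsilon(\gamma)$, assembled over $d'\mid N$, yields a quadratic character of $\Gamma_{0}(N)$ whose triviality is equivalent to $\prod_{d'}(d')^{r_{d'}}\in\Q^{2}$, which is~(5). Finally, in order for $g_{\mathbf{r}}$ to define a single-valued meromorphic function on $\mathcal{H}$ (rather than a multi-valued one with branch cuts), and for the multiplier computation above to produce honest $24$-th roots of unity rather than fractional-order characters, each $r_{d'}$ must be an integer; this is condition~(1). Conversely, once (1)--(5) hold, Lemma~\ref{lem:ordereta} guarantees that $g_{\mathbf{r}}$ is holomorphic and nonvanishing on $\mathcal{H}$ with poles or zeros only at the cusps, so it descends to a well-defined modular function on $X_{0}(N)$.

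The main obstacle is the careful bookkeeping of the eta multiplier under conjugation by $\begin{psmallmatrix} d' & 0 \\ 0 & 1 \end{psmallmatrix}$, and in particular tracking how the Jacobi-symbol component of $\varepsilon(\gamma)$ aggregates across $d'\mid N$ to produce the square-class condition~(5). Since this is exactly the content of the classical calculation of Newman and Ligozat, rather than redoing it we simply appeal to \cite[Section~3.2]{ligozat1975courbes}.
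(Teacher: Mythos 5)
The paper offers no proof of this proposition: it is stated as a quoted result of Ligozat, which is exactly where your argument also terminates, so the two approaches coincide. Your sketch of the underlying multiplier-system derivation (weight zero giving (4), the $q^{1/24}$-prefactor at the cusps $\infty$ and $0$ giving (2) and (3), and the Jacobi-symbol part of the eta multiplier giving (5)) is the standard one and is consistent with the cited source, modulo one small slip: the conjugate $\begin{psmallmatrix} d' & 0 \\ 0 & 1\end{psmallmatrix}\gamma\begin{psmallmatrix} d' & 0 \\ 0 & 1\end{psmallmatrix}^{-1}$ lies in $\mathrm{SL}_2(\Z)$ because $d'\mid c$ (which follows from $N\mid c$ and $d'\mid N$), not because $N/d'$ divides $c$.
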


\begin{proposition}[{\cite[Proposition~3.2.8]{ligozat1975courbes}}] \label{prop:diveta} 
Let $\mathbf{r} = \sum_{d'\mid N} r_{d'} \cdot e(N)_{d'} \in S'(N)_{\Q}$ and consider the function $g_{\mathbf{r}}(z)$ on the upper-half plane. With notation as in Equation \eqref{eq:aN}, it holds that
\begin{equation*}
    \textup{div}(g_{\mathbf{r}}) = \sum_{d \mid N} \left( \sum_{d' \mid N} \frac{a_{N}(d, d')}{24}\cdot r_{d'} \right) \cdot P_{d}. 
\end{equation*}
\end{proposition}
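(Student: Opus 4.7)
The proposition asserts the divisor formula for a generalised eta quotient, so my plan is to reduce everything to a local computation at each cusp and invoke Lemma~\ref{lem:ordereta} (the $\Delta_{d'}$ vanishing formula) term by term.

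First I would observe that Dedekind's eta function $\eta(z)$ is holomorphic and nowhere vanishing on $\mathcal{H}$; consequently, for every $d' \mid N$, the function $\eta_{d'}(z) = \eta(d'z)$ is also holomorphic and nonzero on $\mathcal{H}$. Therefore the product $g_{\mathbf{r}}$ has neither zeros nor poles on $\mathcal{H}$, even when the exponents $r_{d'}$ are rational: locally on $\mathcal{H}$ we may choose a holomorphic branch and the divisor we care about is supported entirely on the cusps. So it suffices to compute $\mathrm{ord}_{\omega}(g_{\mathbf{r}})$ for each cusp $\omega \in \mathrm{Cusps}(X_0(N))$, where the order is defined (as usual for eta products) from the leading exponent in the local $q$-expansion at $\omega$ and is additive in the exponents $r_{d'}$, now interpreted as a $\Q$-linear form.

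Next, I would fix a cusp $\omega$ of level $d$ and apply Lemma~\ref{lem:ordereta}: the form $\Delta_{d'}(z) = \eta_{d'}(z)^{24}$ vanishes at $\omega$ to order $a_{N}(d, d')$. Taking $24$th roots in the $q$-expansion, this means the rational order of $\eta_{d'}$ at $\omega$ is exactly $a_N(d, d')/24$. By $\Q$-linearity of the order function on products,
\begin{equation*}
\mathrm{ord}_{\omega}(g_{\mathbf{r}}) \;=\; \sum_{d' \mid N} r_{d'} \cdot \mathrm{ord}_{\omega}(\eta_{d'}) \;=\; \sum_{d' \mid N} \frac{a_N(d, d')}{24}\, r_{d'}.
\end{equation*}
Crucially, this value depends only on the level $d$ of $\omega$, because $a_N(d, d')$ depends only on $d$ and $d'$; geometrically this reflects the fact that $\Delta_{d'}$ is already a form for $\Gamma_0(N)$ and the cusps of a given level form a single $\Gamma_0(N)$-orbit (equivalently, a single Galois orbit).

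Finally I would sum up. Since $P_{d} = \sum_{\omega \text{ of level } d} [\omega]$, grouping cusps by level gives
\begin{equation*}
\mathrm{div}(g_{\mathbf{r}}) \;=\; \sum_{\omega} \mathrm{ord}_{\omega}(g_{\mathbf{r}}) \cdot [\omega] \;=\; \sum_{d \mid N} \left( \sum_{d' \mid N} \frac{a_N(d, d')}{24}\, r_{d'} \right) \cdot P_{d},
\end{equation*}
which is precisely the claimed formula. The main (mild) subtlety is rigorously justifying that the order function extends $\Q$-linearly to fractional eta products and that the formal divisor with possibly rational coefficients on each $P_d$ is the right object; once this is set up, the proof is a direct unfolding of Lemma~\ref{lem:ordereta}. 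I expect no serious obstacle beyond making this bookkeeping precise.
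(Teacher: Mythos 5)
Your argument is correct, and it is the standard derivation: the paper itself gives no proof of this proposition but simply quotes it from Ligozat (Proposition~3.2.8 of \cite{ligozat1975courbes}), with Lemma~\ref{lem:ordereta} stated immediately beforehand precisely so that the reader can reconstruct the computation you carry out. Your two key points — that $\eta$ is nonvanishing on $\mathcal{H}$ so the divisor is supported on the cusps, and that $\mathrm{ord}_{\omega}(\eta_{d'}) = a_N(d,d')/24$ at a cusp of level $d$ by taking the $24$th root of $\Delta_{d'}$ — combined with $\Q$-linearity of the order and the fact that $a_N(d,d')$ depends only on the level $d$, give exactly the stated formula after grouping the cusps into the rational divisors $P_d$. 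The only caveat, which you correctly flag, is that for $\mathbf{r} \in S'(N)_{\Q}$ with $\sum_{d'} r_{d'} \neq 0$ the expression $\mathrm{div}(g_{\mathbf{r}})$ must be read as a formal $\Q$-divisor recording leading exponents of local $q$-expansions (this is how the paper uses it, via the linear map $\Lambda(N)$ of Remark~\ref{rem:lambdamap}); with that convention in place there is no gap.
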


\begin{remark} \label{rem:lambdamap}
Notice from Propositions \ref{prop:modulareta} and \ref{prop:diveta} that we can construct a linear map $\Lambda(N)$ from $S(N)'_{\Q}$ to $S(N)_{\Q}$ such that under this map, the image of a vector $\mathbf{r} \in S(N)'_{\Q}$ with $g_{\mathbf{r}}(z)$ a modular function on $X_{0}(N)$ gives the element in $S^{0}(N)$ that encodes the divisor of the function $g_{\mathbf{r}}$. It follows that the representation of this map as a matrix is
\begin{equation*}
    \Lambda(N) \coloneqq \left( \frac{a_{N}(d, d')}{24} \right)_{d, d' \mid N} \in \textup{M}_{\sigma_{0}(N)}(\Q), 
\end{equation*}
a square matrix indexed by the divisors of $N$,  and 
\begin{equation*}
    \text{div}(g_{\mathbf{r}}) = \Phi_{N}^{-1}(\Lambda(N)\times(\mathbf{r})),
\end{equation*}
where $\Phi_{N}$ is the map described in Equation \eqref{eq:phi}. 
Furthermore, the matrix $\Lambda(N)$ is invertible (over $\Q$) -- see \cite[Lemma~3.2.9]{ligozat1975courbes} or \cite[Lemma~3.7]{yoo2019rationalcusp}. Hence we see that, given any divisor $D \in C(N)$, the generalised eta quotient $g(z)$ corresponding to the vector $\mathbf{r}(D) \coloneqq \Lambda(N)^{-1} \times \Phi_{N}(D)$ satisfies $\Div(g)=D$. The criteria in Proposition \ref{prop:modulareta} tell us when this function is a function on $X_{0}(N)$, and hence, when  $D$ is a trivial element in $J_{0}(N)$.
\end{remark}

The following diagram summarises the information collected so far in the results of this subsection: 

\begin{equation} 
\begin{tikzcd}  \label{eq:diag}
\text{Div}_{\textup{Cusp}}^{0}(X_{0}(N))(\Q) \arrow[rr, "\Phi_{N}"]     &  & S^{0}(N) \arrow[rr, "\Lambda(N)^{-1}"]     &  & S(N)'_{\Q} \arrow[rr, "g"]     &  & \mathcal{E}(N) \\
D= \sum a_{d} P_{d} \arrow[rr, maps to] &  & \sum a_{d} e(N)_{d} \arrow[rr, maps to] &  & \mathbf{r}(D) = \sum r_{d'} e(N)_{d'} \arrow[rr, maps to] &  & g(\mathbf{r}(D))
\end{tikzcd}
\end{equation}

Here, $g(\mathbf{r}(D)) \coloneqq g_{\mathbf{r}(D)}(z)$.

Next, we give two definitions that, together with the information collected Diagram~\eqref{eq:diag}, are used to compute the order of a given divisor $D \in A_{N}(\Q)$ in $J_{0}(N)$.

\begin{definition} \label{def:V(D)}
Let $D \in A_{N}(\Q)$ and $k(N)$ as in Definition~\ref{def:radN}. We define the vector $V(D)$ in $S'(N)$ as $V(D) \coloneqq \frac{k(N)}{24} \cdot \textbf{r}(D)$. In this notation, we define the greatest common divisor of the divisor $D$, denoted $\GCD(D)$, as the greatest common divisor of the entries $\frac{k(N)}{24}r_{d'}$ of the vector $V(D)$. Finally, we define the vector $\mathbb{V}(D) = (\mathbb{V}(D)_{d'})_{d'|N}$ in $S'(N)$ as $\mathbb{V}(D) \coloneqq \GCD(D)^{-1} \cdot V(D)$.
\end{definition}

\begin{definition} \label{def:h(D)} Let $D \in A_{N}(\Q)$. For each prime number $l$, we define 
\begin{equation*}
    \text{Pw}_{l}(D) \coloneqq \sum_{\ord_{l}(d') \not\in 2\Z} \mathbb{V}(D)_{d'}. 
\end{equation*}
Moreover, we let 
\begin{equation*}
    \textbf{h}(D) \coloneqq \begin{cases} 1 & \text{ if } \text{Pw}_{l}(D) \in 2\Z \text{ for all primes } l, \\
    2 & \text{ if } \text{Pw}_{l}(D) \not\in 2\Z \text{ for some prime } l.
    \end{cases}
\end{equation*}
\end{definition}

Next, we introduce a well-known result based on Remark \ref{rem:lambdamap}, cf.~\cite[Proposition~3.10,~Theorem~3.13]{yoo2019rationalcusp}.
\begin{theorem} 
\label{thm:order} 
Let $D$ be a rational cuspidal divisor on the modular curve $X_{0}(N)$. The order of $D$ is the smallest positive integer $n$ such that $g(n \cdot \mathbf{r}(D))$ is a modular function on $X_{0}(N)$ and we have
\begin{equation*}
    \order(D) = \num \left(\frac{k(N)\cdot \textbf{h}(D)}{24 \cdot \GCD(D)} \right). 
\end{equation*}
\end{theorem}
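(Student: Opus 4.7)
The plan is to reduce the statement to an analysis of the five conditions of Proposition~\ref{prop:modulareta}. By Remark~\ref{rem:lambdamap}, the generalised eta quotient $g(\mathbf{r}(D))$ has divisor equal to $D$, and by linearity of $\Lambda(N)$, the quotient $g(n\mathbf{r}(D))$ has divisor $nD$ for every positive integer $n$. Hence $nD$ is a principal divisor on $X_0(N)$ if and only if $g(n\mathbf{r}(D))$ is a modular function on $X_0(N)$, so $\order(D)$ is the smallest positive integer $n$ such that $g(n\mathbf{r}(D))$ is modular. This reduces the problem to finding the smallest $n$ for which each of conditions (1)--(5) of Proposition~\ref{prop:modulareta} holds for $n\mathbf{r}(D)$.

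Next I would check that conditions (2), (3), and (4) are automatic. For (2) and (3), Proposition~\ref{prop:diveta} together with the computations $a_N(N, d') = d'$ and $a_N(1, d') = N/d'$ gives $\sum_{d'\mid N} r_{d'}(D)\, d' = 24 c_N$ and $\sum_{d'\mid N} r_{d'}(D)\, N/d' = 24 c_1$, where $c_1$, $c_N$ are the (integer) coefficients of $P_1$, $P_N$ in $D$; hence these sums already lie in $24\Z$, and scaling by $n$ preserves the congruence. For (4), I would use that $D$ has degree zero together with the identity $\sum_d \varphi((d,N/d))\, c_d = 0$, expanded via Proposition~\ref{prop:diveta} in terms of $r_{d'}$; a cofactor computation on the matrix $\Lambda(N)$ (or use of the explicit weight-zero kernel generator in $\ker(\Lambda(N)^{\mathrm{tr}})$) reduces this to $\sum_{d'} r_{d'}(D) = 0$. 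Consequently only conditions (1) and (5) place real constraints on $n$.

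For condition (1), Definition~\ref{def:V(D)} gives $r_{d'}(D) = \tfrac{24\, \GCD(D)}{k(N)}\, \mathbb{V}(D)_{d'}$ with $\gcd_{d'}\mathbb{V}(D)_{d'} = 1$, so (1) is equivalent to $k(N)\mid 24\, n\, \GCD(D)$, whose smallest solution is $n_1 \coloneqq \num\!\left(\tfrac{k(N)}{24\, \GCD(D)}\right)$. Writing $n = m n_1$, a direct substitution yields $n\, r_{d'}(D) = m\alpha\, \mathbb{V}(D)_{d'}$, where $\alpha \coloneqq \tfrac{24\,\GCD(D)}{\gcd(k(N),\, 24\,\GCD(D))}$. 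Condition (5) then reads $m\alpha \sum_{d'} \ord_l(d')\,\mathbb{V}(D)_{d'} \in 2\Z$ for every prime $l$; reducing modulo $2$ this is precisely $m\alpha\, \mathrm{Pw}_l(D) \equiv 0 \pmod{2}$ for all $l$.

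Finally I would split into cases. If either $\alpha$ is even or $\mathbf{h}(D) = 1$, then condition (5) holds with $m = 1$, so $\order(D) = n_1$; a $2$-adic comparison shows $\num\!\left(\tfrac{k(N)\,\mathbf{h}(D)}{24\,\GCD(D)}\right) = n_1$ in this case. If $\alpha$ is odd and $\mathbf{h}(D) = 2$, then $m$ must be even so $\order(D) = 2n_1$, and the same $2$-adic comparison gives $\num\!\left(\tfrac{k(N)\,\mathbf{h}(D)}{24\,\GCD(D)}\right) = 2 n_1$. In both cases the claimed formula is confirmed. The main technical obstacle I anticipate is establishing condition (4) cleanly, i.e.\ that $\sum_{d'} r_{d'}(D) = 0$ for every $D \in A_N(\Q)$; handling this in full generality, rather than case by case on the factorisation of $N$, will require a careful structural argument about $\Lambda(N)$.
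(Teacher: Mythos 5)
Your overall architecture is sound and the computational half is correct: conditions (2) and (3) of Proposition~\ref{prop:modulareta} hold already for $\mathbf{r}(D)$ itself because $a_N(N,d')=d'$ and $a_N(1,d')=N/d'$; condition (4) follows from $\deg D=0$ because $\sum_d \varphi(M_d)\,a_N(d,d')=\deg(\Div(\Delta_{d'}))$ is independent of $d'$ (each $\Delta_{d'}$ has weight $12$ and no zeros on $\mathcal{H}$), so the anticipated ``obstacle'' there is in fact the easy part; and your extraction of $n_1=\num\left(k(N)/(24\,\GCD(D))\right)$ from condition (1), of the parity condition $m\alpha\,\mathrm{Pw}_l(D)\equiv 0 \pmod 2$ from condition (5), and the final $2$-adic comparison with $\num\left(k(N)\mathbf{h}(D)/(24\,\GCD(D))\right)$ are all accurate.

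The genuine gap is the biconditional in your first paragraph. From ``$g(n\mathbf{r}(D))$ has divisor $nD$'' you get only one implication: if $g(n\mathbf{r}(D))$ is a modular function, then $nD$ is principal. The converse --- that if $nD=\Div(f)$ for \emph{some} $f\in\C(X_0(N))^{\times}$ then this particular generalised eta quotient is itself modular --- is not formal, since a priori $f$ need not be an eta quotient. Without it your argument only yields $\order(D)\mid \num\left(k(N)\mathbf{h}(D)/(24\,\GCD(D))\right)$, an upper bound on the order rather than its value. Closing the gap requires showing that any modular function whose divisor is a rational cuspidal divisor agrees, up to a constant, with the corresponding generalised eta quotient; the standard route is to let $n_0$ be minimal with $g(n_0\mathbf{r}(D))$ modular, suppose $mD=\Div(f)$ with $m\mid n_0$ and $m<n_0$, deduce $f^{n_0/m}=c\cdot g(n_0\mathbf{r}(D))$ from equality of divisors, and use uniqueness (up to roots of unity) of formal $(n_0/m)$-th roots of $q$-expansions to conclude that $f$ is a constant times $g(m\mathbf{r}(D))$, contradicting minimality via Ligozat's criterion. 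Note that the paper does not prove this theorem at all but defers to \cite[Proposition~3.10, Theorem~3.13]{yoo2019rationalcusp}; the step you elided is precisely the substantive content of that cited result, and everything else in your proposal is bookkeeping around it.
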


From this theorem we see that expressing the divisors $D \in A_{N}(\Q)$ through their associated generalised eta quotient is very useful for describing the group $C(N)$. 
The full description of the proof of the computation of the order of the divisor $D$ can be found in \cite[Section~3]{yoo2019rationalcusp}; in the same section, the reader can find a criterion for linear independence of divisor classes that is used later to prove the results written in Equations \eqref{eq:yoodiv} and \eqref{eq:yoodivl}. Theorem~\ref{thm:order} leads to the following theorem (cf.~\cite[Theorem~3.15]{yoo2019rationalcusp}), which will be used in Proposition \ref{prop:etaz-divN} to compute the eta quotients corresponding to the divisors $Z(d)$ from Theorem \ref{thm:yooz}.

\begin{theorem} \label{thm:tnsorlin} 
Let $N$ be a positive integer. Let $D\in A_{N}(\Q)$. If there exist two positive integers $N_{1}$, $N_{2} \geq 2$ such that $N=N_{1}N_{2}$ , $(N_{1}, N_{2})=1$ and $\Phi_{N}(D) = V_{1} \otimes_{\Z} V_{2}$ with $V_{i} \in S(N_{i})$, we say that $D$ is \textbf{defined by tensors}. In this case, 
\begin{equation*}
    V(D) = V(D_{1}) \otimes_{\Z} V(D_{2})  \text{ and } \mathbb{V}(D) = \mathbb{V}(D_{1}) \otimes_{\Z} \mathbb{V}(D_{2}), 
\end{equation*}
where $D_{i}=\Phi^{-1}_{N_{i}}(V_i)$.
\end{theorem}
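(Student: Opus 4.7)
The plan is to trace the tensor decomposition $\Phi_N(D) = V_1 \otimes V_2$ through Diagram~\eqref{eq:diag} and to verify that each of the three operations applied to it, namely $\Lambda(N)^{-1}$, multiplication by $k(N)/24$, and division by $\GCD(D)$, respects the tensor product structure whenever $N = N_1 N_2$ with $(N_1, N_2) = 1$.

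First I would establish the key multiplicativity of the exponents $a_N(d,d')$. Given any $d = d_1 d_2$ and $d' = d_1' d_2'$ with $d_i, d_i' \mid N_i$, the Chinese Remainder Theorem gives $(d, N/d) = (d_1, N_1/d_1)(d_2, N_2/d_2)$ and $(d, d') = (d_1, d_1')(d_2, d_2')$, from which $a_N(d, d') = a_{N_1}(d_1, d_1') \cdot a_{N_2}(d_2, d_2')$ follows immediately from the definition in Equation~\eqref{eq:aN}. Keeping track of the denominator $24$ in $\Lambda(N) = (a_N(d,d')/24)$, this translates to the identity $\Lambda(N) = 24 \cdot \bigl(\Lambda(N_1) \otimes \Lambda(N_2)\bigr)$ of matrices indexed by the divisors of $N$, so that $\Lambda(N)^{-1} = \tfrac{1}{24}\bigl(\Lambda(N_1)^{-1} \otimes \Lambda(N_2)^{-1}\bigr)$.

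Next I would apply this to the hypothesis $\Phi_N(D) = V_1 \otimes V_2$. Using the standard identity $(A \otimes B)(v \otimes w) = (Av) \otimes (Bw)$ one gets $\mathbf{r}(D) = \tfrac{1}{24}\,\mathbf{r}(D_1) \otimes \mathbf{r}(D_2)$, where $D_i = \Phi_{N_i}^{-1}(V_i)$. Since $k(N) = N\prod_{p \mid N}(p - p^{-1})$ is a multiplicative arithmetic function, $k(N) = k(N_1)\, k(N_2)$, and assembling the scalars yields
\begin{equation*}
V(D) \;=\; \frac{k(N)}{24}\,\mathbf{r}(D) \;=\; \frac{k(N_1)\,k(N_2)}{24^{2}}\,\mathbf{r}(D_1)\otimes\mathbf{r}(D_2) \;=\; V(D_1)\otimes_{\Z} V(D_2),
\end{equation*}
which is the first claim. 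For $\mathbb{V}(D)$ it then suffices to show that $\GCD(D) = \GCD(D_1)\,\GCD(D_2)$; equivalently, for two integer vectors $v, w$ one has $\gcd_{i,j}(v_i w_j) = \gcd_i(v_i)\,\gcd_j(w_j)$. This follows by pulling out the two GCDs and observing that multiplying a Bezout combination realising $1$ in each coprime residual vector gives a Bezout combination realising $1$ in the tensor. Dividing $V(D) = V(D_1) \otimes V(D_2)$ through by $\GCD(D_1)\GCD(D_2)$ then gives $\mathbb{V}(D) = \mathbb{V}(D_1) \otimes_{\Z} \mathbb{V}(D_2)$.

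The only mildly delicate point is the bookkeeping of the factor of $24$ that arises because $\Lambda(N)$ has the denominator absorbed into its entries, which is why the tensor factorisation of $\Lambda(N)$ carries an extra scalar; once this is tracked correctly, the statement is a clean consequence of the Chinese Remainder Theorem applied to the divisor lattice of $N$ together with the multiplicativity of $k$ and of the GCD under tensor products.
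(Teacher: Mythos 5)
Your proof is correct. The paper does not actually prove this statement --- it imports it from Yoo (Theorem~3.15 of the cited reference on the rational cuspidal divisor class group) --- but your argument is the natural one and complete: the entrywise multiplicativity $a_N(d,d') = a_{N_1}(d_1,d_1')\,a_{N_2}(d_2,d_2')$ under the identification $e(N)_{d_1d_2} = e(N_1)_{d_1}\otimes e(N_2)_{d_2}$ of Remark~\ref{rem:tensor}, the resulting factorisation $\Lambda(N) = 24\,(\Lambda(N_1)\otimes\Lambda(N_2))$, the multiplicativity of $k$, and the identity $\gcd_{i,j}(v_iw_j)=\gcd_i(v_i)\gcd_j(w_j)$ for integer vectors all check out, and your bookkeeping of the extra factor of $24$ is exactly right.
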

 
 \subsubsection{The divisors $Z(d)$} \label{ssec:Z(d)}

Recall from Theorem \ref{thm:yooz} that we can describe $C(N)$,  the rational divisor class group of $X_{0}(N)$, by finding divisors $Z(d)$ such that  $C(N) \simeq \langle [Z(d)] : d \in D^{\textup{sf}}_{N} \rangle \oplus \left( \bigoplus_{d \in D_N^{\text{nsf}}} \left\langle [Z(d)] \right\rangle\right)$. In this subsection we give the explicit construction of the divisors $Z(d)$ in the case $N=p^{r}q^{s}$; the general case can be found in~\cite{yoo2019rationalcusp}.

 Let $N $ an odd prime. We aim to determine the divisors $Z(d)$ generating $C(N)$. We will do this through first constructing vectors in $S^{0}(p^{r})$ for any odd prime $p$ dividing $N$,  whose preimages under $\Phi$ correspond to the divisors $Z(d)$ generating $C(p^{r})$, and later
 giving vectors encoding the divisors $Z(d)$ for $N = \prod_{i=1}^{s} p_{i}^{r_{i}}$ by means of the equality $S^{0}(N)= \bigotimes_{i=1}^{s} S^{0}(p_{i}^{r_{i}}) $.
 The following vectors in $S(p^{r})$ will be useful in these constructions, cf.\ \cite[Proposition~6.14]{yoo2019rationalcusp}. 
 
 \begin{definition} \label{def:vect} Let $p$ be an odd prime. For any $r \geq 1$ and any $0 \leq f \leq r$, we define $r+1$ vectors $\mathbf{A}_{p}(r, f)$ in $S(p^{r})$ as follows:
 
 \begin{longtable}{l l}
       & $\makecell[l]{\mathbf{A}_{p}(r, 0)_{p^{k}} = \begin{cases} 1  & \text{ if } k=0, \\
                    0 & \text{ otherwise}; \end{cases}}$ \\[25pt]

      & $\makecell[l]{\mathbf{A}_{p}(r, 1)_{p^{k}} = p^{\text{max}(r-2k, 0)};}$\\[15pt]
 
        if $r \geq 3$ is odd,  &  $\makecell[l]{\mathbf{A}_{p}(r, 2)_{p^{k}} = \begin{cases} K_{p}\left(\frac{r-3}{2}\right)  & \text{ if } k=0, \\
     K_{p}\left(\frac{r-1-2k}{2}\right) & \text{ if } 0 < k \leq \frac{r-1}{2},\\
     0 & \text{ if } k=\frac{r+1}{2}, \\
     -p \cdot K_{p}\left(\frac{2k-r-3}{2}\right) & \text{ otherwise; }
     \end{cases}}$ \\[40pt]
     
     if $r\geq 2$ is even,
      & $\makecell[l]{\mathbf{A}_{p}(r, 2)_{p^{k}} = \begin{cases} K_{p}\left(\frac{r-2-2k}{2}\right)  & \text{ if } 0 \leq k < \frac{r}{2},\\
     0 & \text{ if } k=\frac{r}{2}, \\
     - K_{p}\left(\frac{2k-r-2}{2}\right) & \text{ otherwise; }
     \end{cases}}$ \\[30pt]
     
     where $K_{p}(j)=\sum_{i=0}^{j}p^{2i}$; & \\

      if $3 \leq f =r-2\kappa \leq r$,
      & $\makecell[l]{\mathbf{A}_{p}(r, f)_{p^{k}} = \begin{cases} p^{\kappa}  & \text{ if } k=0, \\
     -1 & \text{ if } r-\kappa \leq k \leq r,\\
     0 & \text{ otherwise; }
     \end{cases}}$ \\[30pt]

    if $3 \leq f =r-2\kappa+1 \leq r$, & $\makecell[l]{\mathbf{A}_{p}(r, f)_{p^{k}} = \begin{cases} 1  & \text{ if } 0 \leq k \leq \kappa, \\
     -p^{\kappa} & \text{ if } k = r,\\
     0 & \text{ otherwise. }
     \end{cases}}$
 \end{longtable}
 \end{definition}
  
 In the following definition, we describe the vectors in $S(p^{r})^{0}$ which correspond to the divisors generating $C(N)$ when $N=p^{r}$ for any odd prime $p$, cf.\ \cite[Theorem~1.6]{yoo2019rationalcusp}.
 
 \begin{definition}
 \label{def:vectB}
  For any $r \geq 1$ and $0 \leq f \leq r$ we further define $r$ vectors $B_{p}(r, f)$ in $S(p^{r})^{0}$, as follows: 
 \begin{equation*}\mathbf{B}_{p}(r, 1) = p^{r-1}(p+1)\mathbf{A}_{p}(r, 0) -\mathbf{A}_{p}(r, 1); \end{equation*}
otherwise
     \begin{equation*}\mathbf{B}_{p}(r, f) = \mathbf{A}_{p}(r, f). \end{equation*}
 \end{definition}
 
 Now we use the vectors in Definition \ref{def:vect} together with Remark~\ref{rem:tensor} to produce vectors in $S(N)^{0}$. These will be the vectors encoding the divisors that generate $C(N)$.
 
 \begin{definition}
 \label{def:divect} 
  For $N=\prod_{i=1}^{s} p_{i}^{r_{i}}$ and a fixed prime $l$, we take the ordering $p\prec q$ on the set $\{p \ \text{prime} : \ p|N\}$ fixed in Definition~\ref{def:ordering}. Given a divisor $d = \prod_{i=1}^{s} p_{i}^{f_{i}}\in D_{N}$, we define the vectors 
 \begin{equation*}
     \mathbf{Z}(d) \coloneqq \begin{cases} \otimes_{i=1}^{s} \mathbf{A}_{p_{i}}(r_{i}, f_{i}) & \text{ if } d \in D^{\text{nsf}}_{N}, \\
     \otimes_{i=1, i \neq m}^{s} \mathbf{A}_{p_{i}}(r_{i}, f_{i}) \otimes \mathbf{B}_{p_{m}}(r_{m}, f_{m}) & \text{ if } d \in D^{\text{sf}}_{N}. \\
     \end{cases}
 \end{equation*}
 \end{definition}

We now give an example of these constructions.
 
 \begin{example} For $N=5\cdot 7^{2}$ and assuming that for a given $l$ we have the ordering $5 \prec 7$, the vectors $\mathbf{Z}(d)$ defined in Definition~\ref{def:divect} are 
 \begin{itemize}
     \item $\mathbf{Z}(5) = (1_{\tc{Orchid}{1}} , -1_{\tc{Orchid}{5}}) \otimes (1_{\tc{Orchid}{1}}, 0_{\tc{Orchid}{7}}, 0_{\tc{Orchid}{7^{2}}})= (1_{\tc{Orchid}{1}}, -1_{\tc{Orchid}{5}}, 0_{\tc{Orchid}{7}}, 0_{\tc{Orchid}{5\cdot 7}}, 0_{\tc{Orchid}{7^{2}}}, 0_{\tc{Orchid}{5 \cdot 7^{2}}})$;
     
    \item $\mathbf{Z}(7) = (1_{\tc{Orchid}{1}} , 0_{\tc{Orchid}{5}}) \otimes (7_{\tc{Orchid}{1}}, -1_{\tc{Orchid}{7}}, -1_{\tc{Orchid}{7^{2}}})= (7_{\tc{Orchid}{1}}, 0_{\tc{Orchid}{5}}, -1_{\tc{Orchid}{7}}, 0_{\tc{Orchid}{5\cdot 7}}, -1_{\tc{Orchid}{7^{2}}}, 0_{\tc{Orchid}{5 \cdot 7^{2}}})$;
    
     \item $\mathbf{Z}(5\cdot 7) = (1_{\tc{Orchid}{1}} , -1_{\tc{Orchid}{5}}) \otimes (7^{2}_{\tc{Orchid}{1}}, 1_{\tc{Orchid}{7}}, 1_{\tc{Orchid}{7^{2}}})= (7^{2}_{\tc{Orchid}{1}}, -7^{2}_{\tc{Orchid}{5}}, 1_{\tc{Orchid}{7}}, -1_{\tc{Orchid}{5\cdot 7}}, 1_{\tc{Orchid}{7^{2}}}, -1_{\tc{Orchid}{5 \cdot 7^{2}}})$;
     
      \item $\mathbf{Z}( 7^{2}) = (1_{\tc{Orchid}{1}} , 0_{\tc{Orchid}{5}}) \otimes (1_{\tc{Orchid}{1}}, 0_{\tc{Orchid}{7}}, -1_{\tc{Orchid}{7^{2}}})= (1_{\tc{Orchid}{1}}, 0_{\tc{Orchid}{5}}, 0_{\tc{Orchid}{7}}, 0_{\tc{Orchid}{5\cdot 7}}, -1_{\tc{Orchid}{7^{2}}}, 0_{\tc{Orchid}{5 \cdot 7^{2}}})$;
      
            \item $\mathbf{Z}( 5\cdot 7^{2}) = (5_{\tc{Orchid}{1}} , 1_{\tc{Orchid}{5}}) \otimes (1_{\tc{Orchid}{1}}, 0_{\tc{Orchid}{7}}, -1_{\tc{Orchid}{7^{2}}})= (5_{\tc{Orchid}{1}}, 1_{\tc{Orchid}{5}}, 0_{\tc{Orchid}{7}}, 0_{\tc{Orchid}{5\cdot 7}}, -5_{\tc{Orchid}{7^{2}}}, -1_{\tc{Orchid}{5 \cdot 7^{2}}})$.
 \end{itemize}
 \end{example}

\begin{definition} \label{def:z-divisors} With notation as in Definition \ref{def:divect}, for any $d\in D_{N}$ we define the divisor  \begin{equation*} Z(d) \coloneqq \Phi_{N}^{-1}(\mathbf{Z}(d)) \in \text{Div}^{0}_{\text{cusp}}(X_{0}(N))(\Q).
\end{equation*}
Furthermore, we denote the order of the divisor $Z(d)$ by $n_{d}$.
\end{definition} 

The proof of Theorem \ref{thm:yooz} relies on the definition of the divisors $Z(d)$. In \cite{yoo2019rationalcusp}, the author proves that these divisors generate $C(N)$ and he develops criteria to determine if a set of divisors in $C(N)$ is linear independent and uses these to conclude that $\langle [Z(d)] : d \in D_{N} \rangle = \langle [Z(d)] : d \in D^{\text{sf}}_{N} \rangle \oplus \bigoplus_{d \in D^{\text{nsf}}_{N}} \langle [Z(d)] \rangle$. Furthermore, using Theorem~\ref{thm:order} we can compute the order of each $Z(d)$.  Theorem \ref{thm:yooz} is one of the key results for the proof of Theorem \ref{thm:mainthmintro}.

Next, we give vectors in $S'(p^{r})$ that are going to be useful to compute $\mathbf{r}(Z(d))$ in Proposition~\ref{prop:etaz-divN}. 

\begin{definition}[{\cite[Definition~6.15]{yoo2019rationalcusp}}] \label{def:vectbb} Let $p$ be an odd prime number and $r$ a positive integer. For any $0 \leq f \leq r$ we define
\begin{equation*}
    \mathbb{A}_{p}(r, f) \coloneqq \begin{cases} 
    (p_{\tc{Orchid}{1}}, -1_{\tc{Orchid}{p}}, 0, \ldots, 0) & \text{ if } f=0, \\
    (1_{\tc{Orchid}{1}}, 0, \ldots, 0) & \text{ if } f=1, \\
    (1_{\tc{Orchid}{1}}, 0, \ldots, 0, -1_{\tc{Orchid}{p^{r}}}) & \text{ if } f=2 \text{ and } r \in 2\Z, \\
    (0_{\tc{Orchid}{1}}, 1_{\tc{Orchid}{p}}, 0, \ldots, 0, -1_{\tc{Orchid}{p^{r}}}) & \text{ if } f=2 \text{ and } r \not\in 2\Z, \\
    (p_{\tc{Orchid}{1}}, -1_{\tc{Orchid}{p}}, 0, \ldots, 0, 1_{\tc{Orchid}{p^{r-\kappa-1}}}, -p_{\tc{Orchid}{p^{r-\kappa}}}, 0, \ldots, 0) & \text{ if } 3 \leq f=r-2\kappa \leq r, \\
    (0, \ldots 0, p_{\tc{Orchid}{p^{\kappa}}}, -1_{\tc{Orchid}{p^{\kappa+1}}}, 0, \ldots, 0, 1_{\tc{Orchid}{p^{r-1}}}, -p_{\tc{Orchid}{p^{r}}}) & \text{ if } 3\leq f=r-2\kappa+1\leq r-1. \\
  \end{cases}
\end{equation*}
Further, for $1 \leq f, \leq r$ we define
\begin{equation*}
    \mathbb{B}_{p}(r, f) \coloneqq \begin{cases} 
    (1_{\tc{Orchid}{1}}, -1_{\tc{Orchid}{p}}, 0, \ldots, 0) & \text{ if } f=1, \\
    \mathbb{A}_{p}(r, f) & \text{ otherwise}. \\
  \end{cases}
\end{equation*}
\end{definition}

\begin{definition} \label{def:upsilon} \sloppy Let $p$ be an odd prime number and $r$ a positive integer. We define the linear map  $\Upsilon(p^{r})~:~S^{0}(N)~\rightarrow~S(N)'_{\Q}$ by 
\begin{equation*}
    \Upsilon(p^{r}) \coloneqq \frac{k(p^{r})}{24}\Lambda^{-1}(p^{r}).
\end{equation*}
We use the same notation, $\Upsilon(p^{r})$,  for the representation of this linear map as a matrix in $\rm{SL}_{2}(\Z)$.
\end{definition}

\begin{lemma}[{\cite[Lemma~6.16]{yoo2019rationalcusp}}] \label{lem:V-imagediv} Let $p$ be an odd prime number and $r$ a positive integer. For any $0 \leq f \leq r$, we have
\begin{equation*}
    \Upsilon(p^{r}) \times \mathbf{A}_{p}(r, f) =  g_{p}(r, f) \cdot \mathbb{A}_{p}(r, f); 
\end{equation*}
 and for any $1 \leq f \leq r$, we get 
 \begin{equation*}
    \Upsilon(p^{r})\times \mathbf{B}_{p}(r, f) = \begin{cases} 
    p^{r-1}(p+1) \cdot \mathbb{B}_{p}(r, f) & \textup{if } f=1, \\
    g_{p}(r, f) \cdot \mathbb{B}_{p}(r, f) & \textup{otherwise}; \\
  \end{cases}
\end{equation*}
where
\begin{equation*}
    g_{p}(r, f) \coloneqq \begin{cases} 
    1 & \textup{if } f=0, \\
    p^{r-1}(p^{2}-1) & \textup{if } f=1, \\
    p^{\kappa} & \textup{if } 2 \leq f \leq r.
    \end{cases}
\end{equation*}
See Definition~\ref{def:ordering} for the definition of $\kappa.$
\end{lemma}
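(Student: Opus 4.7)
\medskip

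\noindent\textbf{Proof plan.} The identity to verify is equivalent, after moving the factor $k(p^{r})/24$ across, to the statement
\[
\Lambda(p^{r}) \cdot \mathbb{A}_{p}(r,f) \;=\; \frac{k(p^{r})}{24\, g_{p}(r,f)}\, \mathbf{A}_{p}(r,f),
\]
together with the analogous identity for $\mathbb{B}_{p}(r,f)$ and $\mathbf{B}_{p}(r,f)$. My plan is therefore to compute the left-hand sides directly using the explicit formula $a_{p^{r}}(p^{j},p^{k}) = p^{r-\min(j,r-j)+2\min(j,k)-j-k}$ obtained from \eqref{eq:aN}, and check in each case of $f$ that the resulting vector matches the appropriate multiple of $\mathbf{A}_{p}(r,f)$ or $\mathbf{B}_{p}(r,f)$ from Definition~\ref{def:vect}. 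Since the matrix $\Lambda(p^{r})$ is invertible over $\Q$ (cf.\ Remark~\ref{rem:lambdamap}), these equalities will uniquely determine $\Upsilon(p^{r})$ applied to $\mathbf{A}_{p}(r,f)$ and $\mathbf{B}_{p}(r,f)$, and hence prove the lemma.

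\medskip

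\noindent The strategy is then purely computational: for each $f$, the vector $\mathbb{A}_{p}(r,f)$ has only two, three or four nonzero entries, so $\Lambda(p^{r})\cdot \mathbb{A}_{p}(r,f)$ is a short $\Z$-linear combination of columns of $\Lambda(p^{r})$, and the $j$-th coordinate reduces to evaluating a small sum of products $p^{r-\min(j,r-j)+2\min(j,k)-j-k}$. First I would treat $f=0$, where $\mathbb{A}_{p}(r,0) = p\, e_{1} - e_{p}$ and a direct check (splitting on whether $j \leq r/2$ or $j>r/2$) shows that the combination vanishes for all $j \geq 1$ and contributes $p^{r-1}(p^{2}-1) = k(p^{r})$ at $j=0$, giving the desired multiple of $\mathbf{A}_{p}(r,0)$. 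For $f=1$, the unique nonzero entry of $\mathbb{A}_{p}(r,1)$ at $d'=1$ forces the output to be the first column of $\Lambda(p^{r})$, whose entries are exactly $p^{\max(r-2k,0)}/24$; comparing with $\mathbf{A}_{p}(r,1)_{p^{k}} = p^{\max(r-2k,0)}$ and the claimed scalar $g_{p}(r,1) = p^{r-1}(p^{2}-1) = k(p^{r})$ gives a perfect match.

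\medskip

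\noindent For the middle case $f=2$ (split according to the parity of $r$) and for $3 \leq f \leq r$ (split according to the parity of $r-f$), the nonzero positions of $\mathbb{A}_{p}(r,f)$ are pairs or quadruples symmetrically placed around the Atkin--Lehner midpoint $k = r/2$; the telescoping structure of $K_{p}(j) = \sum_{i=0}^{j} p^{2i}$ in Definition~\ref{def:vect} will cause most coordinates of the product to collapse, leaving a vector whose only surviving entries sit at $k=0$ and at the tail $k \geq r-\kappa$, matching $\mathbf{A}_{p}(r,f)$ up to the factor $p^{\kappa} = g_{p}(r,f)$. This step, in particular the verification that the coefficients $K_{p}$ are exactly those for which the middle entries cancel, is the main technical obstacle, but it is a finite case analysis that can be carried out by splitting on the position of $j$ relative to $\kappa$ and $r-\kappa$ and invoking the identity $(p^{2}-1) K_{p}(j) = p^{2j+2}-1$.

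\medskip

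\noindent Finally, for $\mathbf{B}_{p}(r,f)$ and $\mathbb{B}_{p}(r,f)$, only $f=1$ differs from the $\mathbf{A}$-case. By linearity and Definition~\ref{def:vectB},
\[
\Lambda(p^{r})\cdot \mathbb{B}_{p}(r,1) \;=\; \Lambda(p^{r})\cdot\bigl(\mathbb{A}_{p}(r,0) - \mathbb{A}_{p}(r,1)\bigr),
\]
which by the computations above equals $\tfrac{k(p^{r})}{24}\bigl(\mathbf{A}_{p}(r,0) - \tfrac{1}{k(p^{r})}\mathbf{A}_{p}(r,1)\bigr)$ up to the appropriate scalar; rearranging yields precisely $\tfrac{k(p^{r})}{24 \, p^{r-1}(p+1)}\mathbf{B}_{p}(r,1)$, confirming the prefactor $p^{r-1}(p+1)$ in the statement. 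The remaining $\mathbf{B}_{p}(r,f)$ for $f\geq 2$ coincide with $\mathbf{A}_{p}(r,f)$ and therefore need no extra work.
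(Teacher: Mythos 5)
This lemma is quoted from Yoo (\cite[Lemma~6.16]{yoo2019rationalcusp}) and the paper gives no proof of it, so there is nothing internal to compare against; your plan of verifying it directly is the natural one. The reduction to $\Lambda(p^{r})\cdot \mathbb{A}_{p}(r,f)=\frac{k(p^{r})}{24\,g_{p}(r,f)}\mathbf{A}_{p}(r,f)$ via invertibility of $\Lambda(p^{r})$ is valid, your formula $a_{p^{r}}(p^{j},p^{k})=p^{\,r-\min(j,r-j)+2\min(j,k)-j-k}$ is correct, and the cases $f=0$, $f=1$ check out exactly as you describe; spot checks of the cases $f=2$ and $3\leq f\leq r$ (using $(p^{2}-1)K_{p}(j)=p^{2j+2}-1$) also confirm that the finite case analysis you outline goes through. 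One descriptive inaccuracy: for $f=2$ the output is \emph{not} supported only at $k=0$ and the tail $k\geq r-\kappa$ --- it is proportional to $\mathbf{A}_{p}(r,2)$, which has $K_{p}$-entries at every position except the midpoint (and for $f=r-2\kappa+1$ the support is $0\leq k\leq\kappa$ together with $k=r$); this does not affect the method, only the way you phrased the expected cancellation pattern.

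There is, however, a genuine error in your treatment of the $\mathbf{B}$-case. You assert $\mathbb{B}_{p}(r,1)=\mathbb{A}_{p}(r,0)-\mathbb{A}_{p}(r,1)$, but by Definition~\ref{def:vectbb} the right-hand side is $(p-1,\,-1,\,0,\dots,0)$ while $\mathbb{B}_{p}(r,1)=(1,\,-1,\,0,\dots,0)$; the correct relation is $\mathbb{B}_{p}(r,1)=\mathbb{A}_{p}(r,0)-(p-1)\mathbb{A}_{p}(r,1)$. With your decomposition the final ``rearranging'' does not balance: you obtain $\frac{k(p^{r})}{24}\mathbf{A}_{p}(r,0)-\frac{1}{24}\mathbf{A}_{p}(r,1)$, whereas $\frac{k(p^{r})}{24\,p^{r-1}(p+1)}\mathbf{B}_{p}(r,1)=\frac{k(p^{r})}{24}\mathbf{A}_{p}(r,0)-\frac{p-1}{24}\mathbf{A}_{p}(r,1)$, so the $\mathbf{A}_{p}(r,1)$-coefficient is off by a factor $p-1$. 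The fix is immediate: either use the corrected decomposition of $\mathbb{B}_{p}(r,1)$, or argue on the other side by applying $\Upsilon(p^{r})$ to $\mathbf{B}_{p}(r,1)=p^{r-1}(p+1)\mathbf{A}_{p}(r,0)-\mathbf{A}_{p}(r,1)$ (Definition~\ref{def:vectB}) and using the already-proved $\mathbf{A}$-identities, which gives
\begin{equation*}
\Upsilon(p^{r})\mathbf{B}_{p}(r,1)=p^{r-1}(p+1)\,\mathbb{A}_{p}(r,0)-p^{r-1}(p^{2}-1)\,\mathbb{A}_{p}(r,1)=p^{r-1}(p+1)\bigl(\mathbb{A}_{p}(r,0)-(p-1)\mathbb{A}_{p}(r,1)\bigr)=p^{r-1}(p+1)\,\mathbb{B}_{p}(r,1),
\end{equation*}
as required. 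So the approach is sound, but as written the $f=1$ case of the $\mathbf{B}$-statement is not actually proved.
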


Recall that, given $d= \prod_{i=1}^{s} p_{i}^{f_{i}}\in D_{N}$ and $\bs{f} = \bs{f}(d)$, from Theorem~\ref{thm:tnsorlin} we have that 
\begin{equation*}
     \Upsilon (Z(d)) = \begin{cases} \otimes_{i=1}^{s} \Upsilon(\mathbf{A}_{p_{i}}(r_{i}, f_{i})) & \text{ if }   d \in D^{\text{nsf}}_{N}, \\
     \otimes_{i=1, i \neq m(\bs{f})}^{s} \Upsilon(\mathbf{A}_{p_{i}}(r_{i}, f_{i})) \otimes \Upsilon(\mathbf{B}_{p_{m(\bs{f})}}(r_{m(\bs{f})}, 1)) & \textup{ if } d  \in D_{N}^{\text{sf}}.
     \end{cases}
 \end{equation*}
{}
 Hence, by using Definitions~\ref{def:upsilon} and \ref{def:vectbb}, Lemma~\ref{lem:V-imagediv} and by the fact that the order of $Z(d)$ is given by $n_{d}$ as in Definition~\ref{def:ordering}, we obtain
\begin{equation} \label{eq:linV}
     n_{N, d} \cdot \Lambda^{-1} (Z(d)) = \begin{cases} \otimes_{i=1}^{s}\mathbb{A}_{p_{i}}(r_{i}, f_{i}) & \text{ if }   d \in D^{\text{nsf}}_{N}, \\
     \otimes_{i=1, i \neq m(\bs{f})}^{s} \mathbb{A}_{p_{i}}(r_{i}, f_{i}) \otimes \mathbb{B}_{p_{m(\bs{f})}}(r_{m(\bs{f})}, 1) & \text{ if } d  \in D_{N}^{\text{sf}}.
     \end{cases}
 \end{equation}
 
\begin{definition} \label{def:etaZ(d)N}
Let $N = \prod_{i=1}^{s}p_{i}^{r_{i}}$ be an odd positive integer. For any divisor $d\in D_{N}$ of $N$, let $Z(d)$ be the divisor described in Definition \ref{def:z-divisors}, whose order is $n_{d}=n(N, d)$, where $n(N, d)$ is defined in Definition~\ref{def:ordering}. We define the eta quotient $h_{d}(z)$ by
\begin{equation*} 
    h_{d}(z) \coloneqq g(\mathbf{r}(n(N, d)\cdot Z(d)))
\end{equation*}
where $\mathbf{r}$ and $g$ are as in Remark~\ref{rem:lambdamap}. By Theorem~\ref{thm:order}, this is a modular function on $X_{0}(N)$. 
\end{definition}

In order to describe the eta quotients that the vectors in Equation~\eqref{eq:linV} encode, which we will do in Proposition~\ref{prop:etaz-divN}, we introduce the following notation.
\begin{definition} Let $N= \prod_{i=1}^{s} p_{i}^{r_{i}}$ be odd. Suppose $d \in D_{N}$ and $\bs{f}=\bs{f}(d) = (f_{1}, \ldots, f_{s})$. We define the set $I(d)$ of tuples of indices as
\begin{equation*}
    I(d) \coloneqq \prod_{i=1}^{s} I(\bs{f})_{\tc{Orchid}{i}}
\end{equation*}
where $I(\bs{f})_{\tc{Orchid}{i}} \coloneqq  \{0, 1\}$ if $f_{i} \neq 1, 2$, and $I(\bs{f})_{\tc{Orchid}{i}} \coloneqq \{0\}$ else. Similarly, the set $I'(d)$ is defined as
\begin{equation*}
    I'(d) \coloneqq \prod_{i=1}^{s} I'(\bs{f})_{\tc{orange}{i}} 
\end{equation*}
where $I'(\bs{f})_{\tc{orange}{i}} \coloneqq  \{0, 1\}$ if $f_{i} \neq 0, 1$ or $i=m(\boldsymbol f)$,  and $I'(\bs{f})_{\tc{orange}{i}} \coloneqq  \{0\}$ otherwise.
If $t = (t_{1}, \ldots t_{s}) \in I(d)$ and $t' = (t'_{1}, \ldots t'_{s}) \in I'(d)$ we further define the map
\begin{align*}
    \tau : I(d) \times I'(d) \rightarrow \Z^{s}
    \end{align*}
given by  $\tau(\tc{Orchid}{t}, \tc{orange}{t'}) = (\tau_{i}(\tc{Orchid}{t}, \tc{orange}{t'}))_{i}$, where each $\tau_i$ is defined as follows. If $f_{i}>2$ we have 
\begin{equation*}
     \tau_{i}(\tc{Orchid}{t}, \tc{orange}{t'}) \coloneqq \begin{cases}
     \tc{Orchid}{t}_{i}(1-\tc{orange}{t'}_{i})(r_{i}-\kappa_{i}) +(1-\tc{Orchid}{t}_{i})((1-\tc{orange}{t'}_{i})(r_{i}-\kappa_{i}-1)+\tc{orange}{t'}_{i}) & \text{if } r_{i}-f_{i} \in 2\Z\\ 
     \tc{Orchid}{t}_{i}((1-\tc{orange}{t'}_{i})\kappa_{i}+\tc{orange}{t'}_{i}r_{i}) +(1-\tc{Orchid}{t}_{i})((1-\tc{orange}{t'}_{i})(r_{i}-1)+\tc{orange}{t'}_{i}(\kappa+1)) & \text{if } r_{i}-f_{i} \not\in 2\Z\\
     \end{cases}
\end{equation*}
recall $\kappa_{i} \coloneqq [\frac{r_{i}-f_{i}+1}{2}]$;
if $f_{i}=2$, we have
\begin{equation*}
    \tau_{i}(\tc{Orchid}{t}, \tc{orange}{t'}) \coloneqq \begin{cases} \tc{orange}{t'}_{i} r_{i} & \text{ if } r_{i} \text{ even,} \\
    \tc{orange}{t'}_{i} (r_{i}-1) + 1 & \text{ if } r_{i} \text{ odd;} \end{cases}
\end{equation*}
if $\bs{f}\in F^{\textup{sf}}$ and $i=m(\boldsymbol f)$, we have
\begin{equation*}
    \tau_{i}(\tc{Orchid}{t}, \tc{orange}{t'}) \coloneqq \tc{orange}{t'_{i}}; 
\end{equation*}
if $\bs{f}\in F^{\textup{sf}}$ and $f_{i}=1$ with $i\neq m(\boldsymbol f)$, or $\bs{f}\not\in F^{\textup{sf}}$ and $f_{i}=1$, we have
\begin{equation*}
    \tau_{i}(\tc{Orchid}{t}, \tc{orange}{t'}) \coloneqq 0; 
\end{equation*}
and for $i$ with $f_{i} = 0$,
\begin{equation*}
    \tau_{i}(\tc{Orchid}{t}, \tc{orange}{t'}) \coloneqq 1 - \tc{Orchid}{t}_{i}. 
\end{equation*}

\end{definition}

The following result explicitly describes $h_{d}(z)$ for all $d \in D_{N}$. The importance of these computations will become clear in Lemma~\ref{lem:delta}; in particular, we will need to compute the leading Fourier coefficients of these eta quotients, which is done in  Proposition \ref{prop:fourcoefZ.N}.
 
\begin{proposition} \label{prop:etaz-divN} Let $N = \prod_{i=1}^{s}p_{i}^{r_{i}}$ be an odd positive integer. For any divisor $d=\prod_{i=1}^{s}p_{i}^{f_{i}}\in D_{N}$ of $N$, let $h_{d}(z)$ be the modular function on $X_{0}(N)$ described in Definition \ref{def:etaZ(d)N}. Then $h_{d}(z)$ is the eta quotient given by
\begin{equation*}
    \left(\prod_{\tc{Orchid}{t} \in I(d)} \prod_{\tc{orange}{t'} \in I'(d)} \left(\eta(\displaystyle \Pi_{i} p_{i}^{\tau_{i}(\tc{Orchid}{t}, \tc{orange}{t'})} z) \right)^{\varepsilon(\tc{Orchid}{t}, \tc{orange}{t'})}\right)^{\beta(d)},
\end{equation*}
where 
$$\beta(d) = \begin{cases} B(p_{1}) & \textup{if } d = p_{1} \clc p_{s} \\
            B_{3}(p_{1}) & \textup{if } N=3M  \textup{ with } 3\nmid M \textup{ and } d = (p_{1} \clc p_{s})/3, \\
            1 & \textup{otherwise;}
\end{cases}$$
where $B$ and $B_{3}$ are given in Definition~\ref{def:B}; and
\begin{equation} \label{eq:epsilon}
    \varepsilon(\tc{Orchid}{t}, \tc{orange}{t'}) \coloneqq \begin{cases} 
                                                                +\prod_{i} p_{i}^{\tc{Orchid}{t}_{i}} & \textup{if } \sum_{i, \ f_{i}=0} (1-\tc{Orchid}{t}_{i}) + \sum_{i, \ f_{i}\neq 0} \tc{orange}{t'}_{i} \text{ is even,} \\
                                      -\prod_{i} p_{i}^{\tc{Orchid}{t_{i}}} & \textup{if } \sum_{i, \ f_{i}=0} (1-\tc{Orchid}{t}_{i}) + \sum_{i, \ f_{i}\neq 0} \tc{orange}{t'}_{i} \text{ is odd.} \\
\end{cases}
\end{equation}
\end{proposition}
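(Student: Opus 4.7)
The plan is to prove the proposition by directly unfolding the vector
\[ \mathbf{r}_d := n(N, d) \cdot \mathbf{r}(Z(d)) \]
into a product of eta functions via Diagram~\eqref{eq:diag}: an integer vector $\mathbf{r} = \sum_{d' \mid N} r_{d'} e(N)_{d'} \in S'(N)$ corresponds to the eta quotient $g(\mathbf{r})(z) = \prod_{d' \mid N} \eta(d'z)^{r_{d'}}$, so identifying $h_d(z) = g(\mathbf{r}_d)$ amounts to enumerating the nonzero entries of $\mathbf{r}_d$ together with their values.

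The first step would be to make equation~\eqref{eq:linV} fully explicit by tracking the scalar factor. Starting from the tensor factorisation $\Lambda(N)^{-1} = 24^{-(s-1)} \bigotimes_i \Lambda(p_i^{r_i})^{-1}$ implicit in Remark~\ref{rem:tensor}, combined with Lemma~\ref{lem:V-imagediv} and the defining relation $n(N,d) = \mathrm{num}(\mathcal{G}(N,d)/24)$, one obtains that $\mathbf{r}_d = \beta(d) \cdot \bigotimes_i \mathbb{A}_{p_i}(r_i, f_i)$ in the non-squarefree case (and analogously with $\mathbb{A}_{p_m}$ replaced by $\mathbb{B}_{p_m}$ when $d \in D_N^{\mathrm{sf}}$), where $\beta(d) = 24/\gcd(24, \mathcal{G}(N,d))$ arises as the denominator of $\mathcal{G}(N,d)/24$ in lowest terms. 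I would then verify that this $\beta(d)$ reduces to the piecewise formula in the statement; this rests on the elementary observation that $p^2-1 \equiv 0 \pmod{24}$ for every prime $p \geq 5$ while $3^2 - 1 = 8$ contributes only a factor of $8$, so the only failures of $24$-divisibility of $\mathcal{G}(N,d)$ occur precisely for $d = p_1 \cdots p_s$ (giving $\beta(d) = B(p_1)$) and, when $N = 3M$ with $3\nmid M$, for $d = (p_1 \cdots p_s)/3$ (giving $\beta(d) = B_3(p_1)$).

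The second step is the combinatorial unpacking of the tensor product. Each $\mathbb{A}_{p_i}(r_i, f_i)$ in Definition~\ref{def:vectbb} has between one and four nonzero coordinates, and the sets $I(\bs{f})_i$ and $I'(\bs{f})_i$ are designed so that $I(\bs{f})_i \times I'(\bs{f})_i$ is in bijection with these nonzero coordinates (with the analogous statement for $\mathbb{B}_{p_m}(r_m, 1)$ accounting for the extra choice at the index $m(\bs{f})$ in the squarefree case). I would verify case-by-case that, under this bijection, the position of the selected coordinate has $p_i$-adic valuation $\tau_i(\tc{Orchid}{t}, \tc{orange}{t'})$ as defined in the statement, and its signed value equals the $i$-th factor of $\varepsilon(\tc{Orchid}{t}, \tc{orange}{t'})$. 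Multiplying across $i$ then turns the tensor product into the asserted product of $\eta$-factors indexed by $\prod_i p_i^{\tau_i}$, and raising to the exponent $\beta(d)$ completes the identification of $h_d(z)$.

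The main obstacle is the case-by-case checking of $\tau_i$ and $\varepsilon$: Definition~\ref{def:vectbb} distinguishes $f = 0, 1, 2$, $f = r - 2\kappa \geq 3$ with $r - f$ even, and $f = r - 2\kappa + 1 \geq 3$ with $r - f$ odd, and each subcase produces a different pattern of nonzero positions and signs that must be matched to the uniform formulas of the proposition; in particular the case $f = 2$ splits further according to the parity of $r$. A secondary delicate point is the squarefree case, where the substitution of $\mathbb{B}_{p_m}$ for $\mathbb{A}_{p_m}$ enlarges the number of nonzero coordinates at index $m$ and is reflected in enlarging $I'(\bs{f})_m$, and one must check that the parity rule defining the sign of $\varepsilon$ continues to record the correct signs of the $\mathbb{B}_{p_m}(r_m,1)$-entries relative to the combined product over $i$.
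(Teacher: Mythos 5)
Your proposal is correct and follows essentially the same route as the paper's proof: both start from Equation~\eqref{eq:linV} and Lemma~\ref{lem:V-imagediv} to express $\mathbf{r}(n(N,d)\cdot Z(d))$ as $\beta(d)$ times a tensor product of the tuples $\mathbb{A}_{p_i}(r_i,f_i)$ (with $\mathbb{B}_{p_m}$ in the squarefree case), identify $\beta(d)$ via the observation that $p^2-1\equiv 0\pmod{24}$ for $p\neq 3$, and then read off the nonzero entries through the bijection with $I(d)\times I'(d)$ encoded by $\tau$ and $\varepsilon$. The case-by-case matching you flag as the main obstacle is exactly the part the paper also treats by direct inspection of Definition~\ref{def:vectbb}, so no further ideas are needed.
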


\begin{proof}

The statement follows from Lemma~\ref{lem:V-imagediv} and Equation~\eqref{eq:linV}, which together give an explicit description of $\Lambda^{-1}(\mathbf{Z}(d)) \in S(N)$ for any $d\in D_{N}$. We write down the proof for the case $3 \nmid N$ as the proof for $3|N$ is analogous to the one given, and thus omitted here.  
If $3\nmid N$, then $\mathfrak{n}(N, d) =   \frac{\mathcal{G}(N, d)}{24}$ for all $d \neq p_{1}\clc p_{s}$. This follows from the fact that $(p^{2}-1) \equiv 0 \pmod{24}$ for any prime $p \neq 3$. Thus, given $d \in D_{N}^{\textup{sf}}$,
we have $\num\left(\frac{\mathcal{G}(N, d)}{24}\right)\frac{24}{k(N)}\prod_{i=1}^{s}g(N, d)=\beta(d)$, where $g(N, d)$ is defined as
$$ g(N, d) \coloneqq \begin{cases}
\prod_{i=1}^{s} g_{p_{i}}(r_{i}, f_{i}) & \textup{if } \boldsymbol f(d) \in F^{\textup{nsf}}, \\ p_{m}^{r_{m}-1}(p_{m}+1)\prod_{i=1, i\neq m}^{s} g_{p_{i}}(r_{i}, f_{i}) & \textup{if } \boldsymbol f(d) \in F^{\textup{sf}}.
\end{cases}$$
In particular, the entries of the tuples $\mathbb{A}_{p}(r, f)$ and $\mathbb{B}_{p}(r, f)$ given in Definition~\ref{def:etaZ(d)N} describe the exponents of the factors $\eta(d'z)$ in $h_{d}(z)$ up to a factor $\beta(d)$. Any zero $p_{i}^{f_{i}}$-entry in $\mathbb{A}_{p_{i}}(r_{i}, f_{i})$ or $\mathbb{B}_{p_{i}}(r_{i}, f_{i})$ leads to the exponent of $\eta(d'z)$ being equal to zero for any $d'$ divisible by $p_{i}^{f_{i}}$ but not by a higher power of $p_i$. 
Hence, for our computation we are only interested in the non-zero entries. 
Notice that the tuples $\mathbb{A}_{p}(r, f)$ and $\mathbb{B}_{p}(r, f)$ given in Definition~\ref{def:etaZ(d)N}, each have $1$, $2$ or $4$ non-zero entries, and those take values in $\{ \pm 1, \pm p \}$. 
Hence, we define two parameters to characterize our non-zero entries: $t$ for entries with the same absolute value, and $t'$ for the sign. 
That is, given $t$ in $I(d)$, the value of $t_{i}$ points to those entries in $\mathbb{A}_{p_{i}}(r_{i}, f_{i})$ or $\mathbb{B}_{p_{i}}(r_{i}, f_{i})$ with either absolute value $1$ or absolute value $p_{i}$; 
similarly, given $t'$ in $I'(d)$, the value of $t'_{i}$ chooses for the sign $+$ or $-$ of the entry in $\mathbb{A}_{p_{i}}(r_{i}, f_{i})$ or $\mathbb{B}_{p_{i}}(r_{i}, f_{i})$. Notice that in the case where $f_{i}=0$ we parameterize both properties (absolute value and sign) only with the variable $t$, as the tuple $\mathbb{A}_{p_{i}}(r_{i}, f_{i})$ has then exactly two non-zero entries: one with value $p$ and another one with value -1. 
So given both a $t$ and a $t'$, we can obtain a unique non-zero entry of $\mathbb{A}_{p_{i}}(r_{i}, f_{i})$ or $\mathbb{B}_{p_{i}}(r_{i}, f_{i})$. 
We encode the labelling of these entries for all $i$ through $\tau(t, t')$ and similarly, $\varepsilon(t, t')$ encodes the value of the entry. 
More specifically, we have $\displaystyle \mathbf{r}(n(N, d) \cdot Z(d)) = \sum_{t \in I(d)} \sum_{t' \in I'(d)} \varepsilon(t, t')\cdot e(N)_{d'(t, t')}$ where $d'(t, t') = \prod_{i}^{s} p_{i}^{\tau_{i}(t, t')}$, which shows our result. \end{proof}

We illustrate the construction in Propositions~\ref{prop:etaz-divN} with an example. 

\begin{example} Take $N=pq^{3}$ with $p \precsim q$ distinct primes. Consider $d = p$. Hence, $f_{p}=1$ with $p_{m(\bs{f})}=p$, and $f_{q}=0$. Then we have
$$ I(d) = \{0\}_{\tc{Orchid}{p}} \times \{0, 1\}_{\tc{Orchid}{q}} \textup{ and } I'(d) = \{0, 1\}_{\tc{orange}{p}} \times \{0\}_{\tc{orange}{q}}. $$ 
Thus we have $ I(d) = \{(\tc{Orchid}{0}, \tc{Orchid}{0}), (\tc{Orchid}{0}, \tc{Orchid}{1})\}$ and $I'(d) = \{(\tc{orange}{0}, \tc{orange}{0}), (\tc{orange}{1}, \tc{orange}{0})\}$. Take $\tc{Orchid}{t}=(\tc{Orchid}{0}, \tc{Orchid}{0})$ and $\tc{orange}{t'} = (\tc{orange}{0}, \tc{orange}{0})$. By definition we have 
$$\tau(\tc{Orchid}{t}, \tc{orange}{t'}) = (\tau_{p}(\tc{Orchid}{0}, \tc{orange}{0}),\tau_{q}(\tc{Orchid}{0}, \tc{orange}{0})) = (0_{p}, 1_{q})$$ 
and 
$$\varepsilon(\tc{Orchid}{t}, \tc{orange}{t'}) = -1.$$ 
Likewise, we can compute $\tau(\tc{Orchid}{t}, \tc{orange}{t'})$ and $\varepsilon(\tc{Orchid}{t}, \tc{orange}{t'})$ for the rest of pairs $(\tc{Orchid}{t}, \tc{orange}{t'}) \in I(d) \times I'(d)$; we obtain
\begin{itemize}
    \item if $\tc{Orchid}{t}=(\tc{Orchid}{0}, \tc{Orchid}{0})$ and $\tc{orange}{t'} = (\tc{orange}{1}, \tc{orange}{0})$, $\tau(\tc{Orchid}{t}, \tc{orange}{t'}) = (1_{p}, 1_{q})$ and $\varepsilon(\tc{Orchid}{t}, \tc{orange}{t'}) = 1$;
    \item if $\tc{Orchid}{t}=(\tc{Orchid}{0}, \tc{Orchid}{1})$ and $\tc{orange}{t'} = (\tc{orange}{0}, \tc{orange}{0})$, $\tau(\tc{Orchid}{t}, \tc{orange}{t'}) = (0_{p}, 0_{q})$ and $\varepsilon(\tc{Orchid}{t}, \tc{orange}{t'}) = q$;
    \item if $\tc{Orchid}{t}=(\tc{Orchid}{0}, \tc{Orchid}{1})$ and $\tc{orange}{t'} = (\tc{orange}{1}, \tc{orange}{0})$, $\tau(\tc{Orchid}{t}, \tc{orange}{t'}) = (1_{p}, 0_{q})$ and $\varepsilon(\tc{Orchid}{t}, \tc{orange}{t'}) = -q$.
\end{itemize}
Moreover $\beta(d)=1$, thus we can compute 
    $$h_{p}(z) = \frac{\eta(pq\cdot z)\eta(z)^{q}}{\eta(q\cdot z)\eta(p\cdot z)^{q}}.$$
\end{example}

\section{The generalised Jacobians \texorpdfstring{$J_{0}(N)_{\mathbf{m}}$}{JONm} } \label{sec:generalised jacobians}

Generalised Jacobians -- as defined by Rosenlicht \cite{rosenlicht1954generalised} and Serre \cite{serre1959groupes} -- provide a generalisation of the usual Jacobian suitable for curves with singularities at specific points -- i.e., for ramified coverings. If $C$ is a smooth projective curve and $\mathbf{m} = \sum_{P \in C} a_{P}P \in \text{Div}(C)$ is a modulus (that is, an effective divisor), we consider the curve $C_{\mathbf{m}}$ that results after the identification of all the points in $\mathbf{m}$. 
The generalised Jacobian $J(C_{\mathbf{m}})$ is the group of invertible line bundles on the singular curve $C_{\mathbf{m}}$. 
It is an abelian variety which is the extension of the usual Jacobian variety $J(C)$ by a commutative affine algebraic group. 

Following the interest in the rational torsion of the modular Jacobian arising from the statements in the generalised Ogg's conjecture -- see Conjecture~\ref{conj:A} -- and Conjecture~\ref{conj:B}, recent works by Yamazaki-Yang \cite{yamazaki2016rational} and Wei-Yamazaki \cite{wei2019rational} consider similar questions for the generalised Jacobian of the curve $X_0(N)$ with modulus given by the sum of the cusps of the modular curve.
In the present article, we are going to work with the same set-up and from now on we will fix a prime number $l$ and an odd positive number $N=\prod_{i=1}^{s}p_{i}^{r_{i}}$ whose prime decomposition is ordered according to Assumption~\ref{def:ordering} and with respect to the fixed prime $l$. Hence, recall that we take $C=X_{0}(N)$ and $\mathbf{m} = \sum_{ P \in \text{Cusps}(X_0(N))} P$. 
As in Section~\ref{ssec:cuspsubgroup}, for each $d'|N$ we define $P_{d'}$ as the divisor in $\text{Div}(X_{0}(N))$ given by the sum of all the level-$d'$ cusps of $X_{0}(N)/\C$. Recall that the action of $\text{Gal}(\Q(\zeta_{M_{d'}})/\Q)$ is transitive on the set of cusps of level $d'$, cf.\ \cite[Theorem~2.17]{yoo2019rationalcusp}. Hence, $\sigma(P_{d'})= P_{d'}$ for all $\sigma \in \text{Gal}(\Q(\zeta_{M_{d'}})|\Q)$ and so $P_{d'} \in J_{0}(N)(\Q)$ is a rational point in $X_{0}(N)$. Moreover, we denote the residue field of the point $P_{d'}$ by $\Q(P_{d'})$. We will denote the generalised modular Jacobian with respect to the modulus $\mathbf{m}$ by $J_{0}(N)_{\mathbf{m}}$; it is the extension of $J_{0}(N)$ by a commutative affine algebraic group; namely, there is a short exact sequence 
\begin{equation} \label{eq:genextjac}
    0 \rightarrow L_{\mathbf{m}} \rightarrow J_{0}(N)_{\mathbf{m}} \rightarrow J_{0}(N) \rightarrow 0, 
\end{equation}
where $L_{\mathbf{m}}$ is given by the quotient 
\begin{equation*} 
    0 \rightarrow \mathbb{G}_{m} \rightarrow \prod_{d'|N} \text{Res}_{\Q(P_{d'})/\Q} \mathbb{G}_{m}  \rightarrow L_{\mathbf{m}} \rightarrow 0.
\end{equation*}
We can apply the functor $H^{1}(\Q, -)$ to the short exact sequence in \eqref{eq:genextjac} and get the following exact sequence:
\begin{equation} \label{ExtJac}
     0 \rightarrow \bigoplus_{d'|N, \ d' \neq N} \Q(P_{d'})^{\times} \rightarrow J_{0}(N)_{\mathbf{m}}(\Q) \rightarrow J_{0}(N)(\Q) \rightarrow 0.
\end{equation}
Applying the left-exact functor $\text{Hom}(\Q/\Z, -)$ to the short exact sequence in Equation~\eqref{ExtJac}, we obtain the exact sequence
\begin{equation} \label{eq:exseq}
    0 \rightarrow \bigoplus_{d'|N, \ d' \neq N} (\Q(P_{d'})_{\tor})^{\times} \rightarrow J_{0}(N)_{\mathbf{m}}(\Q)_{\tor} \rightarrow J_{0}(N)(\Q)_{\tor}
    \xrightarrow{\bs \delta}  \bigoplus_{d'|N, \ d' \neq N} \Q(P_{d'})^{\times}\otimes \Q/\Z,
\end{equation}
where the connecting map $\bs \delta$ is a group homomorphism. In their papers, Yamazaki-Yang and Wei-Yamazaki determine the structure of $ J_{0}(N)_{\mathbf{m}}(\Q)_{\tor}$ for $N$ a power of a prime and squarefree, respectively, by using known cases of the generalised Ogg's Conjecture and computing the kernel of the group homomorphism $\bs \delta$ when restricted to $C_{N}(\Q)$

The key lemma in both papers -- and also central to our paper -- is Lemma 2.3.1 in \cite{yamazaki2016rational}, which we present here.

\begin{lemma}[{\cite[Lemma~2.3.1]{yamazaki2016rational}}] \label{lem:delta} Let $D = \sum_{d'|N} a_{d'}\cdot P_{d'} \in \textup{Div}^{0}(X_{0}(N))$ be a degree-zero divisor supported on $\textbf{m}$ such that $[D] \in J_{0}(N)(\Q)_{\tor}$. Let $n \in \Z_{>0}$ so that $n\cdot[D]= 0$, i.e., so that there exists $f \in \Q(X_{0}(N))$ such that $\Div(f)=n \cdot D$. We have
\begin{equation*}
    \bs \delta([D])= \left( \left(\frac{f}{t_{P_{N}}^{na_{N}}}\right)(P_{N})\left(\frac{t_{P_{d'}}^{na_{d'}}}{f}\right)(P_{d'}) \right)_{d'|N,d' \neq N} \otimes \frac{1}{n} \in \bigoplus_{d'|N,d \neq N} \Q(P_{d'})^{\times} \otimes \Q/\Z, 
\end{equation*}
 where $t_{P_{d'}}$ is a uniformiser of $D$ at a cusp of level $d'$. Notice that this description does not depend on the choice of $f$ nor on that of $t_{P_{d'}}$. 
\end{lemma}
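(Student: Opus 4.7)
The plan is to unravel the connecting homomorphism $\bs\delta$ of \eqref{eq:exseq} by hand, using the Rosenlicht-Serre description of $J_{0}(N)_{\mathbf m}(\Q)$ as classes of degree-zero divisors supported away from $\mathbf m$ modulo the $\mathbf m$-equivalence in which $E\sim_{\mathbf m}E'$ iff $E-E'=\Div(g)$ for some $g\equiv 1\pmod{\mathbf m}$. Under this description the kernel $L_{\mathbf m}(\Q)$ identifies with $\prod_{d'\mid N}\Q(P_{d'})^{\times}/\Q^{\times}$ by sending $[\Div(g)]_{\mathbf m}$ (for $g$ with no zeros or poles at the cusps) to the tuple $(g(P_{d'}))_{d'\mid N}$ of values at the cusps, modulo the diagonal $\Q^{\times}$. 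Since the cusp $P_{N}=\infty$ is $\Q$-rational, its coordinate can be used as a ``gauge'' to identify this quotient with the direct sum $\bigoplus_{d'\mid N,\,d'\neq N}\Q(P_{d'})^{\times}$ appearing in \eqref{ExtJac}.

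Given $[D]\in J_{0}(N)(\Q)_{\tor}$ with $n[D]=0$ and $\Div(f)=nD$, I would lift $[D]$ to $J_{0}(N)_{\mathbf m}(\Q)$ by choosing any $h\in\Q(X_{0}(N))^{\times}$ with $\ord_{P_{d'}}(h)=a_{d'}$ at every level; such an $h$ exists by a standard moving-lemma argument. Then $D':=D-\Div(h)$ is supported away from $\mathbf m$, and its class $\widetilde D:=[D']_{\mathbf m}$ lifts $[D]$. Multiplying by $n$ yields $n\widetilde D=[\Div(f/h^{n})]_{\mathbf m}\in L_{\mathbf m}(\Q)$, and by the description above this corresponds to the tuple $\bigl((f/h^{n})(P_{d'})\bigr)_{d'\mid N}$ modulo the diagonal $\Q^{\times}$.

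Next, expanding via the uniformiser and using $\ord_{P_{d'}}(f)=na_{d'}$, $\ord_{P_{d'}}(h)=a_{d'}$,
\begin{equation*}
(f/h^{n})(P_{d'})=\bigl(f/t_{P_{d'}}^{na_{d'}}\bigr)(P_{d'})\cdot\bigl((h/t_{P_{d'}}^{a_{d'}})(P_{d'})\bigr)^{-n}.
\end{equation*}
Tensoring with $1/n\in\Q/\Z$ kills the second factor, and passing from $\prod_{d'}\Q(P_{d'})^{\times}/\Q^{\times}$ to $\bigoplus_{d'\neq N}\Q(P_{d'})^{\times}$ via the $P_N$-gauge produces the ratios
\begin{equation*}
\frac{(f/t_{P_N}^{na_N})(P_N)}{(f/t_{P_{d'}}^{na_{d'}})(P_{d'})}
\end{equation*}
for $d'\neq N$, which is precisely the expression in the statement. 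The same $\otimes 1/n$ argument yields independence from the choice of $t_{P_{d'}}$ (distinct uniformisers differ by a local unit, raised to the $n$-th power), while independence from $f$ (unique up to a global constant in $\Q^{\times}$) is absorbed by the $\Q^{\times}$-gauge quotient.

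The only genuinely delicate point, in my view, is the bookkeeping of the gauge identification of $L_{\mathbf m}(\Q)$ with $\bigoplus_{d'\neq N}\Q(P_{d'})^{\times}$ together with the verification that the auxiliary function $h$ contributes only an $n$-th power at each cusp; once this is cleanly set up, the formula follows directly from the snake-lemma definition of $\bs\delta$.
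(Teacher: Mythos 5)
The paper gives no proof of this lemma---it is quoted verbatim from Yamazaki--Yang \cite[Lemma~2.3.1]{yamazaki2016rational}---so there is no internal argument to compare against, but your reconstruction is correct and is essentially the standard proof. You compute the connecting map of the $-\otimes\Q/\Z$ (Tor) long exact sequence exactly as one should: lift $[D]$ to $J_{0}(N)_{\mathbf m}(\Q)$ by subtracting $\Div(h)$ for an auxiliary $h$ with $\ord_{P_{d'}}(h)=a_{d'}$, observe that $n$ times the lift is $[\Div(f/h^{n})]_{\mathbf m}\in L_{\mathbf m}(\Q)$, and note that both the $h$-contribution and a change of uniformiser alter the leading coefficients only by $n$-th powers, which vanish under $\otimes\frac{1}{n}$, while the quotient by the diagonal $\Q^{\times}$ is handled by your $P_{N}$-gauge and also absorbs the scalar ambiguity in $f$. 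The only point left to convention is the orientation of the identification $L_{\mathbf m}(\Q)\simeq\bigoplus_{d'\neq N}\Q(P_{d'})^{\times}$ (your ratio could a priori come out inverted), but this is immaterial for the paper, which only uses $\ker(\bs\delta)$.
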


Our strategy follows that of Yamazaki-Yang and Wei-Yamazaki: in the case $N=p^{r}q^{s}$, we compute the $l$-primary part of the kernel of $\bs \delta$ for $l$ any odd number such that $l^{2} \nmid 3N$. The main tool -- and reason -- for this are the results in Theorem~\ref{thm:yoogg}; namely, given a positive integer $N$ we have 
\begin{equation*}
    C(N)[l^{\infty}] = J_{0}(N)(\Q)_{\tor}[l^{\infty}]
\end{equation*}
for any odd prime $l$ such that $l^{2}$ does not divide $3N$. This is one of the proven cases of the generalised Ogg's conjecture and it allows us to use the divisors constructed in the previous section for our purposes.

We know from Theorem~$\ref{thm:yooz}$ that  
\begin{equation*}
     C(N) \simeq \langle [Z(d)] : d \in D^{\textup{sf}}_{N} \rangle \oplus \bigoplus_{d \in D^{\textup{nsf}}_{N}} \langle [Z(d)] \rangle. 
 \end{equation*}
Therefore, the image of $C(N)$ under the homomorphism $\bs \delta$ satisfies
\begin{equation} \label{eq:im}
     \textup{im}(\bs \delta) \simeq \langle \bs \delta([Z(d)]) : d \in D_{N} \rangle.
 \end{equation}
Notice from the description obtained in Lemma~\ref{lem:delta} that to compute $\bs \delta([Z(d)])$ for each $d\in D_{N}$, it is sufficient to compute $\displaystyle \left(\frac{h_{d}}{t_{P_{d'}}^{na_{d'}}}\right)(P_{d'})$ for each $d'$ dividing $N$, i.e., the leading Fourier coefficient of the modular function $h_{d}(z)$ at $P_{d'}$. After base change to $\C$, these Fourier coefficients can be computed, as they are given by the Fourier coefficients of the functions at the $\C$-valuated points of each $P_{d'}$. However, since cusps of the same level are Galois conjugate, it will be sufficient to pick a cusp $Q_{d'}$ on $X_{0}(N)$ for each level $d'$ and compute the leading Fourier coefficient of $h_{d}(z)$ at each $Q_{d'}$; this will be carried out in Section~\ref{sec:imdelt}, and more precisely in Proposition~\ref{prop:fourcoefZ.N}. Furthermore, since $\bs \delta$ is, a homomorphism of abelian groups, from Theorem \ref{thm:yoogg} we have \begin{equation} \label{eq:kerhomomorphism} \ker(\bs \delta|_{J_{0}(N)(\Q)_{\tor}[l^{\infty}]}) =  \ker(\bs \delta|_{C(N)[l^{\infty}]})=\ker(\bs \delta|_{C(N)})[l^{\infty}] \end{equation}
for $l$ an odd prime such that $l^{2} \nmid 3N$ -- that is $l \neq 2$ and possibly $l\neq p$ for primes $p$ dividing $N$. This means that we can conduct our computations by first looking into  $\ker(\bs \delta|_{C(N)})$ using Equation~\eqref{eq:im}, and then look into its $l$-primary part for any $l$. As a result of this process, we obtain the main result of this paper.
 
\begin{theorem} \label{thm:mainthm} Let $N$ be an odd number. Consider the set $D_{N}^{F}$ of divisors $d$ of $N$ divisible by at least two distinct primes. Then for any odd prime~$l$ with $l^{2}$ not dividing $3N$ and for any $d\in D^{F}_{N}$ there exists a degree-$0$ cuspidal divisor $E(d)$ such that 
\begin{equation*}
    J_{0}(N)_{\mathbf{m}}(\Q)_{\tor}[l^{\infty}] = \bigoplus_{d \in D_{N}^{F}} \langle [E(d)] \rangle[l^{\infty}]\simeq \bigoplus_{d \in D_{N}^{F}}  \Z / l^{\val_{l}(\bs \varepsilon(N, d))} \Z,
\end{equation*}
where $\bs \varepsilon(N, d)= \order(E(d))$ equals the numerator of the quantity $\frac{\mathcal{G}_{\mathcal{M}}(N, d)}{24}$ when written in lowest terms.
The integer $\mathcal{G}_{\mathcal{M}}(N, d)$ is given in Definition~\ref{def:ordering}.\textup{(c)}.
\end{theorem}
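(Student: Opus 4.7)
The plan is to reduce the theorem to an explicit description of $\ker(\bs\delta|_{C(N)})$ at each odd prime $l$ with $l^{2}\nmid 3N$, and then identify this kernel using the eta-quotient representatives $h_{d}$ from Proposition~\ref{prop:etaz-divN}. First, taking the $l$-primary part of \eqref{eq:exseq}, I would show that the leftmost term $\bigoplus_{d'\mid N,\,d'\neq N}(\Q(P_{d'})_{\tor})^{\times}[l^{\infty}]$ vanishes under the hypotheses: since $\Q(P_{d'})\subseteq\Q(\zeta_{M_{d'}})$ with $M_{d'}=(d',N/d')$ and $M_{d'}^{2}\mid N$, the condition $l^{2}\nmid 3N$ forces $l\nmid M_{d'}$, so no non-trivial $l$-power roots of unity appear. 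Together with \eqref{eq:kerhomomorphism}, this identifies $J_{0}(N)_{\mathbf{m}}(\Q)_{\tor}[l^{\infty}]$ with $\ker(\bs\delta|_{C(N)})[l^{\infty}]$.

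Next, using Theorem~\ref{thm:yooz} I would write $C(N)$ in terms of the generators $[Z(d)]$ and compute $\bs\delta([Z(d)])$ for each $d\in D_{N}$ via Lemma~\ref{lem:delta}. This reduces the task to evaluating the leading Fourier coefficients of $h_{d}(z)$ at every cusp $P_{d'}$, which is tractable because Proposition~\ref{prop:etaz-divN} expresses $h_{d}$ as an explicit product of translates of $\eta$; this local analysis is to be performed in Section~\ref{sec:imdelt}. The output is a closed expression for the image vector $\bs\delta([Z(d)])$ inside $\bigoplus_{d'\mid N,\,d'\neq N}\Q(P_{d'})^{\times}\otimes\Q/\Z$, compatible with the tensor decomposition of Remark~\ref{rem:tensor}, and from it a complete description of $\im(\bs\delta|_{C(N)})$.

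With the image in hand, I would construct the divisors $E(d)$, for $d\in D_{N}^{F}$, as specific $\Z$-linear combinations of the generators $Z(d')$ that are annihilated by $\bs\delta$ while respecting the direct-sum structure of Theorem~\ref{thm:yooz}. Heuristically, for $d\in D_{N}^{\textup{nsf},F}$ the divisor $E(d)$ should be a small modification of $Z(d)$ by a $Z(d^{*})$ supported at cusps of prime-power level, absorbing the factor $\mathcal{M}(\bs f)$ that enters $\mathcal{G}_{\mathcal{M}}(N,d)$; for $d\in D_{N}^{\textup{sf},F}$ the correction is dictated by the combinatorial data of Definition~\ref{def:ordering}.(b). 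Once $E(d)$ is pinned down, its order follows from Theorem~\ref{thm:order} and Proposition~\ref{prop:modulareta}: the smallest positive integer scaling its eta-quotient exponent vector into the lattice of modular functions on $X_{0}(N)$ is exactly $\num(\mathcal{G}_{\mathcal{M}}(N,d)/24)$. Linear independence of the classes $[E(d)]$ in $C(N)[l^{\infty}]$ will then follow from the corresponding independence statement for the $Z(d)$'s in Theorem~\ref{thm:yoozl}.

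The main obstacle I expect is the final matching step: showing that $\bigoplus_{d\in D_{N}^{F}}\langle[E(d)]\rangle[l^{\infty}]$ exhausts $\ker(\bs\delta|_{C(N)})[l^{\infty}]$ and that the cyclic factor $\Z/l^{\val_{l}(\bs\varepsilon(N,d))}\Z$ matches precisely the slot indexed by $d$. The case-by-case definition of $\mathcal{M}(\bs f)$ in Definition~\ref{def:ordering}.(c), together with the auxiliary indices $m,m',n,n',n''$ of Definition~\ref{def:ordering}.(b), signals that this verification will split into several combinatorial sub-cases and require careful tracking of cancellations in the leading Fourier coefficients of $h_{d}$ across all cusp levels; this is where the bulk of the technical work in Section~\ref{sec:kerdelt} will be concentrated.
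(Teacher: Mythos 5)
Your overall reduction is the same as the paper's: take the $l$-primary part of the exact sequence~\eqref{eq:exseq} (the torsion-unit term indeed dies since $l^{2}\nmid 3N$ forces $l\nmid M_{d'}$), invoke Theorem~\ref{thm:yoogg} via Equation~\eqref{eq:kerhomomorphism} to replace $J_{0}(N)(\Q)_{\tor}[l^{\infty}]$ by $C(N)[l^{\infty}]$, and compute $\bs\delta([Z(d)])$ through Lemma~\ref{lem:delta} and the leading Fourier coefficients of the eta quotients $h_{d}$ (Propositions~\ref{prop:etaz-divN} and \ref{prop:fourcoefZ.N}). One ingredient you omit already at this stage: the eta transformation formula produces $24$th roots of unity at every cusp, so the leading coefficients only determine the image after factoring $\bs\delta$ through $\Omega\otimes\Q/\Z$ and proving the resulting map is injective; this is Lemma~\ref{lem:inject} (a Kummer-theoretic/inflation-restriction argument), and without it the passage from Fourier coefficients to $\im(\bs\delta)$ is not justified.

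The genuine gap is in your third step. Your candidate $E(d)$ is what the paper calls $D(\bs f)$, namely $Z(\bs f)$ corrected by a multiple of a $Z$ supported at prime-power (or single-prime) level, and you assert that independence of the classes $[E(d)]$ ``follows from the corresponding independence statement for the $Z(d)$'s in Theorem~\ref{thm:yoozl}.'' This step fails: the squarefree block $\langle [Z(d)]:d\in D_{N}^{\textup{sf}}\rangle$ in Theorem~\ref{thm:yooz} is not itself a direct sum of the $\langle[Z(d)]\rangle$, the corrections in the $D(\bs f)$ for different $\bs f$ share common terms, and these divisors do not satisfy Yoo's linear-independence criteria, which is exactly why the paper cannot stop there. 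Two further pieces of work are needed and constitute the heart of the proof: (i) showing that the $D(\bs f)$, $\bs f\in F$, exhaust $\ker(\overline{\bs\delta}|_{C(N)})$, which requires proving that the image tuples $v_{\bs f(d)}$ admit no relations beyond the ones used — done via the explicit row-reduction to an almost-triangular matrix in Lemma~\ref{lem:matrix} and the Claims inside Theorem~\ref{thm:genkerdeltN}; and (ii) replacing the $D(\bs f)$ by a second, recursively defined family $E(\bs f)$ (Definitions~\ref{def:E'(f)} and \ref{def:E(f)}), chosen using the ordering of Definition~\ref{def:ordering}.(a) so that the coefficient of $D(\bs f)$ is an $l$-adic unit, so that Yoo's criterion applies with the tailored orderings on cusp labels (Lemmas~\ref{lem:EgenD}, \ref{lem:E(f)cicFsf}, \ref{lem:E(f)cicFib}) and yields both the cyclic decomposition and, via the computation of $V(E(\bs f))$ and its GCD, the orders $\num\left(\mathcal G_{\mathcal M}(N,d)/24\right)$ in Proposition~\ref{prop:orderE}. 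You correctly identify this matching as the main obstacle, but you propose no mechanism for it, and the mechanism is precisely where the factor $\mathcal M(\bs f)$ and the case analysis of Definition~\ref{def:ordering}.(b)--(c) are produced; as written, the proposal leaves the theorem's essential content unproved.
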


\begin{proof} In the following sections we prove this theorem in parts. More precisely, in Section~\ref{sec:imdelt} we describe the image of $C(N)$ under the homomorphism $\bs \delta$ using the divisors $Z(d)$ and Theorem~\ref{thm:yooz} -- see Proposition~\ref{prop:fourcoefZ.N}. 
Next, in Section~\ref{sec:kerdelt} we construct a set of deree-$0$ rational cuspidal divisors $\{D(\bs{f})\}_{\bs{f} \in F}$ in the kernel of the map $\bs \delta$ and using the results in Section~\ref{sec:imdelt} we show that they generate the kernel of $\bs \delta|_{C(N)}$ -- see Theorem~\ref{thm:genkerdeltN}. Finally, in Section~\ref{sec:ldecompker} we
find a cyclic decomposition of the $l$-primary parts of $\bs \delta|_{C(N)}$. In particular, we construct yet another set of divisors $\{E(\bs{f})\}_{\bs{f} \in F}$ such that $\langle \{[D(\bs{f})]: \ \bs{f}\in F\}\rangle \otimes \Z_{l} = \bigoplus_{\bs{f} \in F} \langle [E(\bs{f})]\rangle \otimes \Z_{l}$ -- see Lemmas~\ref{lem:EgenD} and \ref{lem:E(f)cic}~--, and we compute the $l$-adic valuation of the order of the divisors $E(\bs{f})$ -- see Proposition~\ref{prop:orderE}. 
Using Theorem~\ref{thm:yoogg} we can identify $\ker(\bs \delta|_{J_{0}(N)(\Q)_{\tor}[l^{\infty}]}) = \ker(\bs \delta|_{C(N)})[l^{\infty}]$; therefore, the proof of the theorem follows from the description of $\ker(\bs \delta|_{C(N)})[l^{\infty}]$ in Theorem~\ref{thm:genkerdeltN} and Lemma~\ref{lem:EgenD}, and the description of $J_0(N)_{\mathbf{m}}(\Q)_{\tor}$ given in the exact sequence in Equation~\eqref{eq:exseq}.
\end{proof}

In \cite[Proposition~1.3.2]{yamazaki2016rational}, Yamazaki--Yang show that whenever $p$ and $q$ are both congruent to 1 modulo~12, then  $J_{0}(pq)_{\mathbf{m}}(\Q)_{\tor} = \Z/\frac{(p-1)(q-1)}{24}\Z$; and Wei-Yamazaki in \cite[Theorem~1.2.3]{wei2019rational} extend their results by showing that for any odd $N=\prod_{i}^{s}p_{i}$ we have $J_{0}(N)_{\mathbf{m}}(\Q)_{\tor} = \oplus_{\bs{f} \in F_{\textup{sf}}}\Z/\left(\prod_{i=1}^{s}(p-1)^{f_{i}}\prod_{i=1}^{s}(p+1)^{1-f_{i}}\right)\Z$ up to $2$--primary and $3$--primary torsion. Using combinatorial arguments, in both cases we obtain that (up to $2$- and $3$-torsion)
$$ \left|J_{0}(N)_{\mathbf{m}}(\Q)_{\tor}\right| = \prod_{i=1}^{s}(p_{i}-1)^{2^{s-1}-1}\prod_{i=1}^{s}(p_{i}+1)^{2^{s-1}-s-2}.$$
Likewise, for $N$ squarefree, the results in Theorem \ref{thm:mainthm} together with combinatorial arguments yield (up to $2$- and $3$-torsion)
$$ \left|J_{0}(N)_{\mathbf{m}}(\Q)_{\tor} \right| = \prod_{\bs{f} \in F_{\textup{sf}}} \mathcal{M}(\bs{f})\left((p_{i}-1)(p_{i}+1)\right)^{1-f_{i}} = \prod_{\bs{f} \in F_{\textup{sf}}} \mathcal{M}(\bs{f}) \prod_{i=1}^{s}(p_{i}-1)^{2^{s-1}-s-2}\prod_{i=1}^{s}(p_{i}+1)^{2^{s-1}-s-2} $$ $$ =\prod_{i=1}^{s}(p_{i}-1)^{2^{s-1}-1}\prod_{i=1}^{s}(p_{i}+1)^{2^{s-1}-s-2}.$$
The equality $\prod_{\bs{f} \in F_{\textup{sf}}} \mathcal{M}(\bs{f})=\prod_{i=1}^{s}(p_{i}-1)^{{s-1}}$ follows from the definition of $\mathcal{M}(N, \bs{f})$ in Definition~\ref{def:ordering} by employing a counting argument in each of the cases. 
Thus, our results in Theorem~\ref{thm:mainthm} coincide with both computations when $N$ is squarefree and give an explicit description of $J_{0}(N)_{\mathbf{m}}(\Q)_{\tor}[l^{\infty}]$ for $l$ not 2 nor $l^{2}|3N$. 

\sloppy For $N=\prod_{i}^{s}p_{i}^{r_{i}}$ divisible by two distinct primes, the results in Theorem~\ref{thm:mainthm} show that -- away from $2$-primary torsion and $l$-primary torsion for $l^{2}$ dividing $N$ -- $J_{0}(N)_{\mathbf{m}}(\Q)_{\tor}[l^{\infty}]$ increases (linearly) with each $r_{i}$. This results contrasts with the case of $N=p^{r}$ computed in \cite[Theorem~1.1.3]{yamazaki2016rational}, where one has that the rational torsion of $J_{0}(p^{r})_{\mathbf{m}}$ is trivial up to $2$-torsion and $p$-torsion for any $r$.
Hence, the case $N=\prod_{i-1}^{s} p_{i}^{r_{i}}$ resembles more the situation of squarefree level computed in \cite[Theorem~1.2.3]{wei2019rational}, as in both cases the size of  $J_0(N)_{\mathbf{m}}(\Q)_{\tor}$ -- up to 2-primary torsion and 3-primary torsion -- increases (exponentially) with $s$ and (linearly) with each $p_{i}$. This is, again, different from the situation in the prime-power case, where we have that $J_{0}(p^{r})_{\mathbf{m}}$ is trivial up to $2$-torsion and $p$-torsion for any $p$ \cite[Theorem~1.1.3]{yamazaki2016rational}.

The changes observed in the torsion of the generalised Jacobian diverge from the study of the usual Jacobian, where the behaviour of $J_{0}(N)(\Q)_{\tor}$ is more even throughout the different types of level. The size of $J_{0}(N)(\Q)_{\tor}[l^{\infty}]$ for $N=\prod_{i=1}^{s} p_{i}$ squarefree and $l \neq 2, 3$, increases linearly with each prime $p_{i}$ and (exponentially) with $s$ -- see \cite{ohta2013eisenstein}; for $N=p^{r}$ and $l \neq 2, p$, it also increases linearly with $p$ and $r$  --  see \cite{lorenzini1995torsion} or \cite{yoo2019rationalcusp}~--; and for $N=\prod_{i=1}^{s}p_{i}^{r_{i}}$ and $l\neq 2, p_{i}$, it increases (linearly) with each $p_{i}$, $r_{i}$ and (exponentially) with $s$ -- see \cite[Theorem~1.7]{yoo2021rational}.

Furthermore, the results in \cite{yamazaki2016rational}, \cite{wei2019rational} and Theorem~\ref{thm:mainthm} point towards the idea that the size of the rational torsion of $J_{0}(N)$ is always bigger than that of  $J_{0}(N)_{\mathbf{m}}$. For squarefree level $N=\prod_{i=1}^{s} p_{i}$, up to 2-primary torsion and 3-primary torsion, $J_{0}(N)(\Q)_{\tor}$ is given by the product of $2^{s}-1$ cyclic subgroups, while of $J_{0}(N)_{\mathbf{m}}(\Q)_{\tor}$ is isomorphic to the product of $2^{s}-(s+1)$ of those cyclic subgroups. For prime-power levels $N=p^{r}$, up to 2-primary torsion and $p$-primary torsion, $J_{0}(p^{r})(\Q)_{\tor}$ is given by the product of $2r-1$ cyclic groups, while the rational torsion of $J_{0}(N)_{\mathbf{m}}$ is trivial up to 2-primary torsion and $p$-primary torsion. For odd level $N~=~\prod_{i=1}^{s}p_{i}$, the group $J_{0}(N)(\Q)_{\tor}[l^{\infty}]$ is given by the product of $(\prod_{i=1}^{s}(r_{i}+1)-1)$ cyclic groups -- see \cite[Theorem~1.7]{yoo2019rationalcusp} --,  while  $J_{0}(N)_{\mathbf{m}}(\Q)_{\tor}[l^{\infty}]$ is the product of $\prod_{i=1}^{s} r_{i}$ subgroups of those cyclic groups.

\section{The image of the map \texorpdfstring{$\bs \delta$}{d}} \label{sec:imdelt}

In this section we compute the image of $C(N)$ under the connecting map $\bs \delta$.
In order to do so, we first generalise \cite[Proposition~6.1.1]{yamazaki2016rational} in Lemma \ref{lem:inject} and give the main result in Proposition \ref{prop:fourcoefZ.N}.
Recall from Theorem~\ref{thm:yooz} that $C(N) =  \langle [Z(d)] : d \in D^{\textup{sf}}_{N} \rangle \oplus \bigoplus_{d \in D^{\textup{nsf}}_{N}} \langle [Z(d)] \rangle$, where the divisors $Z(d)$ are as described in Definition~\ref{def:z-divisors}. Recall also that we fix an odd prime $l$ and  $N=\prod_{i=1}^{s}p_{i}^{r_{i}}$ odd, where the primes $p_{i}$ are labeled according to Assumption~\ref{def:ordering} with respect to $l$.

Recall from previous sections that for each divisor $d'$ of $N$ we regard $P_{d'}$ as the divisor given by the sum of all the cusps of level $d'$ and that to compute the image of the homomorphism $\bs \delta$ we need to compute the leading Fourier coefficient at each $P_{d'}\in X_{0}(N)(\Q)$ of the eta quotient $h_{d}(z)$ associated to the divisor $Z(d)$ for each $d \in D_{N}$. On the other hand, recall that the Fourier coefficients of a modular function $f(z)$ at $P_{d'}$ lie in the residue field $\Q(P_{d'})$. For a suitable choice of model of the modular curve the latter is equal to $\Q(Q_{d'})$, the field of definition of the cusp $Q_{d'}$ under the geometric point $P_{d'}$, which is given by the cyclotomic extension $\Q(Q_{d'})=\Q(\zeta_{M_{d'}})$ with $M_{d'}=(d', N/d')$. Moreover, cusps of the same level are Galois conjugates, hence, so are the Fourier coefficients at these cusps, so we only need pick one $Q_{d'}\in \text{Cusps}(X_{0}(N))(\C)$ for each level $d'$ and compute the Fourier coefficients at these chosen points. In our case, for each divisor $d'$ of $N$ we choose the cusp $Q_{d'}= [-\frac{1}{d'}]$. To do so, first we need to choose a uniformiser at each cusp $Q_{d'}$. Namely, given a reduced fraction $\omega=-\frac{a}{c} \in \Q$, we can take a matrix $\gamma=  \begin{pmatrix} a & b \\c & d \end{pmatrix} \in SL_{2}$ such that  $\gamma \infty = \omega$, and choose the minimal $h \in \N_{>0}$ such that the matrix $\begin{pmatrix} 1 & h \\0 & 1 \end{pmatrix}$ lies in $\gamma^{-1}\Gamma_{0}(N)\gamma$. With this notation, $e^{2 \pi i \gamma^{-1}z/h}$ is a uniformiser at the cusp $[\omega]$; see \cite[p.~16]{diamond2005first} for a complete argument. With this choice of uniformiser, the Fourier expansion of a modular function $f(z)$ at the cusp $[\omega]$ is given by the expansion at infinity of $f(z)|\gamma=f(\gamma z)$. This can be observed in the following result (cf.~\cite[Proposition~2.1]{kohler2011eta}), which will be used in Proposition \ref{prop:fourcoefZ.N} to compute the Fourier coefficient of $h_d(z)$ at $Q_{d'}$.   

\begin{proposition} \label{prop:coefeta}
For $k\in \Z$, let $\eta_k(z)=\eta(kz)$ and let $\omega=-\frac{d}{c} \in \Q$ be a reduced fraction with $c \neq 0$. Let $a$ and $b$ be chosen such that $\gamma=\begin{pmatrix} a & b \\c & d \end{pmatrix} \in \textup{SL}_{2}(\Z)$. The Fourier expansion of~$\eta_k(z)$ at the cusp $[\omega]$ is given by the Fourier expansion at infinity of $\eta_k(\gamma^{-1}z)$, which is of the form
\begin{equation} \label{eq:etatransf}
     v_{\eta}(\gamma, k) \sqrt{ \frac{(c, k)}{k} (-cz+a)} \cdot \sum_{n=1}^{\infty} \left( \frac{n}{12} \right) e^{2\pi i(\frac{n^{2}}{24k}((c, k)^{2}z+\upsilon(\gamma, c) \cdot (c, k)))}, 
\end{equation}
where
\begin{itemize}

    \item[(a)] $\upsilon(\gamma, k)$ is an integer; 
    
    \item[(b)] $v_{\eta}(\gamma, k)$ is some 24th root of unity;
    
    \item[(c)] $\left( \frac{n}{12} \right)$ denotes the Jacobi-Kronecker symbol.
    
\end{itemize}
\end{proposition}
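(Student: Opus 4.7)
The strategy is the classical one: reduce the transformation of $\eta(k\gamma^{-1}z)$ to the $\textup{SL}_{2}(\Z)$-transformation formula for the Dedekind eta function. Recall that for any $M=\begin{psmallmatrix} \alpha & \beta \\ \gamma_{M} & \delta_{M} \end{psmallmatrix} \in \textup{SL}_{2}(\Z)$ one has $\eta(Mz) = v_{\eta}(M)(\gamma_{M} z + \delta_{M})^{1/2} \eta(z)$, where $v_{\eta}(M)$ is an explicit $24$-th root of unity depending only on $M$. Since $\gamma^{-1}\in\textup{SL}_{2}(\Z)$, pre-composing the Möbius action with multiplication by $k$ yields the integer matrix $\begin{psmallmatrix} kd & -kb \\ -c & a \end{psmallmatrix}$ of determinant $k$; the first step is to factor this matrix as $M\cdot T$ with $M\in\textup{SL}_{2}(\Z)$ and $T$ upper triangular with positive determinant.

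For the factorisation, set $g=(c,k)$ and write $c=gc'$, $k=gk'$ with $(c',k')=1$. Since $\gcd(a,c)=1$ we also have $\gcd(a,c')=1$, so there is a unique integer $Y\in\{0,1,\dots,k'-1\}$ with $c'Y\equiv -a \pmod{k'}$; setting $\delta'=(a+c'Y)/k' \in\Z$, a direct computation yields
\begin{equation*}
\begin{pmatrix} kd & -kb \\ -c & a \end{pmatrix}
= \begin{pmatrix} k'd & -gb-dY \\ -c' & \delta' \end{pmatrix}
  \begin{pmatrix} g & Y \\ 0 & k' \end{pmatrix},
\end{equation*}
with the first factor $M$ having determinant $ad-bc=1$. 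Thus $k\gamma^{-1}z = M(Tz)$ with $Tz=(gz+Y)/k'$.

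Applying the eta transformation formula to $M$ gives $\eta(k\gamma^{-1}z) = v_{\eta}(M)(-c'\cdot Tz+\delta')^{1/2}\eta(Tz)$, and the algebraic identity $-c'(gz+Y)/k' + \delta' = (a-cz)/k'$ reduces the square-root prefactor to $\sqrt{(c,k)/k}\cdot\sqrt{-cz+a}$, matching the prefactor in \eqref{eq:etatransf}. For the remaining factor I would invoke Euler's classical identity $\eta(w)=\sum_{n\geq 1}\bigl(\frac{n}{12}\bigr)e^{2\pi i n^{2}w/24}$ at $w=(gz+Y)/k'$; multiplying numerator and denominator of the exponent by $g$ and using $g^{2}/k'=(c,k)^{2}/k$ and $g/k'=(c,k)^{2}/k \cdot 1/(c,k)$, the resulting series is precisely the one displayed in \eqref{eq:etatransf}, with $Y$ playing the role of $\upsilon(\gamma,c)$ and with $v_{\eta}(\gamma,k):=v_{\eta}(M)$.

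The only genuinely delicate point is the matrix factorisation: checking that $Y$ and $\delta'$ are well-defined integers and that the resulting $M$ has determinant $1$. All three follow from $(c',k')=1$ together with the defining congruence for $Y$ and the relation $ad-bc=1$. Once this is in hand, the remaining assertions — that $v_{\eta}(\gamma,k)$ is a $24$-th root of unity and that $\upsilon(\gamma,c)$ is an integer — are immediate from the classical eta multiplier and the explicit integrality of $Y$. Sign conventions in the square root, which may require replacing $M$ by $-M$ when $c>0$, are harmlessly absorbed into the multiplier $v_{\eta}(\gamma,k)$.
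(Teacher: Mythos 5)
Your proposal is correct. Note that the paper does not prove this statement at all: it is quoted verbatim from K\"ohler (Proposition~2.1 of \emph{Eta products and theta series identities}), so there is no in-paper argument to compare against. What you have written is precisely the standard proof underlying that reference: the factorisation $\begin{psmallmatrix} kd & -kb \\ -c & a \end{psmallmatrix} = M\cdot T$ checks out (all four entries and $\det M = ad-bc=1$ verify directly, and $Y$ exists because $(c',k')=1$), the prefactor simplification $-c'\,Tz+\delta' = (a-cz)/k'$ with $1/k'=(c,k)/k$ gives exactly the square root in \eqref{eq:etatransf}, and substituting $w=(gz+Y)/k'$ into Euler's identity $\eta(w)=\sum_{n\ge 1}\left(\frac{n}{12}\right)e^{2\pi i n^2 w/24}$ reproduces the displayed exponent with $\upsilon(\gamma,c)=Y\in\Z$. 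The sign/branch ambiguity you flag is indeed harmless since $\pm i$ is itself a $24$th root of unity and is absorbed into $v_\eta(\gamma,k)$.
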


\begin{definition}\label{not:Delta} 
Following \cite{yamazaki2016rational}, for a given odd prime $p$ we define
\begin{equation*}
    p^{\ast} \coloneqq e^{2 \pi i (p-1)/4}p
\end{equation*}
and 
\begin{equation*}
    \sqrt{p^{\ast}} \coloneqq e^{2 \pi i (p-1)/8}\sqrt{p}. 
\end{equation*}
We denote by $\mathcal{O}_{d'}$ the following subgroup of $\Q(\zeta_{M_{d'}})^{\times}$:
\begin{equation*}
     \mathcal{O}_{d'} \coloneqq \langle \{p: p|N, p \nmid d'\} \cup \{\sqrt{p^{\ast}} : p|N; p|d'\}\rangle. 
\end{equation*}
\end{definition}

\begin{remark} \label{rem:fieldcoef} Notice that the factor $(-cz+a)$ in the square root of Equation~\eqref{eq:etatransf} does not depend on~$k$. On the other hand, if an eta quotient $g(z)= \prod_{k} \eta(kz)^{r_{k}}$ defines a modular function on $X_{0}(N)$, then in particular $\sum_{k} r_{k} =0$. So if we use Proposition \ref{prop:coefeta} to compute the leading coefficient of $g(z)$ at a cusp, the factor $\sqrt{(-cz+a)}$ will appear as $\sqrt{(-cz+a)}^{\left(\sum_{k}r_{k}\right)}=1$. Hence, the leading coefficient of $g(z)$ at the cusp $\omega= \left[- \frac{d}{c} \right]$ will be given by 
\begin{equation*}
    e^{2 \pi i \tau_{g}}  \cdot \prod_{k|N} \left(\sqrt{\frac{(c, k)}{k}}\right)^{r_{k}},
\end{equation*}
where $e^{2 \pi i \tau_{g}} $ is the product of the $\nu_{\eta}(\gamma, k)^{r_{k}}$, the factors $(e^{2 \pi i(\upsilon(\gamma, k)\cdot (c, k)/24k)})^{r_k}$ and $\left(\frac{1}{12}\right)^{\sum_k r_k}= 1$.  

The $k$'s appearing in the factors of the eta quotients $h_d(z)$ are divisors of $N$ -- see Proposition~\ref{prop:etaz-divN} --, and the $c$'s are the denominators of the cusps $Q_{d'}$, so $\frac{(c, k)}{k}= \frac{1}{\prod_{i=1}^{s}p_{i}^{b_{i}}}$ for some $0\leq b_{i} \leq r_{i}$ . In particular, the leading Fourier coefficient of the function $h_d(z)$ at $Q_{d'}$ is in the subgroup $\mathcal{O}_{d'}$ defined in Definition~\ref{not:Delta}.
\end{remark}

\begin{definition} \label{def:omega} With the notation as in Definition~\ref{not:Delta}, consider the subgroup $\Omega_{d'}$ given by the image of $\mathcal{O}_{d'}$ in the quotient $\Q(Q_{d'})^{\times}/\mu(\Q(Q_{d'}))$. We denote 
\begin{equation*} 
    \Omega \coloneqq \bigoplus_{d' \mid N, d'\neq N} \Omega_{d'} \subseteq \bigoplus_{d'\mid N, d'\neq N} \Q(Q_{d'})^{\times}/\mu(\Q(Q_{d'})). 
\end{equation*}
\end{definition}

\noindent By Remark \ref{rem:fieldcoef} and since all the torsion units $\mu(\Q(Q_{d'}))$ map to the identity in $\bigoplus_{d'\mid N, d' \neq N} \Q(Q_{d'}) \otimes \Q/\Z$, the map $\bs \delta$ factors as follows:

\begin{equation} \label{eq:fact}
\begin{tikzcd}
C(N) \arrow[rr, "\overline{\bs \delta}"] \arrow[rrdd, "\bs \delta"'] &  & \Omega\otimes\Q/\Z \arrow[dd, "\phi"] \\
                                     &  &                         \\
                                     &  & \bigoplus_{d'\mid N, d' \neq N} \Q(Q_{d'})^{\times} \otimes \Q/\Z                      
\end{tikzcd}
\end{equation}

\begin{lemma} \label{lem:inject}
 The map $\phi$ in Equation~\eqref{eq:fact} is an injection. 
\end{lemma}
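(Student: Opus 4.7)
The plan is to reduce the statement to a saturation (purity) claim in the cyclotomic field $\Q(Q_{d'}) = \Q(\zeta_{M_{d'}})$ and then to establish it via a valuation-and-Galois argument. First, since $\phi = \bigoplus_{d'\mid N,\, d'\neq N}\phi_{d'}$, it suffices to show that each $\phi_{d'}\colon \Omega_{d'}\otimes \Q/\Z \to \Q(Q_{d'})^{\times}\otimes \Q/\Z$ is injective. Tensoring the short exact sequence $1\to \mu(\Q(Q_{d'}))\to \Q(Q_{d'})^{\times}\to \Q(Q_{d'})^{\times}/\mu(\Q(Q_{d'}))\to 1$ with $\Q/\Z$, and using that the torsion group $\mu(\Q(Q_{d'}))$ satisfies $\mu\otimes \Q/\Z = 0$ while the torsion-free quotient kills the relevant $\mathrm{Tor}$, one sees that the natural map $\Q(Q_{d'})^{\times}\otimes \Q/\Z \to (\Q(Q_{d'})^{\times}/\mu)\otimes \Q/\Z$ is an isomorphism. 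Hence $\phi_{d'}$ is injective if and only if $\Omega_{d'}$ is a saturated subgroup of $\Q(Q_{d'})^{\times}/\mu$; equivalently, whenever $\alpha\in \Q(Q_{d'})^{\times}$ satisfies $\alpha^{n}\in \mathcal{O}_{d'}\cdot\mu$ for some $n\geq 1$, we must have $\alpha\in \mathcal{O}_{d'}\cdot\mu$.

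To prove this purity, write $\alpha^{n}=\zeta\gamma$ with $\zeta\in\mu$ and $\gamma=\prod_{p}p^{a_{p}}\prod_{p}\sqrt{p^{*}}^{b_{p}}\in \mathcal{O}_{d'}$. The generators of $\mathcal{O}_{d'}$ are supported only at primes of $\Q(\zeta_{M_{d'}})$ lying above $N$, so at any prime $\mathfrak{q}$ not above $N$ we have $v_{\mathfrak{q}}(\alpha)=v_{\mathfrak{q}}(\gamma)/n=0$, i.e.\ $\alpha$ is an $S_{N}$-unit. At a prime $\mathfrak{p}$ above $p\mid N$, only the factor indexed by $p$ contributes to $v_{\mathfrak{p}}(\gamma)$: if $p\nmid d'$ then $p\nmid M_{d'}$, so $p$ is unramified and the contribution is $a_{p}$; if $p\mid d'$, the contribution from $\sqrt{p^{*}}^{b_{p}}$ equals $b_{p}\cdot e_{p}/2$, where $e_{p}$ is the ramification index of $p$ in $\Q(\zeta_{M_{d'}})$.

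Granted that $n$ divides the relevant exponent at each $p\mid N$, we may then define $\gamma'\coloneqq \prod p^{a_{p}/n}\prod\sqrt{p^{*}}^{b_{p}/n}\in \mathcal{O}_{d'}$ so that $v_{\mathfrak{p}}(\gamma')=v_{\mathfrak{p}}(\alpha)$ for all $\mathfrak{p}\mid N$, making $u\coloneqq \alpha/\gamma'$ a global unit. Any element of $\mathcal{O}_{d'}\cdot\mu$ that is also a unit must have zero valuation at every $\mathfrak{p}\mid N$, forcing all exponents in its expression to vanish, so it lies in $\mu$; applying this to $u^{n}=\zeta\gamma/(\gamma')^{n}\in \mathcal{O}_{d'}\cdot\mu$ gives $u^{n}\in \mu$, whence $u\in\mu$ and finally $\alpha=u\gamma'\in \mathcal{O}_{d'}\cdot\mu$. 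The main obstacle is thus the divisibility $n\mid a_{p}$ (respectively $n\mid b_{p}$): if it failed, then up to a root of unity $\alpha$ would realize an $n'$-th root of $p$ or of $\sqrt{p^{*}}$ with $n'>1$ inside the abelian extension $\Q(\zeta_{M_{d'}})/\Q$. A comparison between the normal closure of $\Q(\sqrt[n']{p})$ or $\Q(\sqrt[n']{\sqrt{p^{*}}})$, which is generically non-abelian (it involves adjoining $\zeta_{n'}$ to a degree-$n'$ radical extension), and the cyclotomic field $\Q(\zeta_{M_{d'}})$ rules this out. The odd hypothesis on $N$ (hence on $M_{d'}$) is essential here: it ensures that $\Q(\zeta_{M_{d'}})$ contains no primitive $4$-th root of unity, so no extra abelian radical extensions sneak in.
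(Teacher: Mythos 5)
Your reduction of the lemma to a saturation statement --- namely that $\alpha\in\Q(Q_{d'})^{\times}$ with $\alpha^{n}\in\mathcal{O}_{d'}\cdot\mu(\Q(Q_{d'}))$ forces $\alpha\in\mathcal{O}_{d'}\cdot\mu(\Q(Q_{d'}))$ --- is sound (the Tor argument works, and since $\Omega_{d'}$ is free of finite rank, saturation is indeed equivalent to injectivity), and your valuation analysis correctly disposes of the exponents $a_{p}$ for $p\nmid d'$: such $p$ are unramified in $\Q(\zeta_{M_{d'}})$, so $n\,v_{\mathfrak{p}}(\alpha)=a_{p}$ gives $n\mid a_{p}$ for free. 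This is a genuinely different organization from the paper, which instead identifies $\ker\bigl(\Q^{\times}\otimes\Q/\Z\to\Q(\zeta_{N})^{\times}\otimes\Q/\Z\bigr)$ with $H^{1}(\mathrm{Gal}(\Q(\zeta_{N})/\Q),\mu_{2N})$ via inflation--restriction and computes it using the Kummer sequence and Hilbert 90.

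There is, however, a genuine gap at the decisive step, the divisibility $n\mid b_{p}$. From $\alpha^{n}=\zeta\prod_{p}p^{a_{p}}\prod_{p}\bigl(\sqrt{p^{*}}\bigr)^{b_{p}}$ it does not follow that ``$\alpha$ realizes an $n'$-th root of $p$ or of $\sqrt{p^{*}}$'': what $\alpha$ realizes, up to a root of unity, is a $2n$-th root of the single rational number $\prod_{p}p^{2a_{p}}\prod_{p}(p^{*})^{b_{p}}$, and one cannot separate the factors without first knowing exactly which rational numbers acquire nontrivial roots in $\Q(\zeta_{M_{d'}})$. The ramification computation only yields $n\mid b_{p}e_{p}/2$, which is strictly weaker than $n\mid b_{p}$ whenever $4\mid e_{p}$ (e.g.\ $p\equiv 1\bmod 4$), so valuations cannot rescue this. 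What you actually need is the content of the paper's Equation~\eqref{eq:kervarphi}: the group of $a\in\Q^{\times}$ admitting an $N'$-th root in $\Q(\zeta_{N})$, taken modulo $(\Q^{\times})^{N'}$, is the elementary abelian $2$-group generated by the classes of the $p^{*}$. Your non-abelian-Galois-closure observation is precisely the right tool to prove this, but it must be applied to an \emph{arbitrary} rational $a$ --- in particular to products with mixed exponents --- not merely to a single prime or a single $\sqrt{p^{*}}$; stated at that level of generality it yields $b_{p}\equiv 0\pmod{n}$ for every $p$, after which your descent to $u=\alpha/\gamma'\in\mu$ goes through. Your closing remark about oddness is in the right spirit, though the essential point is that for odd $M_{d'}$ the quadratic subfields of $\Q(\zeta_{M_{d'}})$ are exactly the $\Q(\sqrt{d^{*}})$ already accounted for by the generators $\sqrt{p^{*}}$, with no contribution from $\sqrt{-1}$ or $\sqrt{\pm 2}$.
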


\begin{proof}
The proof builds on that of \cite[Proposition~6.1.1]{yamazaki2016rational}. 
Let $N=\prod_{i=1}^{s} p_{i}^{r_{i}}$ be the prime decomposition of an odd positive number. For each divisor $d'= \prod_{i=1}^{s} p_{i}^{k_{i}}\in  \Z$ of $N$ we consider the following map on the tensor products of $\Z$--modules:
\begin{equation} \label{eq:inj}
    \phi_{d'}: \Q/\Z \rightarrow  \Q(\zeta_{M_{d'}})^{\times} \otimes \Q/\Z, \quad 
    x \mapsto \left(\prod_{\{ 1 \leq i \leq s : k_{i} = 0\}} p_{i} \prod_{\{ 1 \leq i \leq s : k_{i} \geq 1\}} \sqrt{p_{i}^{\ast}}\right) \otimes x.
\end{equation}
Notice that since $\Q/\Z$ is an injective module, splitting follows automatically if we prove injectivity. Consider the maps $\widehat{a}$
\begin{equation} \label{eq:injtilde}
    \widetilde{\varphi}_{d'}: \Q/\Z \rightarrow  \Q(\zeta_{M_{d'}})^{\times} \otimes \Q/\Z, \quad 
    x \mapsto \left(\prod_{\{ 1 \leq i \leq s : k_{i} = 0\}} p_{i} \right) \otimes x.
\end{equation}
For all $d'$, the maps $\widetilde{\varphi}_{d'}$ described in \eqref{eq:injtilde} are injections since up to the torsion part $\{\pm 1\}$, the set of primes generates $\Q^{\times}$ as a free $\Z$-module. In particular, for $d'=1$, $\phi_{d'} = \widetilde{\varphi}_{d'}$ is an injection. By the same argument, to show the injectivity of the other $\varphi_{d'}$, it is enough to prove that the maps 
\begin{equation*}
    \varphi_{d'}: \Q/\Z \rightarrow  \Q(\zeta_{M_{d'}})^{\times} \otimes \Q/\Z, \quad 
    x \mapsto \left( \prod_{\{ 1 \leq i \leq s : k_{i} \geq 1\}} \sqrt{p_{i}^{\ast}}\right) \otimes x
\end{equation*}
are injections. 
Consider the map
\begin{equation*}
    \varphi: \Q^{\times} \otimes \Q/\Z \rightarrow \Q(\zeta_{N})^{\times} \otimes \Q/\Z, \quad y \otimes x \mapsto y \otimes x. 
\end{equation*}
We claim that if
\begin{equation} \label{eq:kervarphi}
\ker(\varphi) = \{ 0 \} \cup \left\{ \left(\prod_{\{ 1 \leq i \leq s : p_{i}|d'\}} p_{i}^{\ast}\right)\otimes \frac{1}{2} : d'|N \text{ squarefree} \right\}\end{equation} 
then the maps described in \eqref{eq:inj} are injective for any $d'|N$.  We write down the proof assuming the claim for $d'=p_{i}$, but the proofs for the other divisors of $N$ are entirely analogous. Indeed, for any $x \in \Q/\Z$ such that $x \in \ker(\phi_{p_{i}})$ we have $\sqrt{p_{i}^{\ast}} \otimes x = 0 \in \Q(Q_{p_{i}})^{\times} \otimes \Q/\Z$. For each $x \in \Q/\Z$, let $y$ be an element in  $\Q/\Z$ with $2y=x$. Notice that since $\sqrt{p_{i}^{\ast}} \in \Q(\zeta_{N})^{\times}$ we have $p_{i}^{\ast}\otimes y = \sqrt{p_{i}^{\ast}}\otimes x$ in  $\Q(\zeta_{N})^{\times} \otimes \Q/\Z$.  
Now, since by definition $p_{i}^{\ast} \in \Q^{\times}$ and $y \in \Q/\Z$, we can also view $p_{i}^{\ast}\otimes y$ as an element of $\Q^{\times}\otimes \Q/\Z$, and $p_{i}^{\ast}\otimes y \in \varphi^{-1}\left( \Q(\zeta_{N})^{\times} \otimes \Q/\Z \right)$.
But then also $p_{i}^{\ast}\otimes y \in \ker(\varphi)$. Consider the map
$\widetilde{\varphi}_{d'}^{\ast} :\Q/\Z \rightarrow \Q^{\times} \otimes \Q/\Z$ given by $\widetilde{\varphi}_{d'}^{\ast}(x) \coloneqq p_{i}^{\ast}\otimes x$. Since $\widetilde{\varphi}_{d'}$ is an injection, $\widetilde{\varphi}_{d'}^{\ast}$ is also an injection. So, from $p_{i}^{\ast} \otimes y \in \ker(\varphi)$ together with the assumption that Equation~\eqref{eq:kervarphi} holds, we have that $y \in \{0, \frac{1}{2}\}$ in $\Q/\Z$ due to the explicit structure of $\Q^{\ast} \otimes \Q/\Z$. It follows then that $x \in \Z$ and so $x =0 \in \Q/\Z$.  

We will now prove \eqref{eq:kervarphi}. Notice that by the definition of $p_{i}^{\ast}$, we have 
\begin{equation*} 
\ker(\varphi) \supseteq \{ 0 \} \cup \left\{ \left(\prod_{\{ 1 \leq i \leq s :  p_{i}|d'\}} p_{i}^{\ast}\right)\otimes \frac{1}{2} : d'|N \text{ squarefree} \right\} .\end{equation*} 
Thus, to show the other inclusion we will see that $\ker(\varphi) \cong \bigoplus_{i=1}^{s} \Z/2\Z$. Notice that we can write $\Q^{\times} \otimes \Q/\Z = H^{1}(G_{\Q}, \mu(\overline{\Q}))$ and $\Q(\zeta_{N})^{\times} \otimes \Q/\Z = H^{1}(G_{\Q(\zeta_{N})}, \mu(\overline{{\Q}}))$, where $G_{\Q}$ and $G_{\Q(\zeta_{N})}$ are the absolute Galois groups of $\Q$ and $\Q(\zeta_{N})$ respectively. Hence, 
\begin{equation*}
    \ker(\varphi) = \ker\{H^{1}(G_{\Q}, \mu(\overline{\Q})) \rightarrow H^{1}(G_{\Q(\zeta_{N})}, \mu(\overline{\Q}))\},
\end{equation*}
\sloppy and using the inflation-restriction sequence we obtain $\ker(\varphi) = H^{1}(G, H^{0}(G_{\Q(\zeta_{N})}, \mu(\overline{\Q})))$, where  $G~=~\text{Gal}(\Q(\zeta_{N})/ \Q)$. By definition $H^{0}(G_{\Q(\zeta_{N})}, \mu(\overline{\Q})) = \mu(\overline{\Q})^{G_{\Q(\zeta_{N})}} = \mu_{2N}$, so we are left with computing $\ker(\varphi) = H^{1}(G, \mu_{2N})$. Write $N'=2N$ and $L \coloneqq \Q(\zeta_{N'}) = \Q(\zeta_{N})$,  and consider the Kummer sequence
\begin{equation*}
    1 \rightarrow \mu_{N'} \rightarrow L^{\times} \xrightarrow{(\cdot)^{N'}} (L^{\times})^{N'} \rightarrow 1.
\end{equation*}
By the long exact sequence in cohomology we have
\begin{equation*}
    1 \rightarrow H^{0}(G, \mu_{N'}) \rightarrow H^{0}(G, L^{\times})\xrightarrow{N'} H^{0}(G,(L^{\times})^{N'}) \rightarrow H^{1}(G, \mu_{N'}) \rightarrow H^{1}(G, L^{\times}) = 1, 
\end{equation*}
where the last equality follows from Hilbert's Theorem 90. Hence, 
\begin{equation*}
    H^{1}(G, \mu_{N'}) = H^{0}(G,(L^{\times})^{N'})/\text{im}(N') = ((L^{\times})^{N'}\cap \Q^{\times})/(\Q^{\times})^{N'},
\end{equation*}
which is the group consisting of rational numbers whose $N'$-th roots are in $L^{\times}$ up to the $N'$-th powers in $\Q^{\times}$. Now, take $a \in \Q^{\times}$ such that $\sqrt[N']{a}\in L^{\times}$ and $\sqrt[N']{a} \not\in \Q $. Since $N' \geq 3$, the Galois closure $\Q(\sqrt[N']{a})^{\text{Gal}}|\Q$ is a non-abelian extension unless it is quadratic, i.e., unless $(\sqrt[N']{a})^{2} \in \Q$. But $L/\Q$ is abelian, hence we need $a = b^{N'/2}$ for some $b \in \Q^{\times}\setminus (\Q^{\times})^{N'}$ and $H^{1}(G, \mu_{N'})$ is an elementary abelian $2$-group. In particular it corresponds to the multiplicative group  generated by square roots of rational numbers contained in $L$, i.e., by the set of prime numbers $\rho$ such that $\sqrt{\rho^{\ast}} \in L^{\times}$. In our case, this is the group generated by $\{p_{i}^{\ast}\}_{i=1}^{s}$ taken modulo the multiplicative group of rational numbers. It follows that $H^{1}(G, \mu_{N'}) \cong \bigoplus_{i=1}^{s} \Z/2\Z$ as we wanted.
\end{proof}

Thanks to this result, computing the kernel of $\bs \delta$ is equivalent to computing the kernel of the map $\overline{\bs \delta}$, which is relatively easier, since we no longer need to take the torsion units into account. The following proposition brings us a step closer to computing the image of the generators of $C(N)$ under the map $\overline{\bs \delta}$.

\begin{proposition}  \label{prop:fourcoefZ.N} Take an odd positive integer $N=\prod_{i=1}^{s}p_{i}^{r_{i}}$ and let $d=\prod_{i=1}^{s} p_{i}^{f_{i}} \in D_{N}$ and $d' = \prod_{i=1}^{s} p_{i}^{k_{i}}$ be two divisors of $N$. Consider $Q_{d'}$, the cusp at the curve $X_{0}(N)$ of level $d'$ given by $Q_{d'}\coloneqq \left[\frac{1}{\prod_{i=1}^{s}p_{i}^{k_{i}}}\right]$. Consider the eta quotient $h_{d}(z)$ attached to the divisor $Z(d)$ as described in Proposition~\ref{prop:etaz-divN}. Let $c_{d}(d')$ be the leading Fourier coefficient of $h_{d}(z)$ at the cusp $Q_{d'}$. The class of  $\overline{c_{d}(d')}$ in $\Omega_{d'}$ is given by the list below. 
Hence, $c_{d}(d')= e(d, d') \cdot \overline{c_{d}(d')}$, where $e(d, d')$ is an element of $\mu(\Q(Q_{d'})^{\times})$. 

\newpage
\pagestyle{plain}
\newgeometry{bottom=1.5cm, left=1cm, right=1cm}
\begin{landscape}

\hspace*{-1.5cm}
\hspace{10cm}
\\
\\
\\
\\

\begin{center}

 $\begin{matrix} \textup{If }  f_{\iota_{1}} \geq 2,\  f_{\iota_{2}} \geq 2 \textup{ for some } \iota_{1}, \iota_{2} :   1 \textup{ for all } d'\end{matrix}$\\[10pt] 

 $\begin{matrix} \textup{If } f_{\iota} = 2 \textup{ and } \\ f_i \in \{0, 1\} \textup{ for all } i \neq \iota \end{matrix}$  
 $:\begin{cases}
            \left(\left(\sqrt{(p_{\iota}^{\ast})^{r_{\iota}-1}}\right)^{A(p_{\iota})}\right)^{\prod_{i=1, i \neq \iota}^{s} (p_{i}-1)^{(1-f_{i})}} & \textup{ for } k_{\iota} =0 ,\\
            \left(\left(\sqrt{(p_{\iota}^{\ast})^{r_{\iota}-k_{\iota}}}\right)^{A(p_{\iota})}\right)^{\prod_{i=1, i \neq \iota}^{s} (p_{i}-1)^{(1-f_{i})}}  & \textup{ for } k_{\iota} \neq 0;
        \end{cases}$  \\[30pt]    
        
    $\begin{matrix} \textup{If } f_{\iota} = r_{\iota}-2\kappa_{\iota}, \textup{ and } \\ f_{i} \in \{0, 1\} \textup{ for all } i \neq \iota\end{matrix}$  $: \begin{cases}
            \left(\sqrt{(p_{\iota}^{\ast})^{p_{\iota}(r_{\iota}-\kappa_{\iota})-(r_{\iota}-\kappa_{\iota}-2)}}\right)^{\prod_{i=1, i \neq \iota}^{s} (p_{i}-1)^{(1-f_{i})}} & \textup{ for } k_{\iota}=0, \\
            \left(\sqrt{(p_{\iota}^{\ast})^{p_{\iota}(r_{\iota}-\kappa_{\iota}-k_{\iota})-(r_{\iota}-\kappa_{\iota}-k_{\iota}-1)}}\right)^{\prod_{i=1, i \neq \iota^{s}} (p_{i}-1)^{(1-f_{i})}} & \textup{ for } 0< k_{\iota} < r_{\iota}-\kappa_{\iota},\\
            1 & \textup{ for } r_{\iota}-\kappa_{\iota} \leq k_{\iota} \leq r_{\iota};
        \end{cases}$ \\[30pt] 
        
     $\begin{matrix} \textup{If } f_{\iota} = r_{\iota}-2\kappa_{\iota}+1, \textup{ and } \\ f_i \in \{0, 1\} \textup{ for all } i\neq \iota \end{matrix}$  $: \begin{cases} 
            \left(\sqrt{(p_{\iota}^{\ast})^{p_{\iota}(r_{\iota}-\kappa_{\iota})-(r_{\iota}-\kappa_{\iota}-2)}}\right)^{\prod_{i=1, i \neq \iota}^{s} (p_{i}-1)^{(1-f_{i})}} & \textup{ for } 0\leq k_{\iota} \leq \kappa_{\iota}, \\
            \left(\sqrt{(p_{\iota}^{\ast})^{p_{\iota}(r_{\iota}-k_{\iota})-(r_{\iota}-k_{\iota}-1)}}\right)^{\prod_{i=1, i \neq \iota}^{s} (p_{i}-1)^{(1-f_{i})}} & \textup{ for } \kappa_{\iota} < k_{\iota} \leq r_{\iota}-1, \\
            1 & \textup{ for } k_{\iota}=r_{\iota};
        \end{cases}$ \\[30pt]

    $\begin{matrix} \textup{If } f_i \in \{0, 1\} \textup{ for all } i\end{matrix}$  $: \begin{cases}
        \left(\sqrt{(p_{m(\bs{f})}^{\ast})}\right)^{\beta(d)\prod_{i=1, i \neq m(\bs{f})}^{s} (p_{i}-1)^{(1-f_{i})}} & \textup{ for } k_{m(\bs{f})}=0, \\
        1 & \textup{ for } k_{m(\bs{f})} \neq 0.
        \end{cases}$ \\[30pt]
\end{center}
  
\end{landscape}

\restoregeometry
\pagestyle{headings}
\end{proposition}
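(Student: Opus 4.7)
The plan is to combine the explicit description of $h_{d}(z)$ given in Proposition~\ref{prop:etaz-divN} with the Fourier expansion formula of Proposition~\ref{prop:coefeta}. By Remark~\ref{rem:fieldcoef}, since $h_{d}(z)=\prod_{k}\eta(kz)^{r_{k}}$ is a modular function (so $\sum_{k}r_{k}=0$), the leading Fourier coefficient at the cusp $Q_{d'}=[-1/d']$ is, up to a torsion unit in $\mu(\Q(Q_{d'}))$,
\begin{equation*}
\prod_{k\mid N}\Bigl(\sqrt{(c,k)/k}\,\Bigr)^{r_{k}},\qquad c=d'=\prod_{i}p_{i}^{k_{i}}.
\end{equation*}
Substituting the form of $h_{d}(z)$ from Proposition~\ref{prop:etaz-divN}, the problem reduces to enumerating the pairs $(\bs t,\bs t')\in I(d)\times I'(d)$ and, for each non-zero entry at position $k(\bs t,\bs t')=\prod_{i}p_{i}^{\tau_{i}(t_{i},t'_{i})}$ with exponent $\beta(d)\varepsilon(\bs t,\bs t')$, multiplying the corresponding $\sqrt{(c,k)/k}$ factor.

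The key simplification is that $\tau_{i}$ and the local sign factor in $\varepsilon$ depend only on $(t_{i},t'_{i})$, so the total exponent of each prime $p_{i}$ decouples into a local factor at $p_{i}$ times a product of \emph{transport sums} at the remaining primes. Writing $\varepsilon(\bs t,\bs t')=\prod_{j}s_{j}(t_{j},t'_{j})\,p_{j}^{t_{j}}$ and using $(c,k)/k=\prod_{i}p_{i}^{\min(k_{i},\tau_{i})-\tau_{i}}$, the exponent of $p_{i}$ inside the square root becomes
\begin{equation*}
\beta(d)\sum_{(t_{i},t'_{i})}s_{i}\,p_{i}^{t_{i}}\bigl(\min(k_{i},\tau_{i})-\tau_{i}\bigr)\,\prod_{j\neq i}T_{j},\qquad T_{j}\coloneqq\sum_{(t_{j},t'_{j})}s_{j}\,p_{j}^{t_{j}}.
\end{equation*}
A direct check using Definition~\ref{def:vectbb} and the definition of $\varepsilon(\bs t,\bs t')$ yields $T_{j}=p_{j}-1$ if $f_{j}=0$, $T_{j}=1$ if $f_{j}=1$ with $j\neq m(\bs f)$ (or with $\bs f\notin F_{\mathrm{sf}}$), and $T_{j}=0$ whenever $f_{j}\geq 2$, or whenever $j=m(\bs f)$ with $\bs f\in F_{\mathrm{sf}}$ and $k_{j}\neq 0$. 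This immediately settles the first case: if $f_{\iota_{1}},f_{\iota_{2}}\geq 2$ for distinct $\iota_{1},\iota_{2}$, then for every $i$ at least one of $T_{\iota_{1}},T_{\iota_{2}}$ vanishes, forcing the exponent of $p_{i}$ to be zero, hence the class in $\Omega_{d'}$ is trivial.

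In the remaining cases there is a single ``active'' prime $p_{\iota}$ (or $p_{m(\bs f)}$), and the product of transport sums at the other primes equals $\prod_{j\neq\iota}(p_{j}-1)^{1-f_{j}}$, which is precisely the combinatorial factor appearing in the statement. It then remains to evaluate the local sum at $p_{\iota}$ by running through the non-zero entries of $\mathbb A_{p_{\iota}}(r_{\iota},f_{\iota})$ or $\mathbb B_{p_{\iota}}(r_{\iota},f_{\iota})$ from Definition~\ref{def:vectbb}. For $f_{\iota}=2$, splitting on the parity of $r_{\iota}$ produces exponent $r_{\iota}-1$ at $k_{\iota}=0$ and $r_{\iota}-k_{\iota}$ at $1\leq k_{\iota}\leq r_{\iota}$, matching the statement. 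For $f_{\iota}=r_{\iota}-2\kappa_{\iota}$ and $f_{\iota}=r_{\iota}-2\kappa_{\iota}+1$ with $\kappa_{\iota}\geq 1$, the non-zero positions cluster around $p_{\iota}^{0},p_{\iota}^{1},p_{\iota}^{r_{\iota}-\kappa_{\iota}-1},p_{\iota}^{r_{\iota}-\kappa_{\iota}}$ (respectively $p_{\iota}^{\kappa_{\iota}},p_{\iota}^{\kappa_{\iota}+1},p_{\iota}^{r_{\iota}-1},p_{\iota}^{r_{\iota}}$), so the thresholds $k_{\iota}\leq\kappa_{\iota}$ vs.\ $k_{\iota}>\kappa_{\iota}$ (respectively $k_{\iota}<r_{\iota}-\kappa_{\iota}$ vs.\ $k_{\iota}\geq r_{\iota}-\kappa_{\iota}$) emerge naturally. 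Finally the squarefree case is handled with $\mathbb B_{p_{m(\bs f)}}(r_{m(\bs f)},1)=(1,-1,0,\ldots,0)$; the transport identity $T_{m(\bs f)}=0$ for $k_{m(\bs f)}\neq 0$ produces exactly the dichotomy in the last line of the statement.

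The main obstacle will be the bookkeeping in converting rational powers of $p_{\iota}$ to powers of the generator $\sqrt{p_{\iota}^{\ast}}\in\mathcal{O}_{d'}$. Using the identity $p_{\iota}=(\sqrt{p_{\iota}^{\ast}}\,)^{2}\cdot e^{-\pi i(p_{\iota}-1)/2}$ modulo $\mu(\Q(Q_{d'}))$, every half-integer power of $p_{\iota}$ is expressed as a power of $\sqrt{p_{\iota}^{\ast}}$, and the residual roots of unity are absorbed into the correction factor $e(d,d')$. The $A(p_{\iota})$ in the exponent in the $f_{\iota}=2$ case and the $\beta(d)$ factor (responsible for the $B(p),B_{3}(p)$ rescalings when $p=3$) come out of tracking the $\beta(d)$ prefactor through this local computation; this is where the case $p_{\iota}=3$ deviates from the rest.
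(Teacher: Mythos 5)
Your proposal is correct and takes essentially the same route as the paper: both reduce the leading coefficient modulo torsion units to $\prod_{k\mid N}\bigl(\sqrt{(d',k)/k}\bigr)^{r_{k}}$ via Proposition~\ref{prop:coefeta} and Remark~\ref{rem:fieldcoef}, substitute the explicit eta quotient of Proposition~\ref{prop:etaz-divN}, and use the factorization of $\varepsilon$ and $\tau$ over the primes to isolate a local computation at the single active prime times the combinatorial factor $\prod_{i\neq\iota}(p_{i}-1)^{1-f_{i}}$, with your transport sums $T_{j}$ being a systematic repackaging of the paper's pairing/quadrupling of eta factors and its symmetric-polynomial evaluation of $\sum_{t}\varepsilon(t,t'(0))$. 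The only slip is presentational: in the squarefree case $T_{m(\bs{f})}=0$ holds identically (independently of $k_{m(\bs{f})}$), and the dichotomy $k_{m(\bs{f})}=0$ versus $k_{m(\bs{f})}\neq 0$ comes from the local factor at $p_{m(\bs{f})}$ computed from $\mathbb{B}_{p_{m(\bs{f})}}(r_{m(\bs{f})},1)=(1,-1,0,\ldots,0)$, which your framework does produce once carried out.
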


\begin{proof} 
The result follows from Proposition \ref{prop:etaz-divN} and Proposition \ref{prop:coefeta}. First of all notice that, by definition (see \eqref{eq:epsilon}),  $\sum_{\tc{Orchid}{t} \in I(d)} \sum_{\tc{orange}{t'} \in I'(d)} \varepsilon(\tc{Orchid}{t}, \tc{orange}{t'}) =0$. Given a cusp $Q_{d'}$ and a divisor $m$ of $N$ the factor $\sqrt{-(d')z+a}$ in Equation~\eqref{eq:etatransf} (here $c=d'$) of the leading Fourier coefficient of $\eta_{m}(z)$ at $Q_{d'}$ only depends on $Q_{d'}$. Hence, we have that all these factors cancel out in the product $h_{d}(z)$. So, since by Lemma~\ref{lem:inject} we can ignore torsion units, the only bit that we need to take into account is the one coming from $\sqrt{\frac{(d', m)}{m}}$ in the product given in Equation~\eqref{eq:etatransf}. For this we will do a case by case computation. 

Let us start with the case where there are $\iota_{1}$ and $\iota_{2}$ with $f_{\iota_{1}}, f_{\iota_{2}} \geq 2$ for some $\iota_{1} \neq \iota_{2}$. In this case we have that $I'(\bs{f})_{\tc{orange}{\iota_{1}}}= I'(\bs{f})_{\tc{orange}{\iota_{2}}}=\{0, 1\}$ in $I'(d)$. 
Fix a $\tc{Orchid}{t} \in I(d)$ and consider the four tuples  $\tc{orange}{t'(0, 0)},\ \tc{orange}{t'(1, 0)}, \ \tc{orange}{t'(0, 1)},\ \tc{orange}{t'(1, 1)}$ in $I'(d)$ resulting from taking arbitrary values for the entries $t'(a, b)_{i}=t'_{i}$ for all $i \neq \iota_{1}, \iota_{2}$ and all pairs $(a, b) \in \{0, 1\}_{\iota_{1}}\times \{0, 1\}_{\iota_{2}}$, and fixing $t'(a, b)_{\iota_{1}}= a, \ t'(a, b)_{\iota_{2}}=b$.
Since we fixed $\tc{Orchid}{t}$ and the values $t'(a, b)_{i}=t_{i}$ are the same across the four pairs $(a, b)$, we have $\varepsilon(\tc{Orchid}{t}, \tc{orange}{t'(0, 0)}) = \varepsilon(\tc{Orchid}{t}, \tc{orange}{t'(1, 1)})= - \varepsilon(\tc{Orchid}{t}, \tc{orange}{t'(1, 0)}) = - \varepsilon(\tc{Orchid}{t}, \tc{orange}{t'(0, 1)})$. Using Proposition~\ref{prop:coefeta} we get that the leading Fourier coefficient at $Q_{d'}$ of the product
\begin{equation*}
    \eta\left(\prod_{i=1}^{s} p_{i}^{\tau_{i}(\tc{Orchid}{t}, \tc{orange}{t'(0, 0)})} z\right)^{\varepsilon(\tc{Orchid}{t}, \tc{orange}{t'(0, 0)})}\times \eta\left(\prod_{i=1}^{s} p_{i}^{\tau_{i}(\tc{Orchid}{t}, \tc{orange}{t'(1, 0)})} z\right)^{\varepsilon(\tc{Orchid}{t}, \tc{orange}{t'(1, 0)})}\times \end{equation*} \begin{equation*} \eta\left(\prod_{i=1}^{s} p_{i}^{\tau_{i}(\tc{Orchid}{t}, \tc{orange}{t'(0, 1)})} z\right)^{\varepsilon(\tc{Orchid}{t}, \tc{orange}{t'(0, 1)})}\times \eta\left(\prod_{i=1}^{s} p_{i}^{\tau_{i}(\tc{Orchid}{t}, \tc{orange}{t'(1, 1)})} z\right)^{\varepsilon(\tc{Orchid}{t}, \tc{orange}{t'(1, 1)})}=
\end{equation*}
\begin{equation}
    \left(\frac{\eta\left(\prod_{i} p_{i}^{\tau_{i}(\tc{Orchid}{t}, \tc{orange}{t'(0, 0)})} z\right)\eta\left(\prod_{i} p_{i}^{\tau_{i}(\tc{Orchid}{t}, \tc{orange}{t'(1, 1)})} z\right)}{\eta\left(\prod_{i} p_{i}^{\tau_{i}(\tc{Orchid}{t}, \tc{orange}{t'(1, 0)})} z\right)\eta\left(\prod_{i} p_{i}^{\tau_{i}(\tc{Orchid}{t}, \tc{orange}{t'(0, 1)})} z\right)}\right)^{\varepsilon(\tc{Orchid}{t}, \tc{orange}{t'(0, 0)})}
\end{equation}
is given by 
\begin{equation} \label{eq:foureq1}
    \left(\frac{\sqrt{\frac{(d', p_{\iota_{1}}^{\tau_{\iota_{1}}(\tc{Orchid}{t}, 0)}p_{\iota_{2}}^{\tau_{\iota_{2}}(\tc{Orchid}{t}, 0)}\prod_{i\neq \iota_{1}, \iota_{2}} p_{i}^{\tau_{i}(\tc{Orchid}{t}, \tc{orange}{t'})})}{p_{\iota_{1}}^{\tau_{\iota_{1}}(\tc{Orchid}{t}, 0)}p_{\iota_{2}}^{\tau_{\iota_{2}}(\tc{Orchid}{t}, 0)}\prod_{i\neq \iota_{1}, \iota_{2}} p_{i}^{\tau_{i}(\tc{Orchid}{t}, \tc{orange}{t'})}}}\sqrt{\frac{(d', p_{\iota_{1}}^{\tau_{\iota_{1}}(\tc{Orchid}{t}, 1)}p_{\iota_{2}}^{\tau_{\iota_{2}}(\tc{Orchid}{t}, 1)}\prod_{i\neq \iota_{1}, \iota_{2}} p_{i}^{\tau_{i}(\tc{Orchid}{t}, \tc{orange}{t'})})}{p_{\iota_{1}}^{\tau_{\iota_{1}}(\tc{Orchid}{t}, 1)}p_{\iota_{2}}^{\tau_{\iota_{2}}(\tc{Orchid}{t}, 1)}\prod_{i\neq \iota_{1}, \iota_{2}} p_{i}^{\tau_{i}(\tc{Orchid}{t}, \tc{orange}{t'})}}}}{\sqrt{\frac{(d', p_{\iota_{1}}^{\tau_{\iota_{1}}(\tc{Orchid}{t}, 0)}p_{\iota_{2}}^{\tau_{\iota_{2}}(\tc{Orchid}{t}, 1)}\prod_{i\neq \iota_{1}, \iota_{2}} p_{i}^{\tau_{i}(\tc{Orchid}{t}, \tc{orange}{t'})})}{p_{\iota_{1}}^{\tau_{\iota_{1}}(\tc{Orchid}{t}, 0)}p_{\iota_{2}}^{\tau_{\iota_{2}}(\tc{Orchid}{t}, 1)}\prod_{i\neq \iota_{1}, \iota_{2}} p_{i}^{\tau_{i}(\tc{Orchid}{t}, \tc{orange}{t'})}}}\sqrt{\frac{(d', p_{\iota_{1}}^{\tau_{\iota_{1}}(\tc{Orchid}{t}, 1)}p_{\iota_{2}}^{\tau_{\iota_{2}}(\tc{Orchid}{t}, 0)}\prod_{i\neq \iota_{1}, \iota_{2}} p_{i}^{\tau_{i}(\tc{Orchid}{t}, \tc{orange}{t'})})}{p_{\iota_{1}}^{\tau_{\iota_{1}}(\tc{Orchid}{t}, 1)}p_{\iota_{2}}^{\tau_{\iota_{2}}(\tc{Orchid}{t}, 0)}\prod_{i\neq \iota_{1}, \iota_{2}} p_{i}^{\tau_{i}(\tc{Orchid}{t}, \tc{orange}{t'})}}}}\right)^{\varepsilon(\tc{Orchid}{t}, \tc{orange}{t'(0, 0)})} = 1.
\end{equation}

Finally, notice that varying the values of $t'(0, 0)_{i}=t'_{i} \in I'(d)_{i}$ for all $i \neq \iota_{1}, \iota_{2}$, the product in Proposition~\ref{prop:etaz-divN} can be rewritten as 
\begin{equation}
    h_{d}(z)=\prod_{\tc{Orchid}{t} \in I(d)} \prod_{\tc{orange}{t'(0, 0)} \in I'(d)} \left(\frac{\eta\left(\prod_{i=1}^{s} p_{i}^{\tau_{i}(\tc{Orchid}{t}, \tc{orange}{t'(0, 0)})} z\right)\eta\left(\prod_{i=1}^{s} p_{i}^{\tau_{i}(\tc{Orchid}{t}, \tc{orange}{t'(1, 1)})} z\right)}{\eta\left(\prod_{i=1}^{s} p_{i}^{\tau_{i}(\tc{Orchid}{t}, \tc{orange}{t'(1, 0)})} z\right)\eta\left(\prod_{i=1}^{s} p_{i}^{\tau_{i}(\tc{Orchid}{t}, \tc{orange}{t'(0, 1)})} z\right)}\right)^{\varepsilon(\tc{Orchid}{t}, \tc{orange}{t'(0, 0)})}. 
\end{equation}
From Equation~\eqref{eq:foureq1} we conclude that the leading Fourier coefficient of $h_{d}(z)$ is 1 for all $Q_{d'}$. 

Let us move on now to the case where we have $f_{\iota} = 2$ for some $\iota$ and $f_i \in \{0, 1\}$ for all $i \neq \iota$. In this case we have $I'(d)_{\iota} = \{0, 1\}$ and  $I'(d)_{i} = \{0\}$ for any other $i\neq \iota$. Hence the only only elements in $I'(d)$ are $\tc{orange}{t'(0)} \coloneqq (0, \ldots, 0, 0_{\iota}, 0, \ldots, 0)$ and $\tc{orange}{t'(1)} \coloneqq (0, \ldots, 0, 1_{\iota}, 0, \ldots, 0)$. Similarly
to the previous case, we fix $\tc{Orchid}{t} \in I(d)$. Since $\varepsilon(\tc{Orchid}{t}, \tc{orange}{t'(0)}) = -\varepsilon(\tc{Orchid}{t}, \tc{orange}{t'(1)})$ we have that the leading Fourier coefficient of 
\begin{equation}
    \eta\left(\prod_{i=1}^{s} p_{i}^{\tau_{i}(\tc{Orchid}{t}, \tc{orange}{t'(0)})} z\right)^{\varepsilon(\tc{Orchid}{t}, \tc{orange}{t'(0)})}\eta\left(\prod_{i=1}^{s} p_{i}^{\tau_{i}(\tc{Orchid}{t}, \tc{orange}{t'(1)})} z\right)^{\varepsilon(\tc{Orchid}{t}, \tc{orange}{t'(1)})}=\left(\frac{\eta\left(\prod_{i} p_{i}^{\tau_{i}(\tc{Orchid}{t}, \tc{orange}{t'(0)})} z\right)}{\eta\left(\prod_{i} p_{i}^{\tau_{i}(\tc{Orchid}{t}, \tc{orange}{t'(1)})} z\right)}\right)^{\varepsilon(\tc{Orchid}{t}, \tc{orange}{t'(0)})}
\end{equation}
at $Q_{d'}$ is given by 

\begin{equation} \label{eq:fourcoef2}
    \left(\frac{\sqrt{\frac{(d', p_{\iota}^{\tau_{\iota}(\tc{Orchid}{t}, \tc{orange}{0})}\prod_{i\neq \iota} p_{i}^{\tau_{i}(\tc{Orchid}{t}, \tc{orange}{t'})})}{p_{\iota}^{\tau_{\iota}(\tc{Orchid}{t}, \tc{orange}{0})}\prod_{i\neq \iota} p_{i}^{\tau_{i}(\tc{Orchid}{t}, \tc{orange}{t'})}}}}{\sqrt{\frac{(d', p_{\iota}^{\tau_{\iota}(\tc{Orchid}{t}, \tc{orange}{1})}\prod_{i\neq \iota} p_{i}^{\tau_{i}(\tc{Orchid}{t}, \tc{orange}{t'})})}{p_{\iota}^{\tau_{\iota}(\tc{Orchid}{t}, \tc{orange}{1})}\prod_{i\neq \iota} p_{i}^{\tau_{i}(\tc{Orchid}{t}, \tc{orange}{t'})}}}}\right)^{\varepsilon(\tc{Orchid}{t}, \tc{orange}{t'(0)})} = \begin{cases}
            \left(\left(\sqrt{(p_{\iota}^{\ast})^{r_{\iota}-1}}\right)^{A(p_{\iota})}\right)^{\varepsilon(\tc{Orchid}{t}, \tc{orange}{t'(0)})} & \text{if } m_{\iota} =0 ,\\
            \left(\left(\sqrt{(p_{\iota}^{\ast})^{r_{\iota}-m_{\iota}}}\right)^{A(p_{\iota})}\right)^{\varepsilon(\tc{Orchid}{t}, \tc{orange}{t'(0)})}  & \text{if } m_{\iota} \neq 0.
        \end{cases}
\end{equation}
In \eqref{eq:fourcoef2} the notation $\prod_{i \neq \iota}$ means $\prod_{i=1, i \neq \iota}^{s}$. 

Moreover, notice that $h_{d}(z)$ can be written as $\prod_{\tc{Orchid}{t} \in I(d)} \left(\frac{\eta\left(\prod_{i} p_{i}^{\tau_{i}(\tc{Orchid}{t}, \tc{orange}{t'(0)})} z\right)}{\eta\left(\prod_{i} p_{i}^{\tau_{i}(\tc{Orchid}{t}, \tc{orange}{t'(1)})} z\right)}\right)^{\varepsilon(\tc{Orchid}{t}, \tc{orange}{t'(0)})}$. Hence from Equation~\eqref{eq:fourcoef2} the computation of the Fourier coefficient of $h_{d}(z)$ given in the statement of \ref{prop:fourcoefZ.N} is now reduced to computing $\sum_{\tc{Orchid}{t} \in I(d)} \varepsilon(\tc{Orchid}{t}, \tc{orange}{t'(0)})$. Using properties of symmetric polynomials we see $$\sum_{\tc{Orchid}{t} \in I(d)} \varepsilon(\tc{Orchid}{t}, \tc{orange}{t'(0)}) = \prod_{i \neq \iota} (p_{i}-1)^{(1-f_{i})}.$$

The rest of the cases can be computed using the same techniques and we omit their proofs here.  
\end{proof}

The next definition and theorem summarise the main results of this section. 

\begin{definition} \label{def:v_ab} Let $N= \prod_{i} p_{i}^{r_{i}}$ and let $d = \prod_{i} p_{i}^{f_{i}} \in D_{N}$. 
We define $v_{\bs{f}(d)} \in \Omega$ to be the tuple determined by the expression
\begin{equation*}
    \overline{\bs \delta}([Z(d)]) \coloneqq v_{\bs{f}(d)} \otimes \frac{1}{n_{d}} 
\end{equation*}
for the image of the divisor $Z(d)$ defined in Definition~\ref{def:z-divisors} under the map $\overline{\bs \delta}$ given in Equation~\eqref{eq:exseq},
where $n_{d}$ is the order of $Z(d)$. 
Note that $v_{\bs{f}(d)}$ is well-defined since $n_d$ is well-defined.
\end{definition}

\begin{theorem} \label{thm:imdelta} Let $N= \prod_{i} p_{i}^{r_{i}}$ and let $d = \prod_{i} p_{i}^{f_{i}} \in D_{N}$. The image of the divisor $Z(d)$ under the map $\overline{\bs \delta}$ is given by 
$
    \overline{\bs \delta}([Z(d)]) = v_{\bs{f}(d)} \otimes \frac{1}{n_{d}};
$
where $v_{\bs{f}(d)}(d') = \overline{c_{d}(d')}$ -- see Proposition~\ref{prop:fourcoefZ.N} -- and $n_{d}$ is the order of $Z(d)$ -- see Definition~\ref{def:ordering} and Theorem~\ref{thm:yooz}.
\end{theorem}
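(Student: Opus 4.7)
The plan is to combine three prior results: the explicit formula for $\bs{\delta}$ in terms of leading Fourier coefficients (Lemma~\ref{lem:delta}), the computation of these coefficients for $h_d$ (Proposition~\ref{prop:fourcoefZ.N}), and the factorisation $\bs{\delta} = \phi \circ \overline{\bs{\delta}}$ in diagram~\eqref{eq:fact} that allows us to work modulo torsion units.

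First, I would observe that by Definition~\ref{def:etaZ(d)N} together with Theorem~\ref{thm:order}, the eta quotient $h_d$ is a modular function on $X_0(N)$ with $\mathrm{div}(h_d) = n_d \cdot Z(d)$. Consequently, Lemma~\ref{lem:delta} applies with $D = Z(d)$, $n = n_d$, and $f = h_d$. The factor $(h_d/t_{P_{d'}}^{n_d a_{d'}})(P_{d'})$ appearing there is, by definition, the leading Fourier coefficient of $h_d$ at the cusp $P_{d'}$. Because cusps of level $d'$ form a single Galois orbit and the $d'$-th summand of the codomain of $\bs{\delta}$ is the single factor $\Q(P_{d'})^\times \otimes \Q/\Z$, it suffices to evaluate at the chosen representative $Q_{d'}$ of Proposition~\ref{prop:fourcoefZ.N} and read off $c_d(d')$.

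Next, I would pass to $\Omega \otimes \Q/\Z$ via the factorisation of diagram~\eqref{eq:fact}. This is legitimate: by Remark~\ref{rem:fieldcoef} each $c_d(d')$ already lies in $\mathcal{O}_{d'}$, while torsion units vanish under the tensor with $\Q/\Z$. Lemma~\ref{lem:inject} then guarantees that $\overline{\bs{\delta}}([Z(d)])$ is uniquely determined by $\bs{\delta}([Z(d)])$. A direct case-by-case inspection of Proposition~\ref{prop:fourcoefZ.N} at $d' = N$ (equivalently $k_i = r_i$ for every $i$) shows that $\overline{c_d(N)} = 1$ in $\Omega_N$ in each of the five cases, so the scalar factor $(h_d / t_{P_N}^{n_d a_N})(P_N)$ trivialises modulo torsion units, leaving a tuple whose $d'$-th entry is (up to inversion) the class of $c_d(d')$. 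Comparing with Definition~\ref{def:v_ab}, this forces $v_{\bs{f}(d)}(d') = \overline{c_d(d')}$, proving the theorem.

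The argument is essentially a matter of assembly and offers no serious obstacle; the only non-formal check is the case-by-case verification that $\overline{c_d(N)} = 1$, which in each case of Proposition~\ref{prop:fourcoefZ.N} reduces to the observation that the listed exponent vanishes once every $k_i$ attains its maximal value~$r_i$.
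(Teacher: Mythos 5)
Your proposal is correct and follows exactly the route the paper takes: the theorem is stated as a summary whose content is precisely the assembly of Lemma~\ref{lem:delta} (applied with $f=h_d$, $n=n_d$), Proposition~\ref{prop:fourcoefZ.N}, the factorisation through $\Omega$ via Remark~\ref{rem:fieldcoef} and Lemma~\ref{lem:inject}, and the observation that $\overline{c_d(N)}=1$. The only caveat is your parenthetical ``up to inversion'': Lemma~\ref{lem:delta} produces $(t_{P_{d'}}^{n a_{d'}}/f)(P_{d'})$, the reciprocal of the leading coefficient, so strictly one gets $\overline{c_d(d')}^{-1}$; this discrepancy is present in the paper as well and is harmless for every subsequent use (orders and kernels are unaffected by replacing a tuple by its inverse).
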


\section{The kernel of \texorpdfstring{$\bs \delta$}{d}} 
\label{sec:kerdelt}

\sloppy The next step in completing the description of $J_{0}(N)_{\mathbf{m}}(\Q)_{\tor}$ 
is to compute the kernel of the map $\bs \delta:~J_{0}(N)(\Q)_{\tor}~\rightarrow~\bigoplus~\Q(P_{d'})~\otimes~\Q/\Z$ introduced in the exact sequence of Equation~\eqref{eq:exseq}. Furthermore, in Lemma~\ref{lem:inject} we reduced the computation of $\ker(\bs \delta)$ to that of $\ker(\overline{\bs \delta})$, where $\overline{\bs \delta}$ is the map resulting from the factorisation of $\bs \delta$ given in Diagram~$\eqref{eq:fact}$. Moreover, using Equation~\eqref{eq:kerhomomorphism} we reduce the computation of $\ker(\bs \delta|_{J_{0}(N)(\Q)_{\tor}[l^{\infty}]})$ to that of $\ker(\overline{\bs \delta}|_{C(N)})[l^{\infty}]$. Hence, in this section we will work towards the proof of Theorem~\ref{thm:genkerdeltN}, which describes the kernel of $\overline{\bs \delta}|_{C(N)}$ in terms of certain divisors $D\in C(N)$. The main idea of the proof is to find a set of divisors in the kernel of $\overline{\bs \delta}$ and show that they generate the whole kernel.
First we give the definition of these divisors $D$. 

\begin{definition} \label{def:D(f)N} Let $N=\prod_{i=1}^{s}p_{i}^{r_{i}}$ be an odd positive integer and suppose $d=\prod_{i=1}^{s} p_{i}^{f_{i}} \in D_{N}^{F}$. Consider the divisors $Z(d) \in \text{Div}^{0}_{\text{cusp}}(X_{0}(N))(\Q)$ given in Definition~\ref{def:z-divisors}. Then we define the divisor
\begin{equation} \label{eq:defD} D(\bs{f}(d)) \coloneqq \begin{cases} \displaystyle
     Z(\bs{f}(d)) & \text{if } f_{\iota_{1}}, f_{\iota_{2}} \geq 2 \text{ for some distinct } \iota_{1}, \iota_{2}, \\

    \displaystyle  Z(\bs{f}(d)) - \prod_{i=1,i\neq \iota}^{s} \gamma_{i}^{f_{i}} \cdot Z(0, \ldots, 0, f_{\iota}, 0, \ldots, 0)
     & \text{if } f_{\iota} \geq 2 \text{ and } f_{i} \in \{0,1\} \text{ for } i \neq \iota, 
    \\
    
    \displaystyle  Z(\bs{f}(d)) - \prod_{i=1, i \neq m(\bs{f})}^{s} \gamma_{i}^{f_{i}} \cdot Z(0, \ldots, 0, m(f(d)), 0, \ldots, 0) & \text{if } \bs{f}(d)\in F_{\mathrm{sf}},
    \end{cases}
\end{equation}
where $D_{N}^{F}$, $\gamma_{j} \in \Z$ and $F_{\text{sf}}$ are as in Definition~\ref{def:ordering}.
If $p_{\iota}=3$, $D(\bs{f}(d))$ is defined as in Equation~\eqref{eq:defD} for $\bs{f} \neq (1, \ldots, 1, 2_{\iota}, 1, \ldots, 1)$ and $D(\bs{f}(d)) \coloneqq (p_{\iota}^{2}-1)\prod_{i=1, i\neq \iota}^{s}p_{i}^{r_{i}-1}(p_{i}+1) \cdot Z(p^{2})$.
\end{definition}

\begin{remark}
For the case $p_{\iota}=3$, we have that $\order(Z(p_{\iota}^{2}\prod_{i=1, i\neq \iota}^{s}p_{i})) = 1$, and so $[Z(p_{\iota}^{2}\prod_{i=1, i\neq \iota}^{s}p_{i})]=0$. Hence, for  $\bs{f} \neq (1, \ldots, 1, 2_{\iota}, 1, \ldots, 1)$ we modify the definition given in Equation~\eqref{eq:defD} and take $D(\bs{f}(d)) = (p_{\iota}^{2}-1)\prod_{i=1, i\neq \iota}^{s}p_{i}^{r_{i}-1}(p_{i}+1) \cdot Z(p^{2})$ in order to have $D(\bs{f}(d)) \in \ker(\overline{\bs \delta})$.
\end{remark}

In the next lemma we compute the vector $V(D(\bs{f}))$ defined in Definition~\ref{def:V(D)} and given by the image of the inverse $\Lambda^{-1}$ of the linear map $\Lambda$ given in Remark~\ref{rem:lambdamap}. This will become very useful in the coming section. In the lemma, $\gamma_{j}$ is as given in Definition~\ref{def:ordering}, and the tuples $\mathbf{A}$, $\mathbf{B}$, $\mathbb{A}$ and $\mathbb{B}$,
are as given in Definitions~\ref{def:vect}, \ref{def:vectB} and \ref{def:vectbb} respectively. The map $\Upsilon$ is given in Definition~\ref{def:upsilon}.

\begin{lemma} \label{lem:V-divD} For each $\bs{f} \in F_{\mathrm{sf}}$ and each squarefree divisor $\delta = \prod_{j=1}^{s} p_{j}^{\delta_{j}}$ of $N$,  we have 
\begin{equation}
    V(D(\bs{f}))_{\delta} = (-1)^{\sum \delta_{j}} \prod_{j=1}^{s} \gamma_{j}^{f_{j}} \left( \prod_{j|f, j\neq m(\bs{f})} (p_{j}^{1-\delta_{j}}-1) \prod_{j \nmid f} p_{j}^{1-\delta_{j}} - \prod_{j \neq m(\bs{f})} p_{j}^{1-\delta_{j}} \right).
\end{equation}
For $\delta$ non-squarefree we have $V(D(\bs{f}))_{\delta} =0$.

Given $\iota \in \{1, \ldots, s\}$ and $2 \leq b \leq r_{\iota}$, for each $\bs{f} \in F_{\iota}^{b}$ and each divisor $\delta = \prod_{j=1}^{s} p_{j}^{\delta_{j}}$ of $N$ such that $p_{j}^{2} \nmid \delta$ for all $j \neq \iota$, we have
\begin{equation} \label{eq:V(D(bsf))}
    V(D(\bs{f}))_{\delta} = p_{\iota}^{\kappa_{\iota}}\cdot \mathbb{A}_{p_{\iota}}(r_{\iota}, b)_{\delta_{\iota}} \cdot (-1)^{\sum_{j=1, j \neq \iota}^{s} \delta_{j}} \prod_{j\neq \iota} \gamma_{j}^{f_{j}} \left( \prod_{j|f, j \neq \iota} (p_{j}^{1-\delta_{j}}-1) \prod_{j \nmid f} p_{j}^{1-\delta_{j}} - \prod_{j \neq \iota} p_{j}^{1-\delta_{j}} \right);
\end{equation}
we recall that $\kappa_{\iota} = \left[ \frac{r_{\iota}-1-b}{2}\right]$ if $3 \leq b \leq r_{\iota}$ (resp. $\kappa_{\iota} = r_{\iota}-1$ if $b=2$). 
For any other entry $\delta$ we have $V(D(\bs{f}))_{\delta} =0$. 
\end{lemma}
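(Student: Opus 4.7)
The plan is to exploit the tensor structure of $V(Z(d))$ guaranteed by Theorem~\ref{thm:tnsorlin}. Writing $\mathbf{Z}(d) = \bigotimes_{i=1}^s W_i$ in accordance with Definition~\ref{def:divect}, one obtains $V(Z(d)) = \bigotimes_{i=1}^s \Upsilon(p_i^{r_i}) W_i$, and Lemma~\ref{lem:V-imagediv} evaluates each local factor as $g_{p_i}(r_i, f_i)\cdot \mathbb{A}_{p_i}(r_i, f_i)$ for the non-squarefree case, and as $p_m^{r_m-1}(p_m+1)\cdot \mathbb{B}_{p_m}(r_m, 1)$ at the distinguished index $m(\bs{f})$ in the squarefree case. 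The global scalar aggregates to $\prod_j \gamma_j^{f_j}$ (using $\gamma_m = (p_m-1)\cdot p_m^{r_m-1}(p_m+1)$ to absorb the $\mathbf{B}$-normalization into the claimed $\prod_j\gamma_j^{f_j}$).

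In the case $\bs{f} \in F_\iota^b$, both $Z(\bs{f})$ and $Z(p_\iota^{f_\iota})$ share the local factor $\mathbf{A}_{p_\iota}(r_\iota, b)$ and differ at $i \neq \iota$ only in whether one uses $\mathbf{A}_{p_i}(r_i, f_i)$ or $\mathbf{A}_{p_i}(r_i, 0)$. The coefficient $\prod_{i\neq\iota}\gamma_i^{f_i}$ in Definition~\ref{def:D(f)N} is chosen precisely to cancel the scalar discrepancy $\prod_{i\neq\iota}g_{p_i}(r_i,f_i)$, so
\begin{equation*}
V(D(\bs{f})) = \prod_{i\neq\iota}\gamma_i^{f_i}\cdot p_\iota^{\kappa_\iota}\left(\bigotimes_{i\neq\iota}\mathbb{A}_{p_i}(r_i, f_i)-\bigotimes_{i\neq\iota}\mathbb{A}_{p_i}(r_i, 0)\right)\otimes\mathbb{A}_{p_\iota}(r_\iota, b).
\end{equation*}
The analogous expression holds in the $F_{\mathrm{sf}}$ case, with $\iota$ replaced by $m(\bs{f})$ and the tensor factor at $m$ replaced by $\mathbb{B}_{p_m}(r_m,1)$.

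The second step is to evaluate these tensors at a fixed index $\delta = \prod_j p_j^{\delta_j}$. I would substitute the explicit entries from Definition~\ref{def:vectbb}: for $\delta_j \in \{0,1\}$ we have $\mathbb{A}_{p_j}(r_j,0)_{\delta_j}=(-1)^{\delta_j}p_j^{1-\delta_j}$, $\mathbb{A}_{p_j}(r_j,1)_{\delta_j}=\mathbf{1}[\delta_j=0]$, and $\mathbb{B}_{p_m}(r_m,1)_{\delta_m}=(-1)^{\delta_m}$. The key algebraic identity that converts indicator functions into the form appearing in the statement is $\mathbf{1}[\delta_j=0]=(p_j^{1-\delta_j}-1)/(p_j-1)$ for $\delta_j\in\{0,1\}$; this produces the factor $(p_j^{1-\delta_j}-1)$ for each $j$ with $f_j=1$. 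The sign contributions $(-1)^{\delta_j}$ collect into the global $(-1)^{\sum_j\delta_j}$. Finally, the vanishing claim for non-squarefree $\delta$ (or $\delta$ with $p_j^2\mid\delta$ for some $j\neq\iota$) is immediate: since $\mathbb{A}_{p_j}(r_j,0)$, $\mathbb{A}_{p_j}(r_j,1)$, and $\mathbb{B}_{p_m}(r_m,1)$ are all supported on $\{p_j^0,p_j^1\}$, both terms of the difference vanish at such $\delta$.

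The main obstacle is the careful bookkeeping needed to reconcile the local scalar factors $g_{p_i}(r_i,f_i)$ with the global coefficient $\prod_j \gamma_j^{f_j}$, particularly in the squarefree case where the $\mathbf{B}$-versus-$\mathbf{A}$ distinction at index $m(\bs{f})$ introduces a $(p_m-1)^{-1}$ factor that must be balanced against the $(p_j-1)^{-1}$ denominators arising from the indicator-function identity. A clean way to proceed is to verify the formula first on the "diagonal" case where $\bs{f}$ has minimal support beyond $m(\bs{f})$ or $\iota$ and then to propagate the general pattern via the multiplicativity of the tensor product.
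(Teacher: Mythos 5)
Your overall strategy is the same as the paper's: decompose $V(Z(d))$ into local factors via Theorem~\ref{thm:tnsorlin}, evaluate each factor through Lemma~\ref{lem:V-imagediv}, substitute the explicit entries of $\mathbb{A}$ and $\mathbb{B}$ from Definition~\ref{def:vectbb}, collect the signs into $(-1)^{\sum\delta_j}$, and observe that the vanishing statements are immediate from the supports of these vectors. Your intermediate tensor expression for $V(D(\bs{f}))$ is correct with the scalars you quote, and the skeleton of the argument matches the paper's proof.

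The genuine gap is exactly the point you defer as ``the main obstacle'': the scalar bookkeeping, which is the actual content of the lemma, and your proposed resolution would not work as described. With the scalars you take from Lemma~\ref{lem:V-imagediv} as printed, namely $g_{p_j}(r_j,1)=\gamma_j$ at indices with $f_j=1$ and $p_m^{r_m-1}(p_m+1)=\gamma_m/(p_m-1)$ at $m(\bs{f})$, substituting $\mathbb{A}_{p_j}(r_j,1)_{\delta_j}=(p_j^{1-\delta_j}-1)/(p_j-1)$ gives, in the squarefree case,
\begin{equation*}
V(D(\bs{f}))_{\delta}=\frac{\gamma_m}{p_m-1}\prod_{j\neq m}\gamma_j^{f_j}\,(-1)^{\sum\delta_j}\Bigl(\prod_{j\mid f,\,j\neq m}\frac{p_j^{1-\delta_j}-1}{p_j-1}\prod_{j\nmid f}p_j^{1-\delta_j}-\prod_{j\neq m}p_j^{1-\delta_j}\Bigr),
\end{equation*}
which is not the stated formula: the denominators $(p_j-1)$ attach only to the first term of the difference and involve different primes than the $(p_m-1)^{-1}$ coming from the $\mathbf{B}$-factor, so they cannot ``balance'' one another, nor are they a common factor that could be rescaled away. (A two-prime check $N=pq$, $\bs{f}=(1,1)$, $\delta=1$ makes the mismatch concrete: your normalisation yields $(p+1)(q^2-1)(1-q)$, while the statement requires $-(p^2-1)(q^2-1)$.) The paper's proof instead evaluates the local factors as $\gamma_j(p_j-1)\cdot\mathbb{A}_{p_j}(r_j,1)$ and $\gamma_m\cdot\mathbb{B}_{p_m}(r_m,1)$, i.e.\ with scalars a factor $(p_j-1)$ larger than those printed in Lemma~\ref{lem:V-imagediv}; it is precisely these extra factors $(p_j-1)$ that combine with the indicator entries to produce $(p_j^{1-\delta_j}-1)$ and the clean prefactor $\prod_j\gamma_j^{f_j}$ (and similarly $p_\iota^{\kappa_\iota}$ times the correct outer constant in the $F_\iota^{b}$ case). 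To complete your argument you must pin these local constants down independently (there is a normalisation tension between Lemma~\ref{lem:V-imagediv} as printed and the formula you are proving, which has to be resolved explicitly, e.g.\ by recomputing $\Upsilon(p^{r})\mathbf{A}_p(r,1)$ and $\Upsilon(p^{r})\mathbf{B}_p(r,1)$ from the definitions); as written, the final identification with the claimed entries does not go through.
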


\begin{proof} We write down the proof for $\bs{f} \in F_{\textup{sf}}$. A similar argument can be used for the computation of $V(D(\bs{f}))$ for $\bs{f} \in F_{\iota}^{b}$ and will be omitted. For this proof, we fix $m=m(\bs{f})$. 

Since $\Lambda^{-1}$ is a linear map so is $\Upsilon$, and it follows from the definition that for $\bs{f} \in F_{\text{sf}}$
$$V(D(\bs{f})) = \Upsilon\left(\displaystyle \left(\bigotimes_{i\neq m} \mathbf{A}_{p_{i}}(r_{i}, f_{i})\right) \otimes \mathbf{B}_{p_{m}}(r_{m}, f_{m}) \right) - \prod_{i \neq m}\gamma_{i}^{f_{i}} \cdot \Upsilon \left(\left(\bigotimes_{ i \neq m} \mathbf{A}_{p_{i}}(r_{i}, 0)\right) \otimes \mathbf{B}_{p_{m}}(r_{m}, f_{m}) \right)=$$
$$ \displaystyle \left(\bigotimes_{i \nmid \bs{f}, i \neq m} \mathbb{A}_{p_{i}}(r_{i}, 0)\right) \otimes \left(\bigotimes_{i \mid \bs{f}, i \neq m} \gamma_{i} (p_{i}-1) \cdot \mathbb{A}_{p_{i}}(r_{i}, 1)\right) \otimes   \gamma_{m} \cdot \mathbb{B}_{p_{m}}(r_{m}, f_{m}) $$ $$- \prod_{i \neq m}\gamma_{i}^{f_{i}} \cdot  \left(\left(\bigotimes_{i \neq m} \mathbb{A}_{p_{i}}(r_{i}, 0)\right) \otimes \gamma_{m} \cdot \mathbb{B}_{p_{m}}(r_{m}, f_{m}) \right) =
$$
$$ \underbrace{\gamma_{m} \prod_{i \neq m}\gamma_{i}^{f_{i}} \prod_{i \neq m}(p_{i}-1)^{f_{i}} \cdot \left(\left(\bigotimes_{i \nmid \bs{f}, i \neq m} ((p_{i})_{\tc{Orchid}{1}}, -1_{\tc{Orchid}{p_{i}}}, 0, \ldots, 0)\right) \otimes \left(\bigotimes_{i \mid \bs{f}, i \neq m} (1_{\tc{Orchid}{1}}, 0, \ldots, 0)\right) \otimes  (1_{\tc{Orchid}{1}}, -1_{\tc{Orchid}{p_{m}}}, 0, \ldots, 0)\right)}_{A}$$ $$- \underbrace{\ \gamma_{m} \prod_{i \neq m}\gamma_{i}^{f_{i}} \cdot  \left(\left(\bigotimes_{i \neq m} ((p_{i})_{\tc{Orchid}{1}}, -1_{\tc{Orchid}{p_{i}}}, 0, \ldots, 0)\right) \otimes (1_{\tc{Orchid}{1}}, -1_{\tc{Orchid}{p_{m}}}, 0, \ldots, 0) \right)}_{B}.
$$

Since all the non-squarefree labelled entries of both terms $A$ and $B$ are zero, it is clear that for any non-squarefree divisor $\delta$ we have $V(D(\bs{f}))_{\delta}=0$. Consider the term $A$. Given a squarefree divisor $\delta = \prod_{j=1}^{s} p_{j}^{\delta_{j}}$ of $N$, the $\delta$-coordinate is $0$ if $f_{j}=1$ and $\delta_{j} = 1$ for some $j \neq m$, and otherwise it is equal to $\displaystyle (-1)^{\delta_{m}}~\gamma_{m}~\prod_{j \neq m}\gamma_{j}^{f_{j}} \prod_{j \neq m}(p_{j}~-~1)^{f_{j}} \prod_{j \mid \bs{f}, j \neq m} (-1)^{\delta_{j}} p_{j}^{1-\delta_{j}}$. 
On the other hand, in the factor $B$ we have that the $\delta$-coordinate is given by $\displaystyle (-1)^{\delta_{m}} \gamma_{m} \prod_{j \neq m}\gamma_{j}^{f_{j}} \prod_{j \neq m} (-1)^{\delta_{j}} p_{j}^{1-\delta_{j}}$. Furthermore, if $f_{j}=1$, then the factor $(p_{j}^{1-\delta_{j}}-1)$ is zero (resp. $(p_{j}-1)$) if and only if $\delta_{j} =1$ (resp. $\delta_{j}=0$). 
Since the $\delta$-coordinate of $A$ is non-zero only if $\delta_{j} = 0$ for all $j$ with $f_{j}=1$, we can rewrite the expression for the $\delta$-coordinate of $V(D(\bs{f}))$ as in Equation~\eqref{eq:V(D(bsf))}. 
\end{proof}

Before moving on to the main result of this section, we first prove the following result. Fix $\iota \in \{1, \ldots, s\}$ and denote by $\bs{f}'_{\iota}=(f'_{1}, \ldots, \hat{f'_{\iota}}, \ldots, f'_{s}) \in \{0, 1\}^{s-1}$ the tuple obtained upon removing the $\iota$-th entry of $\bs{f}'$. Recall the tuple $v_{\bs{f}(d)}$ from Definition \ref{def:v_ab}. Consider the sets of tuples
\begin{align*}
    S_{\iota}(\bs{f}') \coloneqq \{v_{\bs{f}(d)} \text{ for all } \bs{f}(d) \text{ such that } 2 \leq f_{\iota} \leq r_{\iota},\ f_{j}=f'_{j} \text{ for all } j \neq \iota\}, \nonumber
\end{align*}
and 
 \begin{align*}
     S(\text{sf}) \coloneqq \{v_{\bs{f}(d)} = (0, \ldots, 0, 1_{i}, 1, \ldots, 1) \text{ for some } 1 \leq i \leq s\} . \nonumber
 \end{align*}
Notice from Proposition~\ref{prop:fourcoefZ.N} that all the entries of the tuples in $S_{\iota}(\bs{f}')$ lie in the subgroup of $\Omega_{d}$ generated by $p_{\iota}$ if $p_{\iota} \nmid d'$ or $\sqrt{p_{\iota}^{\ast}}$ if $p_{\iota} \nmid d'$ respectively. That is, when $\ord_{p_{\iota}}(d)=0$, picking $p_{\iota}$ $\left( \textup{resp.}  \sqrt{p_{\iota}^{\ast}}\right)$ as a representative of its class in $\Omega_{d}$, we have that $v_{\bs{f}(d)}(d')$ for $\bs{f}(d) \in S_{\iota}(f')$ is of the form $p_{\iota}^{k}$ for some  $k \in \Z$ $\left(\textup{resp. } \left(\sqrt{p_{\iota}^{\ast}}\right)^{k} \textup{ when }\ord_{p_{\iota}}(d)> 0\right)$. Hence, when $\ord_{p_{\iota}}(d)=0$, let $\upsilon_{p_{\iota},d}~:~\Omega_{d}~\rightarrow~\Z$ be the $\Z$-module homomorphism defined by $\upsilon_{p_{\iota},d}\left(p_{\iota}^{k}\right) = k$ (respectively $\upsilon_{p_{\iota},d}\left(\left(\sqrt{p_{\iota}^{\ast}}\right)^{k}\right) = k$ when $\ord_{p_{\iota}}(d)> 0$).

\begin{lemma} \label{lem:matrix} Fix $\iota \in \{1, \ldots, s\}$ and $\bs{f}'= (1, \ldots, 1) \in \{0, 1 \}^{s-1}$. We construct the $(r_{\iota}-1) \times (r_{\iota}+1)$ matrix $M_{\iota}$ such that the $(\tc{Orchid}{n}, \tc{teal}{m})$-th entry of $M_{\iota}$, denoted $M_{\iota}(\tc{Orchid}{n}, \tc{teal}{m})$, satisfies 
\begin{equation*}
M(\tc{Orchid}{n}, \tc{teal}{m}) = \upsilon_{p_{\iota}, p_{\iota}^{\tc{teal}{m-1}}}(v_{(1, \ldots, 1,  \tc{Orchid}{n+1}_{\iota}, 1, \ldots, 1)}(p_{\iota}^{\tc{teal}{m-1}})). 
\end{equation*}
for $1 \leq  m \leq r_{\iota}+1$. That is, $M_{\iota}$ is the matrix obtained from the exponents of $p_{\iota}$ and $\sqrt{p_{\iota}^{\ast}}$ in the entries of the tuples in $S_{i}(\bs{f}')$ labelled by $1, p_{\iota}, \ldots, p_{\iota}^{r_{\iota}}$. There exists an $(n, n+1)$-lower triangular matrix $N_{\iota}$ (i.e., the  $(n, n+1)$ entries of the matrix are all non-zero and all the entries above this diagonal are zero) such that $M_{\iota} \sim N_{\iota}$, that is, $M_{\iota}$ and $N_{\iota}$ are equivalent up to elementary row transformations. 
\end{lemma}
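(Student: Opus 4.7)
The plan is to prove the lemma by explicitly computing the entries of $M_{\iota}$ and then constructing the rows of $N_{\iota}$ as specific $\Z$-linear combinations of the rows of $M_{\iota}$.

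First, using Proposition~\ref{prop:fourcoefZ.N} applied to $\bs{f}(d) = (1, \ldots, 1, (n+1)_{\iota}, 1, \ldots, 1)$ and observing that the auxiliary factor $\prod_{j \neq \iota}(p_{j} - 1)^{1-f_{j}}$ equals $1$, I would compute the entries $M_{\iota}(n, m)$ in closed form. A case analysis by the parity of $r_{\iota} - f_{\iota}$ (together with the special case $f_{\iota} = 2$, which carries the extra factor $A(p_{\iota})$) shows that the entries are piecewise linear in $m$: for $f_{\iota} \ge 3$ they take the form
\[
M_{\iota}(n, m) = \begin{cases}
\tfrac{1}{2}\bigl((p_{\iota} - 1)(r_{\iota} - \kappa_{\iota}(n)) + 2\bigr) & m = 1, \\
(p_{\iota} - 1)(r_{\iota} - \kappa_{\iota}(n)) + 2 & 2 \le m \le \kappa_{\iota}(n) + 1 \text{ (Case C only)}, \\
(p_{\iota} - 1)(r_{\iota} - m + 1) + 1 & m \text{ in the varying tail}, \\
0 & m > r_{\iota} - \kappa_{\iota}(n),
\end{cases}
\]
with $\kappa_{\iota}(n)$ as in Definition~\ref{def:ordering}; the row with $f_{\iota} = 2$ has an analogous shape scaled by $A(p_{\iota})$.

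Second, I would make the key structural observation: in the ``varying tail'' the entry $(p_{\iota}-1)(r_{\iota}-m+1)+1$ depends only on $m$, not on $n$. Consequently, two same-case rows $R_{n}$ and $R_{n+2}$ (for which $\kappa_{\iota}$ differs by $1$) agree in their entire overlapping tail, so the difference $R_{n} - R_{n+2}$ has small support. In Case B this difference has its last nonzero entry equal to $-p_{\iota}$ at the first column where $R_{n}$ vanishes (yielding a pivot ``near the right''); in Case C the difference instead picks up a $+1$ at the boundary $\kappa_{\iota}(n) + 1$ between constant and varying regions (yielding a pivot ``deeper'' in the matrix). Together with $R_{r_{\iota} - 1}$ itself and the Case A row, these first-order differences provide explicit elements of the row space of $M_{\iota}$ whose last nonzero entries can be placed at any prescribed column in $\{2, 3, \ldots, r_{\iota}\}$.

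Third, I would build $N_{\iota}$ inductively from the bottom up. The last row $R_{r_{\iota} - 1}$ itself becomes the bottom row of $N_{\iota}$ since its last nonzero entry sits at column $r_{\iota}$ with value $p_{\iota}$. Assuming the rows of $N_{\iota}$ with pivots at columns $n+2, \ldots, r_{\iota}$ have already been constructed, the row with pivot at column $n+1$ is obtained by taking an appropriate Case B or Case C difference of rows of $M_{\iota}$ and then clearing the columns to the right by subtracting multiples of the previously constructed rows. A direct computation then shows that the pivot at position $(n, n+1)$ is one of $\pm 1$, $\pm p_{\iota}$, or (in the edge cases involving the $f_{\iota} = 2$ row) $\pm(2p_{\iota} - 1)$, each of which is visibly nonzero.

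The main obstacle is the bookkeeping required for the pivots in the ``middle'' rows of $N_{\iota}$, where clearing the right-hand columns requires combining Case B and Case C differences together with the $f_{\iota} = 2$ row; the resulting coefficients are polynomials in $p_{\iota}$ that only simplify after repeated cancellation. A secondary complication is the case $p_{\iota} = 3$, where the factor $A(p_{\iota}) = 3$ scales the $f_{\iota} = 2$ row and forces a corresponding adjustment of the clearing coefficients; the final pivot values, however, remain nonzero.
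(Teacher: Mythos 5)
Your overall strategy --- compute the entries of $M_{\iota}$ from Proposition~\ref{prop:fourcoefZ.N} and then reduce to a triangular form by explicit row combinations --- is the same as the paper's, but two points in the middle of your argument do not hold up. First, your closed form for the entries is wrong for the rows with $f_{\iota}=r_{\iota}-2\kappa_{\iota}$: there the nonzero tail is $(p_{\iota}-1)(r_{\iota}-\kappa_{\iota}-m+1)+1$, not $(p_{\iota}-1)(r_{\iota}-m+1)+1$, so the tail \emph{does} depend on the row. Consequently two such rows do not ``agree in their entire overlapping tail'': their difference equals $1-p_{\iota}$ throughout the overlap rather than $0$, so it is not of small support. (Your conclusion about the \emph{last} nonzero entry of that difference --- the value $-p_{\iota}$ at the first column where the shorter row vanishes --- happens to remain correct, because both rows genuinely vanish beyond that column.)

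Second, and more seriously, the claim that these first-order differences ``provide explicit elements of the row space whose last nonzero entries can be placed at any prescribed column in $\{2,\ldots,r_{\iota}\}$'' is false, and this is exactly where the difficulty of the lemma sits. The differences of consecutive $f_{\iota}=r_{\iota}-2\kappa_{\iota}+1$ rows give pivots only at columns $2,\ldots,[r_{\iota}/2]$, while the differences of consecutive $f_{\iota}=r_{\iota}-2\kappa_{\iota}$ rows, the row $R_{r_{\iota}-1}$ itself and the $f_{\iota}=2$ row all place their last nonzero entries in the few columns adjacent to $r_{\iota}$, colliding with one another and leaving the middle columns (columns $4$ and $5$ when $r_{\iota}=7$, cf.\ Example~\ref{ex:example1}) completely uncovered. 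Producing pivots there cannot be done merely by ``subtracting multiples of the previously constructed rows'': the entries to be cleared are not in general divisible by the pivots already available, so one must first multiply the row being reduced by $p_{\iota}$ and then telescope this operation through the whole family $f_{\iota}=r_{\iota}-2\kappa_{\iota}$, starting from $p_{\iota}\cdot R_{1}-R_{r_{\iota}-1}$, verifying inductively that each step deposits a unit one column further to the left. That induction is Steps 2--4 of the paper's proof and is the substantive part of the argument; your proposal defers it to ``bookkeeping'' without exhibiting the combinations or checking that the resulting pivots are nonzero, and your asserted pivot values (e.g.\ $\pm(2p_{\iota}-1)$) are not substantiated.
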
 

\begin{proof}
Recall that we consider $\Omega_{d}$ as a $\Z$-module. By using the usual matrix transformations, we proceed to do the following operations on $M_{\iota}$:
 
 \begin{itemize}
     \item \textbf{Step 1:} we subtract the $(r_{\iota}-1)$-th row from each of the rows with a different parity from $(r_{\iota}-1)$, except the first row.
     At the bottom row of $M_{\iota}$ we have the vector given by the exponents of $p_{\iota}$ or $\sqrt{p_{\iota}^{\ast}}$ at the entries of the tuple $v_{(1, \ldots, 1, r_{\iota}, 1, \ldots, 1)}$ labelled by $p_{\iota}^{m}$ for $0 \leq m \leq r_{\iota}$. That is, we have $r_{\iota}=r_{\iota}-2j$ when setting $j=0$; and from Proposition~\ref{prop:fourcoefZ.N} we get
     \begin{equation} \label{eq:row.s.M}
         M_{\iota}(r_{\iota}-1, m) = \begin{cases}         (p_{\iota}(r_{\iota})-(r_{\iota}-2)) & \text{ if } m=0, \\
           (p_{\iota}(r_{\iota}-m)-(r_{\iota}-m-1)) & \text{ if } 0<m \leq r_{\iota}-1, \\
            0 & \text{ if }  m=r_{\iota}.
       \end{cases}
     \end{equation}
    On the other hand, each row of different parity from that of $r_{\iota}-1$ is the $(r_{\iota}-2j)$-th row of $M_{\iota}$ for some $j \in \{1, \ldots, \left[\frac{r_{\iota}}{2} \right]\}$, and corresponds to a tuple $v_{(1, \ldots, 1, r_{\iota}-2j+1, 1, \ldots, 1)}$. Hence, from Proposition~\ref{prop:fourcoefZ.N} we have 
    \begin{equation*}
        M_{\iota}(r_{\iota}-2j, m) = \begin{cases}
            (p_{\iota}(r_{\iota}-j)-(r_{\iota}-j-2)) & \text{ if } 0\leq m \leq j, \\
            (p_{\iota}(r_{\iota}-m)-(r_{\iota}-m-1)) & \text{ if } j< m \leq r_{\iota}-1, \\
            0 & \text{ if } m=r_{\iota}. \end{cases}
    \end{equation*}
    Hence, $M_{\iota}(r_{\iota}-1, m) = M_{\iota}(r_{\iota}-2j, m)$ if and only if $m \geq j+1$. We also have $M_{\iota}(r_{\iota}-2j, j) - M_{\iota}(r_{\iota}-1, j) = p_{\iota}(r_{\iota}-j)-(r_{\iota}-j-2) - (p_{\iota}(r_{\iota}-j)-(r_{\iota}-j-1)) $ for each $j \in \{1, \ldots, \left[\frac{r_{\iota}}{2} \right]\}$; so by performing Step 1, we obtain a matrix $M_{\iota}^{(1)}\sim M_{\iota}$ that at each row $((r_{\iota}-1)-2j+1)$ satisfies $M_{\iota}^{(1)}(r_{\iota}-2j, m)=0$ for all $m\geq j+1$ and $M_{\iota}^{(1)}((r_{\iota}-1)-2j+1, j)= 1 \neq 0$, and that equals $M_{\iota}$ in the untouched rows. 
    
     \item \textbf{Step 2:} We subtract the last row from $p_{\iota}$ times the first row. 
     Before the subtraction, in the first row we have the vector corresponding to $v_{(1, \ldots, 1, 2_{\iota}, 1, \ldots, 1)}$. Hence
    \begin{equation*}
M_{\iota}^{(1)}(1, m) = \begin{cases}
          (r_{\iota}-1) & \text{ if } m=0, \\
          (r_{\iota}-m) & \text{ if } 0 < m \leq r_{\iota}. 
        \end{cases}
    \end{equation*}
    So, using Equation~\eqref{eq:row.s.M} we have that $p_{\iota}\cdot M_{\iota}^{(1)}(1, m) - M_{\iota}^{(1)}(r_{\iota}-1, m)$ equals
    \begin{equation} \label{eq:M.step.2} 
        (p_{\iota}(r_{\iota}-m) - (p_{\iota}(r_{\iota}-m)-(r_{\iota}-m-1)) =  \begin{cases} 
        \frac{r_{\iota}-2-p_{\iota}}{2} & \text{ if } m=0, \\
        r_{\iota}-m-1 & \text{ if } 1  \leq m \leq r_{\iota}-1, \\
        0 & \text{ if } m=r_{\iota}. \end{cases}
        \end{equation}
    Hence, by performing Step 2 we obtain a matrix $M_{\iota}^{(2)} \sim M_{\iota}$ satisfying $M_{\iota}^{(2)}(1, m)=(p_{\iota}\cdot M_{\iota}^{(1)}(1, m) - M_{\iota}^{(1)}(r_{\iota}-1, m) = 0$ for $m = r_{\iota}$ and $r_{\iota}-1$, and $M_{\iota}^{(2)}(1, m)=p_{\iota}\cdot M_{\iota}^{(1)}(1, m) - M_{\iota}^{(1)}(r_{\iota}-1, m) = 1$ for $m=r_{\iota}-2$, and that is equal to $M_{\iota}^{(1)}$ in the untouched rows.

     \item \textbf{Step 3:} We subtract $p_{\iota}$ times the fist row from the $(r_{\iota}-3)$-th row. 
     The latter represents the tuple $v_{(1, \ldots, 1, r_{\iota}-2, 1, \ldots, 1)}$, so from Proposition~\ref{prop:fourcoefZ.N} we have
    \begin{equation*}
        M_{\iota}^{(2)}(r_{\iota}-3, m) =  \begin{cases}
            (p_{\iota}(r_{\iota}-1)-(r_{\iota}-1-2)) & \text{ if } m=0, \\
            (p_{\iota}(r_{\iota}-1-m)-(r_{\iota}-1-m-1)) & \text{ if } 0<m< r_{\iota}-1, \\
             0 & \text{ if } r_{\iota}-1 \leq m \leq s. 
        \end{cases}
    \end{equation*}
    By Equation~\eqref{eq:M.step.2}, by performing Step 3 we obtain a matrix $M_{\iota}^{(3)} \sim M_{\iota}$ such that 
    \begin{equation} \label{eq:M.step.3}
         M_{\iota}^{(3)}(r_{\iota}-3, m) = M_{\iota}^{(2)}(r_{\iota}-3, m)-p_{\iota}\cdot M_{\iota}^{(2)}(1, m) =   \begin{cases} 
            0 & \text{ if } m \geq r_{\iota}-2, \\
            m-r_{\iota}+2 & \text{ if } 0 < m \leq r_{\iota}-3. \\
         \end{cases}
    \end{equation}
    
         \item \textbf{Step 4:} This step is divided into sub-steps. In substep $4^{(i)}$ (with $i \geq 1$), we add $p_{\iota}$ times row $(r_{\iota}-(2i+1))$ to row $(r_{\iota}-(2(i+1)+1))$. This process stops when $r_{\iota}-(2i+1) =2$ if $r_{\iota}$ is odd or when $r_{\iota}-(2i+1) = 3$ if $r_{\iota}$ is even. 
      \sloppy We will show by induction that if, after performing Step $4^{(i-1)}$ to the matrix $M_{\iota}^{(4^{i})}$, whose $(r_{\iota}-(2i+1))$-th row corresponds to the tuple $v_{(1, \ldots, 1, r_{\iota}-2i, 1, \ldots, 1)}$, we obtain a matrix $M_{\iota}^{(4^{i})}$ whose $(r_{\iota}-(2i+1))$-th row satisfies
    \begin{align} \nonumber
         M_{\iota}^{(4^{i})}(r_{\iota}-(2i+1), m) &=M_{\iota}^{(4^{i-1})}(r_{\iota}-2i+1, m)-p_{\iota}\cdot M_{\iota}^{(4^{i-1})}(r_{\iota}-2(i-1)+1, m) \\
         &=   \begin{cases} \label{eq:M.step.4i}
            0 & \text{ if } m \geq r_{\iota}-(i+1),\\
            m-r_{\iota}+(i+1) & \text{ if } 0 < m \leq r_{\iota}-(i+2); \\
         \end{cases}
    \end{align}
  then, after performing step $4^{(i)}$ to $M_{\iota}^{(4^{i})}$, we obtain a matrix $M_{\iota}^{(4^{i+1})}$ whose $(r_{\iota}-(2(i+1)+1))$-th row is equal to
    \begin{align*}
         M_{\iota}^{(4^{i+1})} &= M_{\iota}^{(4^{i})}(r_{\iota}-(2(i+1)+1), m)-p_{\iota}\cdot M_{\iota}^{(4^{i})}(r_{\iota}-2i+1, m) \\
         &= \begin{cases} \nonumber
            0 & \text{ if } m \geq r_{\iota}-(i+2), \\
            m-r_{\iota}+(i+2) & \text{ if } 0 < m \leq r_{\iota}-(i+3). \\
         \end{cases}
    \end{align*}
    \begin{enumerate}
        \item \textbf{Base Case:} We set Step $4^{(0)}$= Step 3. Then, after performing step $4^{(0)}$, we see in Equation~\eqref{eq:M.step.3} that row $(r_{\iota}-3)$ of the matrix $M_{\iota}^{(4^{1})}=M_{\iota}^{(3)}$ satisfies Equation~\eqref{eq:M.step.4i}. 
        
        \item \textbf{Induction Step:} Assume that after performing Step $4^{(i-1)}$ we obtain a matrix $M_{\iota}^{(4^{i})}$ satisfying Equation~\eqref{eq:M.step.4i}. Then, by Proposition~\ref{prop:fourcoefZ.N}, the entry $M_{\iota}^{(4^{i})}(r_{\iota}-2(i+1)+1, m)$ equals
        \begin{equation} \label{eq:M.inductionstep2}
            \begin{cases}
            (p_{\iota}(r_{\iota}-(i+1))-(r_{\iota}-(i+1)-2)) & \text{ if } m=0, \\
            (p_{\iota}(r_{\iota}-(i+1)-m)-(r_{\iota}-(i+1)-m-1)) & \text{ if } 0<m< r_{\iota}-(i+1), \\
            0 & \text{ if }  r_{\iota}-(i+1) \leq m.
            \end{cases}
        \end{equation}
        It follows from Equations~\eqref{eq:M.step.4i} and \eqref{eq:M.inductionstep2} that the sum
        \begin{equation*}
            M_{\iota}^{(4^{i})}(r_{\iota}-2(i+1)+1, m)+ p_{i}\cdot M_{\iota}^{(4^{i})}(r_{\iota}-2i+1,m) 
        \end{equation*}
        equals
        \begin{align*}
            &= \begin{cases} 
                0 & \text{ if }  r_{\iota}-(i+1) \leq m  \leq r_{\iota},\\
                 - (r_{\iota}-(i+1)-m-1) & \text{ if }  0< m \leq r_{\iota}-(i+2); \end{cases} \\
            &= \begin{cases} 
                0 & \text{ if }  r_{\iota}-(i+2) \leq m  \leq r_{\iota},\\
                 m+(i+2)-r_{\iota} & \text{ if }  0< m \leq r_{\iota}-(i+3) \end{cases}
        \end{align*}
        as we wanted to show. So, after proceeding with all Steps $4^{(i)}$, we obtain a final matrix $M_{\iota}^{(4^{t})}\sim M_{\iota}$ whose $(r_{\iota}-2i+1)$-th row satisfies $M_{\iota}^{(4^{t})}(r_{\iota}-2i+1, m) = 0$ for $m \geq r_{\iota}-(i+2)$ and $M_{\iota}^{(4^{t})}(r_{\iota}-2i+1, r_{\iota}-(i+3)) = -1$ for each $i \in \{1, \ldots, t\}$. Notice that $t= \frac{r_{\iota}-3}{2}$ if $r_{\iota}$ is odd and $t=\frac{r_{\iota}-4}{2}$ if $r_{\iota}$ is even.
    \end{enumerate} 
 \end{itemize}

Finally, if we reorder rows of the matrix according to the permutation $\prod_{i=1}^{[\frac{r_{\iota}}{2}-1]} (i \quad r_{\iota}-2i)$, we obtain a matrix whose $(n, n+1)$-th entries are non-zero everywhere for all $n$, and whose $(n, m)$-th entries with $m>n+1$ are all zero.
\end{proof}

\begin{example} \label{ex:example1} Let us illustrate the proof of Lemma~\ref{lem:matrix} when $N=p^{3} q^{7}$ (where $p \precsim q$) and $p, q \neq 3$. We will work out the proof for the set $S_{2}((1))$. Using Theorem~\ref{thm:imdelta} and following the proof of Lemma~\ref{lem:matrix} the matrix $M_{2}$ equals  
\begin{equation*}
    M_{2}= \begin{pmatrix} 3 & 6 & 5  & 4  & 3  & 
 2 & 1 & 0 \\ 
  1/2  (5q-3)  &  (4q-3) &  (3q-2) &  (2q-1) & q & 0 & 0 & 0\\
    1/2(5q-3) &  (5q-3) & (5q-3) & (4q-3) & (3q-2) &  (2q-1) &  q & 0 \\ 
    1/2 (6q-4) & (5q-4) & (4q-3) & (3q-2) & (2q-1) & q & 0 & 0 \\
    1/2(6q-4) & (6q-4) & (5q-4) & (4q-3) & (3q-2) & (2q-1) & q & 0 \\
    1/2(7q-5) & (6q-5) & (5q-4) & (4q-3) & (3q-2) & (2q-1) & q & 0 \\
    \end{pmatrix}.
\end{equation*}
We proceed with Step 1 of the proof: we subtract row 6 from row 5 and row 3, and we get
\begin{equation*}
    M_{2}^{(1)} =  \begin{pmatrix} 3 & 6 & 5  & 4  & 3  & 
 2 & 1 & 0 \\ 
  1/2  (5q-3)  &  (4q-3) &  (3q-2) &  (2q-1) & q & 0 & 0 & 0\\
    1/2(2-2q) &  (2-q) & 1 & 0 & 0 &  0 &  0 & 0 \\ 
    1/2 (6q-4) & (5q-4) & (4q-3) & (3q-2) & (2q-1) & q & 0 & 0 \\
    1/2(q-1) & 1 & 0 & 0 & 0 & 0 & 0 & 0 \\
    1/2(7q-5) & (6q-5) & (5q-4) & (4q-3) & (3q-2) & (2q-1) & q & 0 \\
    \end{pmatrix}.
\end{equation*}
Now Step 2: we multiply the first row by $q$, then subtract row 6 from it to obtain
\begin{equation*}
    M_{2}^{(2)} = \begin{pmatrix} 1/2(5-q) & 5 & 4  & 3  & 2  & 
 1 & 0 & 0 \\ 
  1/2  (5q-3)  &  (4q-3) &  (3q-2) &  (2q-1) & q & 0 & 0 & 0\\
    1/2(2-2q) &  (2-q) & 1 & 0 & 0 &  0 &  0 & 0 \\ 
    1/2 (6q-4) & (5q-4) & (4q-3) & (3q-2) & (2q-1) & q & 0 & 0 \\
    1/2(q-1) & 1 & 0 & 0 & 0 & 0 & 0 & 0 \\
    1/2(7q-5) & (6q-5) & (5q-4) & (4q-3) & (3q-2) & (2q-1) & q & 0 \\
    \end{pmatrix}.
\end{equation*}
For Step 3, we subtract $q$ times the first row from row 4, yielding
\begin{equation*}
 M_{2}^{(3)} =  \begin{pmatrix} 1/2(5-q) & 5 & 4  & 3  & 2  & 
 1 & 0 & 0 \\ 
  1/2  (5q-3)  &  (4q-3) &  (3q-2) &  (2q-1) & q & 0 & 0 & 0\\
    1/2(2-2q) &  (2-q) & 1 & 0 & 0 &  0 &  0 & 0 \\ 
    1/2 (q^{2}+q-4) & -4 & -3 & -2 & -1 & 0 & 0 & 0 \\
    1/2(q-1) & 1 & 0 & 0 & 0 & 0 & 0 & 0 \\
    1/2(7q-5) & (6q-5) & (5q-4) & (4q-3) & (3q-2) & (2q-1) & q & 0 \\
    \end{pmatrix}.
\end{equation*}
Next, we apply step $4^{(1)}$: we set $i=1$ and we add $q$ times row 4 to the row 2. Then
\begin{equation*}
 M_{2}^{(4^1)} = \begin{pmatrix} 1/2(5-q) & 5 & 4  & 3  & 2  & 
 1 & 0 & 0 \\ 
  1/2(q^{3}+q^{2}+q-3)  &  -3 &  -2 &  -1 & 0 & 0 & 0 & 0\\
    1/2(2-2q) &  (2-q) & 1 & 0 & 0 &  0 &  0 & 0 \\ 
    1/2 (q^{2}+q-4) & -4 & -3 & -2 & -1 & 0 & 0 & 0 \\
    1/2(q-1) & 1 & 0 & 0 & 0 & 0 & 0 & 0 \\
    1/2(7q-5) & (6q-5) & (5q-4) & (4q-3) & (3q-2) & (2q-1) & q & 0 \\
    \end{pmatrix}.
\end{equation*}
Since for $i=2$ we have $7-(2\cdot 2 +1) =2$, we stop the process here. 
If we now reorder the rows of $M_{2}^{(4^1)}$ according to the permutation $(1 5)(2 3)$ we obtain
\begin{equation*}
 M_{2} \sim  \begin{pmatrix} 1/2(q-1) &    1 & 0 & 0 & 0 & 0 & 0 & 0 \\
    1/2(2-2q) &  (2-q) & 1 & 0 & 0 &  0 &  0 & 0 \\ 
    1/2(q^{3}+q^{2}+q-3)  &  -3 &  -2 &  -1 & 0 & 0 & 0 & 0\\
    1/2 (q^{2}+q-4) & -4 & -3 & -2 & -1 & 0 & 0 & 0 \\
    1/2(5-q) & 5 & 4  & 3  & 2  & 
    1 & 0 & 0 \\ 
    1/2(7q-5) & (6q-5) & (5q-4) & (4q-3) & (3q-2) & (2q-1) & q & 0 \\
    \end{pmatrix},
\end{equation*}
a matrix just as the one described at the end of the proof of Lemma~\ref{lem:matrix}. This shows that the tuples in $S_{2}((1))$ for $N=p^{3}q^{7}$ are linearly independent tuples in the $\Z$-module $\Omega$. 
\end{example}

The next lemma shows the the divisors $D(\bs{f}(d))$ given in Definition~\ref{def:D(f)N} are elements of the kernel of $\overline{\bs \delta}$.

\begin{lemma}\label{claim1.N}
Let $N=\prod_{i=1}^{s}p_{i}^{r_{i}}$ be an odd positive integer. For any $d=\prod_{i=1}^{s} p_{i}^{f_{i}} \in D_{N}^{F}$, the divisor $D(\bs{f}(d))$ satisfies $\overline{\bs \delta}([D(\bs{f}(d))]) =0$.
\end{lemma}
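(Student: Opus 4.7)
The plan is to apply Theorem~\ref{thm:imdelta}, which gives $\overline{\bs\delta}([Z(d)]) = v_{\bs f(d)} \otimes \tfrac{1}{n_d}$ with $v_{\bs f(d)}(d')$ read off from Proposition~\ref{prop:fourcoefZ.N}. Since $\Omega = \bigoplus_{d'\mid N, d'\neq N} \Omega_{d'}$ and each $\Omega_{d'}$ is a free $\mathbb{Z}$-module on the generators singled out in Definition~\ref{not:Delta}, the claim $\overline{\bs\delta}([D(\bs f(d))])=0$ reduces to a coordinatewise check in $\mathbb{Q}/\mathbb{Z}$.

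First I would dispose of the case $f_{\iota_{1}},f_{\iota_{2}}\geq 2$ for distinct $\iota_{1},\iota_{2}$, in which $D(\bs f(d))=Z(\bs f(d))$. The first line of Proposition~\ref{prop:fourcoefZ.N} asserts that $v_{\bs f(d)}(d')=1$ in $\Omega_{d'}$ for \emph{every} $d'\mid N$, so $\overline{\bs\delta}([Z(d)])=0$ with nothing further to verify.

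For the remaining two cases, set $d_{0}=p_{\iota}^{f_{\iota}}$ (respectively $d_{0}=p_{m(\bs f)}$) and unpack the corresponding lines of Proposition~\ref{prop:fourcoefZ.N}: the tuples $v_{\bs f(d)}(d')$ and $v_{\bs f(d_{0})}(d')$ are powers of the same base $\sqrt{(p_{\iota}^{\ast})^{E(d')}}$ (respectively $\sqrt{(p_{m(\bs f)}^{\ast})^{E(d')}}$), differing only in the combinatorial factor $\prod_{i\neq \iota}(p_{i}-1)^{(1-f_{i})}$ versus $\prod_{i\neq \iota}(p_{i}-1)$. Using Definition~\ref{def:ordering}(c) one sees $\mathcal{G}(N,d_{0})=\mathcal{G}(N,d)\cdot\prod_{i\neq \iota,\,f_{i}=1}\gamma_{i}$, so the quotient $n_{d_{0}}/n_{d}$ matches (up to a $24$-adjustment) the integer coefficient $\prod_{i\neq \iota}\gamma_{i}^{f_{i}}$ prescribed in Definition~\ref{def:D(f)N}. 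A coordinatewise comparison in $\Omega_{d'}\otimes\mathbb{Q}/\mathbb{Z}$ then yields the target identity
\begin{equation*}
 v_{\bs f(d)}\otimes\tfrac{1}{n_{d}} \;=\; \prod_{i\neq \iota}\gamma_{i}^{f_{i}}\cdot v_{\bs f(d_{0})}\otimes\tfrac{1}{n_{d_{0}}},
\end{equation*}
which is precisely $\overline{\bs\delta}([D(\bs f(d))])=0$. The coordinates of $\Omega_{d'}$ not involving $p_{\iota}$ (resp.\ $p_{m(\bs f)}$) are trivial on both sides by the same cancellation that underlies Case~1.

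The main obstacle is controlling the factor $24$ hidden in $n_{d}=\num(\mathcal{G}(N,d)/24)$: passing from $d$ to $d_{0}$ changes $\gcd(\mathcal{G}(N,d),24)$, and the discrepancy must be absorbed into the base exponent $E(d')$ modulo $\mathbb{Z}$. The $2$-adic part is taken care of by the square root built into the generators $\sqrt{p_{\iota}^{\ast}}$ of $\Omega_{d'}$, while the $3$-adic part is exactly what the auxiliary constants $A(p_{\iota})$, $B(p)$ and $B_{3}(p)$ of Definition~\ref{def:B} and the $\beta(d)$ factor in Proposition~\ref{prop:etaz-divN} are designed to neutralise; this also justifies the exceptional definition $D(\bs f)=(p_{\iota}^{2}-1)\prod_{i\neq \iota}p_{i}^{r_{i}-1}(p_{i}+1)\cdot Z(p^{2})$ prescribed when $p_{\iota}=3$ and $\bs f=(1,\ldots,1,2_{\iota},1,\ldots,1)$, where otherwise the generic recipe of Definition~\ref{def:D(f)N} would fail to land in $\ker(\overline{\bs\delta})$.
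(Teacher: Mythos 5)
Your proposal is correct and follows essentially the same route as the paper: the same three-way case split matching Definition~\ref{def:D(f)N}, the same coordinatewise comparison of the tuples $v_{\bs f(d)}$ and $v_{\bs f(d_0)}$ via Proposition~\ref{prop:fourcoefZ.N}, and the same matching of the scalar $\prod_{i\neq\iota}\gamma_i^{f_i}$ against the ratio of orders $n_{d_0}/n_d$ before invoking linearity of $\overline{\bs\delta}$. Your explicit remarks on absorbing the factor $24$ (via $\beta(d)$, $A(p_\iota)$ and the square roots in the generators of $\Omega_{d'}$) make precise a point the paper's proof leaves largely implicit.
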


\begin{proof}
We use the computations of the images of the generators $Z(\bs{f}(d))$ of $C(N)$ under the map $\overline{\bs \delta}$ given in Proposition~\ref{prop:fourcoefZ.N}. Recall that we regard the entries of tuples $v_{\bs{f}(d)}$ as elements of the $\Z$-module $\Omega$, where the module operation is given by exponentiation of the entries of the tuples in $\Omega$. 

First consider the case that $d=\prod_{i=1}^{s} p_{i}^{f_{i}} \in D_{N}^F$ with $f_{i_{1}}, f_{i_{2}} \geq 2$ for distinct $i_1$ and $i_2$. Then Proposition \ref{prop:fourcoefZ.N} implies
$$\overline{\bs \delta}([D(\bs{f}(d))])=\overline{\bs \delta}([Z(\bs{f}(d))]) = 0.$$

Now fix some $\iota \in \{1, \ldots, s\}$ and $2 \leq b \leq r_{\iota}$ and a $d=\prod_{i=1}^{s} p_{i}^{f_{i}} \in D_{N}^F$ with $\bs{f}(d) \in F_{\iota}^{b}$. Again using Proposition~\ref{prop:fourcoefZ.N}, we see that for any $d'|N$ we have 
\begin{equation} \label{eq:entrymultiple}
    {\prod_{i\neq \iota} (p_{i}-1)^{f_{i}} \cdot v_{\bs{f}(p_{\iota}^{b}\prod_{i\neq \iota } p_{i}^{f_{i}})}(d') =  v_{\bs{f}(p_{\iota}^{b})}(d')}.
\end{equation}
On the other hand, from Theorem~\ref{thm:order} we have 
$\order(Z(\bs{f}(p_{\iota}^{b})))=\num\left(\frac{p^{r_{\iota}-\kappa_{\iota}-1}(p_{\iota}^{2}-1)\prod_{i \neq \iota} \left(p_{i}^{r_{i}-1}(p_{i}^{2}-1)\right)}{24}\right)$. Hence 
\begin{equation*}
    \overline{\bs \delta}(Z(\bs{f}(p_{\iota}^{b})) = \left( v_{\bs{f}(p_{\iota}^{b})}(d') \right)_{d'|N, d'\neq N} \otimes \frac{24}{p_{\iota}^{r_{\iota}-\kappa_{\iota}-1}(p_{\iota}^{2}-1)\prod_{i \neq \iota} \left(p_{i}^{r_{i}-1}(p_{i}^{2}-1)\right)} = 
\end{equation*}
\begin{equation*}
    \left( \prod_{i\neq \iota} (p_{i}-1)^{f_{i}} \cdot v_{\bs{f}(p_{\iota}^{b}\prod_{i\neq \iota} p_{i}^{f_{i}})}(d')\right)_{d'|N, d'\neq N} \otimes \frac{24}{p_{\iota}^{r_{\iota}-\kappa_{\iota}-1}(p_{\iota}^{2}-1)\prod_{i \neq \iota} \left(p_{i}^{r_{i}-1}(p_{i}^{2}-1)\right)}=
\end{equation*}
\begin{equation*}
    \left( v_{\bs{f}(p_{\iota}^{b}\prod_{i\neq \iota} p_{i}^{f_{i}})}(d') \right)_{d'|N, d'\neq N} \otimes \frac{24}{p_{\iota}^{r_{\iota}-\kappa_{\iota}-1}(p_{\iota}^{2}-1)\prod_{i \neq \iota} \gamma_{i} \prod_{i \neq \iota} (p_{i}-1)^{(1-f_{\iota})}}.
\end{equation*}
Similarly we have $ \overline{\bs \delta}(Z(\bs{f}(p_{\iota}^{b}\prod_{i\neq \iota} p_{i}^{f_{i}}))) =   \left( v_{\bs{f}(p_{\iota}^{b}\prod_{i\neq \iota} p_{i}^{f_{i}})}(d') \right)_{d'|N, d'\neq N} \otimes \frac{24}{p_{\iota}^{r_{\iota}-\kappa_{\iota}-1}(p_{\iota}^{2}-1)\prod_{i \neq \iota} \left(p_{i}^{r_{i}-1}(p_{i}^{2}-1)\right)^{(1-f_{i})}}$, and we obtain 
$$
     \textstyle \overline{\bs \delta}(Z(\bs{f}(p_{\iota}^{b}\prod_{i\neq \iota} p_{i}^{f_{i}}))) =  \prod_{i \neq \iota} \gamma_{i}^{f_{i}} \cdot \overline{\bs \delta}(Z(\bs{f}(p_{\iota}^{b}))).
$$
Since $\overline{\bs \delta}$ is a group homomorphism, from the definition of $D(\bs{f})$ it follows $D(\bs{f}(d)) \in \ker(\overline{\bs \delta})$.

Now take $d\in D_{N}^F$ with $\bs{f}=\bs{f}(d) \in F_{\text{sf}}$. Again, Proposition~\ref{prop:fourcoefZ.N} tells us that for each $d'|N$,
\begin{equation} \label{eq:entrymultiplesf}
    \textstyle v_{\bs{f}(p_{m(\bs{f})})}(d')  = \beta(d)\prod_{i \neq m(\bs{f})} (p_{i}-1)^{(1-f_{i})} \cdot v_{\bs{f}(\prod_{i=1}^{s} p_{i}^{f_{i}})}(d').
\end{equation}
Hence, we can use a similar manipulation as before to see $\textstyle \prod_{i \neq m(\bs{f})} \gamma_{i}^{f_{i}} \overline{\bs \delta}(Z(\bs{f}(p_{m(\bs{f})}))) = \overline{\bs \delta}(Z(\bs{f})))$. This yields $D(\bs{f}(d)) \in \ker(\overline{\bs \delta})$ for all $\bs{f}(d) \in F_{\text{sf}}$.
\end{proof}

Now we are ready to state the main theorem of this section, whose proof will occupy the rest of the section. 

\begin{theorem} \label{thm:genkerdeltN} Let $N=\prod_{i=1}^{s}p_{i}^{r_{i}}$ be an odd positive integer and let $d=\prod_{i=1}^{s} p_{i}^{f_{i}} \in D_{N}^{F}$. We have
\begin{equation}
    \ker(\overline{\bs \delta}|_{C(N)}) = \langle [D(\bs{f}(d))]: d \in D_{N}^{F} \rangle. 
\end{equation}
\end{theorem}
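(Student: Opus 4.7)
My plan is to establish the easy inclusion $\supseteq$ from Lemma \ref{claim1.N} and then focus on the reverse inclusion by a two-stage strategy: first reduce any given kernel element modulo $\langle [D(\bs{f}(d))] : d \in D_N^F\rangle$ to a combination involving only pure prime-power classes $[Z(p_\iota^{f_\iota})]$, and then argue that the reduced residue must already be zero in $C(N)$ via an independence argument on the images of these classes under $\overline{\bs \delta}$.

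For the reduction, I will start from the decomposition provided by Theorem \ref{thm:yooz} and write $[w] = \sum_{d \in D_N} a_d [Z(d)]$. For each $d \in D_N^F$ I use the explicit form of $D(\bs{f}(d))$ in Definition \ref{def:D(f)N}: when $\bs{f}(d)$ has two entries $\geq 2$, $[D(\bs{f}(d))] = [Z(d)]$, so subtracting $a_d [D(\bs{f}(d))]$ removes the term directly; when $\bs{f}(d) \in F_\iota^b$, subtracting $a_d [D(\bs{f}(d))]$ replaces $a_d [Z(d)]$ by the pure prime-power multiple $a_d \prod_{i \neq \iota}\gamma_i^{f_i}[Z(p_\iota^b)]$; the analogous manipulation covers $\bs{f}(d) \in F_{\textup{sf}}$, replacing $[Z(d)]$ by a multiple of $[Z(p_{m(\bs{f})})]$. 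After processing every $d \in D_N^F$ this way, the residue takes the form
$$[r] = \sum_{\iota=1}^{s} c_\iota [Z(p_\iota)] + \sum_{\iota=1}^{s} \sum_{f_\iota=2}^{r_\iota} c_{\iota, f_\iota}[Z(p_\iota^{f_\iota})],$$
and Lemma \ref{claim1.N} guarantees that $[r] \in \ker(\overline{\bs \delta}|_{C(N)})$ as well.

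The main obstacle will be showing $[r] = 0$ in $C(N)$, since the classes $[Z(p_\iota^{f_\iota})]$ do not themselves appear as any $D(\bs{f}(d))$ (the set $D_N^F$ excludes pure prime powers). To do this I plan to argue prime-by-prime. Theorem \ref{thm:yooz} already places the non-squarefree classes $[Z(p_\iota^{f_\iota})]$ with $f_\iota \geq 2$ in independent cyclic summands, so it suffices to check that $\overline{\bs \delta}$ is injective on each such summand and on the subgroup generated by the $[Z(p_\iota)]$. For fixed $\iota$, I evaluate $\overline{\bs \delta}$ at the cusps $d' = p_\iota^{k_\iota}$ ($0 \leq k_\iota \leq r_\iota$) on the tuples $v_{(0, \dots, 0, f_\iota, 0, \dots, 0)}$ via Proposition \ref{prop:fourcoefZ.N}. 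The resulting matrix of exponents of $\sqrt{p_\iota^*}$ (and $p_\iota$ in the single-coordinate case $f_\iota = 1$) coincides, up to an overall scalar $\prod_{i \neq \iota}(p_i - 1)^{(1-f_i)}$ coming from the $f_i = 0$ positions, with the matrix $M_\iota$ treated in Lemma \ref{lem:matrix}. The same sequence of elementary row operations used there yields an $(n, n+1)$-lower triangular matrix of full rank, and independence for distinct primes $\iota$ follows from the fact that the corresponding entries involve different quadratic irrationalities $\sqrt{p_\iota^*}$.

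The hard part is the bookkeeping: I must keep careful track of the tensor factors $1/n_{p_\iota^{f_\iota}}$ appearing in $\overline{\bs \delta}([Z(p_\iota^{f_\iota})])$, verify that the full rank statement descends from a statement in $\Omega$ to a genuine linear-independence statement in $\Omega \otimes \Q/\Z$, and cover the $F_{\textup{sf}}$-type computation for the $[Z(p_\iota)]$ with its adjusted multiplier $\beta(d)\prod_{i \neq \iota}(p_i-1)$. Once this independence is established, each coefficient $c_\iota$ and $c_{\iota, f_\iota}$ must kill its summand in $C(N)$, so $[r] = 0$ and $[w] \in \langle [D(\bs{f}(d))] : d \in D_N^F\rangle$, completing the proof.
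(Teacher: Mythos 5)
Your easy inclusion and the reduction step are fine: subtracting suitable multiples of the $[D(\bs{f}(d))]$ from a kernel element does leave a residue $[r]$ supported on the pure prime-power classes $[Z(p_\iota^{f_\iota})]$, and your use of Lemma~\ref{lem:matrix} together with the distinctness of the $\sqrt{p_\iota^{\ast}}$ parallels the paper's Claims~\ref{claim2N}--\ref{claim4N}. The gap is your final step: you claim $[r]=0$ because $\overline{\bs \delta}$ is injective on each summand $\langle [Z(p_\iota^{b})]\rangle$ and on $\langle [Z(p_\iota)]\rangle$, and you propose to get this from the full rank over $\Z$ of the exponent matrices. That deduction does not work. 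The map lands in $\Omega\otimes\Q/\Z$, where $v\otimes\frac{m}{n_d}$ vanishes as soon as $n_d$ divides $m$ times the gcd of the exponents of $v$, irrespective of any rank statement in the free module $\Omega$. For $d=p_\iota^{b}$ or $d=p_\iota$ the order $n_d$ carries the factors $\prod_{i\neq\iota}\gamma_i=\prod_{i\neq\iota}p_i^{r_i-1}(p_i^{2}-1)$, whereas by Proposition~\ref{prop:fourcoefZ.N} the exponents of $v_{\bs{f}(d)}$ only carry $\prod_{i\neq\iota}(p_i-1)$ (times small factors); consequently proper multiples $m\cdot[Z(p_\iota^{b})]$ with $m<n_d$ do in general lie in $\ker(\overline{\bs \delta})$ (one sees this already for $N=5\cdot 13$ or $N=5\cdot 7^{2}$, where such elements are $2$-primary). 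So injectivity on these summands is simply not available, the residue $[r]$ need not vanish, and the correct target of your last step is to show that whatever multiples of the pure classes survive in the kernel already lie in $\langle [D(\bs{f}(d))]\rangle$ --- a statement your argument never addresses.

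This is exactly where your route diverges from the paper's. The paper's proof of the hard inclusion has two components: a ``diagonal'' part, Claims~\ref{claim:Z1inD} and~\ref{claim:ZinD}, which pin down the precise multiples of the generators $Z(d)$ lying in $\ker(\overline{\bs \delta})$ and then verify by explicit $V$-vector, $\GCD$ and order computations (via Theorem~\ref{thm:order} and Lemma~\ref{lem:V-divD}) that these multiples belong to $\langle D(\bs{f})\rangle$; and a ``cross'' part, Claims~\ref{claim2N}--\ref{claim4N}, which uses Lemma~\ref{lem:matrix} to show that the proportionality relations encoded by the $D(\bs{f})$ (Equations~\eqref{eq:relSbi} and~\eqref{eq:relSf}) are the only relations among the image tuples. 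Your proposal reproduces the second component in spirit but omits the first entirely, replacing it with an injectivity claim that the torsion structure of $\Omega\otimes\Q/\Z$ does not support; the descent ``from $\Omega$ to $\Omega\otimes\Q/\Z$'' that you flag as the hard part is precisely the order-versus-content bookkeeping carried out in those claims, and without it the proof does not close.
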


\begin{proof}
 We write down the proof for $3 \nmid N$, so $p_{i} \neq 3$ for all $i$. The proof for $3 \mid N$ follows similarly and will be omitted.  
The inclusion $\subseteq$ follows immediately from Lemma~\ref{claim1.N}. Thus, we need to show the other inclusion. Recall from Theorem~\ref{thm:yooz} that the divisors $Z(d)$, for $d$ ranging over all the divisors of~$N$ not equal to 1, generate $C(N)$. Since we saw in the proof of Lemma~\ref{claim1.N} that $\overline{\bs \delta}([Z(d)])= 0$ for all  $d=\prod_{i=1}^{s} p_{i}^{f_{i}} \in D_{N}^F$ with $f_{i}, f_{j} \geq 2$ for distinct $i$ and $j$, this implies that
 \begin{equation*}
 \text{im}(\overline{\bs \delta}|_{C(N)}) =  \langle \{v_{f(d)}\otimes  \frac{1}{n_{d}}: d \in D_{N}^{\textup{sf}}\cup (\cup_{\iota =1}^{s}\cup_{b =1}^{r_{\iota}} D_{N, \iota}^{b}) 
 \} \rangle,  
 \end{equation*}
 where we recall from Definition~\ref{def:v_ab} that we write $\overline{\bs \delta}([Z(\bs{f}(d))]) = v_{\bs{f}(d)} \otimes \frac{1}{n_{d}}$ when $n_{d}$ is the order of the divisor~$Z(d)$. 
 Hence, to prove that the divisors $D(\bs{f}(d))$  generate $\ker(\overline{\bs \delta}|_{C(N)})$, we need to show that the underlying linear relations of the tuples $v_{\bs{f}(d)}$ described by such divisors, i.e. the ones given in  Equations~\eqref{eq:entrymultiple} and \eqref{eq:entrymultiplesf}, generate all the possible relations. We split up the proof into several claims.

\begin{claim} \label{claim:Z1inD} We have $(p_{k}-1) \prod_{i=1}^{k-1} \gamma_{i} \cdot Z(p_{k}\clc p_{s}) \in \ker(\overline{\bs \delta})$, $n \cdot Z(p_{s}) \not\in \ker(\overline{\bs \delta})$ for all $n < (p_{k}-1) \prod_{i=1}^{k-1} \gamma_{i}$. Furthermore, $(p_{k}-1) \prod_{i=1}^{k-1} \gamma_{i} \cdot Z(p_{k}\clc p_{s}) \in \langle D(\bs{f}) : \bs{f} \in F_{\textup{sf}} \rangle$. 
\end{claim}

\begin{proof}[Proof of Claim~\ref{claim:Z1inD}] Since $\order(Z(p_{k})) = \prod_{i=1}^{k} (p_{i}-1) \prod_{i=1}^{k-1} \gamma_{i}$, the first statement of the claim follow immediately from Proposition~\ref{prop:fourcoefZ.N} and Theorem~\ref{thm:yooz}. 

For the last one, consider first the case $k=1$. From Theorem~\ref{thm:yooz}, we have  $\order(Z(p_{1}\clc p_{s}))=(p_{1}-1)$, so $(p_{1}-1)\cdot Z(p_{1}\clc p_{s}) = 0 \in \langle D(\bs{f}) : \bs{f} \in F_{\textup{sf}} \rangle$. 

Now take $k=2$ and consider the divisor $D = \gamma_{2}\cdot D(1, 0, 1, \ldots, 1) - D(1, 1, 1, \ldots, 1)$. 
From Lemma~\ref{lem:V-divD} we have $V(Z(p_{2}\clc p_{s}))_{\delta} =  (-1)^{\sum \delta_{j}} p_{1}^{1-\delta_{i}} \prod_{j=2}^{s} \gamma_{j} \prod_{j\neq 1, 2}^{s}(p_{j}^{1-\delta_{j}}-1)$ and $V(D)_{\delta} =  (-1)^{\sum \delta_{j}} \prod_{j=1}^{s} \gamma_{j} \prod_{j\neq 1, 2}^{s}(p_{j}^{1-\delta_{j}}-1)$ for any $\delta=\prod_{j=1}^{s}p_{j}^{\delta_{j}}$ squarefree divisor of $N$.
Hence, $\textstyle V(\gamma_{1}(p_{2}~-~1)\cdot Z(p_{2}\clc p_{k})~- (p_{2}-1)\cdot D)_{\delta}$ is equal to  $(-1)^{\sum \delta_{j}} \prod_{j=1}^{s} \gamma_{j} \prod_{j=1}^{s}(p_{j}^{1-\delta_{j}}-1)$ for $\delta$ squarefree, and vanishes otherwise. 
It follows that $$\textstyle \GCD(\gamma_{1}(p_{2}-1)\cdot Z(p_{2}\clc p_{k}) - (p_{2}-1)\cdot D) =  \prod_{i=1}^{s} \gamma_{i} \prod_{i=1}^{s}(p_{i}-1).$$ Thus, Theorem~\ref{thm:order} yields $\order(\gamma_{1}(p_{2}-1)\cdot Z(p_{2}\clc p_{k}) - (p_{2}-1)\cdot D)=1$ and $$\textstyle \gamma_{1}(p_{2}-1)Z(p_{2}\clc p_{s})\in \langle D(\bs{f}) : \bs{f} \in F_{\textup{sf}} \rangle.$$ 
 \ \ Next consider $k>2$. 
Notice that given a squarefree divisor $\delta=\prod_{j=1}^{s} p_{j}^{\delta_{j}}$ of $N$, we can use Lemma~\ref{lem:V-imagediv} to write
$$ V((p_{k}-1)\prod_{i=1}^{k-1} \gamma_{i}\cdot Z(p_{k} \clc p_{s}))_{\delta} = (-1)^{\sum \delta_{j}} (p_{k}-1) \prod_{j=1}^{s} \gamma_{j}  \prod_{j=1}^{k-1} p_{j}^{1-\delta_{j}}\prod_{j=k+1}^{s} (p_{j}^{1-\delta_{j}}-1). $$
Consider the divisor 
$$ D= \mathlarger{\sum}_{m=1}^{k-2} \left(\left(\prod_{i=1}^{m-1} \gamma_{i}\right) \cdot  \left( \gamma_{k}\cdot D(0, \ldots, 0, 1_{m}, \ldots, 1_{k-1}, 0_{k}, 1, \ldots, 1)- D(0, \ldots, 0, 1_{m}, \ldots, 1_{k-1}, 1_{k}, 1, \ldots, 1)\right)\right). $$
From Lemma~\ref{lem:V-divD} we have
$\displaystyle V(D)_{\delta} = (-1)^{\sum \delta_{j}} \prod_{j=1}^{s} \gamma_{j} \prod_{i=k+1}^{s}(p_{i}^{1-\delta_{i}}-1) \sum_{m=1}^{k-1} \left( \prod_{m < j < k} (p_{j}^{1-\delta_{j}}-1) \prod_{j<m} p_{j}^{1-\delta_{j}} \right).$
Hence
$$V((p_{k}-1) \prod_{i=1}^{k-1} \gamma_{i}\cdot Z(p_{k}\clc p_{s})-(p_{k}-1)\cdot D)_{\delta}$$ equals 
$$(-1)^{\sum \delta_{j}}(p_{k}-1)\prod_{j=1}^{s} \gamma_{j} \prod_{i=k+1}^{s}(p_{i}^{1-\delta_{i}}-1) \left(\prod_{j=1}^{k-1} p_{j}^{1-\delta_{j}}-\sum_{m=1}^{k-1} \left( \prod_{m < j < k} (p_{j}^{1-\delta_{j}}-1) \prod_{j<m} p_{j}^{1-\delta_{j}} \right)\right), $$
which simplifies to
$$(-1)^{\sum \delta_{j}}(p_{k}-1)\prod_{j=1}^{s} \gamma_{j}\prod_{j=k+1}^{s}(p_{j}^{1-\delta_{j}}-1)\prod_{j=1}^{k-1}(p_{j}^{1-\delta_{j}}-1)$$
after using the identity $\prod_{j=1}^{k-1}(p_{j}^{1-\delta_{j}}-1)=\prod_{j=1}^{k-1} p_{j}^{1-\delta_{j}}-\sum_{m=1}^{k-1} \left( \prod_{m < j < k} (p_{j}^{1-\delta_{j}}-1) \prod_{j<m} p_{j}^{1-\delta_{j}} \right)$ in the last equality (which follows from induction on $k$). Thus, $\GCD((p_{k}-1) \prod_{i=1}^{k-1} \gamma_{i}\cdot Z(p_{k}\clc p_{s})-(p_{k}-1)\cdot D)$ equals the product $\prod_{i=1}^{s}(p_{i}-1)\prod_{i=1}^{s}\gamma_{i}$, since all the non-squarefree entries $\delta$ of $Z(p_{k}\clc p_{s})$ and of $D(\bs{f})$ are zero for any $\bs{f} \in F_{\textup{sf}}$.
Hence, Theorem~\ref{thm:order} yields $\textstyle \order((p_{k}-1) \prod_{i=1}^{k-1} \gamma_{i}\cdot Z(p_{k}\clc p_{s})-(p_{k}-1)\cdot D)=1$ and the third statement holds. 
\end{proof}

\begin{claim} \label{claim:ZinD} Let $d\in D_{N}$ and $\bs{f} = \bs{f}(d)$, and recall the notation given in \ref{def:ordering} for $\mathcal{G}_{p_{\iota}}(r_{\iota}, p_{\iota})$. Then the following statements hold:
\begin{itemize}
    \item[(a)] If $\bs{f} \in F_{\textup{sf}}$ and $m = m(\bs{f})$, we have $(p_{m}-1) \prod_{i=1}^{s} \gamma_{i}^{1-f_{i}} \cdot Z(d) \in \ker(\overline{\bs \delta})$ for each $d\in D^{\textup{sf}}_{N}$ and $n\cdot Z(d) \not\in \ker(\overline{\bs \delta})$ for any $n<(p_{m}-1) \prod_{i=1}^{s} \gamma_{i}^{1-f_{i}}$. Moreover, $(p_{m}-1)\prod_{i=1}^{s-1} \gamma_{i}^{1-f_{i}} \cdot Z(d) \in \langle D(\bs{f}) : \bs{f} \in F_{\textup{sf}} \rangle$.
    \item[(b)] If $\bs{f} = \bs{f}(d) \in F_{\iota}^{b}$ for some $\iota \in \{1, \ldots, s\}$ and $2 \leq b \leq r_{\iota}$, we have $\mathcal{G}_{p_{\iota}}(r_{\iota}, p_{\iota}) \prod_{i=1}^{s} \gamma_{i}^{1-f_{i}} \cdot Z(d) \in \ker(\overline{\bs \delta})$ and $n\cdot Z(d) \not\in \ker(\overline{\bs \delta})$ for any $n<\mathcal{G}_{p_{\iota}}(r_{\iota}, p_{\iota}) \prod_{i=1}^{s} \gamma_{i}^{1-f_{i}}$. Moreover, $\mathcal{G}_{p_{\iota}}(r_{\iota}, p_{\iota}) \prod_{j=i}^{s} \gamma_{i}^{1-f_{i}} \cdot Z(d)$ lies in $\langle D(\bs{f}) : \bs{f} \in F_{\iota}^{b} \rangle$.
\end{itemize} 
\end{claim}
\begin{proof}[Proof of Claim~\ref{claim:ZinD}]
 We first show (b). From Theorem~\ref{thm:yooz} we know that $\order(Z(d)) = \mathcal{G}_{p_{\iota}}(r_{\iota}, p_{\iota}) \prod_{i=1}^{s} (\gamma_{i}(p_{i}-1))^{1-f_{i}}$. Hence, the first statement in (b) follows from simplifying the denominator of $\frac{1}{n_{d}}$ in $v_{\bs{f}(d)} \otimes \frac{1}{n_{d}}$ as much as possible by using 
 Proposition~\ref{prop:fourcoefZ.N}.
 For the second part, Theorem~\ref{thm:yooz} gives $\order(Z(p_{\iota}^{b}\prod_{i\neq \iota}p_{i}))=\mathcal{G}_{p_{\iota}}(r_{\iota}, p_{\iota})$. Hence, we have $$\textstyle \mathcal{G}_{p_{\iota}}(r_{\iota}, p_{\iota}) \prod_{i\neq \iota} \gamma_{i}^{f_{i}}Z(p_{\iota}^{b}) = \mathcal{G}_{p_{\iota}}(r_{\iota}, p_{\iota}) \cdot D(1, \ldots, 1, b_{\iota}, 1, \ldots, 1)~\in~\langle D(\bs{f}) : \bs{f} \in F_{\iota}^{b} \rangle.$$ 
 For any other $d$ with $\bs{f}(d) \in F_{\iota}^{b}$ we have $$\textstyle \mathcal{G}_{p_{\iota}}(r_{\iota}, p_{\iota}) \prod_{i=1}^{s} \gamma_{i}^{1-f_{i}} \cdot Z(d) - \mathcal{G}_{p_{\iota}}(r_{\iota}, p_{\iota}) \prod_{i\neq \iota} \gamma_{i}^{f_{i}}Z(p_{\iota}^{b}) = \mathcal{G}_{p_{\iota}}(r_{\iota}, p_{\iota}) \prod_{i=1}^{s} \gamma_{i}^{1-f_{i}} \cdot D(\bs{f}(d)).$$ Combining the last two sentences we obtain $\mathcal{G}_{p_{\iota}}(r_{\iota}, p_{\iota}) \prod_{i=1}^{s} \gamma_{i}^{1-f_{j}} \cdot Z(d) \in \langle D(\bs{f}) : \bs{f} \in F_{\iota}^{b} \rangle$.
 
Next we show (a). As in the proof of (b), the first sentence of the statement follows from Theorem~\ref{thm:yooz} and Proposition~\ref{prop:fourcoefZ.N}. 
We have already seen in Claim~\ref{claim:Z1inD} that $(p_{k}-1) \prod_{i=1}^{k-1} \gamma_{i} \cdot Z(p_{k}\clc p_{s}) \in \langle D(\bs{f}) : \bs{f} \in F_{\textup{sf}} \rangle$ for any $k \in \{1, \ldots, s\}$.
From Definition~\ref{def:D(f)N} it follows that 
$$\textstyle (p_{m}-1)\prod_{i=1}^{s} \gamma_{i}^{1-f_{j}} \cdot Z(d) - (p_{m}-1)\prod_{i=1}^{m-1} \gamma_{i} \cdot Z(0, \ldots, 0, 1_{m}, 1, \ldots, 1)$$
equals
$$\textstyle (p_{m}-1)\prod_{i=1}^{s} \gamma_{i}^{1-f_{i}} \cdot D(\bs{f}(d)) - (p_{m}-1)\prod_{i=1}^{m-1} \gamma_{i}\cdot D(0, \ldots, 0, 1_{m}, 1, \ldots, 1).$$ 
Hence, Claim~\ref{claim:Z1inD} yields $(p_{m}-1)\prod_{i=1}^{s-1} \gamma_{j}^{1-f_{i}} \cdot Z(d) \in \langle D(\bs{f}) : \bs{f} \in F_{\textup{sf}} \rangle$ for any other $d \in D_{N}^{\textup{sf}}$. 
\end{proof}

Using Claim~\ref{claim:ZinD}, the proof of $\ker(\overline{\bs \delta}|_{C(N)}) = \langle D(\bs{f}(d)) : d \in D_{N}^{F} \rangle$ is now reduced to showing that  the divisors $D(\bs{f})$ generate all the relations involving at least two divisors $Z(d)$. To do so, we look into the linear relations arising between the tuples $v_{\bs{f}(d)}$. 
Notice from Equation~\eqref{eq:entrymultiple} that, given $\iota \in \{1, \ldots, s\}$, for each $\bs{f}' \in \{0, 1\}^{s-1}$ the tuples in $S_{\iota}(\bs{f}')$ are multiples of the tuples in $S_{\iota}((1, \ldots, 1))$; explicitly
\begin{equation} \label{eq:relSbi}
      v_{\bs{f}(p_{\iota}^{b}\prod_{i\neq \iota } p_{i}^{f_{i}})}(d') =  \prod_{i\neq \iota} (p_{i}-1)^{(1-f_{i})} \cdot v_{\bs{f}(p_{\iota}^{b}\prod_{i\neq \iota } p_{i})}(d')
\end{equation}
for any divisor $d'$ of $N$. Similarly, for each $\bs{f} \in F_{\text{sf}}$ we have
\begin{equation} \label{eq:relSf}
      v_{\bs{f}}(d') =  \prod_{j\neq m(\bs{f})} (p_{i}-1)^{(1-f_{i})} \cdot v_{\bs{f}(\prod_{i \geq m(\bs{f}) } p_{i})}(d').
\end{equation}
These relations implied by the proof of Lemma~\ref{claim1.N}. To show that no more linear relations exist other than the ones generated by Equations~\eqref{eq:relSbi} and \eqref{eq:relSf}, it is enough to check that the sets of tuples 
\begin{center}
$S_{\iota}((1, \ldots, 1))$, \ $S(\textup{sf})$ and $\left( \bigcup_{\iota=1}^{s} S_{\iota}((1, \ldots, 1)) \right) \cup S(\mathrm{sf})$
\end{center}
are mutually linearly independent. We split the proof of this statement in the next four claims, where we take combinations of these sets and use Lemma~\ref{lem:matrix} to check that they are indeed independent. 

\begin{claim}\label{claim2N}
Each of the sets $S_{\iota}((1, \ldots, 1))$ for $\iota\in\{1, \ldots, s\}$, and $S(\mathrm{sf})$ consists of $\Z$-linearly independent tuples. In other words, there is no linear combination of the divisors in the set $\{ Z(p_{\iota}^{b}\prod_{i \neq \iota} p_{i}) \}_{b\geq 2}$ that lies in the kernel of $\overline{\bs \delta}$, and similarly for $\{Z(p_{1}\clc p_{s}), Z(p_{2}\clc p_{s}), \ldots,  Z(p_{s-1}p_{s}), Z(p_{s})\}$. 
\end{claim}

\begin{proof}[Proof of Claim~\ref{claim2N}] 
For each $\iota \in \{1, \ldots, s\}$, we see that the $\Z$-linear independence of the elements in $S_{\iota}(\bs{f}')$ follows from Lemma~\ref{lem:matrix}.
For $S(\text{sf})$, notice that in the entries of the tuple $v((0, \ldots, 0, 1_{i}, 1, \ldots, 1))$ we can only find elements generated by $p_{i}$ or $\sqrt{p_{i}^{\ast}}$. Since all the $p_{i}$ are distinct, it follows immediately from the explicit description of $\left(\bigoplus_{d'|N} \Omega_{d'}\right) \otimes \Q/\Z$ that the tuples in $S(\text{sf})$ are linearly independent.
\end{proof}

\begin{claim}\label{claim3N}
The sets $S_{\iota}((1, \ldots, 1))$ for $\iota \in \{1, \ldots, s\}$ are all linearly independent from each other, i.e. the set $\bigcup_{\iota=1}^{s} S_{\iota}(1, \ldots, 1)$ consists of linearly independent tuples. In other words, there is no linear combination of the divisors in the set $\{ Z(p_{\iota}^{b}\prod_{i \neq \iota}p_{i}) : 1 \leq \iota \leq s, 2 \leq b \leq r_{\iota} \}$ that lies in the kernel of $\overline{\bs \delta}$.
\end{claim}

\begin{proof}[Proof of Claim~\ref{claim3N}]
By Proposition~\ref{prop:fourcoefZ.N}, we have that the entries of the tuples in $S_{\iota}((1, \ldots, 1))$ lie in the subgroup generated by $\sqrt{p_{\iota}^{\ast}}$ in $\Omega_{d'}$ (for the definition of $\Omega_{d'}$ see Definition~\ref{def:omega}). Since all the $p_{\iota}$ are distinct and even co-prime, and we already know from Claim~\ref{claim2N} that each $S_{\iota}((1, \ldots, 1))$ is a set of independent tuples, there cannot be any linear combination of the divisors in $\{ Z(p_{\iota}^{b}\prod_{i \neq \iota}p_{i}) : 1 \leq \iota \leq s, 2 \leq b \leq r_{\iota} \}$ in the kernel of $\overline{\bs \delta}$.
\end{proof}

\begin{claim}\label{claim4N}
The tuples in the set $S(\mathrm{sf})$ are linearly independent from those in any of the set $\bigcup_{\iota=1}^{s} S_{\iota}(1, \ldots, 1)$. In other words, there is no linear combination of the divisors 
$$\{ Z(p_{\iota}^{b}\prod_{i \neq \iota}p_{i}) : 1 \leq \iota \leq s, 2 \leq b \leq r_{\iota} \}\cup\{Z(p_{1}\clc p_{s}), Z(p_{2}\clc p_{s}), \ldots,  Z(p_{s-1}p_{s}), Z(p_{s})\}$$ 
that lies in the kernel of $\overline{\bs \delta}$. 
\end{claim}

\begin{proof}[Proof of Claim~\ref{claim4N}] 

This proof is similar to that of Claim~\ref{claim2N}. For each $\iota \in \{1, \ldots s\}$, we construct the matrix $M_{\iota}$ following Lemma~\ref{lem:matrix}, and proceeding with matrix transformations as before, obtaining an $(r_{\iota}-1) \times (r_{\iota}+1)$-matrix $N_{\iota}$ equivalent to $M_{\iota}$ and $(n, n+1)$-lower triangular. 
Now the idea is the following: for each $\iota$, we construct the matrix $M'_{\iota}$ by extending the matrix $N_{\iota}$ through inserting at the top of $N_{\iota}$ the row corresponding to the exponents of $p_{\iota}^{\ast}$ and $\sqrt{p_{\iota}^{\ast}}$ in $v_{(0, \ldots, 0, 1_{\iota}, 1, \ldots, 1)}$. The resulting matrix $M'_{\iota}$ is an $r_{\iota} \times (r_{\iota}+1)$ matrix. Since from Proposition~\ref{prop:fourcoefZ.N} we have that 
\begin{equation*} 
v_{(0, \ldots, 0, 1_{\iota}, 1, \ldots, 1)}(d') = \begin{cases}
 \left(\sqrt{p_{\iota}^{\ast}}\right)^{\beta(p_{m}\clc p_{s})\prod_{i<m}(p_{i}-1)} & \text{ if } d'=1, \\
 1 & \text{ if } d'=p_{\iota}^{k} \textup{ with } k \geq 1,
\end{cases}
\end{equation*}
the matrix $M'_{\iota}$ satisfies $M'_{\iota}(1, 1) \neq 0$ and $M'_{\iota}(1, j)=0$ for all $j \neq 1$.  
Since $N_{\iota}$ was an $(n, n+1)$-lower triangular matrix, the resulting $r_{\iota} \times (r_{\iota}+1)$ matrix $M'_{\iota}$ is $(n , n)$-lower triangular (i.e., the  $(n, n)$-entries of the matrix are all non-zero and all the entries above this diagonal are zero). 
Hence, also $v_{(0, \ldots, 0, 1_{\iota}, 1, \ldots, 1)}$ is linearly independent from the other tuples in $S_{\iota}(1, \ldots, 1)$, as we wanted. 
\end{proof}

To conclude the proof of Theorem~\ref{thm:genkerdeltN}, we combine the results obtained in Lemma~\ref{claim1.N} and Claims~\ref{claim2N}, \ref{claim3N} and \ref{claim4N}.
\end{proof}

\begin{corollary} \label{cor:splitD} Let $N=\prod_{i=1}^{s}p_{i}^{r_{i}}$ be an odd positive integer and let $d=\prod_{i=1}^{s} p_{i}^{f_{i}} \in D_{N}^{F}$. We have
\begin{equation} \label{eq:dirsumsfnsf}
    \ker(\overline{\bs \delta}|_{C(N)}) = \left(\bigoplus_{\substack{d \in D_{N}^{\text{nsf},  F},\\ p_{i}^{2}p_{j}^{2}|d}} \langle [D(\bs{f}(d))] \rangle\right) \oplus \left( \bigoplus_{\iota=1}^{s} \left( \bigoplus_{b = 2}^{r_{\iota}} \langle [D(\bs{f}(d))] : \bs{f}(d) \in F_{\iota}^{b} \rangle \right)\right) \oplus \langle  [D(\bs{f}(d))] : d \in D_{N}^{\text{sf}, F} \rangle. 
\end{equation}
\end{corollary}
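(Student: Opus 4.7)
The plan is to combine the generation result from Theorem~\ref{thm:genkerdeltN} with the direct sum decomposition of $C(N)$ from Theorem~\ref{thm:yooz}, and then identify which summand of $C(N)$ each generator $[D(\bs{f}(d))]$ of $\ker(\overline{\bs\delta}|_{C(N)})$ inhabits. Recall that Theorem~\ref{thm:yooz} asserts
\begin{equation*}
C(N) \simeq \langle [Z(d)] : d \in D_N^{\textup{sf}} \rangle \oplus \bigoplus_{d \in D_N^{\textup{nsf}}} \langle [Z(d)] \rangle.
\end{equation*}
First I would examine the three cases of Definition~\ref{def:D(f)N}. If $\bs{f}(d) \in F_{\textup{sf}}$, then $D(\bs{f}(d))$ is a $\Z$-linear combination of $Z$'s of squarefree index only, whereas in the other two cases $D(\bs{f}(d))$ is a combination of $Z$'s of non-squarefree index only. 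Therefore the third summand on the right-hand side of \eqref{eq:dirsumsfnsf} lies in the squarefree part of $C(N)$ while the first two summands lie in the non-squarefree part, so they are automatically in direct sum with the third by Theorem~\ref{thm:yooz}.

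Next I would localise inside the non-squarefree part $\bigoplus_{d \in D_N^{\textup{nsf}}} \langle [Z(d)] \rangle$ and track which cyclic components each generator meets. For $d \in D_N^{\textup{nsf},F}$ with $p_i^2 p_j^2 \mid d$ for some distinct $i,j$, Definition~\ref{def:D(f)N} gives $D(\bs{f}(d)) = Z(\bs{f}(d))$, and since the $Z(d)$'s are independent generators of distinct cyclic summands of $C(N)$, this part already splits as the indicated direct sum. For $\bs{f}(d) \in F_\iota^b$, the divisor $D(\bs{f}(d))$ is supported (as a combination of $Z$'s) only on $Z(\bs{f}(d))$, whose index has $p_\iota$ with multiplicity $b\geq 2$ and all other $p_j$ with multiplicity in $\{0,1\}$, and on the anchor $Z(p_\iota^b)$. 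The key observation is that for different pairs $(\iota,b)$ these sets of non-squarefree $Z$-indices are pairwise disjoint (the unique prime with multiplicity $\geq 2$ in the support differs, or its multiplicity does), and they are also disjoint from the indices appearing in the first summand (which require at least two primes of multiplicity $\geq 2$).

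Combining these disjointness observations with the decomposition of $C(N)$ from Theorem~\ref{thm:yooz}, each of the three terms on the right-hand side of~\eqref{eq:dirsumsfnsf} projects into a pairwise disjoint collection of cyclic summands of $C(N)$, so their sum is direct; the equality with $\ker(\overline{\bs\delta}|_{C(N)})$ then follows from Theorem~\ref{thm:genkerdeltN}. The main subtlety to handle is the modified definition of $D(\bs{f}(d))$ when $p_\iota = 3$ and $\bs{f}\neq (1,\ldots,1,2_\iota,1,\ldots,1)$: in that case $D(\bs{f}(d))$ is a multiple of $Z(p_\iota^2)$, and I would verify separately that the support of $Z(p_\iota^2)$ (a pure prime-power index) does not appear in any of the other summands, so the disjointness argument still applies without alteration.
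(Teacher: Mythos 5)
Your proposal is correct and takes essentially the same route as the paper: the paper's own (very terse) proof consists precisely of observing that each $D(\bs{f}(d))$ is supported on the $Z$-summands you identify — $Z(\bs{f}(d))$ alone when two exponents are $\geq 2$, $Z(\bs{f}(d))$ together with the anchor indexed by the pure $p_{\iota}$-power part when $\bs{f}(d)\in F_{\iota}^{b}$, and squarefree-indexed $Z$'s when $\bs{f}(d)\in F_{\mathrm{sf}}$ — and then invoking Theorem~\ref{thm:yooz} for the direct-sum structure and Theorem~\ref{thm:genkerdeltN} for generation. Your write-up just makes the pairwise disjointness of the supporting index sets (and the $p_{\iota}=3$ special case) explicit where the paper leaves it implicit.
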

\begin{proof} 
By definition we have
\begin{itemize} 
    \item $D(\bs{f}(d)) \in \langle Z(\bs{f}(d)) \rangle$ for $f \in F \setminus F_{\textup{sf}}$ with 
    $f_{i}, f_{j} \geq 2 \text{ for some } i \neq j$;
    \item $D(\bs{f}(d)) \in \langle Z(\bs{f}(d)) \rangle \oplus \langle Z((1, \ldots, 1, f(d)_{\iota}, 1, \ldots, 1) \rangle$ for $f(d)_{\iota} \geq 2$ and $f(d)_i \in \{0, 1\}$ for all $i \neq \iota$.
\end{itemize} 
Thus, Equation~\eqref{eq:dirsumsfnsf} follows from Theorem~\ref{thm:genkerdeltN}.
\end{proof}

\section{\texorpdfstring{$l$}{l}-adic decomposition of the kernel} \label{sec:ldecompker}

In Section \ref{sec:kerdelt} we defined divisors $D(\bs{f})$ generating the kernel of the map $\overline{\bs \delta}|_{C(N)}$. 
To obtain a cyclic decomposition of this kernel, we would like to use the ``linear independence'' criteria stated in \cite[Theorem~1.9]{yoo2019rationalcusp}. 
However, the divisors $D(\bs{f})$ do not satisfy these criteria. 
Instead, in this section, we work out an alternative decomposition of the $l$-primary part of $\ker(\overline{\bs \delta}|_{C(N)})$ into cyclic subgroups: that is, we find divisors $E(\bs{f}) \in \langle D(\bs{f}(d)) : d \in D_{N}^{F} \rangle$ for $\bs{f} \in F$ such that $\ker(\overline{\bs \delta}|_{C(N)}) \otimes \Z_{l} \simeq \left(\oplus_{\bs{f} \in F} \langle [E(\bs{f})] \rangle\right) \otimes \Z_{l}$.
Notice that, since we are interested in the $l$-primary part, we can work $l$-adically as the information on the $l$-powers remains in $\Z_{l}$; i.e., $\ker(\overline{\bs \delta}|_{C(N)})[l^{\infty}]=\left(\ker(\overline{\bs \delta}|_{C(N)}) \otimes \Z_{l}\right)[l^{\infty}]$. 
Finally, we compute the orders of the divisors $[E(\bs{f})]$ in $J_{0}(N)$, providing a complete description of $\ker(\overline{\bs \delta}|_{C(N)})[l^{\infty}]$ and hence of $J_{0}(N)_{\mathbf{m}}(\Q)_{\tor}[l^{\infty}]$. In this section, we write $m=m(\bs{f})$, $n=n(\bs{f})$ and $n'=n'(\bs{f})$ for a given $\bs{f}$ whenever this cannot lead to confusion. We first introduce the following notation. 

\begin{definition} We fix the following definitions.
\begin{itemize}
    \item[(a)] Given $\bs{f} \in F_{\text{sf}}$, we define $ \gamma(\bs{f}) \coloneqq \prod_{i=1}^{s} \gamma_{i}^{(1-f_{i})}$ and $D_{\gamma}(\bs{f}) = \gamma(\bs{f})\cdot D(\bs{f})$. Furthermore, given any finite set $\{a_{1}, \ldots, a_{s'}\} \subseteq \{1, \ldots, s\}$ we define $\bs{f}^{a_{1}, \ldots, a_{s'}}$ as the tuple obtained from $\bs{f}$ by swapping the value of each $f_{a_{i}}$ from 0 to 1 or 1 to 0 respectively; e.g., if $\bs{f}=(1, \ldots, 1) \in F_{\textup{sf}}$, then $\bs{f}^{1} = (0, 1, \ldots , 1)$. 
    \item[(b)] Given $\bs{f} \in F_{\iota}^{b}$ for a fixed $\iota \in \{1, \ldots , s\}$ and $2 \leq b \leq r_{\iota}$, we define $\gamma(\bs{f}) \coloneqq \prod_{i \neq \iota} \gamma_{i}^{(1-f_{i})}$ and $D_{\gamma}(\bs{f})~=~\gamma(\bs{f})\cdot D(\bs{f})$. Moreover, for any finite set $\{a_{1}, \ldots, a_{k}\} \subseteq  \{1, \ldots, s\}\setminus\{\iota\}$ we define $\bs{f}^{a_{1}, \ldots, a_{k}}$ as the tuple obtained from $\bs{f}$ by swapping the value of each $f_{a_{j}}$ from 0 to 1 or 1 to 0 respectively. 
\end{itemize}

\end{definition}

Before moving on to the definition of the divisors $E(\bs{f})$ we first define the auxiliary divisors $E'(\bs{f})~\in~\text{Div}_{\text{cusp}}^{0}(X_{0}(N))$. We recall that the notation $w(\bs{f})$ was introduced in Definition~\ref{def:ordering}.

\begin{definition} \label{def:E'(f)} Given $\bs{f} \in F$ we define the divisor $E'(\bs{f})~\in~\text{Div}_{\text{cusp}}^{0}(X_{0}(N))$ by the following:
\begin{itemize}
    \item[\bf{(i)}] For $\bs{f} \in F \setminus \left(\left(\bigcup_{\iota, b} F_{\iota}^{b}\right) \cup F_{\mathrm{sf}}\right)$, set
    \begin{equation*}
         E(\bs{f}) \coloneqq D(\bs{f}).
    \end{equation*}
    
    \item[\bf{(ii)}] For $\bs{f}\in F_{\iota}^{b}$ we define 
    \begin{equation*}
                 E'(\bs{f}) \coloneqq \begin{cases} 
            D_{\gamma}(\bs{f}) & \text{if } m_{+}=s+1,
                    \\
                     D_{\gamma}(\bs{f}) - D_{\gamma}(\bs{f}^{m}) & \text{if } m_{+}\neq s+1 \text{ and } n_{+}=s+1,  \\       D_{\gamma}(\bs{f}) - D_{\gamma}(\bs{f}^{n, n_{+}}) & \text{if } m_{+}\neq s+1 \text{ and } n_{+}\neq s+1.
           
                      \end{cases}
    \end{equation*}

    \item[\bf{(iii)}] For $\bs{f} \in F_{\mathrm{sf}}$, we split the definition in cases.
        \begin{enumerate}
            \item[\bf{1.}] If $n'< s-1$:
            \begin{itemize}
                \item[\bf{1.A:}] If $n'=n$,
                
                 $E'(\bs{f}) \coloneqq \begin{cases} 
                                 D_{\gamma}(\bs{f}) - D_{\gamma}(\bs{f}^{n', n_{+}'})    - D_{\gamma}(\bs{f}^{m, s})+D_{\gamma}(\bs{f}^{n', n'_{+}, m, s})  &
                            \text{ if } w\left(\bs{f}\right)  =2,\\
                                 D_{\gamma}(\bs{f}) - D_{\gamma}(\bs{f}^{n', n'_{+}}) - D_{\gamma}(\bs{f}^{m, s}) + D_{\gamma}(\bs{f}^{n', n'_{+}, m, s}) +  E'(\bs{f}^{m}) & \text{ if } w\left(\bs{f}\right) \geq 3. 
                         \end{cases}$
                \item[\bf{1.B:}] If $n'\neq n$ and $m=n$,
                
                 $E'(\bs{f}) \coloneqq \begin{cases} 
                                 D_{\gamma}(\bs{f}) - D_{\gamma}(\bs{f}^{n', n'_{+}})  - D_{\gamma}(\bs{f}^{m, m_{+}})+D_{\gamma}(\bs{f}^{n', n'_{+}, m, m_{+}}).
                         \end{cases}$
                         
                \item[\bf{1.C:}]  If $n'\neq n$ and $m\neq n$, 
                 
                 $E'(\bs{f}) \coloneqq 
                \begin{cases}  
                                 D_{\gamma}(\bs{f}) - D_{\gamma}(\bs{f}^{n', n'_{+}}) - D_{\gamma}(\bs{f}^{n, n_{+}}) + D_{\gamma}(\bs{f}^{n', n'_{+}, n, n_{+}}) +  E'(\bs{f}^{m}). 
                         \end{cases}$
            \end{itemize}
            \item[\bf{2.}] If $n'= s-1$: 
            \begin{itemize}
                \item[\bf{2.A:}]  If $n'=n$,
                
                 $E'(\bs{f}) \coloneqq \begin{cases}  
                                 D_{\gamma}(\bs{f}) - D_{\gamma}(\bs{f}^{n', n'_{+}}).
                         \end{cases}$
                \item[\bf{2.B:}] If $n'\neq n$ and $m=n$,
                
                 $E'(\bs{f}) \coloneqq \begin{cases}  
                                D_{\gamma}(\bs{f}) - D_{\gamma}(\bs{f}^{n', n'_{+}}) - D_{\gamma}(\bs{f}^{m, m_{+}})+D_{\gamma}(\bs{f}^{n', n'_{+}, m, m_{+}}) 

                         \end{cases}$
                         
                \item[\bf{2.C:}] If $n'\neq n$ and $m\neq n$,
    
                 $E'(\bs{f}) \coloneqq \begin{cases}
                                 D_{\gamma}(\bs{f}) - D_{\gamma}(\bs{f}^{n', n'_{+}}) - D_{\gamma}(\bs{f}^{n, n_{+}}) + D_{\gamma}(\bs{f}^{n', n'_{+}, n, n_{+}}) +  E'(\bs{f}^{m}). 
                        \end{cases}$
            \end{itemize}
                        \item[\bf{3.}] If $n'= s$:
                        \begin{itemize}
                \item[\bf{3.A:}] If $n'=n$,
                 
                 $E'(\bs{f}) \coloneqq \begin{cases}  
                                 D_{\gamma}(\bs{f}) &
                            \text{ if } w\left(\bs{f}\right) =2,\\
                                D_{\gamma}(\bs{f}) - D_{\gamma}(\bs{f}^{m_{+}}) & \text{ if } w\left(\bs{f}\right) \geq 3.
                         \end{cases}$

                \item[\bf{3.B:}] If $n'\neq n$ and $m=n$,
                
                        $E'(\bs{f}) \coloneqq \begin{cases}  
                                D_{\gamma}(\bs{f}) - D_{\gamma}(\bs{f}^{m, m_{+}}) &
                            \text{ if } w\left(\bs{f}\right) =2,\\
                                 D_{\gamma}(\bs{f}) - D_{\gamma}(\bs{f}^{m, m_{+}})-D_{\gamma}(\bs{f}^{m'}) + D_{\gamma}(\bs{f}^{m, m_{+}, m'}) & \text{ if } w\left(\bs{f}\right) \geq 3 \text{ and } n''=n', \\
                        D_{\gamma}(\bs{f}) - D_{\gamma}(\bs{f}^{m, m_{+}})-D_{\gamma}(\bs{f}^{n'', n''_{+}}) + D_{\gamma}(\bs{f}^{m, m_{+}, n'', n''_{+}}) & \text{ if } w\left(\bs{f}\right) \geq 3 \text{ and } n''\neq n'.
                        
                    \end{cases}$
                   \item[\bf{3.C:}] If $n'\neq n$ and $m \neq n$,
                
                 $E'(\bs{f}) \coloneqq  \begin{cases}  
                                D_{\gamma}(\bs{f})-D_{\gamma}(\bs{f}^{n, n_{+}}) & \text{if } n'' = n \text{ and } \\ & \ m'=n+2, 
                                \\
                                 D_{\gamma}(\bs{f})-D_{\gamma}(\bs{f}^{n, n_{+}}) - p_{m'_{-}}E'(\bs{f}^{m, m'_{-}}) & \text{if } n''=n', \ m' \neq n+2, \\ & \ \text{and } n=m+1,\\ 
                                  D_{\gamma}(\bs{f})-D_{\gamma}(\bs{f}^{n, n_{+}}) + E'(\bs{f}^{m}) & \text{if } n''=n', \ m' \neq n+2, \\ & \ \text{and } n \neq m+1, \\ 
                                 D_{\gamma}(\bs{f}) - D_{\gamma}(\bs{f}^{n, n_{+}})-  D_{\gamma}(\bs{f}^{n'', n''_{+}}) + D_{\gamma}(\bs{f}^{n, n_{+}, n'', n''_{+}})-E'(\bs{f}^{m}) & \textup{if } n \neq n'' \\ & \textup{ and } n=m+1,
                             \\ 
                                 D_{\gamma}(\bs{f}) - D_{\gamma}(\bs{f}^{n, n_{+}})-  D_{\gamma}(\bs{f}^{n'', n''_{+}}) + D_{\gamma}(\bs{f}^{n, n_{+}, n'', n''_{+}})+E'(\bs{f}^{m}) & \textup{if } n \neq n'' \\ & \textup{ and } n=m+1.
                         \end{cases}$

            \end{itemize}
        \end{enumerate}
\end{itemize}
\end{definition}

\begin{remark} The case distinction in the definition of the divisors $E'(\bs{f})$ given in Definition~\ref{def:E'(f)} is made in order to guarantee that the divisors $E(\bs{f})$ given in the upcoming definition generate all $D(\bs{f})$ over $\Z_{l}$. 
The idea is that, under Assumption~\ref{def:ordering}.(a), to assure $\langle D(\bs{f}): \bs{f} \in F \rangle \otimes \Z_{l} = \langle E(\bs{f}): \bs{f} \in F \rangle \otimes \Z_{l}$, we can only take $D(\bs{f}')$ with a non-zero coefficient in the definition of $E'(\bs{f})$ if the following is satisfied: $\sum_{j=1}^{k} f'_{j} \leq \sum_{j=1}^{k} f_{j}$ for all $k \in \{1, \ldots, s\}$.
That is, we can only take $D(\bs{f}')$ with non-zero coefficient in the definition of $E'(\bs{f})$ if $f'$ has $1$'s more to the right in comparison with $\bs{f}$.
The following sub-cases in the cases \textbf{(iii.3.B)} and  \textbf{(iii.3.C)} have potentially fewer choices when moving $1$'s to the right for finding an appropriate linear combination of $D(\bs{f}')$, and hence we treat them separately.
\begin{itemize}
    \item The case $n=n'$ consists of the tuples $\bs{f}$ with only one block of 1's, i.e., of the form $(0, \ldots, 0, 1, \ldots, 1, 0, \ldots, 0)$;
    \item The case $n''=n'$ consist of the tuples $\bs{f}$ with two blocks of $1$'s, i.e. when $\bs{f}$ is of the form $(0, \ldots, 0, 1, \ldots, 1, 0, \ldots, 0, 1, \ldots, 1, 0, \ldots, 0)$;
    \item The case $m=n$ consists of the tuples $\bs{f}$ for which the first block of $1$'s has only one element in it, i.e, tuples of the form $(0, \ldots, 0, 1, 0, \ldots )$;
    \item Lastly, the case $m'=n+2$ consists of the tuples $\bs{f}$ that have two blocks of $1$'s with only one $0$ between them, i.e., of the form $(0, \ldots, 0, 1, \ldots, 1, 0, 1, \ldots, 1, 0, \ldots, 0)$. 
\end{itemize}
\end{remark}

Now we are ready to introduce the divisors $E(\bs{f})$. 

\begin{definition}  \label{def:E(f)} Fix $\bs{f} \in F$. For each $\bs{f}' \in F$, let $\gamma(E'(\bs{f}), D(\bs{f}'))$ be the coefficient of the divisor $D(\bs{f}')$ in the definition of $E'(\bs{f})$.  Define $\displaystyle G(E'(\bs{f})) \coloneqq \gcd_{\bs{f}' \in F}\left\{\gamma\left(E'(\bs{f}), D(\bs{f}')\right)\right\}$ to be the greatest common divisor of these coefficients. Then the divisor $E(\bs{f})$ is defined as
\begin{equation*}
    E(\bs{f}) \coloneqq G(E'(\bs{f}))^{-1} \cdot E'(\bs{f}). 
\end{equation*}
\end{definition}

\begin{remark} \label{rem:recursive}
Notice that in the definition of the divisors $E(\bs{f})$ for each $\bs{f} \in F_{\text{sf}}$ the following is satisfied.
For a given $\bs{f} \in F_{\text{sf}}$ in Case $\mathbf{(iii}.\beta.\alpha\mathbf{)}$ with $\beta \in \{\mathbf{1}, \mathbf{2}, \mathbf{3}\}$ and $\alpha \in \{\textbf{A}, \textbf{B}, \textbf{C}\}$, the divisors $D(\bs{f}')$ appearing with non-zero coefficients in the definition of $E(\bs{f})$ satisfy 
\begin{itemize}
    \item $w\left(\bs{f}'\right)<w\left(\bs{f}\right)$, or
    \item $w\left(\bs{f}'\right)=w\left(\bs{f}\right)$ with $\bs{f}'$ in case $(\mathbf{iii}.\beta'.\alpha')$ and $\beta'>\beta$, or
    \item $w\left(\bs{f}'\right)=w\left(\bs{f}\right)$ with $\bs{f}'$ in case $(\mathbf{iii}.\beta'.\alpha')$ and $\beta'=\beta$ and ($\alpha=\textbf{C}$, $\alpha' = \textbf{A}$ or $\textbf{B}$) or ($\alpha=\textbf{B}$, $\alpha' = \textbf{A}$). 
\end{itemize} 
For example, for $\bs{f}$ in Case {\bf (iii.3.B)}, the divisors $D(\bs{f}')$ appearing with non-zero coefficients in the definition of $E(\bs{f})$ satisfy either $w\left(\bs{f}'\right)<w\left(\bs{f}\right)$ or $w\left(\bs{f}'\right)=w\left(\bs{f}\right)$ and $\bs{f}'$ is in Case {\bf (iii.3.A)}. Likewise, for $\bs{f}$ in Case {\bf (iii.3.C)}, the divisors $D(\bs{f}')$ appearing with non-zero coefficients in the definition of $E(\bs{f})$ satisfy either $w\left(\bs{f}'\right)<w\left(\bs{f}\right)$ or $w\left(\bs{f}'\right)=w\left(\bs{f}\right)$ and $\bs{f}'$ is in Case {\bf (iii.3.A)} or {\bf (iii.3.B)}; for $\bs{f}$ in Case {\bf (iii.2.A)}, the divisors $D(\bs{f}')$ appearing with non-zero coefficients in the definition of $E(\bs{f})$ satisfy either $w\left(\bs{f}'\right)<w\left(\bs{f}\right)$ or $w\left(\bs{f}'\right)=w\left(\bs{f}\right)$ and $\bs{f}'$ is in Case {\bf (iii.3.A)}, {\bf (iii.3.B)} or {\bf (iii.3.C)}, etc.
As we will see in more detail in Lemma~\ref{lem:EgenD}, the idea behind this construction is to guarantee that the coefficient of the divisor $D(\bs{f})$ in the definition of $E(\bs{f})$ is an $l$-adic unit and, at the same time, to be able to apply recursive arguments to prove that the divisors $E(\bs{f})$ are $l$-adic generators of the kernel of $\overline{\bs \delta}$. This choice of construction is based on Assumption~\ref{def:ordering}.(a), and will also play a role in the decomposition of $\ker(\overline{\bs \delta})$ into cyclic subgroups.
\end{remark}

Next we give an example of this construction. 

\begin{example}
Take $s=3$ and  $N=p_{1}^{2}p_{2}p_{3}$. The construction in Definitions~\ref{def:E'(f)} and \ref{def:E(f)} yield
\begin{align*}
E(2, 0, 1) &= D(2, 0, 1), \\
E(2, 1, 1) &= D(2, 1, 1) -\gamma_{2} D(2, 0, 1), \\
E(2, 1, 0) &= \gcd(\gamma_{2},\gamma_{3})^{-1}\left(\gamma_{3}D(2, 1, 0) - \gamma_{2}D(2, 0, 1)\right), \\
E(0, 1, 1) &= D(0, 1, 1), \\
E(1, 0, 1) &= \gcd(\gamma_{1}, \gamma_{2})^{-1}\left(\gamma_{2}D(1, 0, 1) -\gamma_{1} D(0, 1, 1)\right), \\
E(1, 1, 0) &= \gcd(\gamma_{2}, \gamma_{3})^{-1}\left(\gamma_{3}D(1, 1, 0) - \gamma_{2}D(1, 0, 1)\right), \\
E(1, 1, 1) &= D(1, 1, 1) - \gamma_{1} D(0, 1, 1). 
\end{align*}
Since $v_{l}(\gamma_{3}) \leq v_{l}(\gamma_{2}) \leq v_{l}(\gamma_{1})$, we have $\langle D(\bs{f}) : \bs{f} \in F \rangle \otimes \Z_{l} = \langle E(\bs{f}) : \bs{f} \in F \rangle \otimes \Z_{l}$.
\end{example}

To show that the divisors $E(\bs{f})$ are $l$-adic generators of the kernel of $\overline{\bs \delta}$, we start by showing that the classes of these divisors in $J_{0}(N)_{\mathbf{m}}$ generate the same group over $\Z_{l}$ as the group generated by the divisors $D(\bs{f})$ defined in Definition~\ref{def:D(f)N}. We show this in Lemma~\ref{lem:EgenD}, and to do so we will need the next result. 

\begin{lemma} 
\label{lem:lunitcoef} 
Let $\bs{f} \in F$. Then $\val_{l}(G(E'(\bs{f}))) = \val_{l}(\gamma(\bs{f}))$. In addition, recalling Definitions \ref{def:E'(f)} and \ref{def:E(f)}, the coefficient of the divisor $D(\bs{f})$ in the definition of the divisor $E(\bs{f})$ is an $l$-adic unit.
\end{lemma}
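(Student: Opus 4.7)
The plan is to prove both assertions simultaneously by induction on $w(\bs{f})$ for $\bs{f} \in F_{\mathrm{sf}}$; the cases $\bs{f} \in F_\iota^b$ proceed along the same lines, and the remaining trivial case from Definition~\ref{def:E'(f)}\textbf{(i)}, where $E(\bs{f}) = D(\bs{f})$ and $\gamma(\bs{f})$ is an empty product, is immediate. The two key ingredients are (a) Assumption~\ref{def:ordering}\textbf{(a)}, which says that $\val_l(\gamma_i) \geq \val_l(\gamma_j)$ whenever $i \leq j$, and (b) the property emphasised in the remark following Definition~\ref{def:E'(f)}, which guarantees that every divisor $D(\bs{g})$ appearing with nonzero coefficient in $E'(\bs{f})$ satisfies $\sum_{j \leq k} g_j \leq \sum_{j \leq k} f_j$ for all $k$ (i.e.\ $\bs{g}$ is obtained from $\bs{f}$ by ``moving $1$'s to the right'').

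First I would verify that the coefficient of $D(\bs{f})$ in $E'(\bs{f})$ is exactly $\gamma(\bs{f})$. In each case of Definition~\ref{def:E'(f)}, the only direct summand of the form $\pm \gamma(\bs{f}) D(\bs{f})$ is $+D_\gamma(\bs{f})$, because every other direct summand $\pm D_\gamma(\bs{f}')$ involves some $\bs{f}' \neq \bs{f}$. The recursive summands $E'(\bs{f}''')$ carry $w(\bs{f}''') < w(\bs{f})$, hence by property (b) applied to $\bs{f}'''$, every $D(\bs{g})$ that they contribute satisfies $\sum_{j \leq k} g_j \leq \sum_{j \leq k} f'''_j < \sum_{j \leq k} f_j$ for some $k$, excluding $\bs{g} = \bs{f}$.

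Second I would show $\val_l(\gamma(E'(\bs{f}), D(\bs{f}'))) \geq \val_l(\gamma(\bs{f}))$ for every nonzero coefficient in $E'(\bs{f})$. For a direct summand $\pm D_\gamma(\bs{f}')$, the coefficient is $\pm \gamma(\bs{f}')$; writing $\bs{f}'$ as obtained from $\bs{f}$ by pairing flipped positions $i_1 < \cdots < i_a$ (where $f_{i_k} = 1$ becomes $0$) with $j_1 < \cdots < j_a$ (where $f_{j_k} = 0$ becomes $1$) satisfying $i_k \leq j_k$ (which is precisely the content of (b)), one gets $\gamma(\bs{f}') = \gamma(\bs{f}) \cdot \prod_k \gamma_{i_k}/\gamma_{j_k}$, and (a) yields $\val_l(\gamma_{i_k}) \geq \val_l(\gamma_{j_k})$, hence $\val_l(\gamma(\bs{f}')) \geq \val_l(\gamma(\bs{f}))$. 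For a recursive summand $E'(\bs{f}''')$, the induction hypothesis gives $\val_l$ of every coefficient in $E'(\bs{f}''')$ at least $\val_l(\gamma(\bs{f}'''))$; since $\bs{f}'''$ is obtained from $\bs{f}$ by additionally removing a $1$ at some position $m$, one has $\gamma(\bs{f}''') = \gamma_m \cdot \gamma(\bs{f})$ with $\val_l(\gamma_m) \geq 0$, so $\val_l(\gamma(\bs{f}''')) \geq \val_l(\gamma(\bs{f}))$. The auxiliary prefactor $p_{m'_-}$ appearing in case \textbf{(iii.3.C)} has $\val_l \geq 0$ trivially.

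Combining the two steps, $G(E'(\bs{f})) \mid \gamma(\bs{f})$ from the first step and $\val_l(G(E'(\bs{f}))) \geq \val_l(\gamma(\bs{f}))$ from the second give $\val_l(G(E'(\bs{f}))) = \val_l(\gamma(\bs{f}))$; consequently the coefficient of $D(\bs{f})$ in $E(\bs{f}) = G(E'(\bs{f}))^{-1} E'(\bs{f})$ equals $\gamma(\bs{f})/G(E'(\bs{f}))$, an $l$-adic unit. The hard part will be the patient case analysis of Definition~\ref{def:E'(f)}: to confirm property~(b) and the weight drop $w(\bs{f}''') < w(\bs{f})$ for each sub-case \textbf{(ii)}, \textbf{(iii.1.A--C)}, \textbf{(iii.2.A--C)} and \textbf{(iii.3.A--C)}, and in particular to handle the idiosyncratic expressions in \textbf{(iii.3.C)} (where a recursive term $E'(\bs{f}^{m,m'_-})$ appears with a prime prefactor). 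Once this bookkeeping is accepted, the induction closes uniformly.
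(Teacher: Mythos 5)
Your overall strategy is the same as the paper's: induct on $w(\bs{f})$, use Assumption~\ref{def:ordering}.(a) to show that every coefficient occurring in $E'(\bs{f})$ has $l$-adic valuation at least $\val_{l}(\gamma(\bs{f}))$, observe that the coefficient of $D(\bs{f})$ itself is exactly $\gamma(\bs{f})$, and conclude that $\val_{l}(G(E'(\bs{f})))=\val_{l}(\gamma(\bs{f}))$, whence the normalised coefficient is an $l$-adic unit (the paper writes out case \textbf{(iii.1.A)} and declares the rest analogous). However, one step as you state it would fail. Your induction is anchored on the claim that every recursive summand $E'(\bs{f}''')$ satisfies $w(\bs{f}''')<w(\bs{f})$; this is false in case \textbf{(iii.3.C)} with $n''=n'$, $m'\neq n+2$, $n=m+1$, where the term $p_{m'_{-}}E'(\bs{f}^{m,m'_{-}})$ involves a swap (a $1$ removed at position $m$ and a $1$ created at $m'_{-}=m'-1$), so $w(\bs{f}^{m,m'_{-}})=w(\bs{f})$ and a straight induction on the weight is circular there. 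The cure is the finer well-ordering indicated in Remark~\ref{rem:recursive} (weight first, then case $\mathbf{3}\rightarrow\mathbf{2}\rightarrow\mathbf{1}$, then $\mathbf{A}\rightarrow\mathbf{B}\rightarrow\mathbf{C}$): the tuple $\bs{f}^{m,m'_{-}}$ lies in case \textbf{(iii.3.B)} while $\bs{f}$ is in \textbf{(iii.3.C)}, so the refined induction closes; your valuation estimate survives since $\gamma(\bs{f}^{m,m'_{-}})=\gamma(\bs{f})\gamma_{m}/\gamma_{m'-1}$ with $m<m'-1$, and the exclusion of $D(\bs{f})$ from that term follows from the partial-sum property at $k=m$ rather than from a weight drop.

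A second, smaller imprecision: your pairing description of the direct summands (flipped positions $i_{k}\leq j_{k}$) only covers weight-preserving swaps, whereas summands such as $D_{\gamma}(\bs{f}^{m_{+}})$ in \textbf{(iii.3.A)}, $D_{\gamma}(\bs{f}^{m'})$ in \textbf{(iii.3.B)} or $D_{\gamma}(\bs{f}^{m})$ in \textbf{(ii)} simply delete a $1$. These are harmless — then $\gamma(\bs{f}')=\gamma(\bs{f})\cdot\gamma_{i}$ for the deleted index $i$, exactly the argument you already make for the recursive terms — but they should be folded into the general statement (your property (b) plus an Abel-type summation handles both situations uniformly). With these two repairs your argument coincides in substance with the paper's proof.
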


\begin{proof}
We write down the proof for $\bs{f} \in F_{\text{sf}}$ with $n' < s-1$ and $n'=n$ (i.e., for Case \textbf{(iii.1.A)} and omit the proofs of the rest of the cases as the arguments proceed in the same manner.
The proof is done by induction on $w\left(\bs{f}\right)$. By definition, the coefficient of $D(\bs{f})$ in the definition of $E(\bs{f})$ is given by
\begin{equation*} \displaystyle
    \begin{cases}
    \frac{\gamma(\bs{f})}{\gcd\left(\gamma(\bs{f}), \gamma(\bs{f}^{n', n_{+}'}), \gamma(\bs{f}^{m, s}), \gamma(\bs{f}^{n', n'_{+}, m, s})\right)} & \text{ if } w\left(\bs{f}\right) =2, \\
    \frac{\gamma(\bs{f})}{\gcd\left(\gamma(\bs{f}), \gamma(\bs{f}^{n', n'_{+}}), \gamma(\bs{f}^{m, s}), \gamma(\bs{f}^{n', n'_{+}, m, s}), G(E'(\bs{f}^{m})\right)} & \text{ if } w\left(\bs{f}\right) \geq 3.
    \end{cases}
\end{equation*}
Given a prime $l$, recall that we fixed an ordering $\precsim$ on the set of primes $p_{i}$ dividing $N$ in Assumption~\ref{def:ordering}.(a). Hence
\begin{equation*}
    \val_{l}(\gamma(\bs{f})) \leq \val_{l}(\gamma(\bs{f}^{n', n_{+}'})), \ \val_{l}(\gamma(\bs{f})) \leq \val_{l}(\gamma(\bs{f}^{m, s})) \text{ and } \val_{l}(\gamma(\bs{f})) \leq  \val_{l}(\gamma(\bs{f}^{n', n'_{+}, m, s})). 
\end{equation*}
Hence, for $w\left(\bs{f}\right)=2$, we have  $\val_{l}(G(E'(\bs{f}))) = \val_{l}(\gamma(\bs{f}))$ and $\frac{\gamma(\bs{f})}{\gcd(\gamma(\bs{f}), \gamma(\bs{f}^{n', n_{+}'}), \gamma(\bs{f}^{m, s}), \gamma(\bs{f}^{n', n'_{+}, m, s}))}$ is an $l$-adic unit, proving the base case. 

Before moving on to the induction step, let us first show that $\val_{l}(G(E'(\bs{f}^{m})))= \val_{l}(\gamma(\bs{f}^{m}))$ for any $\bs{f} \in F_{\text{sf}}$. For this, we also use induction on $w\left(\bs{f}\right)$. If $\bs{f}$ has $w\left(\bs{f}\right) =3$, then  $\bs{f}^{m}$ satisfies $w(\bs{f}^{m})=2$, and the statement follows from our previous argument. Now assume that for all $\bs{f}$ with $w\left(\bs{f}\right) = t \geq 3$ we have  $\val_{l}(G(E'(\bs{f}^{m})))= \val_{l}(\gamma(\bs{f}^{m}))$ and take $\bs{f}$ with $w\left(\bs{f}\right)=t$. Then $$G(E'(\bs{f}^{m})) = \gcd(\gamma(\bs{f}^{m}), \gamma(\bs{f}^{n'(f^{m}), n'_{+}(f^{m})}), \gamma(\bs{f}^{m(\bs{f}^{m}), s}), \gamma(\bs{f}^{n'(\bs{f}^{m}), n'_{+}(\bs{f}^{m}), m(\bs{f}^{m}), s}), G(E'((\bs{f}^{m})^{m(\bs{f}^{m})})).$$
By definition $\bs{f}_{m}=1$ so we have $w((\bs{f}^{m})^{m(\bs{f}^{m})})=t$, and using the induction hypothesis it follows that
$$G(E'(\bs{f}^{m})) = \gcd(\gamma(\bs{f}^{m}), \gamma(\bs{f}^{n'(\bs{f}^{m}), n'_{+}(\bs{f}^{m})}), \gamma(\bs{f}^{m(\bs{f}^{m}), s}), \gamma(\bs{f}^{n'(\bs{f}^{m}), n'_{+}(\bs{f}^{m}), m(\bs{f}^{m}), s}), \gamma((\bs{f}^{m})^{m(\bs{f}^{m})}))),$$ 
and similarly as before, by Assumption~\ref{def:ordering}.(a), we conclude that $G(E'(\bs{f}^{m})) = \gamma(\bs{f}^{m})$.

We are now ready for the induction step. 
By Asumption~\ref{def:ordering}.(a) we have that
\begin{equation*}
    \val_{l}(\gamma(\bs{f})) \leq \val_{l}(\gamma(\bs{f}^{m})), 
\end{equation*}
and we have just shown that  $\val_{l}(G(E'(\bs{f}^{m}))= \val_{l}(\gamma(\bs{f}^{m}))$. It follows that for $w\left(\bs{f}\right) \geq 3$ we also have that $\val_{l}(G(E'(\bs{f}))) = \val_{l}(\gamma(\bs{f}))$ and $\frac{\gamma(\bs{f})}{\gcd(\gamma(\bs{f}), \gamma(\bs{f}^{n', n'_{+}}), \gamma(\bs{f}^{m, s}), \gamma(\bs{f}^{n', n'_{+}, m, s}), G(E'(\bs{f}^{m}))}$ is an $l$-adic unit. 
\end{proof}

\begin{lemma} 
\label{lem:EgenD} 
With notation and assumptions as above we have that
\begin{equation*}
    \mathcal{D}_{l} \coloneqq \langle [D(\bs{f})]: \bs{f} \in F \rangle \otimes \Z_{l} = \langle [E(\bs{f})]: \bs{f} \in F \rangle \otimes \Z_{l} \eqqcolon \mathcal{E}_{l}. 
\end{equation*}

\end{lemma}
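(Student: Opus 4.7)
The inclusion $\mathcal{E}_l \subseteq \mathcal{D}_l$ is immediate: by Definitions~\ref{def:E'(f)} and \ref{def:E(f)}, each $E(\bs{f})$ is a $\Z$-linear combination of divisors of the form $D(\bs{f}')$, so its class lies in $\mathcal{D}_l$. The substance of the lemma is the reverse inclusion $\mathcal{D}_l \subseteq \mathcal{E}_l$, which I would prove by induction using the well-founded ordering on $F$ suggested by Remark~\ref{rem:recursive}.

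The base cases are the tuples $\bs{f} \in F \setminus \left( \bigcup_{\iota,b} F_{\iota}^b \cup F_{\mathrm{sf}} \right)$, for which $E(\bs{f}) = D(\bs{f})$ by part \textbf{(i)} of Definition~\ref{def:E'(f)}; for these $[D(\bs{f})] \in \mathcal{E}_l$ trivially. For the inductive step, I would order $F$ lexicographically by the pair $(w(\bs{f}), \beta(\bs{f}), \alpha(\bs{f}))$, where $\beta \in \{\mathbf{1},\mathbf{2},\mathbf{3}\}$ and $\alpha \in \{\textbf{A},\textbf{B},\textbf{C}\}$ index the case of Definition~\ref{def:E'(f)} in which $\bs{f}$ falls; by Remark~\ref{rem:recursive}, every divisor $D(\bs{f}')$ with nonzero coefficient in the expression for $E(\bs{f})$ corresponds to a tuple $\bs{f}'$ strictly smaller than $\bs{f}$ in this order (with the analogous —simpler— monotonicity holding for $\bs{f} \in \bigcup_{\iota,b} F_{\iota}^b$ via the $\bs{f}^{m}$, $\bs{f}^{m,m_+}$, $\bs{f}^{n,n_+}$ replacements).

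The crucial input is Lemma~\ref{lem:lunitcoef}, which guarantees that in the expansion
\begin{equation*}
E(\bs{f}) = u_{\bs{f}}\, D(\bs{f}) + \sum_{\bs{f}' \prec \bs{f}} c_{\bs{f},\bs{f}'}\, D(\bs{f}')
\end{equation*}
the coefficient $u_{\bs{f}} \in \Z$ is an $l$-adic unit. Consequently, in $\mathcal{D}_l \otimes \Z_l$ we may invert $u_{\bs{f}}$ and write
\begin{equation*}
[D(\bs{f})] = u_{\bs{f}}^{-1} [E(\bs{f})] - u_{\bs{f}}^{-1} \sum_{\bs{f}' \prec \bs{f}} c_{\bs{f},\bs{f}'} [D(\bs{f}')] .
\end{equation*}
By the induction hypothesis each $[D(\bs{f}')]$ on the right lies in $\mathcal{E}_l$, whence $[D(\bs{f})] \in \mathcal{E}_l$ as well. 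Iterating across all $\bs{f} \in F$ gives $\mathcal{D}_l \subseteq \mathcal{E}_l$, completing the proof.

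The main technical obstacle is bookkeeping: one must check carefully, in each sub-case of Definition~\ref{def:E'(f)}.\textbf{(iii)}, that every auxiliary term $D_{\gamma}(\bs{f}^{\cdots})$ or $E'(\bs{f}^{m})$ really does correspond to a strictly smaller tuple in the chosen order, so that the induction is well-founded. This is essentially a case verification using the defining inequalities $m < n \le n' < n''$ and the ordering $\prec$ from Assumption~\ref{def:ordering}.(a) that was built into Lemma~\ref{lem:lunitcoef}; once these are checked the proof goes through mechanically.
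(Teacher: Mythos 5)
Your proposal is correct and follows essentially the same strategy as the paper: a well-founded induction on $w(\bs{f})$ refined by the case labels of Definition~\ref{def:E'(f)} (noting that the order on $\beta$ must run $\mathbf{3}\to\mathbf{2}\to\mathbf{1}$, as in Remark~\ref{rem:recursive}), with Lemma~\ref{lem:lunitcoef} supplying the $l$-adic unit leading coefficient that lets one solve for $[D(\bs{f})]$ in terms of $[E(\bs{f})]$ and previously handled classes. The paper merely carries out the bookkeeping you defer (the Initial and Induction steps for $F_{\mathrm{sf}}$, with $F_{\iota}^{b}$ treated analogously), so no genuine gap remains.
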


\begin{proof} By Definition~\ref{def:E(f)} the divisors $E(\bs{f})$ lie inside $\mathcal{D}_{l}$. Hence $\mathcal{E}_{l} \subseteq \mathcal{D}_{l}$, so we only need to show the other inclusion. Notice that if $\bs{f} \in F \setminus \left(\left(\bigcup_{\iota, b} F_{\iota}^{b}\right) \cup F_{\mathrm{sf}}\right)$, then $D(\bs{f})$ is in $\mathcal{E}_{l}$, as in this case $D(\bs{f}) = E(\bs{f})$. Now we break down the rest of the proof into showing the equalities
\begin{equation} \label{eq:inclusionSF}
    \langle [D(\bs{f})]: \bs{f} \in F_{\text{sf}} \rangle \otimes \Z_{l} = \langle [E(\bs{f})]: \bs{f} \in F_{\text{sf}} \rangle \otimes \Z_{l}, 
\end{equation}
and, for given $\iota \in \{1, \ldots, s\}$ and $2 \leq b \leq r_{\iota}$,
\begin{equation} \label{eq:inclusionFbi}
    \langle [D(\bs{f})]: \bs{f} \in F_{\iota}^{b} \rangle \otimes \Z_{l} = \langle [E(\bs{f})]: \bs{f} \in F_{\iota}^{b} \rangle \otimes \Z_{l}. 
\end{equation}
In particular, we showcase the proof of Equation~\eqref{eq:inclusionSF}; Equation~\eqref{eq:inclusionFbi} is done in a similar manner and we omit the proof here. The strategy of the proof is the following. One the one hand, we show recursively that the divisors $D(\bs{f}')$ appearing with a non-zero coefficient in the definition of $E(\bs{f})$ when $\bs{f}' \neq \bs{f}$ lie in $\mathcal{E}_{l}$. This recursion is done in three directions: on $w\left(\bs{f}\right)$, on cases $3 \rightarrow 2 \rightarrow 1$ and, within these cases, from $A \rightarrow B \rightarrow C$. On the other hand, we know by Lemma~\ref{lem:lunitcoef} that the coefficient of $D(\bs{f})$ in the definition of $E(\bs{f})$ is an $l$-adic unit. These two facts together imply that $D(\bs{f}) \in  \mathcal{E}_l$ for all $\bs{f} \in F_{\textup{sf}}$.  
We prove the statement case-by-case (\textbf{(iii.3)}--\textbf{(iii.1)}), and in each case we perform an induction on $w\left(\bs{f}\right)$. That is:  
 \begin{itemize}
    \item {\bf Initial step:} We show that for each $\bs{f}$ with $w\left(\bs{f}\right)=2$, we have $D(\bs{f}) \in \mathcal{E}_{l}$. 
    \item {\bf Induction step:} We show the following. Let $t \geq 3$. If each $\bs{f}$ with $w\left(\bs{f}\right)=t-1$ satisfies $D(\bs{f}) \in \mathcal{E}_{l}$, then for any $\bs{f}$ with $w\left(\bs{f}\right)=t$, we have $D(\bs{f}) \in \mathcal{E}_{l}$. 
 \end{itemize}
 We begin with the {\bf Initial step}. 
\begin{itemize}
    \item[\bf{(iii.3):}] By definition $D((0, \ldots, 0, 1, 1))=E((0, \ldots, 0, 1, 1))$ is in $\mathcal{E}_{l}$ (this corresponds to the Case \textbf{(iii.3.A)} with $w\left(\bs{f}\right) =2$). 
    Now consider $f= (0, \ldots, 0, 1, 0, 1)$ and the divisor $$E(\bs{f}) =  G(E(\bs{f}))^{-1}\cdot (D_{\gamma}((0, \ldots, 0, 1, 0, 1)) - D_{\gamma}((0, \ldots, 0, 0, 1, 1))).$$
    Since we have just seen that $D((0, \ldots, 0, 1, 1)) \in \mathcal{E}_{l}$ and by definition $E(\bs{f}) \in \mathcal{E}_{l}$, also $G(E(\bs{f}))^{-1}\cdot (D_{\gamma}((0, \ldots, 0, 1, 0, 1)))$ lies in $\mathcal{E}_{l}$. 
    Furthermore, Lemma~\ref{lem:lunitcoef} yields that $\frac{\gamma(\bs{f})}{G(E(\bs{f}))}$ is an $l$-adic unit. Hence $D(\bs{f}) \in \mathcal{E}_{l}$. 
    Similarly, we can prove recursively that $D(\bs{f}) \in \mathcal{E}_{l}$ for any $\bs{f}$ such that $n'=s$ and with $w\left(\bs{f}\right)=2$; i.e., any element of the form $\bs{f} = (0, \ldots, 0, 1_{m}, 0, \ldots, 0, 1)$. 
    \item[\bf{(iii.2):}] Now take $\bs{f} = (0, \ldots, 0, 1, 1, 0)$; Case \textbf{(iii.2.A)} with $w\left(\bs{f}\right) =2$. By definition  $E(\bs{f}) = G(E(\bs{f}))^{-1} \cdot (D_{\gamma}(\bs{f}) - D_{\gamma}(\bs{f}^{n', n'_{+}}))$, but since $n'=s-1$, we have $n'_{+}=s$. 
    Hence $\bs{f}^{n', n'_{+}}$ satisfies $n'(\bs{f}^{n', n'_{+}}) = s$ and, by our previous step, $w\left(\bs{f}\right)^{n', n'_{+}}=2$, so $D(\bs{f}^{n', n'_{+}}) \in \mathcal{E}_{l}$. 
    Again, since $E(\bs{f}) \in \mathcal{E}_{l}$ by definition and Lemma~\ref{lem:lunitcoef} states that $\frac{\gamma(\bs{f})}{G(E(\bs{f}))}$ is an $l$-adic unit, this yields $D(\bs{f}) \in \mathcal{E}_{l}$. 
    We continue the recursion by taking $\bs{f}= (0, \ldots, 0, 1, 0, 1, 0)$. 
    Then  $\bs{f}^{n', n'_{+}}=(0, \ldots, 0, 1, 0, 0, 1)$, $\bs{f}^{m, m_{+}} = (0, \ldots, 0, 0, 1, 1, 0)$ and  $\bs{f}^{n', n'_{+}, m, m_{+}}=(0, \ldots, 0, 0, 1, 0, 1)$, and we have already seen that for all these tuples the corresponding divisor $D$ lies in $\mathcal{E}_{l}$. 
    Using again Lemma~\ref{lem:lunitcoef} we obtain $D(\bs{f}) \in \mathcal{E}_{l}$. 
    For any other $\bs{f}$ with $n'(\bs{f})=s-1$ and  $w\left(\bs{f}\right)=2$ we can use similar arguments to show that $D(\bs{f}) \in \mathcal{E}_{l}$. 
    \item[\bf{(iii.1):}] Consider now $\bs{f} = (0, \ldots, 0, 1, 1, 0, 0) \in F_{\text{sf}}$. This $\bs{f}$ satisfies $n' =s-2$, $n=n'$ and $w\left(\bs{f}\right)=2$, i.e., Case \textbf{1.A} with $w(\bs{f})=2$.
    Then by definition $$E(\bs{f})= G(E(\bs{f}))^{-1} \cdot \left(D_{\gamma}(\bs{f}) - D_{\gamma}(\bs{f}^{n', n_{+}'}) - D_{\gamma}(\bs{f}^{m, s})+D_{\gamma}(\bs{f}^{n', n'_{+}, m, s})\right).$$ 
    But since $ n'(\bs{f}^{n', n_{+}'}), n'(\bs{f}^{m, s}), n'(\bs{f}^{n', n'_{+}, m, s}) \in \{s-1, s\}$ we have that $D(\bs{f}^{n', n_{+}'}), D(\bs{f}^{m, s}), D(\bs{f}^{n', n'_{+}, m, s}) \in \mathcal{E}_{l}$ as shown in the previous steps. Using Lemma~\ref{lem:lunitcoef} we obtain that $\frac{\gamma(\bs{f})}{G(E(\bs{f}))}$ is an $l$-adic unit and also $D(\bs{f}) \in \mathcal{E}_{l}$. 
    Similarly as before, we can show recursively that we have  $D((\bs{f}')^{m, m_{+}}) \in \mathcal{E}_{l}$ for any other $\bs{f}' \in  F_{\text{sf}}$ with $ n'(\bs{f}') =s-2$ and $n'(\bs{f}') \neq n(\bs{f}')$; i.e., any element of the form $\bs{f}'=(0, \ldots, 0, 1_{m}, 0 \ldots, 0, 1, 0, 0)$. This follows from the fact that $n'((\bs{f}')^{n', n'_{+}}), n'((\bs{f}')^{n', n'_{+}, m, m_{+}}) = s-1$, and thus in the previous step we saw $D((\bs{f}')^{n', n'_{+}}), D_{\gamma}((\bs{f}')^{n', n'_{+}, m, m_{+}}) \in \mathcal{E}_{l}$. 
    Hence, from the definition of $E(\bs{f}')$, Lemma~\ref{lem:lunitcoef} yields $D(\bs{f}') \in \mathcal{E}_{l}$. 
    The same recursive argument can be done for the rest of tuples $\bs{f} \in F_{\text{sf}}$ with $w\left(\bs{f}\right)=2$ using that we have seen that $D~(\bs{f}')~\in ~\mathcal{E}_{l}$ for any $\bs{f}'$ with $n'(\bs{f}) < n'(\bs{f}')$. 
\end{itemize}
Next, we consider the \textbf{Induction step}. Let $t\in \{1, \ldots, s-1\}$ and 
\begin{equation*}
\bs{f} = (0, \ldots, 0, \underbrace{1, \ldots, 1}_{t+1}), \quad \text{so that} \quad \bs{f}^{m_{+}} = (0, \ldots, 0, 1, 0, \underbrace{1, \ldots, 1}_{t-1}).
\end{equation*} 
Then the induction hypothesis yields $D(\bs{f}^{m_{+}}) \in \mathcal{E}_{l}$, and this together with Lemma~\ref{lem:lunitcoef} implies that $D(\bs{f})~\in~\mathcal{E}_{l}$, since $E(\bs{f}) = G(E'(\bs{f}))^{-1} \cdot  \left(D_{\gamma}(\bs{f}) - D_{\gamma}(\bs{f}^{m_{+}})\right)$. This proves the statement for any $\bs{f} \in F_{\text{sf}}$ in Case \textbf{(iii.3.A)}. In similar fashion as in the \textbf{Initial step}, we can use this case together with Remark~\ref{rem:recursive} and employ a recursive argument to prove that $D(\bs{f}) \in \mathcal{E}_{l}$ for all $\bs{f} \in F_{\text{sf}}$.
\end{proof}

Now that we know that the divisors $E(\bs{f})$ are $l$-adic generators of $\ker(\overline{\bs \delta})$, we would like to apply \cite[Theorem~1.9]{yoo2019rationalcusp} to obtain a cyclic decomposition of this group. For this it is essential to first compute the tuples $V(E(\bs{f}))$ for each $\bs{f} \in F$. By definition, we have $E(\bs{f}) = D(\bs{f})$ for all $\bs{f} \in F \setminus \left(\left(\bigcup_{\iota, b} F_{\iota}^{b}\right) \cup F_{\mathrm{sf}}\right)$, and for all such $\bs{f}$ the tuple $V(E(\bs{f}))$ is already computed in Lemma~\ref{lem:V-divD}. In the following lemma we compute $V(E(\bs{f}))$ for the remaining $\bs{f}$. 

\begin{lemma} \label{lem:V(E(f))}Fix $\iota \in \{1, \ldots, s\}$ and $2 \leq b \leq r_{\iota}$. For $\bs{f} \in F_{\iota}^{b}$ and for any given divisor $\delta = \prod_{j=1}^{s} p_{j}^{\delta_{j}}$ of $N$ such that $p_{j}^{2} \nmid \delta$ for all $j \neq \iota$, we have
          $$  V(E(\bs{f}))_{\delta} = \left(p_{\iota}^{\kappa_{\iota}} \mathbb{A}_{p_{\iota}}(r_{\iota}, b)_{\delta_{\iota}} (-1)^{\sum_{j \neq \iota} \delta_{j}} \prod_{j\neq \iota}\gamma_{j}^{f_{j}} \right) \times $$ $$
        \begin{cases} 
        \displaystyle
       (-1) \displaystyle \prod_{j\mid f, j \neq \iota, m} (p_{j}^{1-\delta_{j}}-1) \prod_{j\nmid f} p_{j}^{1-\delta_{j}}  & \textup{if } n_{+}=s+1, \\ \displaystyle \left(p_{n_{+}}-p_{n}\right)  \prod_{j\mid f, j \neq \iota, n} (p_{j}^{1-\delta_{j}}-1)  \prod_{j\nmid f, j \neq  n_{+}}p_{j}^{1-\delta_{j}} & \text{if } n_{+} \neq s+1.
        \end{cases}$$
For any other divisor $\delta$ of $N$ we have $V(E(\bs{f}))_{\delta}=0$. 
For $f \in F_{\mathrm{sf}}$ and for any squarefree divisor $\delta = \prod_{j=1}^{s} p_{j}^{\delta_{j}} \in \mathcal{D}_{N}$ of $N$ we have
\begin{enumerate}
        \item[\bf{1.}] If $n'<s-1$: $\displaystyle V(E(\bs{f}))_{\delta} = \frac{\prod_{j=1}^{s} \gamma_{j}}{G(E'(f))} \cdot \left( (-1)^{\sum \delta_{j}}  \prod_{j|f, j\neq m, n, n'}  \left( p_{j}^{1-\delta_{j}}- 1\right) \prod_{j \nmid f,  j \neq n+1, n'+1, s}  p_{j}^{1 -\delta_{j}} \right) \times$
            $$ \begin{matrix}  \mathrm{ \bf{\left(1.A\right)}} \\
            \mathrm{ \bf{\left(1.B\right)}} \\
            \mathrm{ \bf{\left(1.C\right)}}
            \end{matrix} \begin{cases}
                        \left( p_{s}^{1-\delta_{s}}-p_{m}^{1-\delta_{m}}\right) \left( p_{n'}^{1-\delta_{n'}}-p_{n'+1}^{1-\delta_{n'+1}}\right) & \ \mathrm{ if } \ n'=n, \\ \displaystyle
                        p_{s}^{1-\delta_{s}} \left( p_{m_{+}}^{1-\delta_{m_{+}}}-p_{m}^{1-\delta_{m}}\right) \left( p_{n'}^{1-\delta_{n'}}-p_{n'+1}^{1-\delta_{n'+1}}\right) &  \ \mathrm{ if } \ n' \neq n \ \mathrm{ and } \ m=n, \\ \displaystyle
                        (-1)\cdot p_{s}^{1-\delta_{s}}\left( p_{m}^{1-\delta_{m}}-1\right) \left( p_{n}^{1-\delta_{n}}-p_{n+1}^{1-\delta_{n+1}}\right) \left( p_{n'}^{1-\delta_{n'}}-p_{n'+1}^{1-\delta_{n'+1}}\right)  & \ \mathrm{ if } \ n' \neq n \ \mathrm{ and } \ m\neq n. 
                        \end{cases} $$

            \item[\bf{2.}] If $n'=s-1$: $\displaystyle V(E(\bs{f}))_{\delta} = \frac{\prod_{j=1}^{s} \gamma_{j}}{G(E'(f))} \cdot \left((-1)^{\sum \delta_{j}} \prod_{j|f, j\neq m, n, n'} \left( p_{j}^{1-\delta_{j}}- 1\right) \prod_{j \nmid f,  j \neq n+1, n'+1}  p_{j}^{1 -\delta_{j}} \right)\times$
            $$ \begin{matrix}  \mathrm{ \bf{\left(2.A\right)}} \\
            \mathrm{ \bf{\left(2.B\right)}} \\
            \mathrm{ \bf{\left(2.C\right)}}
            \end{matrix} \begin{cases} \displaystyle
                        \left( p_{s-1}^{1-\delta_{s-1}}-p_{s}^{1-\delta_{s}}\right)   & \mathrm{if } \ n'=n,  \\ \displaystyle
                        \left( p_{m+1}^{1-\delta_{m+1}}-p_{m}^{1-\delta_{m}}\right) \left( p_{n'}^{1-\delta_{n'}}-p_{n'+1}^{1-\delta_{n'+1}}\right) & \mathrm{if } \ n' \neq n \ \mathrm{ and } \ m=n, \\ \displaystyle
                        (-1)\cdot \left( p_{m}^{1-\delta_{m}}-1\right) \left( p_{n}^{1-\delta_{n}}-p_{n+1}^{1-\delta_{n+1}}\right) \left( p_{n'}^{1-\delta_{n'}}-p_{n'+1}^{1-\delta_{n'+1}}\right)  & \mathrm{if } \ n' \neq n \ \mathrm{ and } \ m\neq n.
                        \end{cases} $$
            \item[\bf{3.}] If $n'=s$:
                \begin{itemize}
                    \item[\textup{\bf{(3.A)}:}]  $\displaystyle V(E(\bs{f}))_{\delta} = \frac{\prod_{j=1}^{s} \gamma_{j}}{G(E'(f))} \cdot (-1)^{1+\sum \delta_{j}} \cdot \prod_{j|f, j\neq m, m+1} \left( p_{j}^{1-\delta_{j}}- 1\right) \prod_{j \nmid f}  p_{j}^{1 -\delta_{j}}$ \textup{ if } $n=n'$. 
                        \item[\textup{\bf{(3.B):}}]
                            $\displaystyle V(E(\bs{f}))_{\delta} =\frac{\prod_{j=1}^{s} \gamma_{j}}{G(E'(f))} \cdot (-1)^{\sum \delta_{j}} \times$  $$\begin{cases}
                          \displaystyle \left( p_{m}^{1-\delta_{m}}-p_{m+1}^{1-\delta_{m+1}}\right)\prod_{j|f, j\neq m, m'} \left( p_{j}^{1-\delta_{j}}- 1\right) \prod_{j \nmid f, j \neq m+1}  p_{j}^{1 -\delta_{j}} & \textup{if } n' \neq n, \ m=n \\ & \ \textup{ and } n'=n'',
                        \\
                        \displaystyle \left( p_{m+1}^{1-\delta_{m+1}}-p_{m}^{1-\delta_{m}}\right)\left( p_{n''}^{1-\delta_{n''}}- p_{n''+1}^{1-\delta_{n''+1}}\right) \prod_{j|f, j\neq m, n''} \left( p_{j}^{1-\delta_{j}}- 1\right) \prod_{j \nmid f, j \neq m+1, n''+1}  p_{j}^{1 -\delta_{j}}  & \textup{if } n' \neq n, \ m=n \\ & \ \textup{ and } \ n' \neq n''.
                            \end{cases} $$
                    \item[\textup{\bf{(3.C):}}] $\displaystyle V(E(\bs{f}))_{\delta} = \frac{\prod_{j=1}^{s} \gamma_{j}}{G(E'(f))} \cdot (-1)^{\sum \delta_{j}} \cdot \prod_{j|f, j\neq m,n, n''} \left( p_{j}^{1-\delta_{j}}- 1\right) \prod_{j \nmid f, j \neq n+1, n''+1}  p_{j}^{1 -\delta_{j}} \times$
                     $$ \begin{cases}
                     \\
                        \displaystyle
                        \left( p_{n}^{1-\delta_{n}}-p_{n+1}^{1-\delta_{n+1}}\right)\left( p_{s}^{1-\delta_{s}}- 1\right)  & \textup{if } n\neq m, \ n' \neq n, \ n''= n' \\ & \quad \textup{and } m'=n+2, \\
                        \displaystyle
                        (-1)\cdot\left( p_{m}^{1-\delta_{m}}- 1\right)\left( p_{n}^{1-\delta_{n}}-p_{n+1}^{1-\delta_{n+1}}\right)\left( p_{s}^{1-\delta_{s}}- 1\right)  & \textup{if }  n\neq m, \ n' \neq n, \ n''= n'  \\ & \quad \textup{and } m'\neq n+2,\\
                        \displaystyle
                    (-1)\cdot \left( p_{m}^{1-\delta_{m}}- 1\right)\left( p_{n}^{1-\delta_{n}}-p_{n+1}^{1-\delta_{n+1}}\right)\left( p_{n''}^{1-\delta_{n''}}-p_{n''+1}^{1-\delta_{n''+1}}\right) & \textup{otherwise.} 
                     \end{cases}$$
                    \end{itemize}

    \end{enumerate}    
For any other non-squarefree $\delta \in \mathcal{D}_{N}$ we have $V(E(\bs{f}))_{\delta} = 0$.
\end{lemma}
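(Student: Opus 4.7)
My strategy is to reduce the computation to Lemma~\ref{lem:V-divD} by exploiting the $\Z$-linearity of the assignment $D \mapsto V(D)$, which follows from the fact that $V = \Upsilon \circ \Phi_N$ is a composition of linear maps (see Definition~\ref{def:V(D)} and Remark~\ref{rem:lambdamap}). Writing $E(\bs f) = G(E'(\bs f))^{-1} \cdot E'(\bs f)$, and noting that $E'(\bs f)$ is by construction a $\Z$-linear combination of the divisors $D_{\gamma}(\bs f') = \gamma(\bs f')\cdot D(\bs f')$, we immediately get
\begin{equation*}
V(E(\bs f))_{\delta} = G(E'(\bs f))^{-1}\sum_{\bs f'\in F} \gamma(E'(\bs f),D(\bs f')) \cdot \gamma(\bs f')\cdot V(D(\bs f'))_{\delta}.
\end{equation*}
For $\delta$ failing the relevant squarefreeness hypothesis, each $V(D(\bs f'))_{\delta}$ already vanishes by Lemma~\ref{lem:V-divD}, and we obtain the stated vanishing directly. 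So the work concentrates on the squarefree (or $p_{\iota}^{2}$-free) entries $\delta$.

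For $\bs f \in F_{\iota}^{b}$ the divisor $E'(\bs f)$ is a sum of at most two $D_{\gamma}$-terms, and each is handled by the second formula of Lemma~\ref{lem:V-divD}. The factor $p_{\iota}^{\kappa_{\iota}}\mathbb{A}_{p_{\iota}}(r_{\iota},b)_{\delta_{\iota}}(-1)^{\sum_{j\neq\iota}\delta_{j}}\prod_{j\neq\iota}\gamma_{j}^{f_{j}}$ is common to every $D(\bs f')$ appearing (because the $p_{\iota}$-component $\mathbb{A}_{p_{\iota}}(r_{\iota},b)$ and the $\iota$-independent sign/$\gamma$-factors do not change when we flip coordinates $\neq\iota$). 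Factoring it out, the remaining computation is a purely combinatorial identity of the form
\begin{equation*}
\prod_{j\mid f, j\neq \iota}(p_{j}^{1-\delta_{j}}-1)\prod_{j\nmid f}p_{j}^{1-\delta_{j}} - \prod_{j\neq\iota}p_{j}^{1-\delta_{j}} = -\prod_{j\mid f, j\neq\iota,m}(p_{j}^{1-\delta_{j}}-1)\prod_{j\nmid f}p_{j}^{1-\delta_{j}}
\end{equation*}
(in the $n_{+}=s+1$ case), and an analogous telescoping identity in the other case; both follow by expanding a single factor $(p_{m}^{1-\delta_{m}}-1)$ or $(p_{n}^{1-\delta_{n}}-p_{n_{+}}^{1-\delta_{n_{+}}})$.

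For $\bs f \in F_{\mathrm{sf}}$ we argue by induction on $w(\bs f)$, proceeding through the case distinctions in the order dictated by Remark~\ref{rem:recursive}: non-recursive cases first (\textbf{(iii.1.B)}, \textbf{(iii.1.C)}, \textbf{(iii.2.A)}, \textbf{(iii.2.B)}, \textbf{(iii.3.A)} with $w=2$, etc.), then the cases whose definition invokes $E'(\bs f^{m})$ at a strictly smaller $w$. In each non-recursive case, $E'(\bs f)$ is an alternating sum of four $D_{\gamma}(\bs f^{a,b})$-terms, and the four corresponding vectors from Lemma~\ref{lem:V-divD} combine into a telescoping product: every ``flip'' at coordinate $a$ replaces a factor $(p_{a}^{1-\delta_{a}}-1)$ or $p_{a}^{1-\delta_{a}}$ by its counterpart, so the alternating sum collapses to a product $(p_{a}^{1-\delta_{a}}-p_{b}^{1-\delta_{b}})$ (or $-(p_{a}^{1-\delta_{a}}-1)\cdot\text{something}$) times the unchanged factors. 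For the recursive cases, the inductive hypothesis provides the formula for $V(E'(\bs f^{m}))_{\delta}$ (up to the factor $G(E'(\bs f^{m}))$ that then gets absorbed in the definition of $E(\bs f)$, using the computation of $G(E'(\bs f))$ carried out in Lemma~\ref{lem:lunitcoef}), and one verifies the claim by the same telescoping manipulation.

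The main obstacle will be bookkeeping rather than any deep idea: keeping track of which coordinates get flipped in each sub-case and verifying that the resulting products over $j\mid f$ and $j\nmid f$ match those stated. A uniform trick that tames this is to notice that the map $\bs f\mapsto \bs f^{a,b}$ acts on the product $\prod_{j\mid f}(p_{j}^{1-\delta_{j}}-1)\prod_{j\nmid f}p_{j}^{1-\delta_{j}}$ by swapping the two factors at positions $a,b$, so that for any two pairs $(a,b),(c,d)$ of disjoint flipped coordinates the corresponding alternating sum factors as $(p_{a}^{1-\delta_{a}}\mathchar`-p_{b}^{1-\delta_{b}})(p_{c}^{1-\delta_{c}}\mathchar`-p_{d}^{1-\delta_{d}})$ times the common product over untouched indices. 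Applied in each case (with $(a,b)=(n',n'_{+})$, $(c,d)=(m,s)$ or $(m,m_{+})$, etc.), this identity produces exactly the formulas in the statement, and in the recursive cases it combines with the inductive formula for $V(E'(\bs f^{m}))_{\delta}$ to yield the remaining factors.
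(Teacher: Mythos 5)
Your proposal is correct and follows essentially the same route as the paper, whose proof is stated in one line as a straightforward arithmetic computation by induction on $w(\bs{f})$ combining Definition~\ref{def:E(f)} with Lemma~\ref{lem:V-divD}; your telescoping identity for the flip maps $\bs{f}\mapsto\bs{f}^{a,b}$ and the use of Lemma~\ref{lem:lunitcoef} to handle the normalisation by $G(E'(\bs{f}))$ simply make explicit what the paper leaves to the reader.
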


\begin{proof} The result in this lemma follows from straightforward arithmetic computation using induction on $w\left(\bs{f}\right)$ and a case distinction that combines Definition~\ref{def:E(f)} and Lemma~\ref{lem:V-divD}. 
\end{proof}

The next result uses \cite[Theorem~1.9]{yoo2019rationalcusp} to obtain the aforementioned decomposition of  $\langle [E(\bs{f})]: \bs{f} \in F \rangle \otimes \Z_{l}$ into cyclic groups. First we treat the ``squarefree'' part. 

\begin{lemma}  \label{lem:E(f)cicFsf} With notation and assumptions as above, we have
 \begin{equation*}
      \langle [E(\bs{f})]: \bs{f} \in F_{\textrm{sf}} \rangle \otimes \Z_{l} = \left(\bigoplus_{\bs{f} \in F_{\textrm{sf}}} \langle [E(\bs{f})] \rangle\right) \otimes \Z_{l}. 
\end{equation*}
\end{lemma}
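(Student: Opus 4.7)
The plan is to invoke the linear independence criterion of \cite[Theorem~1.9]{yoo2019rationalcusp}, which (after tensoring with $\Z_l$) says that a family of classes $\{[D_i]\}$ in $C(N)$ is $\Z_l$-linearly independent whenever one can exhibit for each $i$ a ``witness'' squarefree divisor $\delta_i$ of $N$ such that the matrix of entries $V(D_i)_{\delta_j}$ has a triangular shape with $l$-adic units on the diagonal (after dividing each row by the appropriate power of $l$). Applying this criterion to the family $\{E(\bs{f}): \bs{f} \in F_{\mathrm{sf}}\}$ will give the direct sum decomposition, since we already know from Lemma~\ref{lem:EgenD} that the $[E(\bs{f})]$ generate the left-hand side.

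First I would produce the ``witnesses''. For each $\bs{f} \in F_{\mathrm{sf}}$ I pick a canonical squarefree divisor $\delta(\bs{f})$ by reading off the formulas in Lemma~\ref{lem:V(E(f))}: choose $\delta_j(\bs{f}) = 0$ for the coordinates where a factor $(p_j^{1-\delta_j}-1)$ appears (so that factor contributes $p_j - 1$, nonvanishing), and choose $\delta_j(\bs{f}) \in \{0,1\}$ for the coordinates appearing in a difference factor $(p_i^{1-\delta_i} - p_{i+1}^{1-\delta_{i+1}})$ so that the two exponents differ, ensuring the difference is a unit at $l$. A case check against the three sub-formulas \textbf{(1.*)}, \textbf{(2.*)}, \textbf{(3.*)} of Lemma~\ref{lem:V(E(f))} shows that such a choice exists, and that with this $\delta(\bs{f})$ the entry $V(E(\bs{f}))_{\delta(\bs{f})}$ has $l$-adic valuation equal to $\val_l(\prod_j \gamma_j) - \val_l(G(E'(\bs{f})))$, which by Lemma~\ref{lem:lunitcoef} is exactly $\val_l(\order(E(\bs{f})))$; equivalently, the leading coefficient is an $l$-adic unit in the sense required by Yoo's criterion.

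Next I would establish the triangular vanishing. Define a strict partial order $\prec$ on $F_{\mathrm{sf}}$ by first comparing $w(\bs{f})$, then the case label in Definition~\ref{def:E'(f)} (ordered \textbf{3.A} $\prec$ \textbf{3.B} $\prec$ \textbf{3.C} $\prec$ \textbf{2.A} $\prec \cdots \prec$ \textbf{1.C}), following the same recursion used in the proof of Lemma~\ref{lem:EgenD}. I would then show $V(E(\bs{f}))_{\delta(\bs{f}')} = 0$ whenever $\bs{f} \prec \bs{f}'$. This follows from the explicit product formulas: by construction, $\delta(\bs{f}')$ has a coordinate where $\bs{f}$ forces a $(p_j^{1-\delta_j}-1)$ factor with $\delta_j = 1$, making the corresponding factor vanish, or where the difference-factor degenerates because the chosen exponents coincide. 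Combining this triangular vanishing with the unit diagonal yields the linear independence and hence the claimed direct sum.

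The main obstacle will be handling the nine cases of Lemma~\ref{lem:V(E(f))} simultaneously: the witnesses $\delta(\bs{f})$ must be chosen compatibly across the cases so that the single ordering $\prec$ simultaneously triangulates all nine blocks. Because Definition~\ref{def:E'(f)} is already built recursively along the same ordering (as noted in Remark~\ref{rem:recursive}), the natural strategy is a double induction, first on $w(\bs{f})$ and then on the case label; each inductive step reduces to checking finitely many sign/factor conditions in the formulas for $V(E(\bs{f}))$, which can be verified directly.
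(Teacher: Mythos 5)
Your proposal follows essentially the same route as the paper: invoke \cite[Theorem~1.9]{yoo2019rationalcusp}, choose for each $\bs{f}\in F_{\mathrm{sf}}$ a witness divisor $\delta_{\bs{f}}$ read off from the product formulas of Lemma~\ref{lem:V(E(f))} (your prescription ``set $\delta_j=0$ where a factor $(p_j^{1-\delta_j}-1)$ appears, and break the difference factors'' recovers exactly the divisors $\prod_j p_j^{1-f_j}$, possibly multiplied by $p_m$, $p_mp_{m_+}$ or $p_{m'}$, used in the paper), verify that $\mathbb{V}(E(\bs{f}))_{\delta_{\bs{f}}}$ is an $l$-adic unit via Lemma~\ref{lem:lunitcoef} and Assumption~\ref{def:ordering}.(a), and conclude by triangularity. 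Two points in your set-up are weaker than what the cited criterion actually requires and than what the paper checks. First, the triangularity must be taken with respect to a total order on the squarefree divisors of $N$ themselves (the order $\precsim_N$ of Definition~\ref{def:ordentries}), and one must show $\mathbb{V}(E(\bs{f}))_{\delta}=0$ for \emph{every} $\delta$ with $\delta_{\bs{f}}\precsim\delta$, i.e.\ that $\delta_{\bs{f}}$ is the largest nonvanishing coordinate; vanishing only at the witnesses $\delta_{\bs{f}'}$ of the other tuples, organised by an order on the $\bs{f}$'s, does not match the hypothesis of the theorem as it is applied here. Second, the pairwise distinctness of the witnesses $\delta_{\bs{f}}$ is not automatic and is a genuine case analysis (the paper splits $F_{\mathrm{sf}}$ into three groups according to divisibility of $\delta_{\bs{f}}$ by $p_{s-1}$ and $p_s$ and compares the sub-cases of part \textbf{(iii.3)} separately); your proposal takes this for granted. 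Both issues are repairable with the paper's choices, so the strategy is sound, but as written the verification of the criterion's hypotheses is incomplete.
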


The following ordering on the set of squarefree divisors of $N$ will be useful when applying \cite[Theorem~1.9]{yoo2019rationalcusp} in Lemma~\ref{lem:E(f)cicFsf} to $\langle E(\bs{f}): \bs{f} \in F_{\textrm{sf}} \rangle \otimes \Z_{l}$. By establishing an ordering on the squarefree-labelled entries of $V(E(\bs{f}))$, we will be able to compare these across all the tuples $V(E(\bs{f}))$ for $\bs{f} \in E_{\text{sf}}$. More details on how this is done can be found in the proof of the lemma. 

\begin{definition} \label{def:ordentries} 
Let $l$ be a prime number and $N=\prod_{i=1}^{s} p_{i}^{r_{i}}$ odd, where the $p_{i}$'s are ordered according to $\prec$ with respect to $l$, see Assumption~\ref{def:ordering}. For a squarefree divisor $\delta \in \mathcal{D}^{\textup{sf}}_{N} \cup \{1\}$ of $N$ we define $\# \delta$ as the number of primes dividing $\delta$. For any two $\delta_{1}, \delta_{2} \in \mathcal{D}(N)$ we say that $\delta_{1} \precsim_{N} \delta_{2}$ if and only if one of the following holds:
\begin{enumerate}
    \item $\# \delta_{1} < \# \delta_{2}$, 
    \item $\#\delta_{1} = \#\delta_{2}$ and $\min_{i}\{ \bs{f}(\delta_{1})_{i}=1, \bs{f}(\delta_{2})_{i} =0 \} < \min_{i}\{ \bs{f}(\delta_{1})_{i}=0, \bs{f}(\delta_{2})_{i} =1 \}$. 
\end{enumerate}
Condition 2 says that if $\delta_{1}$ and $\delta_{2}$ have the same number of prime divisors, then $\delta_{1} \precsim_{N} \delta_{2}$ if when looking to their factorisation, the first prime where they differ is smaller according to $\prec$ in $\delta_{1}$ than in $\delta_{2}$. That is $\delta_{1}$ is smaller than $\delta_{2}$ with respect to the lexicographical order in the ``alphabet'' $\{p_{1}, \ldots, p_{s}\}$. This gives a total order on $\mathcal{D}^{\textup{sf}}_{N} \cup \{1\}$. 
If $l$ and $N$ are clear from the context, we drop $N$ from the notation and write $\precsim$ for $\precsim_{N}$. 
\end{definition}

Let us illustrate the previous definition with an example.

\begin{example} Let $s=4$, i.e., $N=p_{1}^{r_{1}}p_{2}^{r_{2}}p_{3}^{r_{3}}p_{4}^{r_{4}}$. Following Definition~\ref{def:ordentries} we have 
$$ 1 \precsim p_{1} \precsim p_{2} \precsim p_{3} \precsim p_{4}  \precsim p_{1}p_{2}  \precsim \precsim p_{1}p_{3} \precsim p_{1}p_{4} \precsim p_{2}p_{3} \precsim p_{2}p_{4}  $$ $$ \precsim p_{3}p_{4} \precsim p_{1}p_{2}p_{3} \precsim  p_{1}p_{2}p_{4}  \precsim p_{1}p_{3}p_{4} \precsim p_{2}p_{3}p_{4} \precsim p_{1}p_{2}p_{3}p_{4}.$$
\end{example}

Now we are ready to prove Lemma~\ref{lem:E(f)cicFsf}.

\begin{proof}[Proof of Lemma~\ref{lem:E(f)cicFsf}] To prove this lemma we will use the criteria for linear independence described in \cite[Theorem~1.9]{yoo2019rationalcusp}. To apply the criteria, we start by picking a divisor $\delta_{\bs{f}}$ of $N$ for each $\bs{f} \in F_{\text{sf}}$. The definition of $\delta_{\bs{f}}$ is given case-by-case: 
\begin{itemize}
             \item[\bf{1.}] If $n'< s-1$, then  \begin{equation} \label{eq:deltaf1} \delta_{\bs{f}} \coloneqq  \prod_{j=1}^{s}p_{j}^{(1-f_{j})}. \qquad \textbf{[1.(A+B+C)]}\end{equation}
\begingroup
\renewcommand*{\arraystretch}{1.3}

            \item[\bf{2.}] If $n'= s-1$, then \begin{equation} \label{eq:deltaf2} \delta_{\bs{f}} \coloneqq \begin{cases} 
                p_{m}\prod_{j=1}^{s}p_{j}^{(1-f_{j})} & \text{if } n'=n, \\
                \prod_{j=1}^{s}p_{j}^{(1-f_{j})} & \text{if } n'\neq n.
               \end{cases} \qquad \begin{matrix}  \textbf{[2.A]} \\
            \textbf{[2.(B+C)]} 
            \end{matrix}  \end{equation} 

            \item[\bf{3.}]  If $n'= s$, then \begin{equation} \label{eq:deltaf3} \delta_{\bs{f}} \coloneqq \begin{cases}
                p_{m}p_{m_{+}} \prod_{j=1}^{s}p_{j}^{(1-f_{j})} & \text{if } n'=n, \\
                p_{m'}\prod_{j=1}^{s}p_{j}^{(1-f_{j})}& \text{if } n'\neq n, \ m=n \text{ and } n''=n', \\
                
                \prod_{j=1}^{s}p_{j}^{(1-f_{j})}& \text{if } n'\neq n, \ m=n \text{ and } n''\neq n', \\
                p_{m}\prod_{j=1}^{s}p_{j}^{(1-f_{j})} & \text{if } n'\neq n, \ m \neq n, \ n''=n' \text{ and } m' = n+2, \\
                \prod_{j=1}^{s}p_{j}^{(1-f_{j})} & \text{if } n'\neq n, \ m \neq n, \ n''=n' \text{ and } m' \neq n+2, \\
                \prod_{j=1}^{s}p_{j}^{(1-f_{j})} & \text{if } n'\neq n, \ m \neq n, \ n''\neq n'.
                \end{cases}  \qquad \begin{matrix}  \textbf{[3.A]} \\
            \textbf{[3.B]} \\
            \textbf{[3.B]} \\
            \textbf{[3.C]} \\
            \textbf{[3.C]} \\
            \textbf{[3.C]}
            \end{matrix} \end{equation}
\endgroup
    \end{itemize}

We claim that, for each $\bs{f} \in F_{\text{sf}}$, the picked  $\delta_{\bs{f}}$ satisfies
\begin{enumerate}
    \item $\delta_{\bs{f}} \neq \delta_{\bs{f}'}$ if $\bs{f} \neq \bs{f}'$, 
    \item the $\delta_{\bs{f}}$-th entry of $\mathbb{V}(E(\bs{f}))$ is an $l$-adic unit,
    \item for every $\delta$ satisfying $\delta_{\bs{f}} \precsim \delta$, we have $\mathbb{V}(E(\bs{f}))_{\delta} = 0$. 
\end{enumerate}
 
We first show that (1) holds. We split the proof in three groups of $\delta_{\bs{f}}$'s: the $\delta_{\bs{f}}$'s for which $p_{s-1}, p_{s} | \delta_{\bs{f}}$, the ones with $p_{s}\mid \delta_{\bs{f}}$ and $p_{s-1} \nmid \delta_{\bs{f}}$, and the ones with $p_{s} \nmid \delta_{\bs{f}}$. For $\bs{f}$ and $\bs{f}'$ coming from two of these different groups, the group splitting immediately yields $\delta_{\bs{f}} \neq \delta_{\bs{f}'}$. Hence we show $\delta_{\bs{f}} \neq \delta_{\bs{f}'}$ when $\bs{f}$ and $\bs{f}'$ both lie in one of the three groups.
\begin{itemize}
    \item Group $p_{s-1}, p_{s} | \delta_{\bs{f}}$: by definition of $\delta_{\bs{f}}$ this corresponds to the $\bs{f}$'s in Cases \textbf{(iii.1.(A+B+C))} and the $\bs{f}$'s in Cases \textbf{(iii.3.(A+B))} satisfying $w\left(\bs{f}\right)=2$. By definition all the $\delta_{\bs{f}}$ here are distinct. 
    
    \item Group $p_{s}\mid \delta_{\bs{f}}$, $p_{s-1} \nmid \delta_{\bs{f}'}$: this group corresponds to those $\bs{f}$ in Case \textbf{(iii.2.(A+B+C))}. For $\bs{f}, \bs{f}'$ with $n(\bs{f})=n'(\bs{f})=n(\bs{f}')=n'(\bs{f}')=s-1$ it follows by definition that if $\delta_{\bs{f}} \neq \delta_{\bs{f}'}$ then $\bs{f} \neq \bs{f'}$. The same holds for $\bs{f}, \bs{f}'$ with $n(\bs{f})\neq n'(\bs{f}) =s-1$ and $n(\bs{f}')\neq n'(\bs{f}')=s-1$ . On the other hand if we would have $\bs{f}, \bs{f}'$ with $n(\bs{f})=n'(\bs{f})=s-1$ and $n(\bs{f}')\neq n'(\bs{f}') =s-1$ with $\delta_{\bs{f}} = \delta_{\bs{f}'}$, we would get $p_{m(\bs{f})}\prod_{i=1}^{s}p_{i}^{(1-f_{i})} = \prod_{i=1}^{s}p_{i}^{(1-f'_{i})}$, and so $\bs{f}^{m(\bs{f})}=\bs{f}'$. But since $n(\bs{f})=n'(\bs{f})$, $\bs{f}^{m(\bs{f})}$ also satisfies $n(\bs{f}^{m(\bs{f})})=n'(\bs{f}^{m(\bs{f})})$, which contradict the fact that $\bs{f}^{m(\bs{f})}=\bs{f}'$. Hence, we need $\delta_{\bs{f}} \neq \delta_{\bs{f}'}$. 
    \item Group $p_{s} \nmid \delta_{\bs{f}}$: this last group consists of all the remaining $\bs{f}$'s; i.e., the $\bs{f}$'s with $n'=s$ and $w\left(\bs{f}\right)\geq 3$. For $\bs{f}, \bs{f}'$ both in Case \textbf{(iii.3.A)} or both in Case \textbf{(iii.3.B)}, it follows from definition that $\delta_{\bs{f}} \neq \delta_{\bs{f}'}$ if $\bs{f}\neq \bs{f}'$. However, in this group the definition of $\delta_{\bs{f}}$ splits into multiple sub-cases, and so we need to be a bit more careful when comparing $\delta_{\bs{f}}$'s in different cases within this group to prove (1). To get an idea on how to do that and simultaneously compare all the different sub-cases, we represent each squarefree divisor $\delta$ with a strip constructed by aligning $s$ squares, each representing a prime divisor $p_{i}$ of $N$. In this strip, the square corresponding to the divisor $p_{i}$ is white if $p_{i} \nmid \delta$ and is hatched if $p_{i} \mid \delta$. E.g., the divisor $\delta=p_{1}$ corresponds to the following strip:
\begin{center}
\includegraphics[scale=0.5]{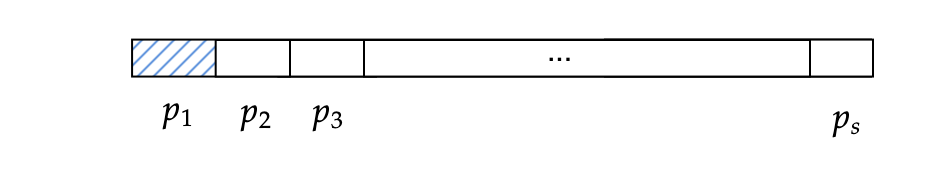}
\end{center}
For each $\bs{f}$ in this group, the following figure visualises the divisors $\delta_{\bs{f}}$.
\begin{center}
\includegraphics[scale=0.5]{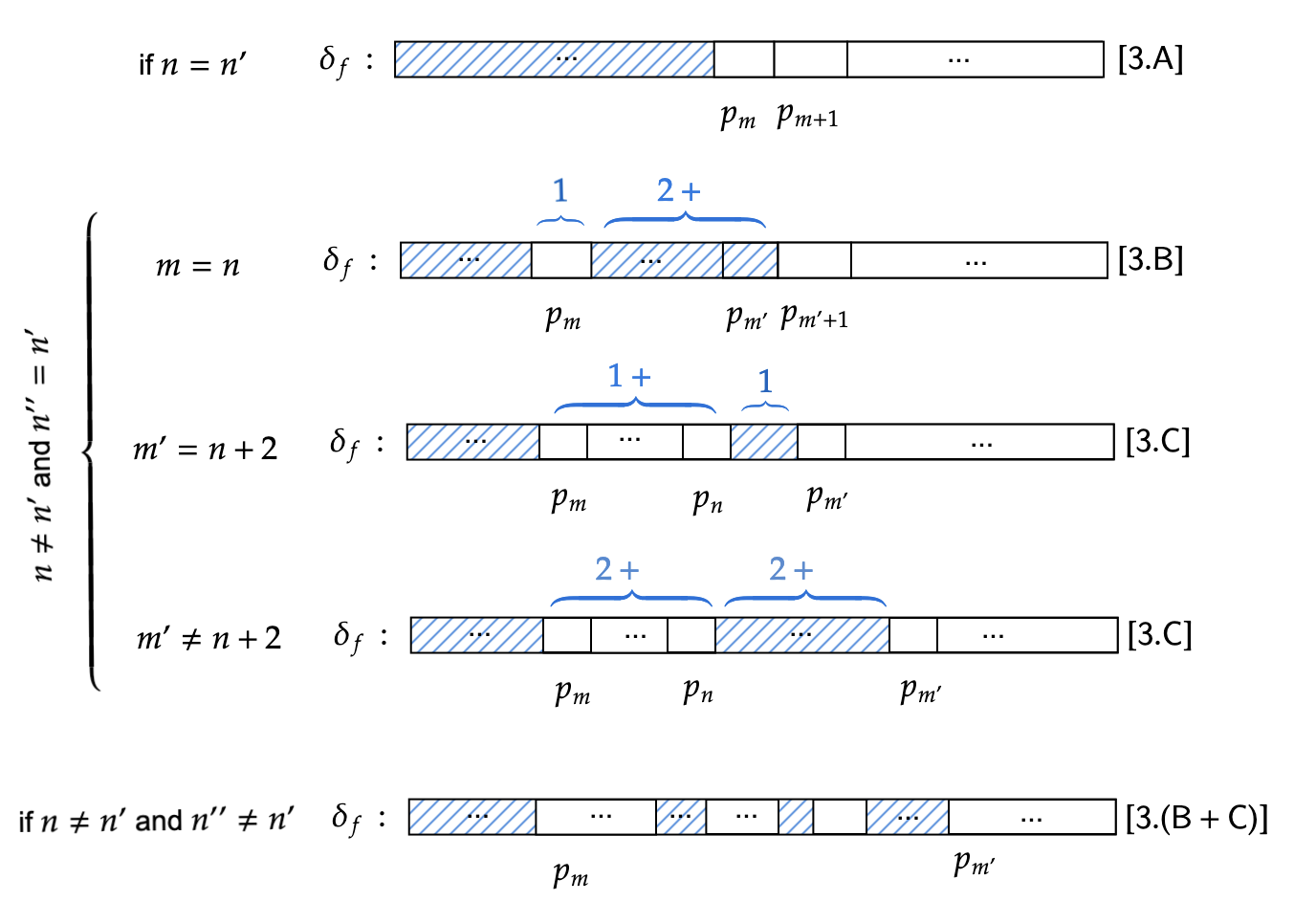}
\end{center}

The visualisations suggest formal arguments to compare the different $\delta_{\bs{f}}$'s in each sub-case and show that $\delta_{\bs{f}} \neq \delta_{\bs{f}'}$ for each $\bs{f} \neq \bs{f}'$. 
For example, we see that in \textbf{(iii.3.A)} we only have one hatched block while in \textbf{(iii.3.B)} we have two hatched blocks, the first of which has only one square. Hence, if we assume that $\bs{f}$ is in Case \textbf{(iii.3.A)} and $\bs{f}'$ in Case \textbf{[3.B]}, then either $m(\bs{f}') < m(\bs{f})$, in which case $\delta_{\bs{f}} \neq \delta_{\bs{f}'}$ as $p_{m(\bs{f}')}\mid\delta_{\bs{f}'}$ and $p_{m(\bs{f}')}\nmid\delta_{\bs{f}}$, or either $m(\bs{f}') \geq m(\bs{f})$, and then again $\delta_{\bs{f}} \neq \delta_{\bs{f}'}$ since for all $i>m(\bs{f})$, $p_{i} \mid \delta_{\bs{f}}$ but $m(\bs{f}')+1 > m(\bs{f}') \geq m(\bs{f})$ satisfies $p_{m(\bs{f}')+1} \nmid \delta_{\bs{f}'}$.
Likewise, we can write down arguments comparing the rest of the cases. 

\end{itemize}

Now we show that (2) holds. Again, we argue case-by-case and use Lemma~\ref{lem:V(E(f))}. 
The idea is the following: when defining the divisors $E(\bs{f})$ in Definition~\ref{def:E(f)}, we have taken an appropriate linear combination of $D(\bs{f}')$'s for each $\bs{f}$ -- in line with Assumption~\ref{def:ordering}.(a) -- such that the valuation $\val_{l}\left(V(E(\bs{f}))_{\delta_{\bs{f}}}\right) \leq \val_{l}\left(V(E(\bs{f}))_{\delta}\right)$ for all $\delta$. 
This guarantees that $\mathbb{V}(E(\bs{f}))_{\delta_{\bs{f}}} = \GCD(E(\bs{f}))^{-1}\cdot V(E(\bs{f}))_{\delta_{\bs{f}}}$ is an $l$-adic unit.
For example, if we take $\bs{f}\in F_{\textup{sf}}$ in Case \textbf{(iii.1)}, then $\bs{f}$ satisfies $n'<s-1$ and $n=n'$, and we have that $$V(E(\bs{f}))_{\delta_{\bs{f}}} = \frac{\prod_{i=1}^{s} \gamma_{i}}{G(E'(\bs{f}))} (-1)^{\sum \delta_{j}}(p_{m} -1)(p_{n'}-1) \prod_{j|f, j \neq m, n, n'} (p_{j} -1),$$ 
and  
$$  V(E(\bs{f}))_{\delta}=\frac{\prod_{i=1}^{s} \gamma_{i}}{G(E'(\bs{f}))} (-1)^{\sum \delta_{j}}\left( p_{m}^{1-\delta_{m}}-p_{s}^{1-\delta_{s}}\right) \left( p_{n'}^{1-\delta_{n'}}-p_{n'+1}^{1-\delta_{n'+1}}\right) \prod_{j|f, j \neq m, n, n'} (p_{j}^{1-\delta_{j}} -1)$$
for any entry $\delta = \prod_{j=1}^{s}p_{j}^{\delta_{j}}$.
Under Assumption~\ref{def:ordering}.(a), we have $v_{l}(p_{m}-1) \leq v_{l}\left( p_{m}^{1-\delta_{m}}-p_{s}^{1-\delta_{s}}\right)$ and $v_{l}(p_{n'}~-~1) \leq v_{l}\left( p_{n'}^{1-\delta_{n'}}-p_{n'+1}^{1-\delta_{n'+1}}\right)$.
Hence $\val_{\bs{f}}\left(V(E(\bs{f}))_{\delta_{\bs{f}}}\right) \leq \val_{\bs{f}}\left(V(E(\bs{f}))_{\delta}\right)$ and $\mathbb{V}(E(\bs{f}))_{\delta_{\bs{f}}}$ is an $l$-adic unit. 
An analogous argument works for the other cases, where, up to a factor $\pm 1$, we have the following.
\begingroup
\renewcommand*{\arraystretch}{1.3} 
\begin{itemize}
    \item[\textbf{(iii.2):}] If $n'=s-1$, $$V(E(\bs{f}))_{\delta_{\bs{f}}} = \pm \frac{\prod_{i=1}^{s} \gamma_{i}}{G(E'(\bs{f}))} \cdot \begin{cases}
     \prod_{j|f, j \neq m} (p_{j} -1) & \text{if } n=n', \\ 
    \prod_{j|f} (p_{j} -1) & \text{if } n \neq n'. 
 \end{cases} \begin{matrix}
            \textbf{[2.A]} \\
             \textbf{[2.B+C]} \\
         \end{matrix} $$
\end{itemize}
And 

\begin{itemize}
    \item[\textbf{(iii.3):}] If $n'=s$,
    $$V(E(\bs{f}))_{\delta_{\bs{f}}} = \pm \frac{\prod_{i=1}^{s} \gamma_{i}}{G(E'(\bs{f}))} \cdot \begin{cases} 
                         \prod_{j|f, i\neq m, m+1} \left( p_{j}- 1\right) & \mathrm{if } \ n'=n, \\ 
                          \prod_{j|f, j\neq m'} \left( p_{j}- 1\right)  &  \mathrm{if } \ n' \neq n,\ m=n \text{ and } n'' = n',
                        \\
                         \prod_{j|f} \left( p_{j}- 1\right)  &  \mathrm{if } \ n' \neq n, \ m=n \text{ and } n'' \neq n',
                        \\
                         \prod_{j|f, j\neq m} \left( p_{j}- 1\right)  & \mathrm{if } \ n' \neq n, \ m\neq n, \ n''=n' \text{ and } m'=n+2, \\
                         \prod_{j|f} \left( p_{j}- 1\right)  & \mathrm{if } \ n' \neq n, \ m\neq n, \ n''=n' \text{ and } m'\neq n+2, \\
                         \prod_{j|f} \left( p_{j}- 1\right)  &  \mathrm{if } \ n' \neq n, \ m\neq n, \ n''\neq n'. \\
                        \end{cases} \qquad \begin{matrix}  \textbf{[3.A]} \\
            \textbf{[3.B]} \\
            \textbf{[3.B]} \\
            \textbf{[3.C]} \\
            \textbf{[3.C]} \\
            \textbf{[3.C]}
            \end{matrix} $$
\end{itemize}
\endgroup
We omit the $\pm$ sign computation, as it is not relevant for our purposes. 

Lastly, we prove (3). Let us showcase the proof in the Case \textbf{(iii.1.A)}. Since $n=n'$, from Lemma~\ref{lem:V(E(f))} we have $$\displaystyle V(E(\bs{f}))_{\delta} = \frac{\prod_{i=1}^{s} \gamma_{i}}{G(E'(\bs{f}))} \left( p_{m}^{1-\delta_{m}}-p_{s}^{1-\delta_{s}}\right) \left( p_{n'}^{1-\delta_{n'}}-p_{n'+1}^{1-\delta_{n'+1}}\right) \prod_{j|f, j\neq m, n'} \left( p_{j}^{1-\delta_{j}}- 1\right) \prod_{j \nmid f,  j \neq n'+1, s}  p_{i}^{1 -\delta_{i}}.$$ 
We can read from this expression that $V(E(\bs{f}))_{\delta} = 0 $ if at least one of the factors $\left( p_{j}^{1-\delta_{j}}- 1\right)=0$ for $j \neq m,n,n'$ with $f_{j} =1$; or  the factor $\left( p_{m}^{1-\delta_{m}}-p_{s}^{1-\delta_{s}}\right)=0$; or the factor $\left( p_{n'}^{1-\delta_{n'}}-p_{n'+1}^{1-\delta_{n'+1}}\right)=0$. That is, $V(E(\bs{f}))_{\delta} = 0 $ if we are in one of the following situations:
\begin{itemize}
    \item[(a)] for some $j \neq m, n$ we have $f_{j}=1$ and $\delta_{j}=1$;
    \item[(b)] we have $\delta_{m} = 1$ and $\delta_{s} = 1$;
    \item[(c)] we have $\delta_{n}=1$ and $\delta_{n+1}=1$.
\end{itemize}
To show (3), we will see now that all $\delta$ with $\delta_{\bs{f}} \precsim \delta$ satisfy at least one of these conditions, i.e., we show that $\delta_{\bs{f}}$ is the biggest $\delta$ (with respect to $\precsim$) with a non-zero entry. Thus, take $\delta$ with $\delta_{\bs{f}} \precsim \delta$. By Definition~\ref{def:ordentries} we have either $(\ast)$ $\# \delta > \#\delta_{\bs{f}}$ or $(\ast\ast)$ $\#\delta = \#\delta_{\bs{f}}$ and $\min_{i}\{ \bs{f}(\delta)_{i}=1, \bs{f}(\delta_{\bs{f}})_{i} =0 \} < \min_{i}\{ \bs{f}(\delta)_{i}=0, \bs{f}(\delta_{\bs{f}})_{i} =1 \}$. Furthermore, $\delta_{\bs{f}}$ satisfies $\delta_{\bs{f}}\cdot d(\bs{f}) = \prod_{i=1}^{s} p_{i}$; hence Case ($\ast$) yields $p_{j} | \delta$ for some $j$ with $f_{j} =1$. 
 If $j \neq m, n$, we are done as $\delta$ satisfies (a). On the other hand, by definition, both $p_{s}$ and $p_{n+1}$ divide $\delta_{\bs{f}}$. Hence, if $j$ is either $m$ or $n$ then also $p_{s}$ and $p_{n+1}$ must divide $\delta$; otherwise $\# \delta > \# \delta_{\bs{f}}$ would lead to $p_{j} \mid \delta$ for some $j \neq m, n$ with $f_{j} =1$ and we would be again in situation (a). 
 Thus, all these possibilities lead $\delta$ to satisfy either situation (a), (b) or (c). 
 In Case ($\ast\ast$), we have $\#\delta = \#\delta_{\bs{f}}$.  As $\delta_{\bs{f}}$ satisfies $\delta_{\bs{f}}\cdot d(\bs{f}) = \prod_{i=1}^{s} p_{i}$, the only possible $\delta$'s with this conditions for which none of (a), (b) or (c) hold are the entries $\delta$ of the form $p_{m} \prod_{i=1, i\neq s}^{s}p_{i}^{(1-f_{i})}$, $p_{n}\prod_{i=1, i\neq n+1}^{s}p_{i}^{(1-f_{i})}$ and $p_{m}p_{n}\prod_{i=1, i\neq n+1, s}^{s}p_{i}^{(1-f_{i})}$. But for those $\delta$ we have $\delta \precsim \delta_{\bs{f}}$, as $m<s$ and $n<n+1$.
 
For the remaining cases we can use Lemma~\ref{lem:V(E(f))} in similar fashion and write down conditions for the $\delta$'s satisfying $V(E(\bs{f}))_{\delta} = 0$. 
Hence, the other cases are shown in a similar manner by noticing that the $\delta_{\bs{f}}$ chosen for each case in Equations~\eqref{eq:deltaf1}, \eqref{eq:deltaf2} and \eqref{eq:deltaf3} is the biggest $\delta$ possible (with respect to $\precsim$) for which none of these conditions hold. 
\end{proof}

Now we want to prove an equivalent result to Lemma~\ref{lem:E(f)cicFsf} for $F_{\iota}^{b}$. The idea is the same: use \cite[Theorem~1.9]{yoo2019rationalcusp} on $\langle E(\bs{f}): \bs{f} \in F_{\iota}^{b} \rangle \otimes \Z_{l}$ to decompose the latter into cyclic subgroups. Thus, once more, we need an appropriate ordering on a subset of the non-zero entries of $\mathbb{V}(E(\bs{f}))$ for $\bs{f} \in F_{\iota}^{b}$. We set such an ordering in the next definition.

\begin{definition} \label{def:orderingentriesFib}
$N=\prod_{j=1}^{s} p_{j}^{r_{j}}$ where the $p_{j}$ are ordered according to Assumption~\ref{def:ordering} with respect to $l$. Given $\iota \in \{1, \ldots, s\}$ and $2 \leq b \leq r_{\iota}$, we set
$$
    u_{\iota} \coloneqq \begin{cases} 0 & \text{ if } b=2 \text{ and } r_{\iota} \in 2\Z,\\  
                                  1 & \text{ if } b=2 \text{ and } r_{\iota} \not\in 2\Z, \\ 
                                  1 & \text{ if } b\geq 3 \text{ and } r_{\iota}-b \in 2\Z,\\  
                                  r_{\iota}-1 & \text{ if } b\geq 3 \text{ and } r_{\iota}-b \not\in 2\Z\\    \end{cases}
$$
and define the set $\mathbf{B_{\bs{\iota}}^{b}} \coloneqq \left\{ \delta = p_{\iota}^{u_{\iota}}\prod_{j\neq \iota} p_{j}^{\delta_{j}} \textup{ with } \delta_{j} \in \{0, 1\} \right\}$. Notice that the choice of $u_{\iota}$ is made as to guarantee $\left(\mathbb{A}_{p_{\iota}}(r_{\iota}, b)\right)_{p_{\iota}^{u_{\iota}}} =\pm 1$. 
To define the ordering $\precsim_{\iota}^{b}$ on the set $\mathbf{B_{\bs{\iota}}^{b}}$ we set $N'=\prod_{j\neq i} p_{j}^{r_{j}}$ and consider the ordering $\precsim_{N'}$ on $\mathcal{D}_{N'}^{\textup{sf}}\cup\{1\}$ the set of squarefree divisors of $N'$ as given in Definition~\ref{def:ordentries}. Then, we use this ordering to define a total order in $\mathbf{B_{\bs{\iota}}^{b}}$: for $\delta, \delta' \in \mathbf{B_{\bs{\iota}}^{b}}$, we say that $\delta \precsim_{\iota}^{b} \delta'$ if and only if $\frac{\delta}{p_{\iota}^{u_{\iota}}} \precsim_{N'} \frac{\delta'}{p_{\iota}^{u_{\iota}}}$. 
\end{definition}

\begin{example} Take $s=4$ and $\iota=1$, i.e.,  $N=p_{1}^{r_{1}}p_{2}^{r_{2}}p_{3}^{r_{3}}p_{4}^{r_{4}}$ and $N'=p_{2}^{r_{2}}p_{3}^{r_{3}}p_{4}^{r_{4}}$. 
Following Assumption~\ref{def:ordering}.(a) we write  $N'=(p'_{1})^{r'_{1}}(p'_{2})^{r'_{2}}(p'_{3})^{r'_{3}}$, where $p'_{i} = p_{i+1}$ and $r'_{i} = r_{i+1}$ for $i \in \{1, 2, 3\}$. It follows from Definition~\ref{def:ordentries} the order $\precsim_{N'}$ on $\mathcal{D}_{N'}^{\textup{sf}}\cup\{1\}$ is given by
$$1 \precsim_{N'} p_{2} \precsim_{N'} p_{3} \precsim_{N'} p_{4} \precsim_{N'} p_{2}p_{3}  \precsim_{N'} p_{2}p_{4} \precsim_{N'} p_{3}p_{4} \precsim_{N'} p_{2}p_{3}p_{4}.$$
Thus, for a fixed $2 \leq b \leq r_{\iota}$, we have the the order $\precsim_{\iota}^{b}$ given in Definition~\ref{def:orderingentriesFib} is given by  
$$p_{\iota}^{u_{\iota}} \precsim_{N'} p_{\iota}^{u_{\iota}}p_{2} \precsim_{N'} p_{\iota}^{u_{\iota}}p_{3}
\precsim_{N'} p_{\iota}^{u_{\iota}}p_{4}
\precsim_{N'} p_{\iota}^{u_{\iota}}p_{2}p_{3} \precsim_{N'} p_{\iota}^{u_{\iota}}p_{2}p_{4} \precsim_{N'} p_{\iota}^{u_{\iota}}p_{3}p_{4} \precsim_{N'} p_{\iota}^{u_{\iota}}p_{2}p_{3}p_{4}$$
on the set $\mathbf{B_{\bs{\iota}}^{b}}$.
\end{example}

Now we are ready to give Lemma~\ref{lem:E(f)cicFib}.

 \begin{lemma}
 \label{lem:E(f)cicFib} Take $\iota \in \{1, \ldots, s\}$ and fix $2 \leq b \leq r_{\iota}$. With notation and assumptions as before we have
 \begin{equation*}
      \langle [E(\bs{f})]: \bs{f} \in F_{\iota}^{b} \rangle \otimes \Z_{l} = \left(\bigoplus_{\bs{f} \in F_{\iota}^{b}} \langle [E(\bs{f})] \rangle\right) \otimes \Z_{l}. 
\end{equation*}
\end{lemma}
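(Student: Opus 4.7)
The proof would follow the same scheme as Lemma~\ref{lem:E(f)cicFsf}, applying the linear independence criterion \cite[Theorem~1.9]{yoo2019rationalcusp}. Namely, for each $\bs{f}\in F_\iota^b$ I would exhibit a divisor $\delta_{\bs{f}}\in \mathbf{B}_\iota^b$ verifying:
\begin{enumerate}
\item[(1)] the assignment $\bs{f}\mapsto \delta_{\bs{f}}$ is injective;
\item[(2)] the coordinate $\mathbb{V}(E(\bs{f}))_{\delta_{\bs{f}}}$ is an $l$-adic unit;
\item[(3)] $\mathbb{V}(E(\bs{f}))_{\delta}=0$ for every $\delta\in \mathbf{B}_\iota^b$ with $\delta_{\bs{f}}\precsim_\iota^b \delta$.
\end{enumerate}

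Every $\delta_{\bs{f}}$ must have $\iota$-coordinate equal to $p_\iota^{u_\iota}$: this is precisely the coordinate where $|\mathbb{A}_{p_\iota}(r_\iota,b)|=1$, so that the prefactor $p_\iota^{\kappa_\iota}\mathbb{A}_{p_\iota}(r_\iota,b)_{\delta_\iota}$ appearing in Lemma~\ref{lem:V(E(f))} attains its minimal $l$-adic valuation $\kappa_\iota$. For the squarefree part $\delta_{\bs{f}}/p_\iota^{u_\iota}$, regarded as a squarefree divisor of $N'=N/p_\iota^{r_\iota}$, the choice would mirror the three cases of Definition~\ref{def:E'(f)}.(ii), paralleling Equations~\eqref{eq:deltaf1}--\eqref{eq:deltaf3}. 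In the base case $m_+=s+1$ (so $E'(\bs{f})=D_\gamma(\bs{f})$) one sets $\delta_{\bs{f}}=p_\iota^{u_\iota}\cdot p_m\cdot\prod_{j\neq\iota,\,j\nmid f}p_j$; in the subtraction cases $m_+\neq s+1,\,n_+=s+1$ and $m_+\neq s+1,\,n_+\neq s+1$, one adjusts the inserted factor $p_m$ or $p_n$ (respectively) in a way that both guarantees injectivity and pushes $\delta_{\bs{f}}$ to the ``largest'' non-vanishing coordinate under $\precsim_\iota^b$.

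Condition (2) then follows from Lemma~\ref{lem:V(E(f))}: at $\delta=\delta_{\bs{f}}$ each factor $(p_j^{1-\delta_j}-1)$ in the explicit formula evaluates to $(p_j-1)$ and each factor $p_j^{1-\delta_j}$ for $j\nmid f$ evaluates to $1$; Assumption~\ref{def:ordering}.(a) ensures the resulting $l$-adic valuation is minimal among all nonzero coordinates, and combining with Lemma~\ref{lem:lunitcoef} one concludes $\mathbb{V}(E(\bs{f}))_{\delta_{\bs{f}}}\in \Z_l^\times$. For condition (3), observe that moving from $\delta_{\bs{f}}$ to any $\delta\succ_\iota^b\delta_{\bs{f}}$ forces the squarefree part $\delta/p_\iota^{u_\iota}$ to either acquire an extra prime factor or shift lexicographically to the right relative to $\delta_{\bs{f}}/p_\iota^{u_\iota}$. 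By the construction of $\delta_{\bs{f}}$, any such shift must turn some $\delta_j$ from $0$ to $1$ at an index $j\mid f$ with $j\neq\iota,m$ (respectively $j\neq\iota,n$), forcing the corresponding factor $(p_j^{1-\delta_j}-1)$ to vanish and killing the whole coordinate.

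The main obstacle, as in the squarefree case, is the bookkeeping required to ensure injectivity across the sub-cases of Definition~\ref{def:E'(f)}.(ii): since different $\bs{f}$'s can share the same positions of $1$'s outside $\{m\}$ or $\{n\}$, the extra factor $p_m$ or $p_n$ inserted in $\delta_{\bs{f}}$ for one case could a priori coincide with a choice for a different $\bs{f}$ in another case. This would be resolved by a case-by-case comparison analogous to the group splitting (by divisibility by $p_{s-1}$, $p_s$) used in the proof of Lemma~\ref{lem:E(f)cicFsf}, now performed inside $N'$ using $\precsim_{N'}$ from Definition~\ref{def:ordentries}; once (1)--(3) are established, \cite[Theorem~1.9]{yoo2019rationalcusp} yields the direct sum decomposition.
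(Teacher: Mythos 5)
Your proposal follows essentially the same route as the paper: the paper's proof also invokes the linear independence criterion of Yoo, picks for each $\bs{f}\in F_{\iota}^{b}$ a divisor $\delta_{\bs{f}}$ with $\iota$-coordinate $p_{\iota}^{u_{\iota}}$ and squarefree part $\prod_{j\neq\iota}p_{j}^{1-f_{j}}$ (multiplied by the extra factor $p_{m}$ exactly when $n_{+}=s+1$, and left unadjusted when $n_{+}\neq s+1$), and verifies your conditions (1)--(3) by the same valuation and vanishing arguments, deferring the remaining bookkeeping to the analogous squarefree case. The only difference is that the paper pins down the concrete $\delta_{\bs{f}}$ in the two subtraction cases where you leave the adjustment implicit, but this is a matter of detail rather than of method.
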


\begin{proof} The proof of this lemma follows that of Lemma~\ref{lem:E(f)cicFsf}.  For each $\bs{f}\in F^{\iota}_{b}$ we define $\delta_{\bs{f}}$ case-by-case:
\begin{itemize}
    \item If $n_{+}=s+1$, set
    $\displaystyle \delta_{\bs{f}} \coloneqq p_{\iota}^{u_{\iota}}p_{m}\prod_{j=1, j\neq \iota}^{s} p_{j}^{(1-f_{j})};
    $ 
    \item If $n_{+} \neq s+1$, set
    $\displaystyle \delta_{\bs{f}} \coloneqq p_{\iota}^{u_{\iota}}\prod_{j=1, j\neq \iota}^{s} p_{j}^{(1-f_{j})}.
    $
\end{itemize}
For the chosen $\delta_{\bs{f}}$ we have (up to a factor $\pm 1$)
\begin{center}
    $V(E(\bs{f}))_{\delta_{\bs{f}}} = \begin{cases}
    p_{\iota}^{\kappa_{\iota}} \prod_{j\mid f, j \neq m} (p_{j}-1) & \textup{if } n_{+}=s+1, \\
    p_{\iota}^{\kappa_{\iota}} \prod_{j\mid f} (p_{j}-1) & \textup{if } n_{+}\neq s+1.
    \end{cases}$
\end{center}
Like in the proof of Lemma~\ref{lem:E(f)cicFsf}, we check that the following holds:
    \begin{enumerate}
    \item If $\bs{f}, \bs{f}' \in F_{\iota}^{b}$ are distinct, then $\delta_{\bs{f}} \neq \delta_{\bs{f}'}$.
    \item The $\delta_{\bs{f}}$-th entry of $\mathbb{V}(E(\bs{f}))$ is an $l$-adic unit.
    \item For any $\delta \in \mathbf{B_{\bs{\iota}}^{b}}$ with $\delta_{\bs{f}} \precsim_{\iota}^{b} \delta$, we have $\mathbb{V}(E(\bs{f}))_{\delta} = 0$.
\end{enumerate}
The steps of the proof are analogous to those of Lemma~\ref{lem:E(f)cicFsf}, so we omit the details here.  
\end{proof}

Bringing Corollary~\ref{cor:splitD} and Lemmas~\ref{lem:E(f)cicFsf} and \ref{lem:E(f)cicFib} together we obtain the desired result:
 
\begin{corollary} \label{cor:splitE}
  \label{lem:E(f)cic} With notation and assumptions as above, we have that
 \begin{equation*}
      \langle [E(\bs{f})]: \bs{f} \in F \rangle \otimes \Z_{l} = \left(\bigoplus_{\bs{f} \in F} \langle [E(\bs{f})] \rangle\right) \otimes \Z_{l}. 
\end{equation*}
\end{corollary}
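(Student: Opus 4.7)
The plan is to assemble three results already at hand: the decomposition of $\ker(\overline{\bs\delta}|_{C(N)})$ in Corollary~\ref{cor:splitD}, the $\Z_l$--equality of generating sets in Lemma~\ref{lem:EgenD}, and the cyclicity statements of Lemmas~\ref{lem:E(f)cicFsf} and \ref{lem:E(f)cicFib}. First I would invoke Corollary~\ref{cor:splitD} to write $\ker(\overline{\bs\delta}|_{C(N)})$ as an internal direct sum of three pieces: the piece indexed by those $\bs{f}\in F$ with $p_i^{2}p_j^{2}\mid d(\bs{f})$ for distinct primes $p_i,p_j$, the pieces indexed by $F_\iota^b$ for each $\iota\in\{1,\ldots,s\}$ and $2\leq b\leq r_\iota$, and the piece indexed by $F_{\mathrm{sf}}$. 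Tensoring with $\Z_l$ preserves this direct sum decomposition.

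Next I would handle each summand separately. For $\bs{f}$ in the first piece, Definition~\ref{def:E(f)} sets $E(\bs{f})=D(\bs{f})$, so the summand is already of the form $\bigoplus\langle[E(\bs{f})]\rangle\otimes\Z_l$. For each pair $(\iota,b)$, inspection of Definition~\ref{def:E'(f)} shows that $E(\bs{f})$ for $\bs{f}\in F_\iota^b$ is a $\Z$--linear combination of $D(\bs{f}')$'s with $\bs{f}'\in F_\iota^b$ only; the proof strategy of Lemma~\ref{lem:EgenD} (which is carried out case by case on $F_{\mathrm{sf}}$ but works identically on each $F_\iota^b$) then yields
\[
\langle [D(\bs{f})]:\bs{f}\in F_\iota^b\rangle\otimes\Z_l=\langle [E(\bs{f})]:\bs{f}\in F_\iota^b\rangle\otimes\Z_l,
\]
and Lemma~\ref{lem:E(f)cicFib} refines this equality into the internal direct sum $\bigoplus_{\bs{f}\in F_\iota^b}\langle[E(\bs{f})]\rangle\otimes\Z_l$. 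Likewise, Lemma~\ref{lem:EgenD} together with Lemma~\ref{lem:E(f)cicFsf} gives $\langle [D(\bs{f})]:\bs{f}\in F_{\mathrm{sf}}\rangle\otimes\Z_l=\bigoplus_{\bs{f}\in F_{\mathrm{sf}}}\langle[E(\bs{f})]\rangle\otimes\Z_l$.

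Finally, I would reassemble the three direct sums into a single direct sum indexed over all $\bs{f}\in F$, yielding the corollary. No serious obstacle is expected: the argument is a bookkeeping synthesis of results already established. The only point worth verifying explicitly is that the construction of the $E(\bs{f})$ respects the three-way partition of $F$ used in Corollary~\ref{cor:splitD} --- that is, the defining expression of $E(\bs{f})$ for $\bs{f}$ in one of the three parts of $F$ involves only divisors $D(\bs{f}')$ with $\bs{f}'$ in the same part. This is a direct consequence of inspecting the case distinctions in Definition~\ref{def:E'(f)}: each defining relation swaps entries of $\bs{f}$ in a manner that preserves membership in $F_{\mathrm{sf}}$ or in the fixed $F_\iota^b$, since $f_\iota$ is never altered in the $F_\iota^b$--case and the swaps in the $F_{\mathrm{sf}}$--case always involve entries in $\{0,1\}$.
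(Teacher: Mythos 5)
Your proposal is correct and follows essentially the same route as the paper, which simply combines Corollary~\ref{cor:splitD} with Lemmas~\ref{lem:E(f)cicFsf} and \ref{lem:E(f)cicFib} (using the piecewise equalities \eqref{eq:inclusionSF} and \eqref{eq:inclusionFbi} established inside the proof of Lemma~\ref{lem:EgenD}). Your explicit check that the defining expressions of the $E(\bs{f})$ respect the three-way partition of $F$ is a point the paper leaves implicit, but it is the right thing to verify and holds for the reason you give.
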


From Corollary~\ref{cor:splitD}, Lemma~\ref{lem:EgenD} and Corollary~\ref{cor:splitE}, we obtain $\ker(\overline{\bs \delta}|_{C(N)})[l^{\infty}] = \left(\bigoplus_{\bs{f} \in F} \langle E(\bs{f}) \rangle\right) \otimes \Z_{l}$. Hence, to complete the description of $\ker(\overline{\bs \delta})[l^{\infty}]$, the only thing left to do is compute  $\langle E(\bs{f}) \rangle[l^{\infty}]$ for all $\bs{f} \in F$. That is, we want to compute the power of $l$ in the order of the class of the divisor $E(\bs{f})$ for each $\bs{f}\in F$. Since, by definition, we have $E(\bs{f}) = D(\bs{f}) =Z(\bs{f})$ for all $\bs{f} \in F\setminus \left(\left(\bigcup_{\iota, b} F_{\iota}^{b}\right) \cup F_{\mathrm{sf}}\right)$, and we know that the order of $Z(\bs{f})$ is $n(N, d)$, see Theorem~\ref{thm:yooz}, we only need to take care of the remaining cases. We proceed with this computation in the next proposition.

\begin{proposition} \label{prop:orderE} Let $\bs{f} \in F_{\text{sf}}$ and let $l$ be an odd prime number. The $l$-th power of the order of the divisor $E(\bs{f})$ is given by
\begin{equation*}
    \val_{l}(\order(E(\bs{f}))) = \val_{l}\left( \num\left(\frac{\mathcal{M}(\bs{f})\cdot \prod_{j=1}^{s}(p_{j}-1)^{1-f_{j}} \prod_{j=1}^{s}\gamma_{j}^{1-f_{j}}}{24}\right)\right),
\end{equation*}
where
\begingroup
\renewcommand*{\arraystretch}{1.2} 
\begin{equation*}
    \mathcal{M}(\bs{f}) \coloneqq \begin{cases} 1 & \textup{if } n' < s-1, \\
    (p_{m}-1) & \textup{if } n' = s-1 \textup{ and } n'=n, \\
    1 & \textup{if } n' = s-1 \textup{ and } n' \neq n, \\
    (p_{m}-1)(p_{m+1}-1)  & \textup{if } n' = s \textup{ and } n'=n, \\
    (p_{m'}-1) & \textup{if } n' = s \textup{ and } n' \neq n, m=n, n''=n', \\
    1 & \textup{if } n' = s \textup{ and } n' \neq n, m=n, n''\neq n', \\
    (p_{m}-1) & \textup{if } n' = s \textup{ and } n' \neq n, m\neq n, n''=n, m'=n+2, \\
     1 & \textup{else. } \\
      
    \end{cases} \begin{matrix}  \textup{\textbf{[1.A+B+C]}} \\
            \textup{\textbf{[2.A]}} \\
            \textup{\textbf{[2.B+C]}} \\
            \textup{\textbf{[3.A]}} \\
            \textup{\textbf{[3.B]}} \\
            \textup{\textbf{[3.B]}} \\
            \textup{\textbf{[3.C]}} \\
            \textup{\textbf{[3.C]}} \\
            \end{matrix}
\end{equation*}
\endgroup
Take $\iota \in \{1, \ldots, s\}$ and fix $2 \leq b \leq r_{\iota}$. For $\bs{f} \in F_{\iota}^{b}$, the $l$-th power of the order of the divisor $E(\bs{f})$ is given by
\begin{equation*}
    \val_{l}(\order(E(\bs{f}))) = \val_{l}\left(\num\left(\frac{\mathcal{M}(\bs{f})\cdot \mathcal{G}_{p_{\iota}}(r_{\iota}, b)\cdot \prod_{j=1, j \neq \iota}^{s}(p_{j}-1)^{1-f_{j}} \prod_{j=1, j\neq \iota}^{s}\gamma_{j}^{1-f_{j}}}{24}\right)\right),
\end{equation*}
where
\begin{equation*}
    \mathcal{M}(\bs{f}) \coloneqq \begin{cases} (p_{m}-1)  & \text{if } n_{+} = s+1, \\
    1 & \text{if } n_{+} \neq s+1.
    \end{cases}
\end{equation*}

\end{proposition}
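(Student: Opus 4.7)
The plan is to apply Theorem~\ref{thm:order}: for any rational cuspidal divisor $D$, one has $\order(D) = \num(k(N)\mathbf{h}(D)/(24\,\GCD(D)))$. Since $l$ is odd and $\mathbf{h}(D) \in \{1,2\}$, the factor $\mathbf{h}$ is an $l$-adic unit and drops out, reducing the question to $\val_{l}(\num(k(N)/(24\,\GCD(E(\bs{f})))))$. The work is therefore to compute $\GCD(E(\bs{f}))$ case by case from the explicit expressions of Lemma~\ref{lem:V(E(f))}, and to verify that $k(N)/\GCD(E(\bs{f}))$ matches the stated expression up to factors dividing $24$.

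For $\bs{f} \in F_{\iota}^{b}$, Lemma~\ref{lem:V(E(f))} presents $V(E(\bs{f}))_{\delta}$ as a constant prefix $p_{\iota}^{\kappa_{\iota}}\prod_{j\neq\iota}\gamma_{j}^{f_{j}}$ times a $\delta$-varying tail. The tail contains the factor $\mathbb{A}_{p_{\iota}}(r_{\iota},b)_{\delta_{\iota}}$, which attains values of absolute value $1$ among its nonzero entries, and products of the form $(p_{j}^{1-\delta_{j}}-1)$ and $p_{j}^{1-\delta_{j}}$; taking the gcd over admissible $\delta$ kills the latter and forces each surviving former factor down to $(p_{j}-1)$. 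Using the identity $\gamma_{\iota}/p_{\iota}^{\kappa_{\iota}} = \mathcal{G}_{p_{\iota}}(r_{\iota},b)$ then allows one to rearrange $k(N)/\GCD(E(\bs{f}))$ into $\mathcal{M}(\bs{f}) \cdot \mathcal{G}_{p_{\iota}}(r_{\iota},b)\prod_{j\neq\iota}(p_{j}-1)^{1-f_{j}}\prod_{j\neq\iota}\gamma_{j}^{1-f_{j}}$, with $\mathcal{M}(\bs{f})$ recovered as the surviving ``missing'' factor in each of the two subcases $n_{+}=s+1$ and $n_{+}\neq s+1$.

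For $\bs{f} \in F_{\mathrm{sf}}$, the same strategy applies, but now the formulas in Lemma~\ref{lem:V(E(f))} split according to the cases \textbf{(1.A)}--\textbf{(3.C)} matching Definition~\ref{def:E'(f)}. In each subcase the $\delta$-varying piece is a characteristic combination of factors $(p_{j}^{1-\delta_{j}}-1)$, $p_{j}^{1-\delta_{j}}$, and differences $(p_{j}^{1-\delta_{j}}-p_{j'}^{1-\delta_{j'}})$ indexed by $m,n,n',n''$; taking the gcd over admissible $\delta$ collapses this piece to exactly one of $1$, $(p_{m}-1)$, $(p_{m}-1)(p_{m+1}-1)$, or $(p_{m'}-1)$, reproducing the seven branches of $\mathcal{M}(\bs{f})$ in the proposition. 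The auxiliary scaling $\prod_{j}\gamma_{j}/G(E'(\bs{f}))$ from the $D_{\gamma}$-normalisation is an $l$-adic unit by Lemma~\ref{lem:lunitcoef} and so does not contribute to $\val_{l}$. The main obstacle will be the combinatorial verification across the seven subcases: one must check that the $\delta$ achieving maximal cancellation in each branch leaves precisely the surviving factor $\mathcal{M}(\bs{f})$ claimed, which amounts to an enumeration analogous to that carried out in the proof of Lemma~\ref{lem:E(f)cicFsf}.
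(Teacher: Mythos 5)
Your proposal is correct and follows essentially the same route as the paper: apply Theorem~\ref{thm:order}, discard $\mathbf{h}(E(\bs{f}))\in\{1,2\}$ since $l$ is odd, and extract $\val_l(\GCD(E(\bs{f})))$ from the explicit entries in Lemma~\ref{lem:V(E(f))} together with $\val_l(G(E'(\bs{f})))=\val_l(\gamma(\bs{f}))$ from Lemma~\ref{lem:lunitcoef}. The only streamlining in the paper is that, rather than recomputing the full gcd case by case, it reuses the distinguished entry $\delta_{\bs{f}}$ from the proofs of Lemmas~\ref{lem:E(f)cicFsf} and \ref{lem:E(f)cicFib}, whose $l$-valuation is already known to be minimal among all entries, so $\val_l(\GCD(E(\bs{f})))=\val_l(V(E(\bs{f}))_{\delta_{\bs{f}}})$ — exactly the ``maximal cancellation'' point you identify.
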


\begin{proof}
We will use Theorem~\ref{thm:order}, i.e., \begin{equation} \label{eq:orderformula}
    \order(D) = \num \left(\frac{k(N)\cdot \textbf{h}(D)}{24 \cdot \GCD(D)} \right). 
\end{equation}

From Lemma \ref{lem:V(E(f))} and Lemma \ref{lem:E(f)cic} we have that, for each $\bs{f} \in  \left(\bigcup_{\iota, b} F_{\iota}^{b}\right) \cup F_{\mathrm{sf}}$,
$$ \val_{l}(V(E(\bs{f}))_{\delta_{\bs{f}}}) \leq \val_{l}(V(E(\bs{f}))_{\delta_{\bs{f}}}) \text{ for any } \delta \in D_{N}$$
where $\delta_{\bs{f}}$ is as defined in the proof of Lemmas \ref{lem:E(f)cicFsf} and \ref{lem:E(f)cicFib}. 
Hence, $\val_{l}(\GCD(E(\bs{f})) = \val_{l}(V(E(\bs{f}))_{\delta_{\bs{f}}})$. The right-hand side of the equality can be computed from Lemma~\ref{lem:V(E(f))} and using $\val_{l}(G(E'(\bs{f}))) = \val_{l}(\gamma(\bs{f}))$, see Lemma~\ref{lem:lunitcoef}.
On the other hand, we know from Definition \ref{def:h(D)} that $\mathbf{h}(E(\bs{f}))$ is either 1 or 2. Since we are taking $l$ to be odd, we get $\val_{l}(\mathbf{h}(E(\bs{f})))=0$. The result follows now from Equation~\eqref{eq:orderformula}.
\end{proof}

\end{document}